\documentclass[12pt]{article}

\title{Embedded contact homology of prequantization bundles}
\author{Jo Nelson\footnote{Partially supported by NSF grants DMS-1810692, DMS-2104411, and  DMS-2103245.}\; and Morgan Weiler\footnote{Partially supported by NSF grants DMS-1745670 and DMS-2103245.}}
\date{}
\usepackage{amssymb}
\usepackage{latexsym}
\usepackage{amsmath}
\usepackage{amsthm}
\usepackage[scr=rsfso]{mathalfa}
\usepackage{graphicx}
\usepackage[abs]{overpic}

\usepackage[all,cmtip]{xy}

\usepackage[margin=1in]{geometry}

\usepackage[matha,  mathx]{mathabx}

\usepackage{wasysym}

\usepackage{enumerate}
\usepackage[usenames,dvipsnames,svgnames,table]{xcolor}

\usepackage{hyperref}
\hypersetup{colorlinks}

\definecolor{indigo}{RGB}{51,0,102}
\definecolor{brightpurple}{RGB}{102,0,153}
\definecolor{fuchsia}{RGB}{180,51,180}
\definecolor{jolightpurple}{RGB}{188,171,240}

\hypersetup{colorlinks,
linkcolor=brightpurple,
filecolor=brightpurple,
urlcolor=indigo,
citecolor=fuchsia}

\normalbaselines
\setlength{\baselineskip}{9pt}
\setlength{\normalbaselineskip}{9pt}

\usepackage{tikz}

\newcommand{\mc}[1]{{\mathcal #1}}

\numberwithin{equation}{section}
\numberwithin{figure}{section}

\newtheorem{theorem}{Theorem}[section]
\newtheorem{proposition}[theorem]{Proposition}
\newtheorem{corollary}[theorem]{Corollary}
\newtheorem{lemma}[theorem]{Lemma}
\newtheorem{lemma-definition}[theorem]{Lemma-Definition}

\newtheorem{conjecture}[theorem]{Conjecture}

\theoremstyle{definition}
\newtheorem{definition}[theorem]{Definition}
\newtheorem{remark}[theorem]{Remark}

\newtheorem{example}[theorem]{Example}

\newcommand{\floor}[1]{\left\lfloor #1 \right\rfloor}
\newcommand{\ceil}[1]{\left\lceil #1 \right\rceil}

\newcommand{\C}{{\mathbb C}}

\newcommand{\Q}{{\mathbb Q}}
\newcommand{\R}{{\mathbb R}}
\newcommand{\N}{{\mathbb N}}
\newcommand{\Z}{{\mathbb Z}}

\newcommand{\ds}{{\dot{\Sigma}}}

\newcommand{\fp}{\mathfrak{p}}

\newcommand{\fj}{\mathfrak{J}}
\newcommand{\sj}{\mathscr{J}}
\newcommand{\cj}{\mathcal{J}}
\newcommand{\bj}{\mathbb{J}}

\newcommand{\B}{\mathscr{B}}
\newcommand{\E}{\mathscr{E}}
\newcommand{\W}{W^{k,p,\delta}}

\newcommand{\cs}{\mathcal{S}}
\newcommand{\cy}{\mathcal{Y}}

\newcommand{\D}{\mathbf{D}}
\newcommand{\bfi}{{\mathbf{I}}}
\newcommand{\bfj}{{\mathbf{J}}}

\newcommand{\conn}{\mathscr{C}}
\newcommand{\mgn}{\mathscr{M}_{g,n}}
\newcommand{\sm}{\mathscr{M}}

\newcommand{\CC}{{\mathcal C}}


\newcommand{\ga}{\gamma}
\newcommand{\deebar}{\bar{\partial}_{J}}

\newcommand{\op}{\operatorname}

\newcommand{\M}{\mc{M}}

\newcommand{\Aut}{\op{Aut}} 

\newcommand{\ind}{\op{ind}}

\newcommand{\CZ}{\op{CZ}}

\newcommand{\union}{\bigcup}

\newcommand{\bpm}{\begin{pmatrix}}
\newcommand{\epm}{\end{pmatrix}}











\newcommand{\czm}{{CZ}}

\setcounter{tocdepth}{2} 
\begin{document}

\maketitle

\begin{abstract}
The 2011 PhD thesis of Farris \cite{farris} demonstrated that the ECH of a prequantization bundle over a Riemann surface is isomorphic as a $\Z_2$-graded group to the exterior algebra of the homology of its base.  We extend this result by computing the $\mathbb{Z}$-grading on the chain complex, permitting a finer understanding of this isomorphism and a stability result for ECH.  We fill in a number of technical details, including the Morse-Bott direct limit argument and the classification of certain $J$-holomorphic buildings.  The former requires the isomorphism between filtered Seiberg-Witten Floer cohomology and filtered ECH as established by Hutchings--Taubes \cite{cc2}.  The latter requires the work on higher asymptotics of pseudoholomorphic curves by Cristofaro-Gardiner--Hutchings--Zhang \cite{CGHZ} to obtain the writhe bounds necessary to appeal to an intersection theory argument of Hutchings--Nelson \cite{dc}. 

\end{abstract}

\tableofcontents

\bigskip


\section{Introduction}

Embedded contact homology (ECH) is an invariant of three dimensional contact manifolds, due to Hutchings \cite{Hu2}, with powerful applications to dynamics and symplectic embedding problems.  Most computations of ECH rely on enumerative toric methods, and Morse-Bott methods for ECH are quite subtle \cite{yao1, yao2}.\footnote{ In \cite{yao1}, Yao shows how to degenerate $J$-holomorphic curves and glue together transverse cascades  subject to certain transversality conditions.   In \cite{yao2}, Yao works out how to apply these methods in the context of Morse-Bott computations of ECH, notably in the case when the Morse-Bott families are one dimensional, as in the setting of toric domains.  Adapting Yao's methods to prequantization bundles requires establishing the necessary transversality conditions for cascades in the presence of higher genus curves.  This is a quite subtle endeavor for higher genus curves and branched covers of trivial cylinders, and would require require our results in \S \ref{sec:modspcs}-\ref{handleslides}, namely, those excluding contributions of higher genus curves to the ECH differential, as well as the ECH index calculations in \S \ref{sec:ECHI}.}  Following the framework given by Farris \cite{farris} and providing additional details in preparation for our future work, we show that for prequantization bundles, there is an appropriately  filtered ECH differential which only counts cylinders corresponding to unions of fibers over Morse flow lines of a perfect Morse function on the base. We then make use of direct limits for filtered ECH, as established in \cite{cc2}, to provide a Morse-Bott means of computing ECH for prequantization bundles over closed Riemann surfaces.   This permits us to conclude that the ECH of a prequantization bundle over a Riemann surface is isomorphic as a $\Z_2$-graded group to the exterior algebra of the homology of this base.

\subsection{Definitions and overview of ECH}

Let $Y$ be a closed three-manifold with a contact form $\lambda$. Let $\xi=\ker(\lambda)$ denote the associated contact structure, and let $R$ denote the associated Reeb vector field, which is uniquely determined by 
\[
\lambda(R)=1, \ \ \ d\lambda(R, \cdot)=0.
\]
A {\em Reeb orbit\/} is a map $\gamma:\R/T\Z\to Y$ for some $T>0$ such that $\gamma'(t)=R(\gamma(t))$, modulo reparametrization.  A Reeb orbit is said to be \textit{embedded} whenever this map is  injective. For a Reeb orbit as above, the linearized Reeb flow for time $T$ defines a symplectic linear map
\begin{equation}
\label{eqn:lrt}
P_\gamma:(\xi_{\gamma(0)},d\lambda) \longrightarrow (\xi_{\gamma(0)},d\lambda).
\end{equation}
The Reeb orbit $\gamma$ is {\em nondegenerate\/} if $P_\gamma$ does not have $1$ as an eigenvalue.  The contact form $\lambda$ is called nondegenerate if all Reeb orbits are nondegenerate; generic contact forms have this property. Fix a nondegenerate contact form below.  A nondegenerate Reeb orbit $\gamma$ is {\em elliptic\/} if $P_\gamma$ has eigenvalues on the unit circle and {\em  hyperbolic\/} if $P_\gamma$ has  real eigenvalues. If $\tau$ is a homotopy class of trivializations of $\xi|_\gamma$, then the {\em Conley-Zehnder index\/} $\CZ_\tau(\gamma)\in\Z$ is defined; see the review in \S \ref{cz-sec}. The parity of the Conley-Zehnder index does not depend on the choice of trivialization $\tau$, and is even when $\gamma$ is positive hyperbolic and odd otherwise.  We say that an almost complex structure $J$ on $\R\times Y$ is {\em $\lambda$-compatible\/} if $J(\xi)=\xi$; $d\lambda(v,Jv)>0$ for nonzero $v\in\xi$; $J$ is invariant under translation of the $\R$ factor; and $J(\partial_s)=R$, where $s$ denotes the $\R$ coordinate.   We denote the set of all $\lambda$-compatible $J$ by $\sj(Y,\lambda)$.

ECH is defined roughly as follows, with a complete description given in \S \ref{basics}. Given a closed 3-manifold $Y$ equipped with a nondegenerate contact form $\lambda$ and generic $\lambda$-compatible $J$, the \emph{ECH chain complex} (with respect to a fixed homology class $\Gamma \in H_1(Y)$) is the $\Z_2$ vector space freely generated by finite sets of pairs $\alpha = \{ (\alpha_i, m_i) \}$ where the $\alpha_i$ are distinct embedded Reeb orbits, the $m_i$ are positive integers, the total homology class $\sum_i m_i[\alpha_i] = \Gamma$, and $m_i=1$ whenever $\alpha_i$ is hyperbolic.    

Let $\M^J(\alpha,\beta)$ denote the set of $J$-holomorphic currents from $\alpha$ to $\beta$.  The \emph{ECH differential} is a mod 2 count of ECH index 1  currents.  The definition of the ECH index is the key nontrivial part of ECH \cite{Hindex}, and under the assumption that $J$ is generic, guarantees that the curves are embedded, except possibly for multiple covers of \emph{trivial cylinders} $\R \times \gamma$ where $\gamma$ is a Reeb orbit.  That the differential squares to zero is a technical endeavor requiring obstruction bundle gluing, \cite{obg1}, \cite{obg2}.  Let $ECH_*(Y,\lambda,\Gamma,J)$ denote the homology of the ECH chain complex.  It turns out that this homology does not depend on the choice of $J$ or on the contact form $\lambda$ for $\xi$, and so defines a well-defined $\Z_2$ module $ECH_*(Y,\xi,\Gamma)$.  The proof of invariance goes through Taubes' isomorphism with Seiberg-Witten Floer cohomology \cite{taubesechswf1}-\cite{taubesechswf5}.

There is a filtration on ECH which enables us to compute it via successive approximations, as explained in \S\ref{subsec:PQBgens} and \S\ref{subsec:directlimit}. The \emph{symplectic action} or \emph{length} of an Reeb current $\alpha=\{(\alpha_i,m_i)\}$ is
\[
\mathcal{A}(\alpha):=\sum_im_i\int_{\alpha_i}\lambda.
\]
If $J$ is $\lambda$-compatible and there is a $J$-holomorphic current from $\alpha$ to $\beta$, then $\mathcal{A}(\alpha)\geq\mathcal{A}(\beta)$ by Stokes' theorem, since $d\lambda$ is an area form on such $J$-holomorphic curves. Since $\partial$ counts $J$-holomorphic currents, it decreases symplectic action, i.e.,
\[
\langle\partial\alpha,\beta\rangle\neq0\Rightarrow\mathcal{A}(\alpha)\geq\mathcal{A}(\beta).
\]

Let $ECC_*^L(Y,\lambda,\Gamma;J)$ denote the subgroup of $ECC_*(Y,\lambda,\Gamma;J)$ generated by Reeb currents of symplectic action less than $L$. Since $\partial$ decreases action, it is a subcomplex; we denote the restriction of $\partial$ to $ECC_*^L$ by $\partial^L$. It is shown in \cite[Theorem 1.3]{cc2} that the homology of $ECC_*(Y,\lambda,\Gamma;J)$ is independent of $J$, therefore we denote its homology by $ECH_*^L(Y,\lambda,\Gamma)$, which we call \emph{filtered ECH}.

\subsection{Main theorem and outline of the proof}\label{ssec:introoutline}

Let $(\Sigma_g, \omega)$ be a closed Riemann surface  of genus $g \geq 0$ such that the cohomology class $\frac{[\omega]}{2\pi}$ admits an integral lift in $H^2(\Sigma_g,\Z)$.  Consider the the principal $S^1$-bundle $\fp: Y \to \Sigma_g$  with Euler class $e = -\frac{1}{2\pi}[\omega]$.  A connection on $Y$ is an imaginary-valued 1-form $A \in \Omega^1(Y, i\R)$ that is $S^1$-invariant and satisfies $A(X) = i$, where the vector field
\[
X(p): = \frac{d}{d\theta}\bigg \vert_{\theta =0} p e^{i\theta}, \ \ \ \theta \in \R/2\pi \Z,
\]
 is the derivative of the $S^1$-action of the bundle.  Since the Euler class of $Y$ is $-\frac{1}{2\pi}[\omega]$, the connection can be chosen such that its curvature is
  \[
F_A = dA = i \fp^*\omega;
\]
see \cite[Thm. 2.7.4]{intro}.  The 1-form 
\[
\lambda:= -i A
\]
is a contact form on $Y$ and satisfies $d\lambda = \fp^*\omega$.  Since $\lambda$ is $S^1$-invariant and $\lambda(X) = 1$, the Reeb vector field associated to $\lambda$ is $R:=X$, and all Reeb orbits are degenerate and have period $2\pi$. This contact manifold $(Y,\lambda)$ is called the \emph{prequantization bundle} of $(\Sigma_g, \omega)$, and the construction can be generalized to a symplectic base of arbitrary dimension.  The Hopf fibration is an example of a prequantization bundle of Euler class -1 over the sphere, while the lens space $L(|e|,1)$ is an example of a prequantization bundle of arbitrary negative Euler class $e$ over the sphere.

Our main result is the following computation of the ECH of prequantization bundles. 

\begin{theorem}\label{thm:mainthm}
Let $(Y, \xi = \mbox{\em ker} \lambda)$ be a prequantization bundle over $(\Sigma_g, \omega)$ of negative Euler class $e$. Then as $\Z_2$-graded $\Z_2$-modules,
\[
\bigoplus_{\Gamma\in H_1(Y;\Z)}ECH_*(Y,\xi,\Gamma)\cong\Lambda^*H_*(\Sigma_g;\Z_2).
\]
Moreover, each $\Gamma\in H_1(Y;\Z)$ satisfying $ECH_*(Y,\xi,\Gamma)\neq0$ corresponds to a number in $\{0,\dots,-e-1\}$, and under this correspondence
\begin{equation}\label{eqn:Lambdacorresp}
ECH_*(Y,\xi,\Gamma)\cong\bigoplus_{d\in\Z_{\geq0}}\Lambda^{\Gamma+(-e)d}H_*(\Sigma_g;\Z_2)
\end{equation}
as $\Z_2$-graded $\Z_2$-modules. 

 When $\Gamma=0$, the $\Z_2$-graded isomorphism (\ref{eqn:Lambdacorresp}) can be upgraded to an isomorphism of $\Z$-graded $\Z_2$-modules; the bijection on gradings is as follows. Let $*$ denote the grading on ECH and $\bullet$ the grading on the right hand side of (\ref{eqn:Lambdacorresp}). Then
\begin{equation}\label{eqn:0grading}
*=-ed^2+(\chi(\Sigma_g)+e)d+\bullet.
\end{equation}
\indent When $\Gamma\neq0$, we have a relatively $\Z$-graded isomorphism: Let $\alpha\in\Lambda^{\Gamma+(-e)d_\alpha}H_*(\Sigma_g;\Z_2)$ and $\beta\in\Lambda^{\Gamma+(-e)d_\beta}H_*(\Sigma_g;\Z_2)$, and let $\alpha$ and $\beta$ also denote the corresponding homogeneous ECH elements under (\ref{eqn:Lambdacorresp}). Let $|\cdot|_*$ denote the ECH grading and $|\cdot|_\bullet$ the grading on the right hand side of (\ref{eqn:Lambdacorresp}). Then
\begin{equation}\label{eqn:relgrading}
|\alpha|_*-|\beta|_*=-e(d_\alpha^2-d_\beta^2)+(\chi(\Sigma_g)+2\Gamma+e)(d_\alpha-d_\beta)+|\alpha|_\bullet-|\beta|_\bullet.
\end{equation}
\end{theorem}

\begin{remark} ECH generally only carries a relative $\Z_d$-grading, but it has a $\Z$-grading if the class $c_1(\xi)+2PD(\Gamma)$ is torsion. The fact that this class is torsion is explained in Remark \ref{rem:grading}. We also implicitly upgrade the relative grading to an absolute grading when $\emptyset$ is a generator by mandating that it has ECH index zero. This occurs when $\Gamma=0$.
\end{remark}

\begin{example}\label{ex:lensspc} As a refinement of Theorem \ref{thm:mainthm}, we compute the $\Z$-graded ECH of the lens spaces $L(|e|,1)$, obtaining the ECH version of \cite[Corollary 3.4]{kmos} as Theorem \ref{thm:echS2}:
\[
ECH_*(Y,\xi,\Gamma)=\begin{cases}
\Z&\text{ if $*\in2\Z_{\geq0}$},
\\0&\text{ else}.
\end{cases}
\]
\end{example}

\begin{remark}\label{rmk:Zcoeff} In cases where the differential $\partial$ vanishes because the moduli spaces of ECH index one currents are empty, as it does in Theorem \ref{thm:echS2} (see the proof in \S\ref{sssec:pfg=0}, where we show that the moduli spaces are empty for index reasons), we may upgrade from $\Z_2$-coefficients to $\Z$-coefficients. ECH is defined with $\Z$-coefficients rather than $\Z_2$-coefficients, but because that definition requires putting orientations on the moduli spaces and $\Z$-coefficients have not yet been necessary for any applications, we do not define orientations in this paper.
\end{remark}

We now give a sketch of the proof, which follows much of Farris' approach as outlined in \cite[\S 1]{farris}, and provide the organization of the paper along the way.   In \S \ref{basics}, we provide a short primer on the ingredients that go into ECH.  Then, in \S \ref{sec:ECHI} we give a detailed computation of the ECH index for certain Reeb currents in a perturbation of the canonical contact form associated to a prequantization bundle by a perfect Morse function $H$. Our computation of the ECH index uses the blueprint given in \cite[\S 3]{farris} and fills in a number of details which were not previously available.  Before we can give the computation, we need to perturb  the degenerate canonical  contact form $\lambda$.  

As in \cite{jo2}, we use the following perturbation in \S \ref{sec:ECHI} to employ Morse-Bott computational methods on action filtered ECH.  We use a Morse-Smale function $H:\Sigma\to\R$ such that $|H|$ is ${C^2}$ close to 1 to perturb the canonical connection contact form $\lambda$,
    \[
    \lambda_\varepsilon:=(1+\varepsilon\mathfrak{p}^*H)\lambda.
    \]
The perturbed Reeb vector field is
    \[
    R_\varepsilon=\frac{R}{1+\varepsilon\mathfrak{p}^*H}+\frac{\varepsilon\tilde X_H}{(1+\varepsilon\mathfrak{p}^*H)^2}
    \]
where $\tilde X_H$ is the horizontal lift of the Hamiltonian\footnote{Our convention for defining the Hamiltonian vector field is $X_H=\omega(dH, \cdot)$.} vector field $X_H$ to $\xi$.  Note that if $p \in \mbox{Crit}(H)$ then $X_H(p)=0$.  Fix $L>0$; there  exists $  \varepsilon >0$ so that the orbit  $\gamma$ of $R_\varepsilon$ satisfies the following dichotomy:
\begin{itemize}
\item if $\mathcal{A}(\gamma)<L$ then $\gamma$ is nondegenerate and projects to $p \in \mbox{Crit}(H)$;
\item if $\mathcal{A}(\gamma)>L$ then $\gamma$ loops around the tori above the orbits of $X_H$, or is a larger iterate of a fiber above $p\in\mbox{Crit}(H)$.
 \end{itemize}
We denote the $k$-fold cover projecting to $p \in \mbox{Crit}(H)$ by $\gamma_p^k$.   We have the following expression for the Conley-Zehnder index (see \cite{vknotes}, \cite[\S 4]{jo2}):
    \[
    CZ_\tau(\gamma_p^k) = RS_\tau( \mbox{fiber}^k) - \frac{\mbox{dim}(\Sigma_g)}{2} + \mbox{ind}_p(H).
    \] 
Using the constant trivialization $\tau$ of $\xi=\mathfrak{p}^*T\Sigma_g$, we have that the Robbin-Salamon index of the degenerate fiber is $RS_\tau( \mbox{fiber}^k) =0$.  Thus
     \[
    CZ_\tau(\gamma_p^k) = \mbox{ind}_p(H) -1.
    \]
 If $\mbox{ind}_p(H)=1$ then $\gamma_p$ is positive hyperbolic.  Since $\mathfrak{p}$ is a bundle, all linearized return maps are close to the identity, thus there are no negative hyperbolic orbits.  If $\mbox{ind}_p(H)=0,2$ then $\gamma_p$ is elliptic.

After taking $H$ to be perfect, we denote the index zero elliptic orbit by $e_-$, the index two elliptic orbit by $e_+$, and the hyperbolic orbits by $h_1,\dots,h_{2g}$.  The critical points of a perfect $H$ form a basis for $H_*(\Sigma_g;\Z_2)$. The generators of $ECC_*^L(Y,\lambda_{\varepsilon(L)},\Gamma)$ for appropriate choices of $L$ and $\varepsilon(L)$ are of the form $e_-^{m_-}h_1^{m_1}\cdots h_{2g}^{m_{2g}}e_+^{m_+}$ where $m_i=0,1$. Hence they form a basis for the exterior algebra $\Lambda^*H_*(\Sigma_g;\Z_2)$ when both are given a $\Z_2$ grading, under the map generated by sending each critical point to its Morse homology class.   

 For homologous Reeb currents $\alpha$ and $\beta$,  let $d$ denote the difference in the number of embedded orbits, counted with multiplicity, appearing in $\alpha$ and $\beta$, divided by $-e$:
\begin{equation}\label{intro:d}
d=\frac{M-N}{|e|},
\end{equation}
where $M:=m_-+m_1+\cdots+m_{2g}+m_+$ and similarly $N:=n_-+n_1+\cdots+n_{2g}+n_+$.

We obtain the following formula for the ECH index, cf. Proposition \ref{prop:indexcalc}:

\begin{proposition}\label{intro:ECHindex}
Let $(Y,\lambda)$ be a prequantization bundle over a surface $\Sigma_g$ with Euler class $e\in\Z_{<0}$. The ECH index in $ECH_*^L(Y,\lambda_{\varepsilon(L)},\Gamma)$ satisfies the following formula for any $\Gamma, L$, and Reeb currents $\alpha$ and $\beta$, where $L$ and $\varepsilon(L)$ are as in Lemma \ref{lem:efromL}, 
\begin{equation}\label{introeqn:indexformula}
I(\alpha,\beta)=\chi(\Sigma_g)d-d^2e+2dN+m_+-m_--n_++n_-.
\end{equation}
\end{proposition}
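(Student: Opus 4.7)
The plan is to compute each of the three summands in the ECH index decomposition
\[
I(\alpha,\beta) = c_\tau(\alpha,\beta) + Q_\tau(\alpha,\beta) + CZ_\tau^I(\alpha,\beta)
\]
separately, using the constant trivialization $\tau$ of $\xi = \fp^*T\Sigma_g$ along the fibers, and then sum the results.

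The Conley-Zehnder term follows directly from the formula $CZ_\tau(\gamma_p^k) = \ind_p(H) - 1$ computed earlier: this value is independent of the iterate $k$, so only the elliptic orbits $e_\pm$ contribute, with index $\pm 1$, while each hyperbolic orbit contributes $0$. Summing over all iterates of each embedded orbit with the multiplicities of $\alpha$ and $\beta$ yields
\[
CZ_\tau^I(\alpha) - CZ_\tau^I(\beta) = m_+ - m_- - n_+ + n_-,
\]
which accounts for the last four terms of \eqref{introeqn:indexformula}.

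For the relative first Chern class term, I would use $c_1(\xi) = \fp^*c_1(T\Sigma_g)$ together with the fact that $\tau$ is pulled back from the base. Any relative 2-chain $Z$ in $[-1,1]\times Y$ with $\partial Z = \alpha - \beta$ projects under $\fp$ to a 2-chain in $\Sigma_g$ whose degree over the fundamental class equals $d = (M-N)/|e|$, since the homological identity $[\alpha]=[\beta]\in H_1(Y)$ together with the relation $|e|[F]=0$ in $H_1(Y)$ (a Gysin sequence consequence of the bundle having Euler class $e$) forces exactly this wrapping number. Pairing with $c_1(T\Sigma_g)$, which integrates to $\chi(\Sigma_g)$ over $\Sigma_g$, yields $c_\tau(\alpha,\beta) = \chi(\Sigma_g)\cdot d$, the first term of the formula.

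The main task is computing the relative intersection pairing $Q_\tau(\alpha,\beta)$, which should contribute $-d^2e + 2dN$. I would construct an admissible representative $Z$ of the class as a union of $d$ near-horizontal sheets over $\Sigma_g$ glued via $M+N$ annular caps to the boundary fibers, together with a pushoff $Z'$ obtained using the trivialization $\tau$. The bulk intersection $Z \cdot Z'$ of the $d$-sheeted horizontal part records the Euler class of the bundle and contributes the $-d^2e$ term, while the writhe and linking contributions at the negative ends, where the $d$ sheets link the $N$ fibers comprising $\beta$, sum to $2dN$; the analogous contributions at the positive ends are absorbed into the bulk count after an appropriate choice of pushoff direction at $\alpha$. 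Combining $c_\tau$, $Q_\tau$, and $CZ_\tau^I$ then produces \eqref{introeqn:indexformula}. The main obstacle is the careful bookkeeping for $Q_\tau$: verifying signs of the writhe contributions in the Morse-Bott setting and confirming that cross-terms between distinct fibers, which project to distinct critical points of $H$, do not contribute spurious terms; I would follow Farris' outline in \cite[\S 3]{farris} and make the trivialization $\tau$ explicit along each fiber so that the linking counts can be performed concretely.
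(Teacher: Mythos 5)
Your overall strategy -- decompose the ECH index into $c_\tau + Q_\tau + CZ_\tau^I$ with the constant trivialization, compute each piece, and add -- is exactly the paper's approach (\S\ref{subsec:ECHindexcomp}). The Conley--Zehnder computation is correct and identical to the paper's: $CZ_\tau(\gamma_p^k) = \operatorname{ind}_p(H)-1$ is $k$-independent, so $CZ_\tau^I(\alpha)-CZ_\tau^I(\beta) = m_+-m_--n_++n_-$. Your $c_\tau$ argument via the degree of the projected 2-chain is also sound, and is essentially a relative-homology version of Proposition \ref{prop-degree}; the paper instead computes $c_\tau(\check{S}_N)=0$ and $c_\tau(\check{S}_k)=\chi(\Sigma_g)$ directly on an explicit representative, but both reach $\chi(\Sigma_g)d$.

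Where you diverge from the paper, and where your plan is currently underspecified, is $Q_\tau$. The paper (\S\ref{subsec:surfaces}--\ref{subsec:relselfint}) does \emph{not} use a single connected ``$d$-sheeted surface with $M+N$ annular caps.'' Instead it represents the relative class as the disjoint union of $d$ punctured sections $\check{S}_1,\dots,\check{S}_d$ of $\fp$ over $\Sigma_g$ (each with boundary on a total of $|e|$ fibers of $\alpha$) together with one additional piece $\check{S}_N$, the union of fibers over an embedded graph $G_N$ in $\Sigma_g$ joining the remaining $N$ points of $\fp(\alpha)$ to the $N$ points of $\fp(\beta)$. With this decomposition, $Q_\tau$ expands quadratically as
\[
Q_\tau = Q_\tau(S_N) + 2\sum_{k=1}^d Q_\tau(S_N,S_k) + Q_\tau(S_1\cup\cdots\cup S_d),
\]
and the three terms are computed to be $0$, $2dN$, and $-d^2 e$ respectively. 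Crucially, the $2dN$ is \emph{not} a boundary linking contribution at the negative ends, as you propose; it is a bulk transverse intersection count: each of the $N$ edges of $G_N$ pierces each section $\check{S}_k$ exactly once, and there are $d$ sections. Your claim that the positive-end contributions ``are absorbed into the bulk count after an appropriate choice of pushoff'' is precisely the sort of thing the paper's careful lifting of the $\check{S}_k$'s to nested surfaces $S_k \subset [-1,1]\times Y$ (parametrized by $\delta(k), \epsilon(k), l(k)$) is designed to justify, and without that construction the bookkeeping of writhes and linking numbers at $\alpha$ and $\beta$ is genuinely delicate. Since $Q_\tau$ is independent of the admissible representative, your surface would in principle give the same answer, but the paper's disjoint-union representative is chosen precisely so that the quadratic expansion localizes the $-d^2e$ into the section--section pairing and the $2dN$ into the graph--section pairing, each of which is then a clean transversality count; I would adopt that decomposition rather than try to sort the positive- and negative-end linking terms separately.
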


\begin{remark}
The above formula recovers the ECH index for perturbations of the standard $S^3$. The index formula in \cite{Hu2}, when applied to the contact forms on the ellipsoids $E(1+\delta_n,1-\eta_n)$ where $\delta_n,\eta_n\to0$ and $\frac{1-\delta_n}{1-\eta_n}\in\R-\Q$, converges to our formula with $g=0$ and $e=-1$.
\end{remark}

We prove that the filtered ECH differential $\partial$ only counts unions of cylinders corresponding to Morse flows on $\Sigma_g$, therefore $\partial(e_-^{m_-}h_1^{m_1}\cdots h_{2g}^{m_{2g}}e_+^{m_+})$ is a sum over all ways to apply $\partial^{Morse}$ to $h_i$ or $e_+$.  In \S \ref{sssec:pfg>0}, we show that $\partial$ is zero because $H$ is perfect. 

Before sketching the correspondence between Morse flows and the filtered ECH differential (the content of \S \ref{sec:modspcs}-\ref{handleslides}), we say a few words about how we make the above Morse-Bott formalism precise.   In \S \ref{sec:ECHI} we show that given $L$ there exists $ \varepsilon(L)>0$ so that the generators of $ECC^L_*(Y,\lambda_\varepsilon,J)$ consist solely of orbits which are fibers over critical points.  We have:

\begin{proposition}\label{intro:directlimitcomputesfiberhomology} 
With $Y,\lambda$, and $\varepsilon(L)$ as in Lemma \ref{lem:efromL}, for any $\Gamma \in H_1(Y;\Z)$, there is a direct system consisting of all the $ECH^L_*(Y,\lambda_{\varepsilon(L)},\Gamma)$. The direct limit $\lim_{L\to\infty}ECH^L_*(Y,\lambda_{\varepsilon(L)},\Gamma)$ is the homology of the chain complex generated by Reeb currents $\{(\alpha_i,m_i)\}$ where the $\alpha_i$ are fibers above critical points of $H$ and $\sum_im_i[\alpha_i]=\Gamma$.
\end{proposition}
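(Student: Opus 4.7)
The plan is to construct the direct system, identify its generators with fiber orbit sets at each level, and pass to the limit. First I fix an increasing sequence $L_1<L_2<\cdots\to\infty$ and a decreasing sequence $\varepsilon_n:=\varepsilon(L_n)$ supplied by Lemma \ref{lem:efromL}. By the dichotomy preceding Proposition \ref{intro:ECHindex}, every Reeb orbit of $R_{\varepsilon_n}$ of action less than $L_n$ is a fiber above a critical point of $H$, so $ECC^{L_n}_*(Y,\lambda_{\varepsilon_n},\Gamma)$ is freely generated as a $\Z_2$-module by the orbit sets $\{(\gamma_{p_i}^{k_i},m_i)\}$ with $p_i \in \mbox{Crit}(H)$, $\sum_i m_i k_i [\gamma_{p_i}] = \Gamma$, and total action less than $L_n$.

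Second I construct the direct system maps. The family $\{\lambda_\varepsilon\}$ is a one-parameter isotopy of contact forms for the common kernel $\xi$, and for $m\le n$ one obtains an exact symplectic cobordism with $\lambda_{\varepsilon_m}$ at the positive end and $\lambda_{\varepsilon_n}$ at the negative end (using $\varepsilon_m\geq\varepsilon_n$ and $1+\varepsilon H>0$). The filtered ECH to filtered Seiberg-Witten Floer cohomology isomorphism of Hutchings--Taubes \cite{cc2} then produces canonical maps
\[
\Phi_{m,n}:ECH^{L_m}_*(Y,\lambda_{\varepsilon_m},\Gamma)\longrightarrow ECH^{L_n}_*(Y,\lambda_{\varepsilon_n},\Gamma)
\]
that are independent of auxiliary choices and compose functorially, $\Phi_{n,p}\circ\Phi_{m,n}=\Phi_{m,p}$, by concatenation of cobordisms.

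Third I analyze the limit. Each fiber orbit set in class $\Gamma$ has finite total action, so it appears as a generator of $ECC^{L_n}_*(Y,\lambda_{\varepsilon_n},\Gamma)$ for all sufficiently large $n$. On these stable subcomplexes the map $\Phi_{m,n}$ should act as the tautological identification, since fiber orbits are canonically labeled by critical points of the fixed function $H$ and their Conley--Zehnder indices (computed in \S \ref{sec:ECHI}) are $\varepsilon$-independent. Since homology commutes with filtered direct limits of chain complexes, we conclude that $\lim_{L\to\infty}ECH^L_*(Y,\lambda_{\varepsilon(L)},\Gamma)$ is the homology of the chain complex generated by all fiber orbit sets projecting to $\Gamma$.

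The main obstacle will be step two: verifying that the Hutchings--Taubes cobordism maps assemble into a genuine direct system, and that on the subcomplex generated by fiber orbits they coincide with the tautological inclusion up to chain homotopy. This compatibility is essential both to compose the $\Phi_{m,n}$ coherently and to guarantee that passing to the limit does not introduce exotic identifications on generators. Once this input from \cite{cc2} is in hand, the remainder of the argument is an elementary exercise in homological algebra, namely the exactness of filtered colimits in the category of $\Z_2$-modules.
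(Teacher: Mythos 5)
Your approach mirrors the paper's: both use Lemma \ref{lem:efromL} to identify the generators, build direct-system maps out of the exact symplectic cobordisms $([\varepsilon',\varepsilon]\times Y,(1+s\fp^*H)\lambda)$ and the cobordism maps of Hutchings--Taubes, and conclude by commuting homology with a filtered colimit. However, you correctly flag the crux of the argument and then leave it unresolved: you write that the main obstacle is ``verifying that the Hutchings--Taubes cobordism maps assemble into a genuine direct system, and that on the subcomplex generated by fiber orbits they coincide with the tautological inclusion up to chain homotopy,'' and then treat that as an input to be taken for granted. That is precisely the content that needs to be proved, and it does not follow just from the observation that the generators are canonically labeled by critical points with $\varepsilon$-independent Conley--Zehnder indices --- ECH cobordism maps are defined through Seiberg--Witten theory, not by a map of chain complexes, so there is no a priori reason they act as the identity on generators.

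The paper closes this gap in two parts. First, functoriality of the limit maps is not obtained merely ``by concatenation of cobordisms''; the direct-system map $ECH^L_*(Y,\lambda_{\varepsilon(L)},\Gamma)\to ECH^{L'}_*(Y,\lambda_{\varepsilon(L')},\Gamma)$ is built from a cobordism map (changing $\varepsilon$ at fixed $L$) together with an inclusion map (changing $L$ at fixed $\varepsilon$), and one must verify that the two routes through the square (\ref{eqn:trivialcobcd}) agree and that the four-fold composite diagram commutes using the Composition property of Theorem \ref{thm:cc2cobmaps}. Second, and more essentially, the fact that $\Phi^L(X_{\varepsilon,\varepsilon'},\lambda_{\varepsilon,\varepsilon'})$ is induced at chain level by the canonical bijection on generators is exactly the content of Lemma \ref{lem:nicecobmap} (a packaging of \cite[Lemma 3.4(d), Lemma 5.6, Definition 5.9]{cc2}), and invoking it requires checking its hypotheses for the family $\lambda_t=\lambda_{\varepsilon-(\varepsilon-\varepsilon')t}$: strict monotonicity of the conformal factor, $L$-nondegeneracy, $L$-flatness (arranged by the deformation of \cite[Prop.\ B.1]{taubesechswf1}, cf.\ \cite[Lem.\ 3.6]{cc2}), and $ECH^L$-genericity of the almost complex structures. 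Without this verification your argument stops at exactly the point where a proof is needed.
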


Then in \S \ref{sec:finalcomp}, rather than considering degenerations of moduli spaces directly, we instead pass to filtered ECH and take direct limits by appealing to the isomorphism with filtered Seiberg-Witten theory \cite{cc2}.  We prove:

\begin{theorem}\label{intro:thm:dlisECH} 
With $Y,\lambda,\varepsilon(L)$ as in Lemma \ref{lem:efromL}, and for any $\Gamma \in H_1(Y;\Z)$,
\[
\lim_{L\to\infty}ECH_*^L(Y,\lambda_{\varepsilon(L)},\Gamma)=ECH_*(Y,\xi,\Gamma).
\]
\end{theorem}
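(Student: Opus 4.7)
The plan is to translate the question into filtered Seiberg--Witten Floer cohomology via the Hutchings--Taubes isomorphism of \cite{cc2}, where the direct limit has a clean description because SWF cohomology is independent of any contact form. Fix $\Gamma\in H_1(Y;\Z)$ and let $\mathfrak{s}_{\xi,\Gamma}:=\mathfrak{s}_\xi+\op{PD}(\Gamma)$. By Lemma \ref{lem:efromL} I may, for each $L$, choose $\varepsilon(L)>0$ monotonically decreasing in $L$ so that every Reeb orbit of $\lambda_{\varepsilon(L)}$ of action at most $L$ is nondegenerate. Theorem~1.3 of \cite{cc2} then supplies a canonical filtered isomorphism
\[
\Phi^L\colon ECH_*^L(Y,\lambda_{\varepsilon(L)},\Gamma)\xrightarrow{\cong}\widehat{HM}^{-*}_L(Y,\mathfrak{s}_{\xi,\Gamma};\lambda_{\varepsilon(L)},r)
\]
for $r$ sufficiently large, where the right side is the filtered monopole Floer cohomology with the Kronheimer--Mrowka contact-form perturbation.

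The main step is to show that $\{\Phi^L\}$ is a map of direct systems. For $L<L'$, the direct-system map on the ECH side is the composition of a continuation isomorphism $ECH_*^L(Y,\lambda_{\varepsilon(L)},\Gamma)\xrightarrow{\cong}ECH_*^L(Y,\lambda_{\varepsilon(L')},\Gamma)$, induced by the exact symplectic cobordism interpolating the two contact forms below action $L$, with the action-inclusion map $ECH_*^L(Y,\lambda_{\varepsilon(L')},\Gamma)\to ECH_*^{L'}(Y,\lambda_{\varepsilon(L')},\Gamma)$. On the SWF side, the filtered complexes sit inside a common full monopole Floer complex, and their direct limit over increasing cutoffs equals $\widehat{HM}^{-*}(Y,\mathfrak{s}_{\xi,\Gamma})$, independent of the contact form or perturbation parameter. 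Naturality of the Hutchings--Taubes isomorphism with respect to exact symplectic cobordisms implies that the $\Phi^L$ intertwine the two direct systems, yielding
\[
\lim_{L\to\infty}ECH_*^L(Y,\lambda_{\varepsilon(L)},\Gamma)\cong\widehat{HM}^{-*}(Y,\mathfrak{s}_{\xi,\Gamma}).
\]
Applying Taubes' isomorphism $\widehat{HM}^{-*}(Y,\mathfrak{s}_{\xi,\Gamma})\cong ECH_*(Y,\xi,\Gamma)$ \cite{taubesechswf1} then gives the theorem.

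The hard part will be the naturality verification in the preceding paragraph: although \cite{cc2} provides $\Phi^L$ for each fixed $L$, the contact forms $\lambda_{\varepsilon(L)}$ themselves vary with $L$, so I must check that the continuation isomorphisms absorbing this variation are compatible under $\Phi^L$ with the direct-limit structure on the SWF side. To handle this I would arrange $\varepsilon(L)\to 0$ quickly enough that no orbit sets of action at most $L$ undergo bifurcations as $L$ grows, so that corresponding ECH generators can be tracked identically across the interpolating cobordism; commutativity of the resulting square then reduces to comparing the chain-level construction of $\Phi^L$ and $\Phi^{L'}$ in \cite{cc2}, which follows from the cobordism-functoriality built into their isomorphism. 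Once this naturality is established, the direct limit calculation, and hence the theorem, are immediate.
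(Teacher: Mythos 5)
Your overall strategy matches the paper's: translate to filtered Seiberg--Witten Floer cohomology via the Hutchings--Taubes filtered isomorphism, take a direct limit on the SWF side, and apply Taubes' theorem to close the loop. You also correctly identify where the trouble lives — the contact forms $\lambda_{\varepsilon(L)}$ vary with $L$, so the isomorphisms $\Phi^L$ for different $L$ do not have the same target complex.

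However, your conclusion that "the direct limit calculation, and hence the theorem, are immediate" once the maps of direct systems are assembled contains a real gap. The result from \cite{cc2} that you want to invoke — that $\lim_{L\to\infty}\widehat{HM}^{-*}_L(Y,\lambda,\mathfrak{s}_{\xi,\Gamma})=\widehat{HM}^{-*}(Y,\mathfrak{s}_{\xi,\Gamma})$ — applies only for a \emph{fixed} contact form $\lambda$, with the filtration parameter $L$ alone going to infinity. Your direct system on the SWF side is the ``diagonal'' system $\{\widehat{HM}^{-*}_L(Y,\lambda_{\varepsilon(L)},\mathfrak{s}_{\xi,\Gamma})\}_L$ in which both $L$ and the contact form vary. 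It is not automatic that its limit equals the full $\widehat{HM}$ group, and the claim that the filtered SWF complexes "sit inside a common full monopole Floer complex" is not correct: the perturbed Seiberg--Witten equations (\ref{eqn:pSWeqns}) depend on $\lambda_\varepsilon$, so for different $\varepsilon$ one obtains genuinely different complexes. Only the resulting homology is independent of $\varepsilon$, and exploiting that requires a further argument.

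This is precisely where the paper's proof does its real work. It factors the diagonal limit into an iterated limit via the isomorphism
\[
\lim_{\varepsilon\to0}\widehat{HM}^{-*}_{L(\varepsilon)}(Y,\lambda_\varepsilon,\mathfrak{s}_{\xi,\Gamma})\cong\lim_{\varepsilon\to0}\lim_{L\to\infty}\widehat{HM}^{-*}_L(Y,\lambda_\varepsilon,\mathfrak{s}_{\xi,\Gamma}),
\]
proved by constructing a natural map $F$ between the two direct limits and verifying it is well-defined, injective, and surjective using the commutative square (\ref{eqn:HMLcd}) and the composition law of Lemma \ref{lem:HMLcomp}. Once the inner limit is taken with $\varepsilon$ \emph{fixed}, one may legitimately apply the SWF convergence result from \cite[\S3.5]{cc2} to get $\widehat{HM}^{-*}(Y,\mathfrak{s}_{\xi,\Gamma})$ on the inside, after which the outer $\varepsilon$-limit is trivial. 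Your "no bifurcations" workaround, while relevant to showing that the ECH-side cobordism maps are the obvious inclusions on generators (which the paper does establish via Lemma \ref{lem:nicecobmap}), does not touch this direct-system factoring step, which is the crux of the argument.
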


{Now we sketch why the appropriately filtered differential $\partial^{L}$ only counts cylinders which are the union of fibers over Morse flow lines in $\Sigma_g$. Cylinder counts associated to the above perturbation permit the use of fiberwise $S^1$-invariant $\fj = \fp^*j_{\Sigma_g}$,  even for multiply covered curves, by automatic transversality, cf. \cite{jo2, Wtrans}.  However for somewhere injective curves with genus, we cannot obtain transversality using a $S^1$-invariant $\fj$, and following \cite[\S 6]{farris}, we make use of $S^1$-invariant domain dependent almost complex structures $\bj \in  \sj_\ds^{S^1}$ in \S \ref{sec:modspcs} to achieve regularity.   That the ECH differential does not admit noncylindrical contributions, follows in part from the following proposition, which demonstrates that domain dependent positive genus curves do not contribute to ECH index 1 moduli spaces.

\begin{proposition}\label{intro-nogenus}
Let $\alpha$ and $\beta$ be nondegenerate Reeb currents and $\bj \in  \sj_\ds^{S^1}$ be generic.  If $\op{deg}(\alpha,\beta) >0$ and $I(\alpha,\beta)=1$ then $\M^{\bj}(\alpha,\beta) = \emptyset$.
\end{proposition}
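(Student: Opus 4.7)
The plan is to derive a lower bound $I(u) \geq 2$ for any $\bj$-holomorphic curve $u \in \M^{\bj}(\alpha,\beta)$ whose domain has positive genus $g_u > 0$ and whose degree $d := \op{deg}(\alpha,\beta)$ is positive, by combining the explicit ECH index formula of Proposition \ref{intro:ECHindex} with the Fredholm index formula and the ECH index inequality $\ind(u) \leq I(u) - 2\delta(u)$. Such a bound contradicts the hypothesis $I(\alpha,\beta) = 1$ and so forces $\M^{\bj}(\alpha,\beta) = \emptyset$.

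First I would exploit that any $\bj \in \sj_\ds^{S^1}$ covers the complex structure $j_{\Sigma_g}$ on the base under $\fp : Y \to \Sigma_g$. Composing $u$ with $\fp$ yields a holomorphic map $\bar u : \dot\Sigma \to \Sigma_g$, and a fiber count at the asymptotic ends (which all project to critical points of $H$) shows that the natural compactification of $\bar u$ has degree equal to $d > 0$. Riemann-Hurwitz for $\bar u$ then gives
\[
2 g_u - 2 = d(2g - 2) + B,
\]
with $B \geq 0$ the total ramification, producing a strictly positive contribution of $2 g_u + B$ to the topology of the domain that was absent in the cylindrical case $g_u = 0$, $d = 0$ analyzed earlier.

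Next I would use the constant trivialization $\tau$ of $\xi = \fp^* T\Sigma_g$ together with the Conley-Zehnder values $\CZ_\tau(\gamma_{e_\pm}) = \pm 1$ and $\CZ_\tau(\gamma_{h_i}) = 0$ recorded in Section \ref{sec:ECHI} to compute $\ind(u)$ and the relative first Chern number $c_\tau(u)$ as linear expressions in $d$, $\chi(\Sigma_g)$, $g_u$, $B$, and the end multiplicities. Subtracting the resulting formula from Proposition \ref{intro:ECHindex}'s expression
\[
I(\alpha,\beta) = \chi(\Sigma_g)\, d - d^2 e + 2 d N + m_+ - m_- - n_+ + n_-,
\]
and invoking the index inequality $I(u) \geq \ind(u) + 2\delta(u)$ together with $\ind(u), \delta(u) \geq 0$ for generic $\bj$, the strictly positive quadratic term $-d^2 e$ combined with the $2 g_u + B$ contribution accumulates to force $I(u) \geq 2$, contradicting $I(\alpha,\beta) = 1$.

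The main obstacle is extending this bound to curves that are not somewhere injective, since a multiple cover of a low-index cylinder could a priori evade the somewhere injective argument. To handle this I would invoke the refined ECH index inequality of \cite{Hindex}, which accounts for covering multiplicity via partition conditions at the ends, and control the writhe at ends asymptotic to multiply covered elliptic fibers using the higher asymptotic analysis of \cite{CGHZ}. These sharper writhe estimates are precisely what is needed to ensure that the partition correction associated to a positive-genus cover cannot undercut the $I \geq 2$ bound established in the somewhere injective case, completing the exclusion.
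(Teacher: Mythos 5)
Your proposal has a genuine gap: the strategy of deriving a lower bound $I(u) \geq 2$ from index combinatorics cannot work, because the ECH index $I(u) = I(\alpha,\beta,[u])$ is a purely topological quantity depending only on the pair of orbit sets (and the relative homology class, which Lemma \ref{lem:indepZ} shows is irrelevant here). If a curve $u \in \M^{\bj}(\alpha,\beta)$ exists, then $I(u) = I(\alpha,\beta) = 1$ by hypothesis --- it is not something you get to re-derive from the genus or ramification of the domain. Moreover, pairs $(\alpha,\beta)$ with $\op{deg}(\alpha,\beta) > 0$ and $I(\alpha,\beta) = 1$ genuinely exist: for $g = 1$, $e = -1$, the pair $\alpha = h_1$, $\beta = \emptyset$ has $d = 1$ and $I = 1$ directly from Proposition \ref{intro:ECHindex}. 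So the content of the proposition is analytic (the moduli space of such curves is empty), not combinatorial (such pairs are impossible), and any argument producing a combinatorial contradiction must be flawed. A related confusion is that Riemann--Hurwitz constrains the \emph{Fredholm} index $\ind(u)$ via the Euler characteristic of the domain, not the ECH index, and even there the Conley--Zehnder contributions from covers of $e_-$ can be negative, so you cannot force $\ind(u) \geq 2$ either.

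The key ingredient your plan never uses is the $S^1$-invariance of $\bj$, which is where the hypothesis $\op{deg}(\alpha,\beta) > 0$ actually earns its keep. The paper's proof (Corollary \ref{nogenus}) goes as follows. Since $\deg > 0$, the domain of $u$ is not a union of cylinders (this is the direction of Lemma \ref{lem:d0g0} that only needs the degree count), so the image $\pi_Y u$ is not a union of fibers and the $S^1$-action on $\M^{\bj}(\alpha,\beta)$ induced by the $S^1$-invariance of $\bj$ is locally free on $u$. Combined with the free $\R$-translation, one gets a locally free $\R \times S^1$-action, forcing $\dim \M^{\bj}(\alpha,\beta) \geq 2$ at $u$. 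On the other hand, Theorem \ref{thm:ddacs} establishes regularity for generic $\bj \in \sj_\ds^{S^1}$, so $\dim \M^{\bj}(\alpha,\beta) = \ind(u)$, while the index inequality (Theorem \ref{thm:indexineq}), applied after perturbing to a nearby domain-independent $J$ for which the Fredholm index agrees, gives $\ind(u) \leq I(\alpha,\beta) = 1$. This contradiction is where the emptiness comes from. If you want to repair your proposal, the argument should be reorganized around the $S^1$-symmetry rather than around sharpening writhe and index estimates.
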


In \S \ref{degree}, we define the notion of degree of a completed projected pseudoholomorphic curve in $\Sigma_g$ and show that $\op{deg}(\alpha,\beta) = d$, where $d$ is as in \eqref{intro:d}, c.f. Definition \ref{degree-gen}.   By Lemma \ref{lem:d0g0}, a curve $C$ which contributes nontrivially to the ECH differential has degree zero if and only if it is a cylinder.  We note that Proposition \ref{intro-nogenus} and its proof previously appeared in \cite[Corollary 6.2.3]{farris}, and we repeat his arguments from which this corollary follows, including the regularity result for $S^1$-invariant domain dependent almost complex structures, cf. Theorem \ref{thm:ddacs} and Corollary \ref{nogenus}.

However, the definition of ECH relies on the choice of a generic $\lambda$-compatible domain independent almost complex structure} $J$, rather than a domain dependent $\bj$.  
Thus, as in \cite[\S 7]{farris},  we consider a generic one parameter family of almost complex structures $\{ \cj_t\}_{t\in[0,1]}$ interpolating between $\cj_0:=\bj \in \sj_\ds^{S^1} $ and  $\cj_1:=J \in \sj(Y,\lambda)$. We obtain the following result in \S \ref{handleslides}, which demonstrates that after passing to homology the curve counts are unaffected.

\begin{proposition}\label{intro-handleslides}
Let $\alpha$ and $\beta$ be admissible Reeb currents with $I(\alpha,\beta)=1$ and $\op{deg}(\alpha,\beta) >0.$  For generic paths $\{ \cj_t\}_{t\in[0,1]}$, the moduli space $\mathcal{M}_t := \mathcal{M}^{\cj_t}(\alpha, \beta)$  is cut out transversely save for a discrete number of times $t_0,...,t_\ell \in (0,1)$.    For each such $t_i$,  the ECH differential can change either by:
 \begin{enumerate}[\em (a)]
 \item The creation or destruction of a pair of oppositely signed curves.\footnote{Because we are using $\Z_2$-coefficients, we will not sort through the signs.}
 \item An ``ECH handleslide."  
 \end{enumerate}
 In either case, the homology is unaffected. 
 \end{proposition}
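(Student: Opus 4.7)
The plan is to adapt the standard parametric transversality and bifurcation analysis used in Floer-theoretic cobordism arguments, following the outline sketched by Farris in \cite[\S 7]{farris} and paralleling the invariance proofs underlying the Hutchings--Taubes Seiberg-Witten-to-ECH correspondence in \cite{cc2}. The first step is parametric transversality: consider the universal moduli space
\[
\mathcal{U} := \bigsqcup_{t\in[0,1]}\{t\}\times\M^{\cj_t}(\alpha,\beta)
\]
restricted to somewhere-injective curves, and run a Sard--Smale argument on the Banach space of paths joining $\bj$ to $J$. For generic such paths, $\mathcal{U}$ is a smooth manifold of the expected dimension, and the projection $\pi:\mathcal{U}/\R\to[0,1]$ is smooth; since $I(\alpha,\beta)=1$, the fibers are generically zero-dimensional, so Sard's theorem yields a discrete set of critical values $t_1<\cdots<t_\ell\in(0,1)$, away from which $\M^{\cj_t}(\alpha,\beta)/\R$ is finite and transversely cut out.

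The next step is to classify the codimension-one phenomena that can occur at each $t_i$ by studying the local structure of $\mathcal{U}/\R$ together with SFT compactness and ECH gluing. The analysis produces exactly two possibilities. Either (a) a pair of transverse curves in $\M^{\cj_t}(\alpha,\beta)/\R$ on one side of $t_i$ collides at $t_i$ and annihilates on the other side, in which case the mod-$2$ count of $\partial^{\cj_t}$ is literally unchanged across $t_i$; or (b) a sequence of ECH index $1$ curves degenerates into a broken building of two ECH index $0$ pieces joined along a Reeb orbit, which is the ``ECH handleslide'' case. Proposition \ref{intro-nogenus} together with Lemma \ref{lem:d0g0} and the hypothesis $\op{deg}(\alpha,\beta)>0$ keep noncylindrical higher-genus branches out of these limits.

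For each handleslide time $t_i$, the plan is then to define a chain homotopy $K_i:ECC_*\to ECC_{*+1}$ by counting parametric ECH index $(-1)$ curves in $\mathcal{U}$ lying above a small interval around $t_i$. A boundary analysis of the one-dimensional parametric moduli space of ECH index $0$ curves, carried out exactly as in the proof that $\partial^2=0$, then yields a chain-homotopy identity of the schematic form
\[
\partial^{\cj_{t_i+\eps}}-\partial^{\cj_{t_i-\eps}}=\partial K_i+K_i\partial,
\]
so that the induced maps on homology at $t_i-\eps$ and $t_i+\eps$ agree. Concatenating these homotopies over $i=1,\dots,\ell$ produces a chain homotopy equivalence between the differentials at $t=0$ (domain-dependent $\bj$) and at $t=1$ (domain-independent $J$), whence an isomorphism on homology.

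The hard part will be the bifurcation analysis at the handleslide times, and in particular verifying that the compactifications, gluing theorems, and linearized obstruction bundles behave correctly in the ECH setting along a path that interpolates between domain-dependent and domain-independent almost complex structures. Here the writhe and higher-asymptotic control of \cite{CGHZ}, combined with Hutchings' ECH index inequality, is essential to forbid exotic degenerations violating ECH index additivity, including those involving multiple covers of trivial cylinders. A secondary subtlety is that transversality at $t=0$ relies on the $S^1$-invariant domain-dependent framework of \S\ref{sec:modspcs}, while at $t=1$ transversality is of the usual domain-independent ECH type; one must arrange the interpolation so that the domain dependence of $\cj_t$ fades to zero as $t\to 1$ within a perturbation class for which parametric genericity still applies.
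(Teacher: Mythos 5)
Your parametric-transversality setup and the classification of degenerations into (a) cancellations and (b) broken configurations are on the right track, and your invocation of $\op{deg}(\alpha,\beta)>0$, Lemma \ref{lem:d0g0}, and Proposition \ref{intro-nogenus} to control higher-genus branches matches the paper's ingredients. However, the mechanism you propose for establishing ``the homology is unaffected'' at a handleslide --- a chain homotopy $K_i$ counting parametric index $(-1)$ curves --- is not what the paper does, and I do not think it can be made to yield the statement that the paper actually needs downstream. What Corollary \ref{intro: nogenusJ} requires is the vanishing of the plain count $\#_{\Z_2}\M^J(\alpha,\beta)$ for $\op{deg}>0$, $I=1$; a chain homotopy $\partial^{J}-\partial^{\bj}=\partial K_i+K_i\partial$ would only identify the homologies of two (not a priori honest ECH) complexes, and since $\partial^{\bj}$ vanishes on the relevant entries by Corollary \ref{nogenus}, the homotopy relation still leaves $\partial^J$ equal to $\partial K_i+K_i\partial$ rather than forcing it to zero. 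Separately, a parametric index $(-1)$ count in ECH is precisely the kind of naive boundary bookkeeping that the Hutchings--Taubes obstruction-bundle gluing machinery exists to replace: multiply covered connectors between trivial cylinders make the end structure of parametric moduli spaces non-classical, so ``exactly as in the proof that $\partial^2=0$'' is hiding the entire difficulty.

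The actual argument proceeds differently. Using the connector classification of \S\ref{sec:connectors} (which relies on the \cite{CGHZ} writhe refinements to show connectors cannot appear at the top or bottom of the limiting building) one can apply the obstruction-bundle gluing theorem of \cite{obg1,obg2} to obtain the transition formula
\[
\#\M_{t_i+\eps}(\alpha,\beta)=\#\M_{t_i-\eps}(\alpha,\beta)+\#G(C_+,C_-)\cdot\#\M_{t_i}(\alpha,\gamma_+),
\]
where $C_+\in\M_{t_i}(\alpha,\gamma_+)$ is the surviving index $1$ piece and $C_-$ is the index $0$ ECH handleslide. The crucial step you are missing is the \emph{inductive degree reduction}: because the handleslide curve $C_-$ cannot be a union of branched covers of trivial cylinders and must therefore have positive degree (Lemma \ref{lem:d0g0}), one gets $\op{deg}(C_+)<\op{deg}(C(t))$ strictly. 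One then reruns the whole argument for $\M_t(\alpha,\gamma_+)$ with a fresh generic interpolating family, again starting from a zero count at $t=0$ by Corollary \ref{nogenus}, and iterates; the degree strictly decreases at each stage, so the recursion terminates, giving $\#\M_{t_i}(\alpha,\gamma_+)=0$ and hence that the count $\#\M_t(\alpha,\beta)$ never leaves zero. This sidesteps any need to compute the gluing coefficient $\#G(C_+,C_-)$ explicitly, and delivers the curve-count vanishing that a chain-homotopy identity would not.
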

 
From this, we obtain the desired corollary: 
 
 \begin{corollary}\label{intro: nogenusJ}
Let $\alpha$ and $\beta$ be nondegenerate admissible Reeb currents and $J \in \sj(Y,\lambda)$ be generic.  If $\op{deg}(\alpha,\beta) >0$ and $I(\alpha,\beta)=1$ then the mod 2 count $\#_{\Z_2}\M^{J}(\alpha,\beta) = 0$.  If $\alpha$ and $\beta$ are associated to $\lambda_\varepsilon$ as in Lemma \ref{lem:efromL} and $\mathcal{A}(\alpha), \mathcal{A}(\beta) <L(\varepsilon)$ then $\langle \partial^{L(\varepsilon)} \alpha, \beta \rangle =0$.\end{corollary}

This corollary completes the proof that if any moduli space of $J$-holomorphic currents contributes to $\partial^L$ then the currents must consist of trivial cylinders together with unions of fibers over Morse flow lines in the base $\Sigma_g$, which contribute to the Morse differential. We obtain the main result, Theorem \ref{thm:mainthm} as the counts of such cylinders equal the counts of the Morse flow lines which are their images under $\fp$, cf. Proposition \ref{cylinder-to-morse}. We show in \S \ref{subsec:pfmainthm}, under the assumption that the Morse function $H$ is perfect, that the latter counts are all zero.

We conclude with a brief discussion of Proposition \ref{intro-handleslides}, which is proven in \S \ref{handleslides}.  In Proposition \ref{intro-handleslides}(a) the mod 2 counts of curves in $\M_{t_i-\varepsilon}(\alpha,\beta)$ and $\M_{t_i+\varepsilon}(\alpha,\beta)$ are the same.  The differential can change at an ECH handleslide, at which a sequence of Fredholm and ECH index 1 curves $\{C(t)\}$ breaks into a holomorphic building in the sense of \cite{BEHWZ} into components consisting of an ECH and Fredholm index 0 curve, an ECH and Fredholm index 1 curve, and some ``connectors," which are Fredholm index 0 branched covers of a trivial cylinder $\gamma \times \R$.    In \S \ref{sec:connectors}, we demonstrate that connectors cannot appear at the top most or bottom most level of the building via intersection theory arguments similar to \cite[\S 4]{dc}, appealing to refined asymptotic estimates of Cristofaro-Gardiner--Hutchings--Zhang \cite{CGHZ}.   The framework regarding this classification of connectors was previously indicated in \cite[\S 7]{farris}. We additionally explain how to invoke the obstruction bundle gluing theorems of \cite{obg1, obg2} and then review how Farris' inductive argument involving the degree of a completed projected curve finishes the proof of Proposition \ref{intro-handleslides}(b). 

\subsection{Finer points of the isomorphism and future work}\label{ssec:futurework}

In \S\ref{sec:finalcomp}, as a step in the proof of Theorem \ref{thm:mainthm}, we prove the second, stronger conclusion of Theorem \ref{thm:mainthm} in terms of the total $H_1(Y)$ classes $\Gamma$ and the total multiplicity of representatives of the ECH homology classes, namely that as $\Z_2$-graded $\Z_2$-modules
\begin{equation}\label{eqn:splitting}
ECH_*(Y,\xi,\Gamma)\cong\bigoplus_{d\in\Z_{\geq0}}\Lambda^{\Gamma+(-e)d}H_*(\Sigma_g;\Z_2).
\end{equation}
Here we are abusing notation on the right hand side by considering $\Gamma$ as the element of $\{0,\dots,-e-1\}$ corresponding to its homology class (see Lemma \ref{lem:HYreps} (i)). We illustrate the $\Z$-grading (\ref{eqn:0grading}) in an example.

\begin{example}
The following table organizes the first several generators in the case $g=2, e=-1, \Gamma=0$. Its columns have fixed ECH index $I=-2,\dots,4$ while its rows have fixed multiplicity $0,\dots,4$.

\[\begin{array}{r|ccccccc}
&I=-2&I=-1&I=0&I=1&I=2&I=3&I=4
\\\hline
\\\vspace{-.9cm}
\\\Lambda^0&&&\emptyset
\\\Lambda^1&e_-&h_i&e_+
\\\Lambda^2&e_-^2&e_-h_i&e_-e_+, h_ih_j&h_ie_+&e_+^2
\\\Lambda^3&&&e_-^3&e_-^2h_i&e_-^2e_+, e_-h_ih_j&e_-h_ie_+, h_ih_jh_k&e_-e_+^2, h_ih_je_+
\\\Lambda^4&&&&&&&e_-^4
\end{array}\]
We use $I$ to denote $I(\cdot,\emptyset)$, and we use $h_i$ to denote any one of the four hyperbolic generators. Whenever any two or three $h_i$ appear together, they are all different.
\end{example}

Notice that for $*\geq1$, the $ECH_*$ groups are isomorphic to $\Z_2^8$. This is a special case of the following more general result, which is a corollary of Theorem \ref{thm:mainthm}. It relies on the degree $-2$ map
\[
U:ECH_*(Y,\xi,\Gamma)\to ECH_{*-2}(Y,\xi,\Gamma)
\]
induced by a chain map which counts index 2 $J$-holomorphic curves passing through a base point.
\begin{corollary}[Stability of ECH]\label{stability}
For $*$ sufficiently large and $g>0$, the groups $ECH_*(Y,\xi,\Gamma)$ are isomorphic to $\mathbb{Z}_2^{f(g)}$, where $f(g) = 2^{2g-1}$.  
\end{corollary}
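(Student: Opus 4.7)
The plan is to obtain Corollary \ref{stability} as a direct combinatorial consequence of Theorem \ref{thm:mainthm} together with its relative $\Z$-grading formula, using the $U$-map referenced in the lead-in to the corollary to guarantee conceptually that a stable rank exists. First I would fix a perfect Morse function on $\Sigma_g$ as in \S\ref{sec:ECHI}, giving a basis $\{p_-,h_1,\dots,h_{2g},p_+\}$ of $H_*(\Sigma_g;\Z_2)$ with Morse indices $0,1,\dots,1,2$. Interpreting $\Lambda^M H_*(\Sigma_g;\Z_2)$ on the right hand side of (\ref{eqn:Lambdacorresp}) as the total-multiplicity-$M$ piece of the free graded-supercommutative algebra on this basis, so that $\bigoplus_k \Lambda^k H_*(\Sigma_g;\Z_2)$ is $\Z_2[p_-,p_+]$ tensored with the usual $\Z_2$-exterior algebra on $h_1,\dots,h_{2g}$, a $\Z_2$-basis is given by monomials $p_-^{a}p_+^{b}h_{i_1}\cdots h_{i_r}$ with $a+b+r=M=\Gamma+(-e)d$, $0\leq r\leq 2g$, $i_1<\cdots<i_r$, each of $\bullet$-degree $2b+r$.

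For fixed $*$ I would then invert (\ref{eqn:0grading}) (resp.\ its relative version for $\Gamma\neq 0$) to obtain $\bullet_d=*+ed^2-(\chi(\Sigma_g)+2\Gamma+e)d$ and determine those $d$ for which $\bullet_d\in[0,2M_d]$. Since $\bullet_{d+1}-\bullet_d=-2|e|d-\chi(\Sigma_g)-2\Gamma$ grows without bound in $d$, once $*$ is sufficiently large only two consecutive indices $d_0$ and $d_0-1$ can contribute. Writing $m:=\bullet_{d_0}$, the identity $\bullet_{d_0-1}=m+2|e|(d_0-1)+\chi(\Sigma_g)+2\Gamma$ shows that $d_0-1$ contributes iff $m\leq 2g-2$. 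When $M_{d_0}$ is large enough that the constraint $a\geq 0$ is automatic, the number of contributing monomials is
\[
N_{d_0}(m)=\sum_{\substack{0\leq r\leq\min(m,2g)\\ r\equiv m\pmod 2}}\binom{2g}{r},\qquad N_{d_0-1}(m)=\sum_{\substack{m+2\leq r\leq 2g\\ r\equiv m\pmod 2}}\binom{2g}{r},
\]
the second sum rewritten by the substitution $r\mapsto 2g-r$ together with $\binom{2g}{r}=\binom{2g}{2g-r}$. The disjoint ranges $[0,m]$ and $[m+2,2g]$ together exhaust the parity class $\{r\in\{0,\dots,2g\}:r\equiv m\pmod 2\}$ (the excluded $r=m+1$ lies in the opposite parity class), so for $g\geq 1$,
\[
N_{d_0}(m)+N_{d_0-1}(m)=\sum_{\substack{0\leq r\leq 2g\\ r\equiv m\pmod 2}}\binom{2g}{r}=2^{2g-1}.
\]
For $m>2g-2$ only $d_0$ contributes and $N_{d_0}(m)$ already equals $2^{2g-1}$ by the same identity, and the whole argument is uniform in $\Gamma\in\{0,\dots,-e-1\}$.

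The main obstacle is really the bookkeeping: one has to verify that the thresholds on $*$ and $M_{d_0}$ can be chosen uniformly in $\Gamma$ and that the constraint $a\geq 0$ is inactive in the regime of interest, both of which reduce to quantitative estimates on the quadratic in $d$ governing the grading. As a conceptual check, the $U$-map is an isomorphism on $\wcheck{HM}^*$ in sufficiently positive degree via the Seiberg--Witten identification of \cite{taubesechswf1,cc2}, so the stable rank of $ECH_*(Y,\xi,\Gamma)$ is guaranteed to exist independently of this combinatorial count; the computation above simply identifies it as $f(g)=2^{2g-1}$.
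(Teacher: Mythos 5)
Your argument is correct modulo the bookkeeping you flag, and I checked that the bookkeeping closes: the constraint $r\le 2M_{d_0}-m$ from $a\ge0$ is inactive whenever $m\le M_{d_0}$, which holds in the regime $m\le 2g-2<M_{d_0}$, and when $m>2g-2$ one has $2M_{d_0}-m>2g-2$, so after accounting for parity the bound $r\le\min(2M_{d_0}-m,2g)$ still exhausts the full parity class of $\{0,\dots,2g\}$ and gives $2^{2g-1}$. For the $d_0-1$ term your count implicitly uses $2M_{d_0-1}-\bullet_{d_0-1}=2g-2-m$, and the substitution $r\mapsto 2g-r$ indeed turns $\sum_{0\le r\le 2g-2-m,\,r\equiv m}\binom{2g}{r}$ into $\sum_{m+2\le r\le 2g,\,r\equiv m}\binom{2g}{r}$, so the two sums complement the parity class of $m$ exactly.

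That said, your route is genuinely different from the paper's. The paper first invokes the Seiberg--Witten long exact sequence relating $\widecheck{HM}$, $\widehat{HM}$, $\overline{HM}$ to establish that the groups $ECH_*(Y,\xi,\Gamma)$ are eventually constant in $*$, and then computes the rank at a single, carefully chosen grading: the index $M=\Gamma-de$ relative to $e_-^{\Gamma-de}$ with $d\gg g$, chosen so that only generators with total multiplicity $M$ and $m_-=m_+$ appear, after which the binomial identity closes the count in one step. Your argument instead computes the rank directly at every sufficiently large $*$ by showing at most two consecutive $d$ can contribute and that their contributions combine to $2^{2g-1}$; this is slightly heavier combinatorially, but it establishes stability as a by-product and bypasses the Seiberg--Witten machinery entirely. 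In that sense your closing paragraph has the logic backwards: you present the $U$-map as the thing that ``guarantees'' a stable rank exists, but in your own plan that guarantee is superfluous, since the grading-by-grading count already proves constancy. (The side remark there is also slightly off: $U$ is an isomorphism on $\overline{HM}^\bullet$ by construction, and it descends to $\widehat{HM}^\bullet$ in the stable range because $\widecheck{HM}^\bullet$ vanishes there; $U$ being an isomorphism ``on $\widecheck{HM}^*$'' is vacuous, not the relevant statement.) None of this affects the correctness of your count, which is what actually proves the corollary.
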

(For the computation of all of the ECH groups for $g=0$, see Example \ref{ex:lensspc}.)
\begin{proof} Firstly we show that for $*$ sufficiently large, all of the homology groups are isomorphic. Seiberg-Witten Floer cohomology, denoted $\widehat{HM}^\bullet$ (see \S\ref{subsec:directlimit}), is an invariant of smooth closed three-manifolds $Y$ with spin-c structures $\mathfrak{s}$, for which Taubes \cite{taubesechswf1}-\cite{taubesechswf5} has shown that $\widehat{HM}^{-*}(Y,\mathfrak{s}_{\xi,\Gamma})\cong ECH_*(Y,\xi,\Gamma)$ (the notation $\mathfrak{s}_{\xi,\Gamma}$ denotes a spin-c structure naturally associated to $(\xi,\Gamma)$ as in \cite{cc2}). It fits into a long exact sequence with two other homology theories
\[
\cdots\leftarrow\widecheck{HM}^\bullet(Y,\mathfrak{s})\leftarrow\widehat{HM}^\bullet(Y,\mathfrak{s})\leftarrow\overline{HM}^{\bullet-1}(Y,\mathfrak{s})\leftarrow\cdots
\]
where $\overline{HM}^\bullet$ contains $\widehat{HM}^\bullet$ as a subcomplex, and is constructed to satisfy the property
\[
U:\overline{HM}^\bullet(Y,\mathfrak{s}_{\xi,\Gamma})\to\overline{HM}^{\bullet+2}(Y,\mathfrak{s}_{\xi,\Gamma}) \text{ is an isomorphism}
\]
where $U$ is a map on $\overline{HM}^\bullet$ equalling the image of the $U$-map in ECH under the isomorphism $\widehat{HM}^{-*}(Y,\mathfrak{s}_{\xi,\Gamma})\cong ECH_*(Y,\xi,\Gamma)$. Since $\widecheck{HM}^\bullet$ is zero for $\bullet$ sufficiently small as in the proof of \cite[Cor. 35.1.4]{KMbook}, the isomorphism from $\overline{HM}$ descends to an isomorphism
\[
U:\widehat{HM}^{-*}(Y,\mathfrak{s}_{\xi,\Gamma})\to\widehat{HM}^{-*+2}(Y,\mathfrak{s}_{\xi,\Gamma})
\]
thus to an isomorphism between $ECH_*(Y,\xi,\Gamma)$ and $ECH_{*-2}(Y,\xi,\Gamma)$ for $*$ sufficiently large.

We will compute the dimension of a group $ECH_i(Y,\xi,\Gamma)$ for a well-chosen value of $i$, where if $\Gamma\neq0$ we set $i$ to be the index relative to $e_-^\Gamma$. We first make two observations, both following directly from the index formula (\ref{introeqn:indexformula}):
\begin{itemize}
\item[(i)] Amongst all generators $e_-^{m_-}h_1^{m_1}\cdots h_{2g}^{m_{2g}}e_+^{m_+}$ with $m_-+m_1+\cdots+m_{2g}+m_+=M$, the highest value of $I(e_-^{m_-}h_1^{m_1}\cdots h_{2g}^{m_{2g}}e_+^{m_+},e_-^\Gamma)$ is realized by $e_+^M$, and the lowest by $e_-^M$. We also have
\[
I(e_+^M,e_-^M)=2M.
\]
\item[(ii)] We have
\[
I(e_-^{\Gamma-(d+1)e},e_+^{\Gamma-de})=\chi(\Sigma_g).
\]
\end{itemize}
By (i) and (ii), we can take $d$ large enough compared to $g$ so that all generators $e_-^{m_-}h_1^{m_1}\cdots h_{2g}^{m_{2g}}e_+^{m_+}$ with $I(e_-^{m_-}h_1^{m_1}\cdots h_{2g}^{m_{2g}}e_+^{m_+},e_-^{\Gamma-de})=M$ have
\begin{equation}\label{eqn:allsamed}
m_-+m_1+\cdots+m_{2g}+m_+=M=\Gamma-ed.
\end{equation}
Therefore, to complete the proof we will count all generators with $I(e_-^{m_-}h_1^{m_1}\cdots h_{2g}^{m_{2g}}e_+^{m_+},e_-^{\Gamma-de})=M$ which satisfy (\ref{eqn:allsamed}). Because $e_-^{m_-}h_1^{m_1}\cdots h_{2g}^{m_{2g}}e_+^{m_+}$ satisfies (\ref{eqn:allsamed}), we obtain
\begin{equation}\label{eqn:indexsamed}
I(e_-^{m_-}h_1^{m_1}\cdots h_{2g}^{m_{2g}}e_+^{m_+},e_-^{\Gamma-de})=m_+-m_-+\Gamma-de.
\end{equation}
By (\ref{eqn:allsamed}) and (\ref{eqn:indexsamed}) it suffices to count all generators with $m_-=m_+$ and $2m_-+m_1+\cdots+m_{2g}=M$. We can choose $d$ even larger if necessary so that $d>2g$.

If $\Gamma-de$ is even, then the set of such generators can be grouped into those with $0,2,4,\dots,2k,\dots,2g$ hyperbolic generators. There are ${2g\choose 2k}$ generators with $2k$ hyperbolic generators, therefore there are
\begin{equation}\label{eqn:evenbinomial}
{2g\choose0}+{2g\choose 2}+{2g\choose 4}+\cdots+{2g\choose 2k}+\cdots{2g\choose2g}=2^{2g-1}
\end{equation}
generators with $m_-=m_+$ and $2m_-+m_1+\cdots+m_{2g}=M$ in total. Similarly, if $\Gamma-de$ is odd, then the set of such generators can be grouped into those with $1,3,\dots,2k+1,\dots,2g-1$ hyperbolic generators, and there are
\begin{equation}\label{eqn:oddbinomial}
{2g\choose 1}+{2g\choose 3}+\cdots+{2g\choose 2k+1}+\cdots{2g\choose2g-1}=2^{2g-1}
\end{equation}
such generators in total. Both (\ref{eqn:evenbinomial}) and (\ref{eqn:oddbinomial}) are elementary equalities following from the sum of binomial coefficients
\begin{equation}\label{eqn:binomialsum}
\sum_{k=0}^n{n\choose k}x^k=(1+x)^n
\end{equation}
with $n=2g$. Setting $x=1$ shows that the sum of all ${2g\choose k}$ is $2^{2g}$, and adding (\ref{eqn:binomialsum}) for $x=-1$ gives us (\ref{eqn:evenbinomial}) with both sides doubled. Then (\ref{eqn:oddbinomial}) follows by subtracting (\ref{eqn:evenbinomial}) from (\ref{eqn:binomialsum}) with $x=1$.
\end{proof}

\begin{conjecture}\label{conj:Uperiodicity} For $*$ sufficiently large, the $U$ map is an isomorphism on the chain level of the ECH of prequantization bundles, for the chain complex of Proposition \ref{intro:directlimitcomputesfiberhomology}.
\end{conjecture}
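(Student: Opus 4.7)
The plan is to combine the vanishing of the Morse-Bott ECH differential, established in Section \ref{sssec:pfg>0}, with the $\overline{HM}$-periodicity argument used in the proof of Corollary \ref{stability}. Since $H$ is perfect, $\partial=0$ identically on the chain complex of Proposition \ref{intro:directlimitcomputesfiberhomology}, so this chain complex $CC_*$ is canonically identified with $\bigoplus_{\Gamma}ECH_*(Y,\xi,\Gamma)$ as a $\Z_2$-graded $\Z_2$-module. In particular, any chain map $U:CC_*\to CC_{*-2}$ is a chain-level isomorphism in a given degree if and only if the induced map on $ECH_*$ is an isomorphism in that degree, so it suffices to prove the homological statement.

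Next I would verify that for every $\Gamma\in H_1(Y;\Z)$, the $U$ map on $ECH_*(Y,\xi,\Gamma)$ is an isomorphism for $*$ sufficiently large. The proof of Corollary \ref{stability} already contains the essential argument: combining the long exact sequence relating $\widecheck{HM}^\bullet$, $\widehat{HM}^\bullet$, and $\overline{HM}^\bullet$ with $U$-periodicity on $\overline{HM}^\bullet$ and the vanishing of $\widecheck{HM}^\bullet(Y,\mathfrak{s}_{\xi,\Gamma})$ in sufficiently negative degrees (cf.\ \cite[Cor.~35.1.4]{KMbook}), one shows that $U$ is an isomorphism on $\widehat{HM}^{-*}(Y,\mathfrak{s}_{\xi,\Gamma})$ for $*$ large. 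Pulling back via Taubes' isomorphism $\widehat{HM}^{-*}(Y,\mathfrak{s}_{\xi,\Gamma})\cong ECH_*(Y,\xi,\Gamma)$, and noting that the argument is identical for every $\Gamma$ and its associated spin-c structure $\mathfrak{s}_{\xi,\Gamma}$, gives the homological statement. Combined with the chain-level identification above, this proves the conjecture.

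The main obstacle --- and what makes the conjecture genuinely interesting beyond a formal consequence of Corollary \ref{stability} --- is to instead provide a direct geometric description of the chain-level $U$ map via counts of ECH index $2$ somewhere-injective $J$-holomorphic currents through a generic base point $z\in Y$. Exploiting the prequantization $S^1$-symmetry, any such current should project to $\Sigma_g$ as a curve of degree $d$ (in the sense of Section \ref{degree}) passing through $\fp(z)$, possibly together with cylindrical components over fibers of critical points of $H$. Reducing this to a finite enumeration in the Morse-Bott limit via the domain-dependent almost complex structure machinery of Section \ref{sec:modspcs}, and then passing to a generic domain-independent $J$ via handleslide arguments analogous to those in Section \ref{handleslides}, should produce an explicit formula for $U$ on the generators $e_-^{m_-}h_1^{m_1}\cdots h_{2g}^{m_{2g}}e_+^{m_+}$ --- presumably swapping an $e_+$ for an $e_-$ with possible hyperbolic corrections. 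Such a formula would both reprove the conjecture without invoking Seiberg-Witten theory and provide the input needed for the ECH capacity computations planned in the sequel.
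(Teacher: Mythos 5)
The statement you are proving is labeled a \emph{conjecture} in the paper; no proof is given, and the surrounding text (Remark \ref{conj:u}, the discussion of \cite[\S4.1]{Hu2}) makes clear the authors intend to establish it in future work by constructing an explicit chain-level $U$ map via counts of gradient flow lines and meromorphic multisections. Your formal reduction is correct as far as it goes: since $H$ is perfect, $\partial=0$ on the Morse-Bott chain complex $CC_*$ of Proposition \ref{intro:directlimitcomputesfiberhomology}, so $CC_*$ is canonically identified with its homology, and any chain map on $CC_*$ that induces the abstract $U$ on $ECH_*(Y,\xi,\Gamma)$ is an isomorphism in exactly those degrees where the abstract $U$ is. Combined with the proof of Corollary \ref{stability}, this would give the conjecture.

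The gap is that you implicitly assume (a) a chain-level $U$ map on $CC_*$ exists, defined by counting ECH index $2$ currents through a base point, and (b) that this chain map induces the abstract $U$ map on $ECH_*(Y,\xi,\Gamma)$ under the identification of Theorem \ref{thm:dlisECH}. Neither is addressed, and neither is automatic. For (a), the positive-degree curves that would contribute to the $U$-count face the same transversality obstructions as those arising in the differential, so defining this chain map requires rerunning the domain-dependent $J$ and obstruction-bundle machinery of \S\ref{sec:modspcs}--\S\ref{handleslides} for ECH index $2$ curves constrained to pass through a marked point, and one must also show the counts stabilize in the direct limit over $L$. For (b), one must verify that $U$ commutes with the cobordism maps $\Phi^L(X_{\varepsilon,\varepsilon'})$ and inclusion maps $\iota^{L,L'}$ that build the direct system, and that Taubes' isomorphism respects $U$ at the filtered level --- all plausible, but none of it verified in the paper, which is precisely why the statement remains a conjecture. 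You flag this geometric work as the ``main obstacle'' but frame it as orthogonal to the proof; in fact, without (a) and (b) the formal reduction has no chain map to apply to, so the gap is load-bearing rather than optional.
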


When $g=0$, Conjecture \ref{conj:Uperiodicity} can likely be proved using Example \ref{ex:lensspc} and the methods of \cite[\S4.1]{Hu2}, discussed below. Furthermore, we will investigate the non-stable part of the homology, e.g. prove the ECH analogue of the computation of the reduced Heegaard Floer homology of nontrivial circle bundles in \cite[Thm. 5.6]{knotHF}.

We are further interested in the $U$-map at the chain level as it is crucial to the definition of the ``ECH spectrum." This is a list where the $k^\text{th}$ term is the minimum action of a homologically essential ECH cycle in $ECH_*(Y,\xi,[\emptyset])$ which is homologous to the preimage under $U^k$ of the class of the empty Reeb current. The ECH spectrum of a contact three-manifold governs the embedding properties of its symplectic fillings. It is expected that $U$ is an isomorphism in sufficiently large degree, as is known to be true for Seiberg-Witten cohomology. Additionally, we expect that one could recover the computation in \cite[Cor. 1.0.6]{moy} of the irreducible Seiberg-Witten Floer homology of Brieskorn manifolds.

One should be able to compute the $U$-map for general prequantization bundles following the computation in the $g=0, e=-1$ case (i.e., $Y=S^3$) in \cite[\S4.1]{Hu2}, which interprets $U$ as a count of index two gradient flow lines for a Morse function on the base $S^2$, together with a count of meromorphic multisections of the complex line bundle associated to the Hopf fibration. As discussed in \cite[\S1.2]{farris}, in the case of a general base, a $J$-holomorphic curve in $\R\times Y$ which intersects each $\R\times\{\text{fiber}\}$ the same number of times can be interpreted as a meromorphic multisection of the line bundle associated to $Y\to\Sigma_g$ with its asymptotic ends on Reeb orbits interpreted as either zeroes or poles.  Computations ECH of the disk cotangent bundles $D^*S^2$ and $D^*\mathbb{RP}^2$ based on our work have appeared in \cite[Prop. 3.7, 4.2]{fr}; additional Morse-Bott methods are assumed to compute the associated $U$-maps and ECH capacities \cite[Lem. 3.10, 4.3]{fr}.   


\begin{remark}
Recently Chen \cite[Thm.~1 \& 2]{chen}, has given an alternate means of computing the $U$-maps and ECH capacities for prequantization bundles over $S^2$ and $T^2$, where the filling is given by the unit disk subbundle of the associated prequantization line bundle.  He establishes certain ECH cobordism maps \cite[Thm.~3]{chen}, for all prequantization bundles.  To compute the ECH capacities for arbitrary prequantization bundles, one needs to establish additional properties.  When the base is $S^2$ or $T^2$,  Chen uses  Corollary \ref{stability} in tandem with some additional study of the $U$-map between the two generators in a particular grading to establish the necessary properties needed to compute the ECH capacities.  (Chen also gives a bundle theoretic means of recovering our computation of the ECH index \cite[Lem.~3.1, Rem.~3.1]{chen}.)
\end{remark}

Since this paper first appeared, we have since extended our methods to compute the knot filtered ECH of the $T(p,q)$ Seifert fibrations of $S^3$, by considering them as prequantization bundles over orbifolds in \cite{knot, calabi}.  (This turned out to be more subtle than first expected.) Knot filtered ECH was introduced in \cite{HuMAC} to study the mean action and Calabi invariant of disk symplectomorphisms, and extended in \cite{weiler, weiler2} to study annular symplectomorphisms via similar methods. Since periodic surface symplectomorphisms can be realized as the return map of open book decompositions on Seifert fibered contact manifolds \cite{CH}, our computations allow us to study the dynamics of such symplectomorphisms.  

\bigskip

\noindent \textbf{Acknowledgements.} We thank Michael Hutchings for numerous elucidating conversations and comments on an earlier draft of this paper.  Moreover, we thank the referee for catching a number of typos and pointing out a couple of imprecise statements.

\section{Basics of ECH}\label{basics}
Let $Y$ be a closed contact 3-dimensional manifold equipped with a nondegenerate contact form $\lambda$; let $\xi=\ker(\lambda)$ denote the associated contact structure.  We say that a closed Reeb orbit $\gamma$ is \emph{elliptic} if the eigenvalues of the linearized return map $P_{\gamma}$ are on the unit circle, \emph{positive hyperbolic} if the eigenvalues of $P_{\gamma}$ are positive real numbers, and \emph{negative hyperbolic} if the eigenvalues of $P_{\gamma}$ are negative real numbers.

A \emph{Reeb current}\footnote{In previous literature, the terminology \emph{orbit set} was used in place of Reeb current.  An embedded Reeb orbit is a Reeb current of multiplicity one.} is a finite set of pairs $\alpha=\{(\alpha_i,m_i)\}$, where the $\alpha_i$ are distinct embedded Reeb orbits and the $m_i$ are positive integers.  We call $m_i$ the \emph{multiplicity} of $\alpha_i$ in $\alpha$.  The homology class of the Reeb current $\alpha$ is defined by
\[
[\alpha]=\sum_i m_i [\alpha_i] \in H_1(Y).
\]
The Reeb current $\alpha$ is \emph{admissible} if $m_i=1$ whenever $\alpha_i$ is positive or negative hyperbolic.

The ECH chain complex is generated by admissible Reeb currents.  The differential counts  ECH index one $J$-holomorphic currents in $\R \times Y$.  
The definition of the ECH index depends on three components: the relative first Chern class $c_\tau$, which detects the contact topology of the curves; the relative intersection pairing $Q_\tau$, which detects the algebraic topology of the curves; and the Conley-Zehnder terms, which detect the contact geometry of the Reeb orbits.    Further details are provided in the subsequent sections.

\subsection{Pseudoholomorphic curves and currents}

Given a punctured compact Riemann surface $(\ds, j)$, with a partition of its punctures $\mathcal{S}$ into a positive subset $\mathcal{S}_+$  and negative subset $\mathcal{S}_-$  we consider \emph{asymptotically cylindrical $J$-holomorphic} maps of the form 
\[
C:(\ds, j) \to (\R \times Y, J), \ \ \ dC \circ j = J \circ dC,
\]
subject to the following asymptotic condition.  If  $\gamma$ is a (possibly multiply covered Reeb orbit), a \emph{positive end} of $C$ at $\gamma$ is a puncture near which $C$ is asymptotic to $\R \times \gamma$ as $s \to \infty$ and a \emph{negative end} of $C$ at $\gamma$ is a puncture near which $C$ is asymptotic to $\R \times \gamma$ as $s \to -\infty$.  This means there exist holomorphic cylindrical coordinates identifying a punctured neighborhood of $z$ with a respective positive half-cylinder $Z_+=[0, \infty) \times S^1$ or negative half-cylinder $Z_-=(-\infty, 0] \times S^1$ and a trivial cylinder $\conn_{\gamma_z}: \R \times S^1 \to \R \times Y$ such that
\begin{equation}\label{ass1}
C(s,t) = \exp_{\conn_{\gamma_z}(s,t)}h_z(s,t) \mbox{ for $|s|$ sufficiently large},  
\end{equation}
where $h_z(s,t)$ is a vector field along $\conn_{\gamma_z}$ satisfying $|h_z(s,\cdot)| \to 0$ uniformly as $s \to \pm \infty$.  Both the norm and the exponential map are assumed to be defined with respect to a translation-invariant choice of Riemannian metric on $\R \times Y$.  To obtain a moduli space of $J$-holomorphic curves from the above asymptotically cylindrical maps, we mod out by the usual equivalence relation $(\ds, j, \mathcal{S}, C) \sim (\ds', j', \mathcal{S}', C')$, which exists whenever there exists a biholomorphism $\phi: (\ds, j) \to (\ds', j')$ taking $\mathcal{S}$ to $\mathcal{S}'$ with the ordering preserved, i.e. $\phi(\mathcal{S}_+) =\mathcal{S}'_+$ and $\phi(\mathcal{S}_-) =\mathcal{S}'_-$, such that $C = C' \circ \phi.$

\begin{definition}\label{covered}
An asymptotically cylindrical pseudoholomorphic curve 
\[
C: (\ds, j) \to (\R \times Y, J)
\]
is said to be \emph{multiply covered} whenever there exists a pseudoholomorphic curve
\[
\overline{C}: (\ds' , j') \to (\R \times Y, J),
\]
 and a holomorphic branched covering $\varphi: (\ds, j) \to (\ds', j') $ with $\mathcal{S}'_+ = \varphi(\mathcal{S}_+)$ and  $\mathcal{S}'_- = \varphi(\mathcal{S}_-)$ such that
\[
C = \overline{C} \circ \varphi, \ \ \ \mbox{deg}(\varphi)>1,
\]
allowing for $\varphi$ to not have any branch points.  The \emph{multiplicity} of $C$ is given by $\mbox{deg}(\varphi).$ 
\end{definition}
 
An asymptotically cylindrical pseudoholomorphic curve $C$ is called \emph{embedded}\footnote{Frequently in symplectic field theory literature, the terminology simple is used in place of embedded.} whenever it is not multiply covered.  In \cite[\S 3.2]{jo1} we gave a proof of the folk theorem that that every simple asymptotically cylindrical curve  is \emph{somewhere injective}, meaning for some  $z \in \ds$, which is not a puncture, 
\[
dC(z) \neq 0 \ \  \ C^{-1}(C(z))=\{z\}.
\]
A point $z \in \ds$ with this property is called an \emph{injective point} of $C$.

Let $\alpha=\{(\alpha_i,m_i)\}$ and $\beta=\{(\beta_j,n_j)\}$ be Reeb currents in the same homology class $\sum_i m_i [\alpha_i]=\sum_j n_j [\beta_j]=\Gamma\in H_1(Y).$  We define a \emph{$J$-holomorphic current} from $\alpha$ to $\beta$ to be a finite set of pairs $\mathcal{C} = \{ (C_k,d_k) \}$, where the $C_k$ are distinct irreducible somewhere injective $J$-holomorphic curves in $\R \times Y$, the $d_k$ are positive integers, $\mathcal{C}$ is asymptotic to $\alpha$ as a current as the $\R$-coordinate goes to $+\infty$, and $\mathcal{C}$ is asymptotic to $\beta$ as a current as the $\R$-coordinate goes to $-\infty$.  This last condition means that the positive ends of the $C_k$ are at covers of the Reeb orbits $\alpha_i$, the sum over $k$ of $d_k$ times the total covering multiplicity of all ends of $C_k$ at covers of $\alpha_i$ is $m_i$, and analogously for the negative ends.

Let $\mathcal{M}^J(\alpha,\beta)$ denote the set of $J$-holomorphic currents from $\alpha$ to $\beta$.  A holomorphic current $\mathcal{C} $ is said to be \emph{somewhere injective} if $d_k=1$ for each $k$.  A somewhere injective current is said to be \emph{embedded} whenever each $C_k$ is embedded and the $C_k$ are pairwise disjoint.

Let $H_2(Y,\alpha,\beta)$ denote the set of relative homology classes of 2-chains $Z$ in $Y$ such that 
\[
\partial Z =\sum_i m_i \alpha_i - \sum_j n_j \beta_j,
\] 
modulo boundaries of 3-chains.  The set $H_2(Y,\alpha,\beta)$ is an affine space over $H_2(Y)$, and every $J$-holomorphic current $\mathcal{C}\in \M^J(\alpha, \beta)$ defines a class $[\CC] \in H_2(Y,\alpha,\beta)$.


\subsection{The Fredholm index}\label{sec:freddie}
Let $C$ be an asymptotically cylindrical pseudoholomorphic curve with $k$ positive ends at (possibly multiply covered) Reeb orbits $\gamma_1^+,...,\gamma_k^+$ and $\ell$ negative ends at  $\gamma_1^-,...,\gamma_\ell^-$.  We denote the set of equivalence classes by 
\[
{\mathcal{M}}^J (\gamma^+,\gamma^-): ={\mathcal{M}}^J(\gamma_1^+,...,\gamma_k^+;\gamma_1^-,...,\gamma_\ell^-).
\]
Note that $\R$ acts on ${\mathcal{M}}^J(\gamma^+,\gamma^-)$ by translation of the $\R$ factor in $\R \times Y$.

The \emph{Fredholm index} of $C$ is defined by
\begin{equation}\label{w-index}
{\ind}(C) = -\chi(C)   + 2c_\tau(C) + \displaystyle \sum_{i=1}^k{CZ}_\tau(\gamma_i^+)  -  \sum_{j=1}^\ell {CZ}_\tau(\gamma_j^-).
\end{equation}
The {\em Euler characteristic} of the domain $\ds$ of $C$ is denoted by 
\[
\chi(C) =(2-2g(\Sigma) - k - \ell).
\]
The remaining terms are a bit more involved to define as they depend on the choice of a trivialization $\tau$ of $\xi$ over the Reeb orbits $\gamma_i^{+}$ and $\gamma_j^-$, which is compatible with $d\lambda$, and are defined in the following subsections.

The significance of the Fredholm index is that the moduli space of somewhere injective curves ${\mathcal{M}}^J(\gamma^+,\gamma^-)$ is naturally a manifold near $C$ of dimension $\mbox{ind}(C)$. In particular, we have the following results. 

\begin{theorem}{\em \cite[Theorem 8.1]{wendl-sft}} \label{si-thm1}
Fix a nondegenerate contact form $\lambda$ on a closed manifold $Y$.  Let $\sj(Y,\lambda)$ be the set of all $\lambda$-compatible almost complex structures on $\R \times Y$. Then there exists a comeager subset $\sj_{reg}(Y,\lambda) \subset \sj(Y,\lambda), $ such that for every $J \in \sj_{reg}(Y,\lambda) $, every curve $C \in {\mathcal{M}}^J(\gamma^+,\gamma^-)$ with a representative $C \colon ({\ds},j) \to (\R \times Y, J)$ that has an injective point $z\in \mbox{\emph{int}}({\ds})$ satisfying $ \pi_\xi \circ dC(z)  \neq 0 $ is Fredholm regular.
\end{theorem}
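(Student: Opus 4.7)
The plan is to follow the universal moduli space plus Sard-Smale strategy that has become standard in SFT, with the hypothesis $\pi_\xi \circ dC(z) \neq 0$ at the injective point playing exactly the role needed for the key surjectivity step to succeed.

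First I would fix a reference $J_0 \in \sj(Y,\lambda)$ and introduce a separable Banach manifold $\sj^\varepsilon(Y,\lambda)$ of $\lambda$-compatible almost complex structures agreeing with $J_0$ outside a fixed compact subset of $\R \times Y$, using a Floer-type $C^\varepsilon$-norm so that smooth tangent vectors are dense. After choosing weighted Sobolev completions $W^{1,p,\delta}$ with exponential decay weight $\delta>0$ smaller than the spectral gap of the asymptotic operators at each end, the universal moduli space
\[
\sm^{*} = \{\, (C,J) : J \in \sj^\varepsilon,\ C \in \M^J(\gamma^+,\gamma^-) \text{ somewhere injective with an injective point $z$ satisfying } \pi_\xi \circ dC(z) \neq 0 \,\}
\]
is cut out by a smooth Fredholm section $(u,J) \mapsto \dbar_J u$ of a Banach bundle over the product of a Teichm\"uller slice, the map space, and $\sj^\varepsilon$. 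By the implicit function theorem the theorem reduces to surjectivity of the combined linearization
\[
D_{(C,J)}(\xi', Y) = D_C\,\xi' + \tfrac{1}{2}\, Y \circ dC \circ j
\]
at every such pair $(C,J)$.

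The hard part will be exactly this surjectivity. I would argue by duality: if $\eta$ lies in the $L^2$-pairing cokernel, then $\eta$ annihilates the image of $D_C$, so by elliptic regularity $\eta$ is smooth and satisfies a formal adjoint Cauchy-Riemann-type equation to which Aronszajn-style unique continuation applies. Thus it suffices to force $\eta \equiv 0$ on some open set. I would choose a small neighborhood $U$ of $z$ whose image $C(U)$ is disjoint from $C(\ds \setminus U)$, using somewhere injectivity. On $C(U)$ one can freely vary $J$ within $J$-anti-invariant endomorphisms of $\xi$; because $\pi_\xi \circ dC(z) \neq 0$, the map $Y \mapsto Y \circ dC \circ j$ has enough freedom at $z$ in the $\xi$-valued $(0,1)$-forms that the only $\eta$ annihilating every pairing $\int \langle Y \circ dC \circ j,\, \eta \rangle$ with $Y$ supported near $C(z)$ is $\eta \equiv 0$ on $U$. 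Unique continuation then propagates this to $\eta \equiv 0$ globally, giving surjectivity.

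Finally I would apply Sard-Smale to the smooth Fredholm projection $\sm^{*} \to \sj^\varepsilon(Y,\lambda)$ to obtain a comeager set of regular values, and then invoke the now-standard Taubes trick to upgrade from $C^\varepsilon$-density to a comeager subset $\sj_{reg}(Y,\lambda) \subset \sj(Y,\lambda)$ in the $C^\infty$-topology. The main technical obstacle is the surjectivity step: the $J$-variations must be localized transverse to $C$, which requires somewhere injectivity to avoid perturbing other sheets of the image, and simultaneously chosen to exploit $\pi_\xi \circ dC(z) \neq 0$ so as to witness every cokernel direction. The remaining ingredients --- the weighted Sobolev setup, the Fredholm index calculation, and the Taubes bootstrap from $C^\varepsilon$ to $C^\infty$ --- are standard but require care at the cylindrical ends.
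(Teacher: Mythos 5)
This theorem is cited in the paper directly from Wendl's SFT lecture notes \cite[Theorem 8.1]{wendl-sft} and is not reproved; your proposal reconstructs essentially the same universal-moduli-space-plus-Sard-Smale argument that Wendl gives there, including the two technical points specific to the symplectization setting: the need for the injective point to satisfy $\pi_\xi \circ dC(z)\neq 0$ so that $\lambda$-compatible perturbations of $J$ (which are constrained to act only on $\xi$ and commute with the $\R$-translation symmetry) can still witness every cokernel direction, and the Taubes-style bootstrap from a $C^\varepsilon$-Banach manifold of perturbations to a comeager subset in the $C^\infty$-topology. The outline is accurate and matches the cited reference.
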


The above result also holds for the set of somewhere injective curves in completed exact symplectic cobordisms $(W, J)$; see \cite[Theorem 7.2]{wendl-sft}.  Moreover, we have that Fredholm regularity implies that a neighborhood of a curve admits the structure of a smooth orbifold.

\begin{theorem}[Theorem 0, \cite{Wtrans}]\label{folk0}
Assume that $C\colon ({\ds},j) \to (W, J)$ is a non-constant curve in $\mathcal{M}^J (\gamma^+,\gamma^-).$   If $C$ is Fredholm regular, then a neighborhood of $C$ in $\mathcal{M}^J (\gamma^+,\gamma^-)$   naturally admits the structure of a smooth orbifold of dimension 
\[
{\mbox{\em ind}}(C) = -\chi(C)   + 2c_\tau(C) + \displaystyle \sum_{i=1}^k{CZ}_\tau(\gamma_i^+)  -  \sum_{j=1}^\ell {CZ}_\tau(\gamma_j^-),
\] 
whose isotropy group at $C$ is given by
\[
\mbox{\em Aut}:=\{ \varphi \in \mbox{\em Aut}({\ds}, j) \ | \ C = C \circ \varphi \}.
\] 
Moreover, there is a natural isomorphism
\[
T_C\mathcal{M}^J (\gamma^+,\gamma^-) = \ker D\deebar(j,C)/\mathfrak{aut}({\ds},j).
\]
\end{theorem}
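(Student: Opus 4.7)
The plan is to realize a neighborhood of $C$ in $\mathcal{M}^J(\gamma^+,\gamma^-)$ as the zero set of a Fredholm section of a Banach bundle, quotient by a finite isotropy group, and then match the dimension with the stated index via a Riemann-Roch formula for punctured Cauchy-Riemann operators.

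First I would set up the ambient Banach manifold $\mathcal{B}$ of pairs $(j',u)$, where $j'$ ranges over a finite-dimensional Teichm\"uller slice of complex structures on $\ds$ near $j$, and $u:\ds\to W$ lies in a weighted Sobolev space $W^{k,p,\delta}$ adapted to asymptotically cylindrical convergence to $\R\times\gamma_i^{\pm}$ at each puncture. A small exponential weight $\delta>0$ is chosen so that nondegeneracy of the Reeb orbits forces the linearization to be Fredholm in the sense of Lockhart--McOwen. Over $\mathcal{B}$ the antiholomorphic projection $\deebar$ defines a smooth section of a Banach bundle $\mathcal{E}\to\mathcal{B}$, and I shall write $D\deebar(j,C)$ for its linearization at $(j,C)$, which includes both an ordinary linearized Cauchy--Riemann operator in the $u$-direction and a finite-dimensional contribution from varying $j'$. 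Fredholm regularity is exactly surjectivity of this full linearization.

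Next, the Banach-space implicit function theorem presents the zero set of $\deebar$ near $(j,C)$ as a smooth submanifold $\widetilde{\mathcal{M}}\subset\mathcal{B}$ of dimension equal to $\op{ind}D\deebar$, with tangent space $\ker D\deebar(j,C)$. To descend from $\widetilde{\mathcal{M}}$ to $\mathcal{M}^J(\gamma^+,\gamma^-)$ I would quotient by the residual reparametrization action of $\op{Aut}(\ds,j)$ preserving the labelling of the punctures $\mathcal{S}=\mathcal{S}_+\sqcup\mathcal{S}_-$. For non-constant $C$ this action has finite stabilizer, which is precisely $\op{Aut}=\{\varphi\in\op{Aut}(\ds,j) : C=C\circ\varphi\}$, so a local slice theorem produces an orbifold chart around $C$ with isotropy group $\op{Aut}$ and tangent space $\ker D\deebar(j,C)/\mathfrak{aut}(\ds,j)$, as claimed.

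Finally, to match the dimension with the stated formula I would invoke the Riemann-Roch theorem for asymptotically cylindrical Cauchy-Riemann operators on punctured Riemann surfaces, as proved by Schwarz and reformulated in Wendl's SFT lecture notes: after trivializing $\xi$ by $\tau$ along each Reeb orbit, the analytical index of the $u$-component is expressed in terms of $\chi(\ds)$, the relative first Chern number $c_\tau(C)$ of $u^*\xi$, and the Conley--Zehnder indices of $\gamma_i^{\pm}$, and combining this with the finite-dimensional Teichm\"uller contribution (and the sign convention $\chi(C)=\chi(\ds)$) yields exactly $-\chi(C)+2c_\tau(C)+\sum_{i}\CZ_\tau(\gamma_i^+)-\sum_j\CZ_\tau(\gamma_j^-)$. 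The main obstacle is the orbifold/slice step: when $\op{Aut}$ is nontrivial one must choose the Teichm\"uller slice to be $\op{Aut}$-equivariant and verify that the $\op{Aut}$-action on $\widetilde{\mathcal{M}}$ is smooth with a local slice at $(j,C)$, which is precisely where the argument of Wendl refines the standard somewhere-injective implicit function theorem used to prove Theorem \ref{si-thm1}.
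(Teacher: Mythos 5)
The paper does not give a proof of this statement; it is quoted verbatim (as ``Theorem 0'') from Wendl's paper \cite{Wtrans}, so there is nothing internal to compare against. Your reconstruction is nonetheless an accurate sketch of how Wendl actually proves it: the Banach-manifold setup with a Teichm\"uller slice and exponentially weighted Sobolev spaces, the implicit function theorem yielding a smooth zero set $\widetilde{\mathcal{M}}$ with tangent space $\ker D\deebar(j,C)$, the quotient by the residual finite reparametrization action of $\op{Aut}(\ds,j)$ giving the orbifold chart with isotropy $\op{Aut}$, and the punctured Riemann--Roch theorem (Schwarz, in the form found in Wendl's SFT notes) producing the index formula $-\chi(C)+2c_\tau(C)+\sum_i\CZ_\tau(\gamma_i^+)-\sum_j\CZ_\tau(\gamma_j^-)$ once the finite-dimensional Teichm\"uller contribution is absorbed. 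You also correctly flag the one genuinely delicate point, namely that the Teichm\"uller slice must be chosen $\op{Aut}$-equivariantly so that the slice theorem applies and the isotropy group acts linearly on the chart; this is exactly the refinement Wendl adds beyond the somewhere-injective implicit function theorem of \cite[Theorem 8.1]{wendl-sft} that is quoted as Theorem \ref{si-thm1} in the present paper. The only small omission is that you do not remark on the fact that $W$ here may be a general completed exact symplectic cobordism rather than a symplectization, but the analytic setup you describe carries over verbatim to that case since only the cylindrical ends are used for the Fredholm theory.
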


\subsubsection{The relative first Chern number}

The trivialization $\tau$ determines a trivialization of $\xi|_C$ over the ends of $C$ up to homotopy.   We denote the set of homotopy classes of symplectic trivializations of the 2-plane bundle $\gamma^*\xi$ over $S^1$ by $\mathcal{T}(\gamma)$; this is an affine space over $\mathbb{Z}$.   After fixing trivializations $\tau_i^+ \in \mathcal{T}(\gamma_i^+)$ for each $i$ and $\tau_j^- \in \mathcal{T}(\gamma^-_j)$, we denote this set of trivialization choices by $\tau \in \mathcal{T}(\alpha,\beta)$. 

We are now ready to define the \emph{relative first Chern number} of the complex line bundle $\xi|_C$ with respect to the trivialization $\tau$, which we denote by
\[
c_\tau(C)  = c_1(\xi|_C,\tau).
\]
Let $\pi_Y:\R\times Y\to Y$ denote projection onto $Y$. We define  $c_1(\xi|_C,\tau)$ to be the algebraic count of zeros of a generic section $\psi$ of $\xi|_{[\pi_YC]}$ which on each end is nonvanishing and constant with respect to the trivialization on the ends.  In particular, given a class $Z\in H_2(Y,\alpha,\beta)$ we represent $Z$ by a smooth map $f: S \to Y$ where $S$ is a compact oriented surface with boundary.  Choose a section $\psi$ of $f^*\xi$ over $S$ such that $\psi$ is transverse to the zero section and $\psi$ is nonvanishing over each boundary component of $S$ with winding number zero with respect to the trivialization $\tau$.  We define 
\[
c_\tau(Z) : = \# \psi^{-1}(0),
\]
where `\#' denotes the signed count.

In addition to being well-defined, the relative first Chern class depends only on $\alpha, \beta, \tau,$ and $Z$.  If $Z' \in H_2(Y,\alpha,\beta)$ is another relative homology class then 
\begin{equation}
c_\tau(Z) - c_\tau(Z') = \langle c_1(\xi), Z - Z' \rangle.
\end{equation}

\begin{remark}[Change of trivializations]
We briefly clarify our sign convention for and definition of a change in trivialization up to homotopy of $\gamma^*\xi$. Given 
any two symplectic trivializations
\[
\tau_1, \ \tau_2: \gamma^*\xi \stackrel{\simeq}{\longrightarrow}  S^1 \times \R^2,
\]
we ``define"
\begin{equation}\label{degtrivs}
\tau_1 - \tau_2 := \mbox{deg} \left(\tau_2 \circ \tau_1^{-1} : S^1 \to \mbox{Sp}(2,\R) \approx S^1\right),
\end{equation}
where $\op{deg}  \left(\tau_2 \circ \tau_1^{-1} \right) $ is defined after homotoping $\tau_1$ and $\tau_2$ to unitary trivializations as follows.  Namely, given two unitary trivializations $\tilde{\tau}_j:E \to S^1 \times \R^{2n}$ for $j=1,2$  of a real rank $2n$ Hermitian vector bundle $(E,J,\omega)$, we denote   
\[
\op{deg}  \left(\tilde{\tau}_2 \circ \tilde{\tau}_1^{-1} \right) 
\]
to be the winding number of $\det g: S^1 \to \op{U}(n) \subset \C \setminus \{ 0 \}$, where $g: S^1 \to \op{U}(n)$ is the transition map appearing in the formula $\left(\tilde{\tau}_2 \circ \tilde{\tau}_1^{-1} \right) (t,v) = (t, g(t)v)$. (Note that in the setting under consideration in this paper, $n=1$.)

\end{remark}

Thus, given another collection of trivialization choices up to homotopy $\tau' = \left( \{ {\tau'}_i^+ \}, \{ {\tau'}_j^-\} \right)$ over the  Reeb currents and the convention \eqref{degtrivs}, we have
\begin{equation}\label{cherntriv}
c_\tau(Z) - c_{\tau'}(Z) = \sum_i m_i\left(\tau_i^+-{\tau}_i^{+'}\right) - \sum_j n_j \left(\tau_j^--{\tau}_j^{-'}\right).
\end{equation}

\subsubsection{The Conley-Zehnder index in dimension 3}\label{cz-sec}

Next we define the \emph{Conley-Zehnder index} of an embedded nondegenerate Reeb orbit $\gamma$ with respect to the trivialization $\tau$ up to homotopy. Pick a parametrization 
\[
\gamma: \R/T\Z \to Y.
\]
The choice of trivialization $\tau$ of $\xi$ over $\gamma$ is an isomorphism of symplectic vector bundles
\[
\tau: \gamma^*\xi \stackrel{\simeq}{\longrightarrow} (\R/T\Z) \times \R^2.
\]
Let $\{\varphi_t\}_{t\in \R}$ denote the one-parameter group of diffeomorphisms of $Y$ given by the flow of the Reeb vector field $R$.  With respect to $\tau$, the linearized flow $(d\varphi_t)_{t\in[0,T]}$ induces an arc of symplectic matrices $P:[0,T]\to \mbox{Sp}(2)$ defined by
\[
\label{symparc}
P_{t} = \tau(t) \circ d\varphi_t \circ \tau(0)^{-1}.
\]
To each arc of symplectic matrices $\{P_t\}_{t\in[0,T]}$ with $P_0=1$ and $P_T$ nondegenerate, there is an associated Conley-Zehnder index $CZ(\{P_t\}_{t\in[0,T]})\in\Z$.  We define the \emph{Conley-Zehnder index} of $\gamma$ with respect to $\tau$ by
\[
CZ_\tau(\gamma) = CZ\left(\{P_t\}_{t\in[0,T]}\right).
\]

 \begin{list}{\labelitemi}{\leftmargin=2em }
\item[\textbf{Elliptic case:}]
In the elliptic case, each trivialization is homotopic to one whose linearized flow $\{\varphi_t\}$ can be realized as a path of rotations.  If we take $\tau$ to be one of these trivializations so that each $\varphi_t$ is rotation by the angle $2 \pi \vartheta_t \in \R$ then $\vartheta_t$ is a continuous function of $t \in [0, T]$ satisfying $\vartheta_0=0$ and $\vartheta:=\vartheta_T \in \R / \Z $.  The number $\vartheta \in \R / \Z $ is called the \emph{rotation angle} of $\gamma$ with respect to the trivialization and
\[
{CZ}_\tau(\ga^k) = 2 \lfloor k \vartheta \rfloor + 1. 
\]
 \item[ \textbf{Hyperbolic case:}]
 Let $v \in \R^2$ be an eigenvector of $\phi_T$. Then for any trivialization used, the family of vectors $\{ \varphi_t(v) \}_{t \in [0,T]}$, rotates through angle $\pi r$ for some integer $r$.  The integer $r$ is dependent on the choice of trivialization $\tau$, but is always even in the positive hyperbolic case and odd in the negative hyperbolic case.  We obtain
 \[
CZ_\tau(\ga^k) = k r.
 \]
 \end{list}

The Conley-Zehnder index depends only on the Reeb orbit $\gamma$ and homotopy class of $\tau$ in the set of homotopy classes of symplectic trivializations of the 2-plane bundle $\gamma^*\xi$ over $S^1 = \R / T\Z$.  Given two trivializations $\tau_1$ and $\tau_2$ we have that
\begin{equation}\label{cztrivs}
CZ_{\tau_1}(\gamma^k) - CZ_{\tau_2}(\gamma^k) =2 k (\tau_2 - \tau_1 ),
\end{equation}
maintaining our sign convention \eqref{degtrivs}.

In our later expression of the ECH index, we will use the following shorthand for the total Conley-Zehnder index of a Reeb current $\alpha=\{ (\alpha_i, m_i)\}$:
\begin{equation}\label{CZ^I}
CZ^I_\tau(\alpha) = \sum_i \sum_{k=1}^{m_i}CZ_\tau(\alpha_i^k).
\end{equation}
Another set of trivialization choices $\tau'$ for $\alpha$ yields
\begin{equation}
CZ^I_\tau(\alpha) - \CZ^I_{\tau'}(\alpha) = \sum_i (m_i^2 + m_i)(\tau_i' - \tau_i).
\end{equation}

We will also abbreviate the following Conley-Zehnder contributions to the Fredholm index and ECH index for a curve $C \in \M(\alpha,\beta;J)$ by:
\begin{equation}\label{CZshorthand}
\begin{array}{rcl}
CZ^I_\tau(C) &=&CZ^I_\tau(\alpha) - CZ^I_\tau(\beta) \\
CZ^{ind}_\tau(C) &=&  \displaystyle \sum_{i=1}^k{CZ}_\tau(\alpha_i^{m_i})  -  \sum_{j=1}^\ell {CZ}_\tau(\beta_j^{n_j}).\\
\end{array}
\end{equation}


\subsection{The relative intersection pairing and relative adjunction formula}
The ECH index depends on the relative intersection pairing, which is related to the asymptotic writhe and linking number.  We review these notions now and summarize the relative adjunction formula.
\subsubsection{Asymptotic writhe and linking number}\label{s:writhe}
Given a somewhere injective curve $C \in \M^J(\gamma^+,\gamma^-)$, we consider the slice $u \cap ( \{s\} \times Y)$.  If $s \gg 0$, then the slice $u \cap ( \{s\} \times Y)$ is an embedded curve which is the union, over $i$, of a braid $\zeta^+_i$ around the Reeb orbit $\gamma^+_i$ with $m_i$ strands.  The trivialization $\tau$ can be used to identify the braid $\zeta_i^+$ with a link in $S^1 \times D^2$.   We identify $S^1 \times D^2$ with an annulus cross an interval, projecting $\zeta^+_i$ to the annulus, and require that the normal derivative along $\gamma^+_i$ agree with the trivialization $\tau$.

We define the \emph{writhe} of this link, which we denote by $w_\tau(\zeta_i^+) \in \Z$, by counting the crossings of the projection to $\R^2 \times \{ 0 \} $ with (nonstandard) signs.  Namely, we use the sign convention in which counterclockwise rotations in the $D^2$ direction as one travels counterclockwise around $S^1$ contribute positively.  

Analogously the slice $u \cap ( \{s\} \times Y)$ for $s \ll 0$ is the union over $j$ of a a braid $\zeta^-_j$ around the Reeb orbit $\beta_j$ with $n_j$ strands and we denote this braid's writhe by $w_\tau(\zeta^-_i) \in \Z$.

The writhe depends only on the isotopy class of the braid and the homotopy class of the trivialization $\tau$.  If $\zeta$ is an $m$-stranded braid around an embedded nondegenerate Reeb orbit $\gamma$ and  $\tau' \in \mathcal{T}(\gamma)$ is another trivialization then
\[
w_\tau(\zeta) - w_{\tau'}(\zeta) = m(m-1)(\tau' - \tau)
\]
because shifting the trivialization by one adds a full clockwise twist to the braid.

If $\zeta_1$ and $\zeta_2$ are two disjoint braids around an embedded Reeb orbit $\gamma$ we can define their \emph{linking number} $\ell_\tau(\zeta_1,\zeta_2) \in \Z$
to be the linking number of their oriented images in $\R^3$.  The latter is by definition one half of the signed count of crossings of the strand associated to $\zeta_1$ with the strand associated to $\zeta_2$ in the projection to $\R^2 \times \{0\}$.  If the braid $\zeta_k$ has $m_k$ strands then a change in trivialization results in the following formula
\[
\ell_\tau(\zeta_1,\zeta_2) - \ell_{\tau'}(\zeta_1,\zeta_2) = m_1m_2 (\tau'-\tau).
\]
The writhe of the union of two braids can be expressed in terms of the writhe of the individual components and the linking number:
\begin{equation}\label{eqn:writhelinking}
w_\tau(\zeta_1 \cup \zeta_2) = w_\tau(\zeta_1) + w_\tau(\zeta_2) + 2\ell_\tau(\zeta_1,\zeta_2).
\end{equation}
If $\zeta$ is a braid around an embedded Reeb orbit $\gamma$ which is disjoint from $\gamma$ we define the \emph{winding number} to be the linking number of $\zeta$ with $\gamma$:
\[
\eta_\tau(\zeta) :=\ell_\tau(\zeta,\gamma) \in \Z.
\]

We have the following bound on writhe \cite[\S 5.1]{Hu2}.

\begin{proposition}\label{prop:writheCZbound}
If $C \in \M(\alpha,\beta;J)$ is somewhere injective then,
\[
w_\tau(C) \leq  CZ^I_\tau(C) - CZ^{ind}_\tau(C),
\]
where the Conley-Zehnder shorthand is given by \eqref{CZshorthand}.
\end{proposition}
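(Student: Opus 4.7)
The plan is to reduce the bound to a purely local analysis at each puncture of $C$, exploiting the asymptotic behavior of pseudoholomorphic curves near their limiting Reeb orbits. For $s \gg 0$, the slice $C \cap (\{s\}\times Y)$ is a disjoint union of braids $\zeta_i^+$ around $\alpha_i$ with $m_i$ total strands, and each $\zeta_i^+$ further decomposes as $\bigsqcup_k \zeta_{i,k}^+$ according to the connected components of $C$ near $\alpha_i$, where $\zeta_{i,k}^+$ arises from a single end of $C$ converging to the cover $\alpha_i^{q_{i,k}}$ with $\sum_k q_{i,k} = m_i$. Iterating the identity \eqref{eqn:writhelinking} expresses $w_\tau(\zeta_i^+)$ as the sum of the individual writhes $w_\tau(\zeta_{i,k}^+)$ plus twice the pairwise linking numbers $\ell_\tau(\zeta_{i,k}^+,\zeta_{i,l}^+)$, with an analogous identity at each negative puncture. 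The signs in $CZ^I_\tau(C)$ and $CZ^{ind}_\tau(C)$ are designed precisely to align with the fact that positive-end writhes admit upper bounds while negative-end writhes admit lower bounds, so once each end is controlled the global inequality follows by summation.

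The next step is to invoke the asymptotic analysis of Hofer--Wysocki--Zehnder, as refined by Siefring: each end of $C$ at $\alpha_i^{q_{i,k}}$ admits an asymptotic expansion $h_z(s,t) = e^{\mu s}\bigl(f(t) + r(s,t)\bigr)$ as $s \to +\infty$, where $\mu$ is a negative eigenvalue of the asymptotic operator along $\alpha_i^{q_{i,k}}$, $f$ is a corresponding nonvanishing eigenfunction, and $r$ decays strictly faster. Since $f$ is nowhere zero, its winding number $\mathrm{wind}_\tau(f)\in\Z$ with respect to $\tau$ is well defined, and the key inequality of Hofer--Wysocki--Zehnder asserts $\mathrm{wind}_\tau(f) \leq \lfloor CZ_\tau(\alpha_i^{q_{i,k}})/2 \rfloor$. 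This single winding number controls both the geometry of the braid component $\zeta_{i,k}^+$, giving an upper bound on $w_\tau(\zeta_{i,k}^+)$, and, because $C$ is somewhere injective so distinct ends produce distinct leading asymptotic data, the linking numbers $\ell_\tau(\zeta_{i,k}^+,\zeta_{i,l}^+)$ between different components. Negative ends are treated symmetrically with positive eigenvalues and the reverse inequality $\mathrm{wind}_\tau(f) \geq \lceil CZ_\tau(\beta_j^{q})/2 \rceil$.

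Combining these inputs, one bounds $w_\tau(\zeta_i^+)$ by a partition-dependent sum involving $\lfloor q_{i,k}\vartheta_i\rfloor$ in the elliptic case and $q_{i,k}r_i/2$ in the hyperbolic case. A direct combinatorial argument using the identity $\sum_{k=1}^{m}CZ_\tau(\alpha_i^k) = 2\sum_{k=1}^m\lfloor k\vartheta_i\rfloor + m$ in the elliptic case, together with the subadditivity property $\lfloor a\vartheta\rfloor + \lfloor b\vartheta\rfloor \leq \lfloor(a+b)\vartheta\rfloor$, then shows
\[
w_\tau(\zeta_i^+) \;\leq\; \sum_{k=1}^{m_i} CZ_\tau(\alpha_i^k) - CZ_\tau(\alpha_i^{m_i})
\]
uniformly over all admissible partitions $\{q_{i,k}\}$, with the analogous bound at negative ends (reversed in sign). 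Summing these local inequalities over all positive and negative punctures of $C$ assembles into the desired global bound $w_\tau(C) \leq CZ^I_\tau(C) - CZ^{ind}_\tau(C)$.

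The main obstacle is the partition-independence of the right hand side: one must verify that no choice of the partition $\{q_{i,k}\}$ or of leading asymptotic eigenfunctions can violate the bound, where writhe and linking contributions must be simultaneously controlled. The somewhere injectivity hypothesis is essential, as it rules out the degenerate situation where two distinct ends share the same leading eigenfunction (which would invalidate the linking number estimate); handling the subtle coincidences that can occur among next-to-leading eigenfunctions is precisely where Siefring's refined asymptotic analysis, and in finer settings the higher asymptotics of Cristofaro-Gardiner--Hutchings--Zhang cited elsewhere in the paper, become indispensable.
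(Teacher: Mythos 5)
The paper does not prove Proposition \ref{prop:writheCZbound}; it is cited directly from Hutchings' lecture notes \cite[\S 5.1]{Hu2} (the detailed argument appears in \cite[\S 6]{Hindex} and \cite[Lem.~5.5]{Hu2}). Your overall strategy---decompose $w_\tau(C)$ at each orbit into individual end writhes plus pairwise linking numbers via \eqref{eqn:writhelinking}, bound these using the winding numbers of leading asymptotic eigenfunctions as in Lemmas \ref{lem:positivewrithe} and \ref{lem:negativewrithe}, then reassemble---is precisely the standard route, so the high-level architecture is sound.

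However, the displayed local inequality contains a concrete error. You claim
\[
w_\tau(\zeta_i^+)\;\leq\;\sum_{k=1}^{m_i}CZ_\tau(\alpha_i^k)-CZ_\tau(\alpha_i^{m_i})
\]
holds ``uniformly over all admissible partitions $\{q_{i,k}\}$,'' but the correct subtracted term is the sum $\sum_{k}CZ_\tau(\alpha_i^{q_{i,k}})$ over the actual ends of $C$ at covers of $\alpha_i$, not $CZ_\tau(\alpha_i^{m_i})$. The shorthand $CZ^{ind}_\tau(C)$ in \eqref{CZshorthand} is the Conley--Zehnder contribution to the \emph{Fredholm} index \eqref{w-index}, which runs over the ends, not over the embedded orbits with their total multiplicities: these coincide only when each orbit supports exactly one end. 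This is not a harmless relaxation: for an elliptic orbit with rotation angle $\vartheta=0.6$, total multiplicity $m=2$, and partition $(1,1)$ one has $\sum_k CZ_\tau(\gamma^{q_k})=2$ while $CZ_\tau(\gamma^2)=3$, so your inequality would assert an upper bound strictly smaller than the correct one, which need not hold. Moreover the right side of the proposition is genuinely partition-dependent---this is forced by the fact that the writhe bound is equivalent (via the relative adjunction formula of Lemma \ref{lem:adjunction}) to the ECH index inequality $\op{ind}(C)\leq I(C)-2\delta(C)$, and $\op{ind}(C)$ changes with the number of ends---so the claim of partition-independence is not merely a simplification but a misreading of the statement.

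A secondary inaccuracy: somewhere injectivity is not what forbids two ends from sharing a leading eigenfunction. Distinct ends of a somewhere injective curve at the same orbit with the same covering multiplicity can indeed share leading asymptotic data; this is precisely the situation Siefring's relative asymptotic theorem \cite{s1} is designed to handle. What somewhere injectivity guarantees is that the slices $C\cap(\{s\}\times Y)$ are genuine embedded braids (rather than multiple covers) for $|s|\gg 0$, so that $w_\tau$ is defined at all.
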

\subsubsection{Admissible representatives}

In order to speak more ``globally" of writhe and winding numbers associated to a curve, we need the following notion of an admissible representative for a class $Z \in H_2(Y,\alpha,\beta)$, as in \cite[Def. 2.11]{Hrevisit}.  Given $Z\in H_2(Y,\alpha,\beta)$ we define an \emph{admissible representative} of $Z$ to be a smooth map $f: S \to [-1,1] \times Y$, where $S$ is an oriented compact surface such that
\begin{enumerate}
\item The restriction of $f$ to the boundary $\partial S$ consists of positively oriented  covers of $\{ 1\} \times \alpha_i$ whose total multiplicity is $m_i$ and negatively oriented covers of $\{ -1\} \times \beta_j$ whose total multiplicity is $n_j$.
\item The projection $\pi_Y: [-1,1] \times Y \to Y$ yields $[\pi(f(S))]=Z$. 
\item The restriction of $f$ to $\mbox{int}(S)$ is an embedding and $f$ is transverse to $\{-1,1\} \times Y$.
\end{enumerate}
If $S$ is an admissible representative for any $Z\in H_2(Y,\alpha,\beta)$ then we say $S$ is an \emph{admissible surface}.

The utility of the notion of an admissible representative $S$ for $Z$ can be seen in the following. For $\varepsilon >0$ sufficiently small, $S \cap ( \{ 1 - \varepsilon \} \times Y)$ consists of braids $\zeta_i^+$ with $m_i$ strands in disjoint tubular neighborhoods of the Reeb orbits $\alpha_i$, which are well defined up to isotopy.  Similarly, $S \cap ( \{ -1+ \varepsilon \} \times Y)$ consists of braids $\zeta_j^-$ with $n_j$ strands in disjoint tubular neighborhoods of the Reeb orbits $\alpha_i$, which are well defined up to isotopy.  

Thus an admissible representative of $Z\in H_2(Y;\alpha,\beta)$ permits us to define the \emph{total writhe} of a curve interpolating between the  Reeb currents $\alpha$ and $\beta$ by 
\[
w_\tau(S) = \sum_i w_{\tau_i^+}(\zeta_i^+) -  \sum_j w_{\tau_j^-}(\zeta_j^-).
\]
Here $\zeta_i^+$ are the braids with $m_i$ strands in a neighborhood of each of the $\alpha_i$ obtained by taking the intersection of $S$ with $\{ s \} \times Y$ for $s$ close to 1.  Similarly, the $\zeta_j^-$ are the braids with $n_j$ strands in a neighborhood of each of the $\beta_j$ obtained by taking the intersection of $S$ with $\{ s \} \times Y$ for $s$ close to $-1$.  Bounds on the writhe in terms of the Conley-Zehnder index are given in \cite[\S 3.1]{dc}, which relates the asymptotic behavior of pseudoholomorphic curves, extensively explored by Hutchings, cf. \cite[\S 5.1]{Hu2}.  We review and make use of these writhe bounds as well as their refinements obtained by \cite{CGHZ} in \S \ref{sec:connectors} and \ref{handleslides}. 

Taking a similar viewpoint with regard to the linking number results in the following formula.  If $S'$ is an admissible representative of $Z' \in H_2(Y,\alpha',\beta')$ such that the interior of $S'$ does not intersect the interior of $S$ near the boundary, with braids $\zeta_i^{+ '}$ and $\zeta_j^{- '}$ we can define the \emph{linking number of $S$ and $S'$} to be
\[
\ell_\tau(S,S') := \sum_i \ell_\tau(\zeta_i^+,\zeta_i^{+ '}) - \sum_j \ell_\tau(\zeta_j^-,\zeta_j^{- '}).
\]
Above the Reeb currents $\alpha$ and $\alpha'$ are both indexed by $i$, so sometimes $m_i$ or $m_{i}'$ is 0, similarly both $\beta$ and $\beta'$ are indexed by $j$ and sometimes $n_j$ or $n_{j}'$ is 0.  The trivialization $\tau$ is a trivialization of $\xi$ over all Reeb orbits in the sets $\alpha, \alpha', \beta,$ and $\beta'$.

\subsubsection{The relative intersection pairing}

The relative intersection pairing can be defined using an admissible representative, which is more general than the notion of a $\tau$-representative \cite[Def. 2.3]{Hindex}, as the latter uses the trivialization to control the behavior at the boundary.  Consequently, \emph{we see an additional linking number term appear in the expression of the relative intersection pairing when we use an admissible representative. }   
Let  $S$ and $S'$ be two surfaces which are admissible representatives of $Z \in H_2(Y,\alpha,\beta)$ and $Z'\in H_2(Y,\alpha',\beta')$  whose interiors $\dot{S}$ and $\dot{S}'$ are transverse and do not intersect near the boundary.  We define 
the \emph{relative intersection pairing} by the following signed count
\begin{equation}
Q_\tau(Z,Z'):= \# \left( \dot{S} \cap \dot{S}'\right) - \ell_\tau(S,S').
\end{equation}
Moreover, $Q_\tau(Z,Z')$ is an integer which depends only on $\alpha, \beta, Z, Z'$ and $\tau$.  If $Z=Z'$ then we write $Q_\tau(Z):=Q_\tau(Z,Z)$.

For another collection of trivialization choices $\tau'$, 
\[
Q_\tau(Z,Z') - Q_{\tau'}(Z,Z') = \sum_i m_i m_i' (\tau_i^+ - \tau_i^{+'}) - \sum_i n_j n_j' (\tau_i^- - \tau_i^{-'}).
\]
We recall how \cite[\S 3.5]{Hrevisit} permits us to compute the relative intersection pairing using embedded surfaces in $Y$.  An admissible representative $S$ of $Z \in H_2(Y,\alpha,\beta)$ is said to be \emph{nice} whenever the projection of $S$ to $Y$ is an immersion and the projection of the interior $\dot{S}$ to $Y$ is an embedding which does not intersect the $\alpha_i$'s or $\beta_j$'s. Lemma 3.9 from  \cite{Hrevisit} establishes that if none of the $\alpha_i$ equal the $\beta_j$ then every class $Z \in H_2(Y,\alpha,\beta)$ admits a nice admissible representative.

If $S$ is a nice admissible representative of $Z$ with associated braids $\zeta_i^+$ and $\zeta_j^-$ then we can define the \emph{winding number} by
\[
\eta_\tau(S) : = \sum_i \eta_{\tau_i^+}\left(\zeta_i^+\right) -\sum_j \eta_{\tau_j^-}\left(\zeta_j^-\right).
\]

\begin{lemma}[Lemma 3.9 \cite{Hrevisit}]
Suppose that $S$ is a nice admissible representative of $Z$.  Then
\[
Q_\tau(Z) = -w_\tau(S) - \eta_\tau(S).
\]
\end{lemma}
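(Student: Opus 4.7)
The plan is to exploit the defining identity $Q_\tau(Z) = \#(\dot S \cap \dot S') - \ell_\tau(S, S')$ by choosing a second admissible representative $S'$ of $Z$ as a geometric pushoff of $S$, so that the interior intersection count vanishes and the linking term can be read directly from the braid data at the ends. Since $S$ is nice, $\pi_Y|_{\dot S}\colon \dot S\hookrightarrow Y$ realizes $\dot S$ as an embedded surface $\dot\Sigma\subset Y\setminus(\alpha\cup\beta)$. I would pick a smooth nowhere-vanishing section $v$ of the normal line bundle of $\dot\Sigma$ in $Y$, arranged so that in the trivialized coordinates $S^1\times D^2$ near each $\alpha_i$ (and similarly each $\beta_j$), $v$ is asymptotic to a fixed nonzero vector in $\xi|_{\alpha_i}$ with respect to $\tau_i^+$. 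Extending $v$ via $\pi_Y$ to a section $\tilde v$ of the normal bundle of $S$ in $[-1,1]\times Y$, chosen to preserve the Reeb orbits infinitesimally along $\partial S$, I define $S'$ as the image of $S$ under the time-$\epsilon$ flow of any such extension, followed by a small isotopy at the boundary so that $\partial S'$ lands on the original Reeb orbits. Then $S'$ is an admissible representative of $Z$ in general position with respect to $S$.

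For the interior intersection count, a point of $\dot S\cap\dot S'$ would project to a point of $\dot\Sigma\cap\phi^v_\epsilon(\dot\Sigma)$, where $\phi^v_\epsilon$ is a local flow of an extension of $v$ to $Y$. Since $v$ is nowhere-vanishing and normal to the embedded surface $\dot\Sigma$, for $\epsilon$ small enough this intersection is empty, and hence $\#(\dot S\cap\dot S')=0$. For the linking term, near each $\alpha_i$ the pushed-off braid $\zeta_i^{+\prime}$ of $S'$ is, in the trivialized coordinates, a small constant-direction translate of $\zeta_i^+$. The key local identity is that for any $m$-stranded braid $\zeta$ around an embedded Reeb orbit $\gamma$ and its pushoff $\zeta'$ by a constant $\tau$-direction,
\[
\ell_\tau(\zeta,\zeta') = w_\tau(\zeta) + \eta_\tau(\zeta),
\]
which can be verified by a direct diagrammatic count: crossings among distinct pairs of strands in the annular projection yield $w_\tau(\zeta)$, while each strand's meridional winding around $\gamma$ contributes once to the linking with its $\tau$-parallel pushoff, producing the additional $\eta_\tau(\zeta)$. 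Summing over positive and negative ends with the signs from the definitions of $w_\tau(S)$ and $\eta_\tau(S)$ gives $\ell_\tau(S,S')=w_\tau(S)+\eta_\tau(S)$.

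Substituting these into the definition of $Q_\tau(Z,Z)$ yields the claimed $Q_\tau(Z) = -w_\tau(S) - \eta_\tau(S)$. I expect the crux to be the local linking identity above: isolating the winding-number correction requires recognizing that the normal framing to $\dot\Sigma$ in $Y$ inherited from the niceness hypothesis is isotopic, through nowhere-vanishing normal fields on each braid, to the constant framing determined by $\tau$, and then verifying by a careful crossing-count that a $\tau$-parallel pushoff contributes precisely one additive $\eta_\tau$ per embedded Reeb orbit. A secondary technical point is the extension of $v$ in the first step to a normal vector field to $S$ in $[-1,1]\times Y$ that preserves the admissibility boundary conditions (tangency to the $\alpha_i$ and $\beta_j$); this is arranged without obstruction but requires some explicit bookkeeping in the collar neighborhoods of the ends.
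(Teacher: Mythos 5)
Your overall strategy---push $S$ off by a nowhere-vanishing normal field $v$, observe that the interior intersection count vanishes, and then read off $Q_\tau$ from the linking at the ends---is the right one. However, there is a genuine error in the key local identity.

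You want $v$ to be simultaneously (a) normal to $\dot\Sigma$ and (b) asymptotically constant with respect to $\tau$ near each orbit. These two conditions are incompatible whenever the braids have nonzero winding: if $\zeta$ winds $\eta_\tau(\zeta)$ times around $\gamma$, then the normal line to $\dot\Sigma$ at points of $\zeta$ rotates along with the braid, picking up exactly $\eta_\tau(\zeta)$ full twists relative to the $\tau$-constant framing. (E.g.\ for $\dot\Sigma = \{(t,\rho e^{i\eta t})\}$, the normal direction at the boundary braid is approximately $ie^{i\eta t}$, which is never constant unless $\eta=0$.) As a consequence, the pushed-off braid $\zeta'$ is \emph{not} a constant-direction translate of $\zeta$, and the stated identity
\[
\ell_\tau(\zeta,\zeta')=w_\tau(\zeta)+\eta_\tau(\zeta)
\]
is \emph{false} for a constant $\tau$-direction pushoff. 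For a constant pushoff one gets $\ell_\tau(\zeta,\zeta')=w_\tau(\zeta)$: the annular projection identifies the blackboard framing with the $\tau$-constant framing by construction, and the $\tau$-constant pushoff contributes no crossings with $\zeta$ beyond those already counted in $w_\tau$. (Your diagrammatic claim that ``each strand's meridional winding contributes once to the linking with its $\tau$-parallel pushoff'' is incorrect; meridional winding contributes nothing to linking with a parallel translate.) The identity $\ell_\tau(\zeta,\zeta')=w_\tau(\zeta)+\eta_\tau(\zeta)$ is in fact correct, but only for the genuine \emph{normal-direction} pushoff, and the extra $\eta_\tau(\zeta)$ arises precisely because the normal framing is $\eta_\tau(\zeta)$ twists away from the $\tau$-constant framing. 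You can check this for a one-strand braid $\zeta(t)=(t,re^{i\eta t})$: the $\tau$-constant pushoff satisfies $\ell=0=w_\tau(\zeta)$, whereas the normal pushoff $\zeta'(t)=(t,(r+i\epsilon)e^{i\eta t})$ is a nearby $(1,\eta)$-torus knot with $\ell(\zeta,\zeta')=\eta$, matching $w_\tau(\zeta)+\eta_\tau(\zeta)=0+\eta$.

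The fix is straightforward: drop condition (b), use any nowhere-vanishing normal field $v$ (the normal line bundle is trivial since $Y$ and $\dot\Sigma$ are orientable), and prove the local identity for the normal pushoff directly by comparing framings. Once you track the $\eta_\tau(\zeta)$-fold twist of the normal framing relative to $\tau$, the rest of your argument---vanishing of the interior intersections, decomposition of $\ell_\tau(\zeta,\zeta')$ into same-strand and cross-strand terms, and summation over ends to get $\ell_\tau(S,S')=w_\tau(S)+\eta_\tau(S)$---goes through as you describe, yielding $Q_\tau(Z)=-w_\tau(S)-\eta_\tau(S)$.
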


\subsubsection{The relative adjunction formula}

In this section we review the relative adjunction formulas of interest, which are used later in \S \ref{sec:connectors}.  This is taken from \cite[\S 3]{Hindex} and is stated for pseudoholomorphic curves interpolating between Reeb currents $\alpha$ and $\beta$ in symplectizations.   As explained in \cite[\S 4.4]{Hrevisit} the proof carries over in a straightforward manner to exact symplectic cobordisms.  
\begin{lemma}
\label{lem:adjunction}
Let $u\in \mathcal{M}^J(\alpha,\beta)$ be somewhere injective, $S$ be a representative of $Z\in H_2(Y,\alpha,\beta)$, and $\tau\in \mathcal{T}(\alpha,\beta)$.  Let $N_S$ be the normal bundle to $S$.  
\begin{enumerate}[{\em (i)}]
\item If $u$ is further assumed to be embedded everywhere then
\begin{equation}\label{relcalc}
c_1^{\tau}(Z) = \chi(S) + c_1^\tau(N_S).
\end{equation}
\item For general embedded representatives $S$, e.g. ones not necessarily coming from pseudoholomorphic curves, \eqref{relcalc} holds mod 2 and
\begin{equation}\label{chernwQ}
c_1^\tau(N_S) = w_\tau(S) + Q_\tau(Z,Z).
\end{equation}
\item If $u$ is embedded except at possibly finitely many singularities then
\begin{equation}\label{relsingeq}
c_1^\tau(Z) = \chi(u) + Q_\tau(Z) + w_\tau(u) - 2\delta(u), 
\end{equation}
where $\delta(u)$ is a sum of positive integer contributions from each singularity.
\end{enumerate}
\end{lemma}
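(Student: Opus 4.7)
The plan is to prove the three parts in sequence, progressing from the cleanest case (embedded pseudoholomorphic) up to the general singular case by exploiting the ambient complex splitting of the symplectization tangent bundle.

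For part (i), the starting observation is that $T(\R\times Y) = \underline{\C}\oplus \xi$ as complex vector bundles, where the trivial summand is spanned by $\partial_s$ and $R = J\partial_s$. For an embedded pseudoholomorphic curve $u$, the cylindrical asymptotics force the splitting $Tu\oplus N_u$ to match $\underline{\C}\oplus\xi$ along $\partial S$ (the tangent is spanned by $\partial_s$ and the Reeb direction, so $\xi$ is normal). Since $u$ is $J$-holomorphic, both $Tu$ and $N_u$ are complex subbundles throughout, and the trivialization $\tau$ on $\xi$ together with the canonical framing $(\partial_s, R)$ give compatible boundary trivializations. Additivity of relative first Chern classes then gives $c_\tau(Z) = c_1^\tau(Tu) + c_1^\tau(N_u)$, and a relative Poincar\'e--Hopf argument identifies $c_1^\tau(Tu)$ with $\chi(S)$.

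For part (ii), the first equation becomes only a mod-$2$ congruence because for a general embedded $S$ the real splitting $TS\oplus N_S$ need not respect the complex structure, introducing framing twists of even index along $\partial S$. For the second equation, I would compute $c_1^\tau(N_S)$ as the signed count of zeros of a section $\psi$ of $N_S$ that is $\tau$-constant on the boundary. Taking $\psi$ small and nonvanishing realizes $S' := S + \psi$ as a parallel pushoff, so $c_1^\tau(N_S) = \#(\dot{S}\cap \dot{S}')$. On the boundary, the pushed-off braid $\zeta'$ is the $\tau$-constant parallel of $\zeta$, and a direct cylindrical-model crossing count shows each self-crossing of $\zeta$ contributes a pair of crossings with $\zeta'$, giving $\ell_\tau(\zeta,\zeta') = w_\tau(\zeta)$. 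Summing over ends and substituting into $Q_\tau(Z,Z) = \#(\dot{S}\cap\dot{S}') - \ell_\tau(S,S')$ yields $c_1^\tau(N_S) = Q_\tau(Z,Z) + w_\tau(S)$.

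For part (iii), I would perturb $u$ to a nearby embedded surface $S$ and apply (i) and (ii) together with a careful accounting of singularities. Each pseudoholomorphic singular point $p$ carries a positive integer contribution $\delta_p$ to $\delta(u)$ (the virtual double-point count of the local branch), and a standard complex smoothing near $p$ produces an embedded surface with $\chi(S) = \chi(u) - 2\delta(u)$: transverse nodes replace a pinched neighborhood by a cylinder, reducing $\chi$ by $2$, and higher singularities reduce to nodes via one-parameter deformations. Writhe is determined by the ends, so $w_\tau(S) = w_\tau(u)$, and the relative homology class is preserved. Combining (ii) with its formula for $c_1^\tau(N_S)$ yields the desired equation modulo $2$; to upgrade to an exact equality one invokes Micallef--White's local normal form, which models each singularity on a holomorphic plane curve so that its resolution carries an honest complex structure and the precise version of (i) applies. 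The main obstacle is part (iii): correctly tracking the discrepancy $\chi(u) - \chi(S) = 2\delta(u)$ through the local smoothing and lifting the mod-$2$ congruence of (ii) to an honest equality. The Micallef--White normal form is the essential input that resolves both issues simultaneously.
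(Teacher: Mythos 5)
The paper does not prove this lemma: it is stated as a review of Hutchings' relative adjunction formula and cites \cite[\S 3]{Hindex} and \cite[\S 4.4]{Hrevisit} for the proof. Your proposal reconstructs that argument. Parts (i) and (ii) are solid and match the cited source: the ambient complex splitting $T(\R\times Y)=\underline{\C}\oplus\xi$, the identification $c_1^\tau(Tu)=\chi(S)$ via relative Poincar\'e--Hopf, and for (ii) the pushoff section together with the self-linking/writhe relation $\ell_\tau(\zeta,\zeta')=w_\tau(\zeta)$ for a $\tau$-constant parallel. (One minor slip: you want $\psi$ nonvanishing only near $\partial S$, with generic interior zeros counted by $c_1^\tau(N_S)$; as written, ``small and nonvanishing'' would give $\dot{S}\cap\dot{S}'=\emptyset$.)

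Part (iii) is where the proposal is thin, and worth flagging: your step ``combining (ii) \ldots yields the desired equation modulo 2'' is vacuous regarding the $-2\delta(u)$ term, since $2\delta(u)\equiv 0\pmod 2$. The mod-$2$ version of (i) therefore gives no leverage on the singularity count at all — everything interesting in (iii) is in the exact statement. Your remedy (invoke Micallef--White to get a local complex model and claim ``the precise version of (i) applies'' to the smoothing) glosses the real issue: the topological smoothing $S$ is not $J$-holomorphic, and (i) requires the complex splitting $Tu\oplus N_u\subset u^*T(\R\times Y)$ to be $J$-complex, not complex for some local complex structure near the resolved singularity. The cleaner route, which is what the cited Hutchings argument actually does, is to stay in the $J$-holomorphic category: use Micallef--White to replace $u$ near its singular points by a nearby $J$-holomorphic \emph{immersion} with $\delta(u)$ transverse double points, observe that the complex decomposition $Tu'\oplus N_{u'}$ is defined on the source of an immersion so the Chern-class additivity of (i) still applies, and then track how the pushoff count of (ii) changes at each transverse double point (each contributes $-2$ relative to the embedded case). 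This keeps every step exact rather than mod $2$. Your outline identifies the correct ingredients — Micallef--White, the $\chi\mapsto\chi-2\delta$ bookkeeping, stability of $w_\tau$ and $Q_\tau$ — but the bridge from the embedded case to the singular one needs to run through immersions rather than through a topological resolution to which (i) doesn't directly apply.
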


\begin{remark}
If $u$ is a closed pseudoholomorphic curve, then there is no writhe term or trivialization choice, and \eqref{relsingeq} reduces to the usual adjunction formula
\[
\langle c_1(TW),[u]\rangle = \chi(u) + [u] \cdot [u] - 2\delta(u).
\]
\end{remark}


\subsection{The ECH index}

We are now ready to give the definition of the ECH index.

\begin{definition}[ECH index]
Let $\alpha=\{(\alpha_i,m_i)\}$ and $\beta=\{(\beta_j,n_j)\}$ be Reeb currents in the same homology class, $\sum_i m_i [\alpha_i]=\sum_j n_j [\beta_j]=\Gamma\in H_1(Y).$   Given $Z \in H_2(Y,\alpha,\beta)$. We define the \emph{ECH index} to be
\[
I(\alpha,\beta,Z) = c_1^\tau(Z) + Q_\tau(Z) +  CZ^I_\tau(\alpha) -CZ^I_\tau(\beta).
\]
where $CZ^I$ is the shorthand defined in \eqref{CZ^I}.   When $\alpha$ and $\beta$ are clear from context, we use the notation $I(Z)$.
\end{definition}

The Chern class term is linear in the multiplicities of the Reeb currents and the relative intersection term is quadratic. The ``total Conley-Zehnder" index term $CZ_\tau^I$ typically behaves in a complicated way with respect to the multiplicities.   We also have the following general properties of the ECH index, as proven in \cite[\S 3.3]{Hindex}.

\begin{theorem}\label{thm:Iproperties}The ECH index has the following basic properties: \hfill

\begin{enumerate}[{\em (i)}]
\item {\em(Well Defined)} The ECH index $I(Z)$ is independent of the choice of trivialization.
\item\label{property:indexamb} {\em (Index Ambiguity Formula) }If $Z' \in H_2(Y,\alpha,\beta)$ is another relative homology class, then
\[
I(Z)-I(Z')= \langle Z-Z', c_1(\xi) + 2 \mbox{\em PD}(\Gamma) \rangle.
\]
\item {\em (Additivity) } If $\delta$ is another Reeb current in the homology class $\Gamma$, and if $W \in H_2(Y,\beta,\delta)$, then $Z+W \in  H_2(Y,\alpha,\delta)$ is defined and 
\[
I(Z+W)=I(Z)+I(W).
\]
\item {\em (Index Parity) }If $\alpha$ and $\beta$ are generators of the ECH chain complex, then 
\[
(-1)^{I(Z)} = \varepsilon(\alpha)\varepsilon(\beta),
\]
where $\varepsilon(\alpha)$ denotes $-1$ to the number of positive hyperbolic orbits in $\alpha$.
\end{enumerate}
\end{theorem}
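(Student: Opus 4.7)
The plan is to derive each of the four properties from the explicit change-of-trivialization formulas, elementary homological manipulations with admissible representatives, and---for the parity statement---the mod 2 relative adjunction formula recorded earlier in this section.

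Property (i) is pure bookkeeping: adding the three formulas for $c_\tau(Z) - c_{\tau'}(Z)$, $Q_\tau(Z) - Q_{\tau'}(Z)$, and $CZ^I_\tau(\alpha) - CZ^I_{\tau'}(\alpha)$ (and the $\beta$ analogues with the opposite sign), the coefficient of each $\tau_i^+ - \tau_i^{+'}$ becomes $m_i + m_i^2 - (m_i^2 + m_i) = 0$, with the analogous cancellation at the negative ends; hence $I(Z)$ is trivialization-independent. For property (ii), the $CZ^I_\tau$ pieces depend only on $\alpha$ and $\beta$ and drop out of $I(Z) - I(Z')$. Combined with the identity $c_\tau(Z) - c_\tau(Z') = \langle c_1(\xi), Z - Z'\rangle$ already in the text, it remains to show $Q_\tau(Z) - Q_\tau(Z') = 2\langle \mathrm{PD}(\Gamma), Z - Z'\rangle$. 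Writing $A := Z - Z' \in H_2(Y;\mathbb{Z})$ and representing $A$ by a closed embedded surface $T$ transverse to a fixed admissible representative $S$ of $Z'$, one obtains $Q_\tau(Z) - Q_\tau(Z') = 2 \,(S \cdot T)$ by bilinearity and symmetry of the intersection pairing (the linking cross-term vanishes since $T$ is closed), and this equals $2\langle \mathrm{PD}(\Gamma), A\rangle$ because $T$ intersects any representative of $\Gamma$ the same number of times as it intersects $S$.

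For property (iii), choose admissible representatives $S$ of $Z$ supported in $[\tfrac12, 1] \times Y$ and $S'$ of $W$ supported in $[-1, -\tfrac12] \times Y$; then $S \cup S'$ is an admissible representative of $Z + W$. Additivity of $c_\tau$ is immediate from its definition as an algebraic count of zeros of a generic section, the $CZ^I_\tau$ pieces telescope trivially, and for $Q_\tau$ the decomposition $Q_\tau(Z + W) = Q_\tau(Z) + 2\,Q_\tau(Z, W) + Q_\tau(W)$ simplifies because $\mathrm{int}(S) \cap \mathrm{int}(S') = \emptyset$ and the $\beta$-braids at the intermediate level coincide, forcing $Q_\tau(Z, W) = 0$.

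The main obstacle is property (iv). First I observe that every closed oriented three-manifold is spin, so $c_1(\xi) \equiv w_2(TY) \equiv 0 \pmod 2$; combined with (ii), this implies that the parity of $I(Z)$ is independent of $Z$, and I may compute it using any convenient nice admissible representative $S$. The mod 2 relative adjunction formula of Lemma \ref{lem:adjunction}(ii) then yields
\[
c_\tau(Z) + Q_\tau(Z) \equiv \chi(S) + w_\tau(S) \pmod 2,
\]
so the problem reduces to showing
\[
\chi(S) + w_\tau(S) + CZ^I_\tau(\alpha) - CZ^I_\tau(\beta) \equiv \#\mathrm{pos\, hyp}(\alpha) + \#\mathrm{pos\, hyp}(\beta) \pmod 2.
\]
This is a local calculation at each embedded orbit, using the elliptic/positive hyperbolic/negative hyperbolic formulas for $CZ_\tau(\gamma^k)$ recalled in Section \ref{cz-sec} together with the parity of the writhe of the corresponding multi-strand braid from Section \ref{s:writhe}. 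The admissibility condition forces the multiplicity of any hyperbolic orbit to be one, which collapses the calculation for hyperbolic generators, while for elliptic orbits the mod 2 contributions from $CZ_\tau^I$ and from the writhe of the braid are designed to cancel. The delicate step is this cancellation for multiply-covered elliptic braids, which is where I expect the technical heart of the argument to lie.
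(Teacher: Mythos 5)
The paper does not prove Theorem~\ref{thm:Iproperties} itself---it cites \cite[\S 3.3]{Hindex}. Your proposal follows essentially the same route Hutchings takes: change-of-trivialization bookkeeping for (i), the change-of-$Z$ formulas for (ii), decomposition of representatives for (iii), and the mod~2 relative adjunction formula for (iv). So the comparison here is mostly about completeness, and there are two genuine gaps.

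The main gap is property (iv), and you flag it yourself: after correctly observing that $c_1(\xi)\equiv 0\pmod 2$ (since $TY = \xi\oplus\underline{\R}$ and $Y$ is parallelizable) and reducing, via Lemma~\ref{lem:adjunction}(ii), to
\[
\chi(S) + w_\tau(S) + CZ^I_\tau(\alpha) - CZ^I_\tau(\beta) \equiv \#\mathrm{pos\,hyp}(\alpha) + \#\mathrm{pos\,hyp}(\beta) \pmod 2,
\]
you stop at ``this is where I expect the technical heart of the argument to lie.'' That expectation is correct---the cancellation between $\chi(S)+w_\tau(S)$ and $CZ^I_\tau$ at a multiplicity-$m$ elliptic orbit is precisely the nontrivial content of index parity, and it is not done here. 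The clean way to finish is to work with a $\tau$-trivial representative (one sheet per strand, all end braids trivial with respect to $\tau$, so $w_\tau(S)=0$); then the number of ends over each $\alpha_i$ is $m_i$, one reads off $\chi(S)\equiv \sum_i m_i + \sum_j n_j\pmod 2$, and the elliptic contribution $\sum_{k=1}^{m_i}(2\lfloor k\vartheta\rfloor + 1)\equiv m_i$ cancels against the $m_i$ ends, while a positive hyperbolic orbit (necessarily $m_i=1$ in an admissible generator) contributes $CZ_\tau = $ even and one end, producing the residual $\pm 1$. Without a choice of representative pinned down this specifically, the claimed ``local calculation'' does not close. A secondary issue in (iv) is that Lemma~\ref{lem:adjunction}(ii) needs an \emph{embedded} admissible surface; the paper only guarantees a nice embedded representative when $\alpha$ and $\beta$ share no orbits, and you would need to reduce to that case (e.g.\ via additivity, or by choosing $\beta=\emptyset$ when $\Gamma=0$ and arguing separately otherwise) before invoking the lemma.

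The arguments for (ii) and (iii) are morally right but lean on unstated structure of $Q_\tau$. In (ii) you invoke ``bilinearity and symmetry'' to write $Q_\tau(Z'+A) = Q_\tau(Z') + 2Q_\tau(Z',A) + Q_\tau(A)$ with $A=Z-Z'$ closed, then quietly use $Q_\tau(A)=T\cdot T = 0$ in $[-1,1]\times Y$; both points are fine but should be stated, and bilinearity over the $H_2(Y)$-affine action is itself a lemma, not the definition. In (iii), the expansion $Q_\tau(Z+W) = Q_\tau(Z) + 2Q_\tau(Z,W) + Q_\tau(W)$ is not an instance of bilinearity: $Z\in H_2(Y,\alpha,\beta)$ and $W\in H_2(Y,\beta,\delta)$ live in different affine spaces, and $Z+W\in H_2(Y,\alpha,\delta)$ is produced by concatenation, not by addition in a common module. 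The vanishing of the cross-term is believable with your stacked representatives, but the linking-number bookkeeping at the shared $\beta$-level (and at any orbit appearing in both $\alpha$ and $\delta$) is exactly where the definition of $\ell_\tau(S,S')$ needs to be unwound; ``the $\beta$-braids coincide'' is the right intuition but not yet an argument. Property (i) is correctly and completely established by the cancellation $m_i + m_i^2 - (m_i^2+m_i) = 0$.
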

To learn more about the wonders of the ECH index see \cite[\S 2]{Hrevisit}.

\begin{remark}
We will also use the notation $I(\alpha,\beta,\mathcal{C})$ and $I(\mathcal{C})$ for $\mathcal{C}=\{ (C_k, d_k) \}\in\M^J(\alpha,\beta)$ to denote $I(\alpha,\beta,[\mathcal{C}])$, where $[\cdot]$ denotes equivalence in $H_2(Y,\alpha,\beta)$, as well as $c_\tau(\mathcal{C})$ and $Q_\tau(\mathcal{C})$, following \cite[Notation 4.7]{Hrevisit}. In addition, we will occasionally use the notation $Q_\tau(S)$ for $S$ an admissible surface in $[-1,1]\times Y$ to denote $Q_\tau([\pi_YS])$ (recall that we are using the notation $\pi_Y$ to denote both the projections from $\R\times Y$ and from $[-1,1]\times Y$ to $Y$).
  \end{remark}

The ECH index inequality (cf. \S \ref{ECH-ineq}, \cite[Theorem 4.15]{Hrevisit}) permits the following results regarding low ECH index curves.  A {\em trivial cylinder} is a cylinder $\R \times \gamma \subset \R \times Y$ where $\gamma$ is an embedded Reeb orbit.

\begin{proposition}[Prop. 3.7 \cite{Hu2}]\label{lowiprop}
Suppose $J$ is generic.  Let $\alpha$ and $\beta$ be Reeb currents and let $\mathcal{C} \in \M^J(\alpha,\beta)$ be any $J$-holomorphic current in $\R \times Y$, not necessarily somewhere injective.  Then:
\begin{enumerate}[{\em (i)}]
\item We have $I(\mathcal{C})\geq 0$ with equality if and only if $\mathcal{C}$ is a union of trivial cylinders with multiplicities.
\item If $I(\mathcal{C}) =1$ then $\mathcal{C}= \mathcal{C}_0 \sqcup C_1$,  where $I(\mathcal{C}_0)=0$, and $C_1$ is embedded and has $\ind(C_1)=I(C_1)=1$.
\item If $I(\mathcal{C}) =2$, and if $\alpha$ and $\beta$ are generators of the chain complex $ECC_*(Y,\lambda,\Gamma,J)$, then $\mathcal{C}= \mathcal{C}_0 \sqcup C_2$, where $I(\mathcal{C}_0)=0$, and $C_2$ is embedded and has $\ind(C_2)=I(C_2)=2$.
\end{enumerate}
\end{proposition}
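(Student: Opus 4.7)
The plan is to deduce Proposition \ref{lowiprop} from the ECH index inequality of Section \ref{ECH-ineq} (following \cite[Thm.~4.15]{Hrevisit}), combined with the generic transversality of Theorem \ref{si-thm1}, the writhe bound of Proposition \ref{prop:writheCZbound}, and the relative adjunction formula Lemma \ref{lem:adjunction}. First, I would decompose $\mathcal{C} = \mathcal{C}_T \sqcup \mathcal{C}'$, where $\mathcal{C}_T = \{(T_i,e_i)\}$ consists of the covered trivial cylinders $T_i = \R \times \gamma_i$ and $\mathcal{C}' = \{(C_k,d_k)\}$ consists of the remaining somewhere injective components. By additivity (Theorem \ref{thm:Iproperties}(iii)) together with the vanishing of $c_\tau$ and $Q_\tau$ on $\mathcal{C}_T$ with respect to a trivialization adapted to the $\gamma_i$, the total index decomposes as $I(\mathcal{C}_T) + I(\mathcal{C}') + 2Q_\tau(\mathcal{C}_T,\mathcal{C}')$ plus Conley--Zehnder contributions from the asymptotics.

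Next, the ECH index inequality applied to each somewhere injective $C_k$ gives $I(C_k) \geq \ind(C_k) + 2\delta(C_k)$, where $\delta(C_k)\geq 0$ is the singularity contribution from Lemma \ref{lem:adjunction}(iii) and $\ind(C_k)\geq 0$ by Theorem \ref{si-thm1} for generic $J$ (otherwise the moduli space would have negative dimension). The analogous inequality for the covered contribution asserts that $I(\{(C_k,d_k)\})$ exceeds $d_k I(C_k)$ by a nonnegative correction arising from the partition conditions at the ends, which is ultimately a consequence of the writhe bound (Proposition \ref{prop:writheCZbound}). The same writhe/partition analysis makes $I(\mathcal{C}_T)$ and the cross term $Q_\tau(\mathcal{C}_T,\mathcal{C}')$ both nonnegative, with equality characterized by admissibility of the partitions at the $\gamma_i$. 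Summing these nonnegative terms proves $I(\mathcal{C})\geq 0$; equality forces each $C_k$ to be an embedded trivial cylinder, giving (i).

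Parts (ii) and (iii) then follow by bookkeeping on which summand exhausts the small index budget. If $I(\mathcal{C})=1$, all but one of the nonnegative summands vanish; by (i) the vanishing part $\mathcal{C}_0$ is a union of covered trivial cylinders, and the unique nonvanishing piece must come from a single somewhere injective component $C_1$ of multiplicity one with $I(C_1)=\ind(C_1)=1$ and $\delta(C_1)=0$, hence embedded. For (iii), the same logic produces a component $C_2$ with $I(C_2)=2$, but one must additionally exclude the configurations of two disjoint embedded $\ind=1$ curves (ruled out by genericity as in \cite[Prop.~3.7]{Hu2}) and a doubly covered somewhere injective $\ind=1$ curve; admissibility of $\alpha,\beta$ obstructs the latter, since the partition conditions at a hyperbolic end of such a double cover would force a hyperbolic orbit to appear with multiplicity $\geq 2$ in $\alpha$ or $\beta$.

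The main technical obstacle is isolating the nonnegative corrections coming from multiply covered non-trivial components and from the cross term $Q_\tau(\mathcal{C}_T,\mathcal{C}')$. This requires comparing writhe and linking at each end to the relevant Conley--Zehnder sums via Proposition \ref{prop:writheCZbound}, invoking the admissible partition conditions at elliptic and hyperbolic orbits, and using the relative adjunction formula Lemma \ref{lem:adjunction} to convert the intersection-theoretic data into topological invariants. Once these corrections are packaged into the ECH index inequality of Section \ref{ECH-ineq}, the remainder of the argument is essentially a matter of tracking which summand accommodates the index value $0$, $1$, or $2$.
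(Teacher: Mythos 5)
The paper cites this as \cite[Prop.~3.7]{Hu2} without reproving it; your sketch follows the same broad strategy as Hutchings' argument (decompose $\mathcal{C}$ into trivial-cylinder and non-trivial parts, apply the index inequality to each somewhere injective component, control cross terms via intersection positivity and asymptotic linking, and use admissibility for part (iii)). However, there are three concrete problems.

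First, you appeal to Theorem \ref{si-thm1} only to get $\ind(C_k)\ge 0$. That is not enough: for the ``only if'' direction of (i), one needs that a somewhere injective curve which is not a trivial cylinder satisfies $\ind(C_k)\ge 1$ for generic $J$, because $\R$-translation acts freely on its moduli space and that moduli space has dimension $\ind(C_k)$. With only $\ge 0$, the sum of your nonnegative contributions still gives $I(\mathcal{C})\ge 0$, but equality no longer forces $\mathcal{C}'$ to be empty --- a nontrivial component with $\ind=0$ would contribute nothing, so you cannot conclude that $\mathcal{C}$ is a union of trivial cylinders.

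Second, your proposed exclusion for (iii) of ``two disjoint embedded $\ind=1$ curves'' by ``genericity'' is both incorrect and unnecessary. Such a pair, when disjoint with vanishing asymptotic linking, is an embedded somewhere injective current $C_2$ with $\ind(C_2)=I(C_2)=2$, exactly fitting the conclusion of (iii): the paper's definition of an embedded current explicitly permits disconnected ones, and these configurations are not ruled out by genericity --- indeed they are precisely what the obstruction bundle gluing theory of \cite{obg1,obg2} is designed to handle. Third, your admissibility argument against the doubly covered component $(C,2)$ only applies when $C$ has an end at a hyperbolic orbit; when $C$ has only elliptic ends, admissibility imposes no constraint. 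Excluding $(C,d)$ with $d\ge 2$ in the elliptic case requires combining the writhe bound of Proposition \ref{prop:writheCZbound} with the explicit ECH partition conditions at elliptic orbits (Definition \ref{def:part}, Example \ref{ex:partitions}): a multiplicity-$d$ copy of $C$ produces $d$-fold parallel braids at the ends which, for $d\ge 2$, fail the extremal partition at an elliptic orbit, and this deficit is a strictly positive correction that pushes $I(\{(C,d)\})$ above $d\,\ind(C)$. Without that step your case analysis for (iii) is incomplete.
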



\subsection{The ECH partition conditions and index inequality}\label{ECH-ineq}

The ECH partition conditions are a topological type of data associated to the pseudoholomorphic curves (and currents) which can be obtained indirectly from certain ECH index relations.  In particular, the covering multiplicities of the Reeb orbits at the ends of the nontrivial components of the pseudoholomorphic curves (and currents) are uniquely determined by the trivial cylinder component information.  The genus can be determined by the current's relative homology class.

\begin{definition}\label{def:part} \cite[\S 3.9]{Hu2} Let $\gamma$ be an embedded Reeb orbit and $m$ a positive integer.  We define two partitions of $m$, the \emph{positive partition} $P^+_\gamma(m)$ and  the \emph{negative partition} $P^-_\gamma(m)$\footnote{Previously the papers \cite{Hindex, Hrevisit} used the terminology incoming and outgoing partitions.}  as follows.
\begin{itemize}
\item If $\gamma$ is positive hyperbolic, then
\[
P_\gamma^+(m): = P_\gamma^-(m): = (1,...,1).
\]
\item If $\gamma$ is negative hyperbolic, then
\[
P_\gamma^+(m): = P_\gamma^-(m): = \left\{ \begin{array}{ll}
(2,...,2) & m \mbox{ even,} \\
(2,...,2,1) & m \mbox{ odd. } \\
\end{array}
\right .
\]
\item If $\gamma$ is elliptic then the partitions are defined in terms of the quantity $\vartheta \in \R / \Z $ for which $\czm^\tau(\gamma^k) = 2\lfloor k \vartheta \rfloor + 1$.  We write 
\[
P_\gamma^\pm(m): = P_\vartheta^\pm(m),
\]
with the right hand side defined as follows. 

 Let $\Lambda^+_\vartheta(m)$ denote the highest concave  polygonal path in the plane that starts at $(0,0)$, ends at  $(m,\lfloor m \vartheta \rfloor)$, stays below the line $y = \vartheta x$, and has corners at lattice points.  Then the integers $P^+_\vartheta(m)$ are the horizontal displacements of the segments of the path $\Lambda^+_\vartheta(m)$ between the lattice points.

Likewise, let  $\Lambda^-_\vartheta(m)$ denote the  lowest convex polygonal path in the plane that starts at $(0,0)$, ends at $(m,\lceil m \vartheta \rceil)$, stays above the line $y = \vartheta x$, and has corners at lattice points.  Then the integers $P^-_\vartheta(m)$ are the horizontal displacements of the segments of the path $\Lambda^-_\vartheta(m)$ between the lattice points.

Both $P_\vartheta^\pm(m)$ depend only on the class of $\vartheta$ in $\R / \Z$.  Moreover, $P_\vartheta^+(m) = P_{-\vartheta}^-(m)$.
\end{itemize}

\end{definition}

\begin{example}\label{ex:partitions}
If the rotation angle for an elliptic orbit $\gamma$ satisfies $\vartheta \in (0,1/m)$ then
\[
\begin{array}{lcl}
P_\vartheta^+(m) &=& (1,...,1) \\
P_\vartheta^-(m) &=& (m). \\
\end{array}
\]
The partitions are quite complex for other $\vartheta$ values, see \cite[Fig. 1]{Hu2}.
\end{example}

We end this section with the ECH index inequality \cite[Theorem 4.15]{Hrevisit} in symplectic cobordisms.  As before we take $\alpha=\{(\alpha_i,m_i)\}$ and $\beta=\{(\beta_j,n_j)\}$ to be Reeb currents in the same homology class.  Let $C \in \mathcal{M}^J(\alpha,\beta)$.  For each $i$ let $a_i^+$ denote the number of positive ends of $C$ at $\alpha_i$ and let $\{ q_{i,k}^+\}_{k=1}^{a_i^+}$ denote their multiplicities.  Thus $\sum_{k=1}^{a_i^+} q_{i,k}^+=m_i$.   Likewise, for each $j$ let $b_i^-$ denote the number of negative ends of $C$ at $\beta_j$ and let $\{ q_{j,k}^-\}_{k=1}^{b_j^-}$ denote their multiplicities; we have $\sum_{k=1}^{b_j^-} q_{j,k}^-=n_j$.

\begin{theorem}[ECH index inequality]\label{thm:indexineq} Suppose $C \in \mathcal{M}^J(\alpha,\beta)$ is somewhere injective.  Then \[ \mbox{\em ind}(C) \leq I(C) - 2 \delta(C). \] Equality holds only if $\{  q_{i,k}^+ \} = P_{\alpha_i}^+(m_i)$ for each $i$ and $\{  q_{j,k}^- \} = P_{\beta_j}^-(n_j)$ for each $j$. \end{theorem}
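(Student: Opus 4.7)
My plan is to reduce the inequality to the writhe bound of Proposition \ref{prop:writheCZbound}, with the relative adjunction formula serving as the bridge between $I(C)$ and $\operatorname{ind}(C)$. Let $Z = [\pi_Y C] \in H_2(Y,\alpha,\beta)$. Since $C$ is somewhere injective and asymptotically cylindrical, it is embedded except at a finite set of singularities, so Lemma \ref{lem:adjunction}(iii) applies and yields
\[
c_\tau(Z) \;=\; \chi(C) + Q_\tau(Z) + w_\tau(C) - 2\delta(C).
\]

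Next I would compute $I(C) - \operatorname{ind}(C)$ directly. Using $c_\tau(C) = c_\tau(Z)$ together with the definitions
\[
I(C) = c_\tau(Z) + Q_\tau(Z) + CZ^I_\tau(\alpha) - CZ^I_\tau(\beta), \qquad \operatorname{ind}(C) = -\chi(C) + 2c_\tau(Z) + CZ^{ind}_\tau(C),
\]
and invoking the shorthand $CZ^I_\tau(C) = CZ^I_\tau(\alpha) - CZ^I_\tau(\beta)$ from \eqref{CZshorthand}, I get
\[
I(C) - \operatorname{ind}(C) \;=\; -c_\tau(Z) + Q_\tau(Z) + \chi(C) + CZ^I_\tau(C) - CZ^{ind}_\tau(C).
\]
Substituting the relative adjunction formula, the first three terms collapse to $-w_\tau(C) + 2\delta(C)$, so that
\[
I(C) - 2\delta(C) - \operatorname{ind}(C) \;=\; \bigl[CZ^I_\tau(C) - CZ^{ind}_\tau(C)\bigr] - w_\tau(C).
\]

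Applying Proposition \ref{prop:writheCZbound}, the right-hand side is nonnegative, which gives $\operatorname{ind}(C) \leq I(C) - 2\delta(C)$. For the equality statement, I would observe that the global writhe bound is the sum of \emph{local} writhe bounds at each end: for a braid $\zeta_i^+$ around $\alpha_i$ of total multiplicity $m_i$ with partition $\{q_{i,k}^+\}$, one has $w_{\tau_i^+}(\zeta_i^+) \leq \sum_k CZ_\tau(\alpha_i^{q_{i,k}^+}) - CZ_\tau(\alpha_i^{m_i})$ (with the opposite sign at negative ends), and the partition-wise analysis carried out in \cite[\S 5.1]{Hu2} shows that equality forces the partition of $m_i$ to be exactly $P^+_{\alpha_i}(m_i)$, and symmetrically $P^-_{\beta_j}(n_j)$ at negative ends.

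The main conceptual step — the writhe bound itself and its sharpness characterization — is black-boxed as Proposition \ref{prop:writheCZbound}; the bulk of the work here is simply bookkeeping, with the only genuinely delicate point being to verify that the relative adjunction formula of Lemma \ref{lem:adjunction}(iii) applies. This requires knowing that a somewhere injective asymptotically cylindrical curve is embedded outside a discrete singular set, which follows from the standard asymptotic analysis of ends and positivity of intersections for interior double points.
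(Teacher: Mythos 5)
Your reduction is exactly right and matches the standard argument from the cited reference: substitute the singular relative adjunction formula \eqref{relsingeq} into $I(C)-\op{ind}(C)$ and observe that the $c_\tau$, $Q_\tau$, and $\chi$ terms collapse, leaving
\[
I(C) - 2\delta(C) - \op{ind}(C) \;=\; \bigl[CZ^I_\tau(C) - CZ^{ind}_\tau(C)\bigr] - w_\tau(C),
\]
at which point Proposition~\ref{prop:writheCZbound} finishes the inequality. The paper does not reprove this statement---it is quoted as \cite[Theorem 4.15]{Hrevisit}---so your outline is a faithful reconstruction of the argument there. Your remark that the applicability of Lemma~\ref{lem:adjunction}(iii) rests on a somewhere-injective asymptotically cylindrical curve being embedded off a finite singular set is the correct observation, and is the only non-formal hypothesis to check.

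One correction in the equality discussion: the local writhe bound you quote has the two Conley--Zehnder contributions transposed and the wrong shape in the second term. What should appear (at a positive end at $\alpha_i$ with partition $\{q^+_{i,k}\}$ summing to $m_i$) is
\[
w_{\tau_i^+}(\zeta_i^+) \;\leq\; \sum_{k=1}^{m_i} CZ_\tau(\alpha_i^{k}) \;-\; \sum_{k} CZ_\tau(\alpha_i^{q^+_{i,k}}),
\]
i.e.\ the \emph{cumulative} Conley--Zehnder sum $CZ^I_\tau$ minus the Fredholm-index contribution, not $\sum_k CZ_\tau(\alpha_i^{q^+_{i,k}}) - CZ_\tau(\alpha_i^{m_i})$. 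As a sanity check, with a single positive end ($a_i = 1$, $q^+_{i,1}=m_i$) your expression vanishes identically, whereas the correct bound reproduces $(m_i-1)\lfloor m_i\vartheta\rfloor$ from Lemma~\ref{lem:positivewrithe}(b),(d) (up to the combinatorics of the floor sums). Since you only invoke the formula to set up an appeal to the partition analysis of \cite[\S 5.1]{Hu2} for the sharpness characterization, this does not damage the logic, but the stated inequality should be fixed.
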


\subsection{The ECH differential and grading}

If $\alpha$ and $\beta$ are Reeb currents and $k$ is an integer, define
\[ 
\M_k^J(\alpha,\beta) = \{ \CC \in \M^J(\alpha, \beta) \ | \ I(\CC) =k\}.
\]
If $\alpha$ is a generator of the ECH chain complex we define the differential $\partial$ on $ECC_*(Y,\lambda,\Gamma, J)$ by
\[
\partial \alpha = \sum_{\beta} \# (\M_1^J(\alpha,\beta)/\R ) \beta.
\]
The sum is over chain complex generators $\beta$, and `$\#$' denotes the mod 2 count.  Here $\R$ acts on $\M_1^J(\alpha,\beta)$ by translation of the $\R$-coordinate on $\R \times Y$.  By Proposition \ref{lowiprop} the quotient  $\M_1^J(\alpha,\beta)/\R$ is a discrete set.  By the arguments in \cite[\S 5.3]{Hu2} we can conclude that  $\M_1^J(\alpha,\beta)/\R$ is finite.  Finally, because the differential decreases the symplectic action and since any nondegenerate contact form has only finitely many Reeb orbits with bounded symplectic action, we have that the ECH differential is well-defined. 

Proving that $\partial^2=0$ is a substantial undertaking, see \cite{obg1} and \cite{obg2}. Moreover, Taubes' proof in \cite{taubesechswf1} that the homology of $(ECC_*(Y,\lambda,\Gamma,J),\partial)$ is independent of $J$ and of $\lambda$ up to $c_1(\xi)$ requires Seiberg-Witten theory. In light of this invariance we denote this homology by $ECH_*(Y,\xi,\Gamma)$, and call it the \emph{embedded contact homology}, or ECH, of $(Y,\xi,\Gamma)$.

\begin{remark}\label{rem:grading}
Since the ECH index depends on a choice of relative second homology class $Z$, for general $(Y,\xi,\Gamma)$ we can only expect a relative $\Z_d$ grading, where $d$ is the divisibility of the class $c_1(\xi) + 2 \mbox{\em PD}(\Gamma)$ in $H^2(Y;\Z)$ mod torsion. This is because of the index ambiguity property of the ECH index, Theorem \ref{thm:Iproperties} (\ref{property:indexamb}), whereby we set
\[
I(\alpha,\beta):=[I(\alpha,\beta,Z)]\in\Z_d.
\]

In the setting of prequantization bundles, we will have $d=0$, because $c_1(\xi) + 2 \mbox{\em PD}(\Gamma)$ is torsion, as we now explain. If $g=0$, then $H^2(Y)\cong H_1(Y)$ is torsion, as shown in \S\ref{subsec:homology}. If $g>0$, then we will demonstrate in Lemma \ref{lem:indepZ} that the divisibility of $c_1(\xi)$ is zero.  In \S\ref{subsec:PQBgens} we explain why we only need to consider $\Gamma$ which are a multiple of the fiber class. The fiber class is torsion as we will show in \S\ref{subsec:homology}. Thus we can work with a relative $\Z$ grading.

We often refine our relative $\Z$ grading to an absolute $\Z$ grading by setting a chosen generator to have grading zero. In particular, if $\Gamma=[\emptyset]$, we will choose $\emptyset$ to have grading zero.
\end{remark}

\subsection{Filtered ECH}

There is an action filtration on ECH which enables us to compute it via successive approximations, as explained in \S\ref{subsec:PQBgens} and \S\ref{subsec:directlimit}. The \emph{symplectic action} or \emph{length} of an Reeb current $\alpha=\{(\alpha_i,m_i)\}$ is
\[
\mathcal{A}(\alpha):=\sum_im_i\int_{\alpha_i}\lambda.
\]
If $J$ is $\lambda$-compatible and there is a $J$-holomorphic current from $\alpha$ to $\beta$, then $\mathcal{A}(\alpha)\geq\mathcal{A}(\beta)$ by Stokes' theorem, since $d\lambda$ is an area form on such $J$-holomorphic curves. Since $\partial$ counts $J$-holomorphic currents, it decreases symplectic action, i.e.,
\[
\langle\partial\alpha,\beta\rangle\neq0\Rightarrow\mathcal{A}(\alpha)\geq\mathcal{A}(\beta).
\]

Let $ECC_*^L(Y,\lambda,\Gamma;J)$ denote the subgroup of $ECC_*(Y,\lambda,\Gamma;J)$ generated by Reeb currents of symplectic action less than $L$. Because $\partial$ decreases action, it is a subcomplex. It is shown in \cite[Theorem 1.3]{cc2} that the homology of $ECC_*(Y,\lambda,\Gamma;J)$ is independent of $J$, therefore we denote its homology by $ECH_*^L(Y,\lambda,\Gamma)$, which we call \emph{filtered ECH}.

Given $L<L'$, there is a homomorphism
\[
\iota^{L,L'}:ECH_*^L(Y,\lambda,\Gamma)\to ECH_*^{L'}(Y,\lambda,\Gamma),
\]
induced by the inclusion $ECC_*^L(Y,\lambda,\Gamma;J)\hookrightarrow ECC_*^{L'}(Y,\lambda,\Gamma;J)$ and independent of $J$, as shown in \cite[Theorem 1.3]{cc2}. The $\iota^{L,L'}$ fit together into a direct system $(\{ECC_*^L(Y,\lambda,\Gamma)\}_{L\in\R},\iota^{L,L'})$. Because taking direct limits commutes with taking homology, we have
\[
ECH_*(Y,\lambda,\Gamma)=H_*\left(\lim_{L\to\infty}ECC^L_*(Y,\lambda,\Gamma;J)\right)=\lim_{L\to\infty}ECH^L_*(Y,\lambda,\Gamma).
\]

The inclusion maps are compatible with certain cobordism maps as follows. An \textit{exact symplectic cobordism} from $(Y_+,\lambda_+)$ to $(Y_-,\lambda_-)$ is a pair $(X,\lambda)$ where $X$ is a four-manifold with $\partial X=Y_+-Y_-$ and $\lambda$ a one-form on $X$ with $d\lambda$ symplectic and $\lambda|_{Y_\pm}=\lambda_\pm$.

Define
\begin{equation}\label{eqn:ECHdecompH1}
ECH_*(Y,\lambda):=\bigoplus_{\Gamma\in H_1(Y)}ECH_*(Y,\lambda,\Gamma),
\end{equation}
and define $ECH^L_*(Y,\lambda)$ similarly.

Exact symplectic cobordisms induce maps on filtered ECH. The properties of these cobordism maps will allow us to compute the ECH of prequantization bundles via a nondegenerate perturbation of the contact form (up to large action), as discussed in \S\ref{sec:ECHI} and \S\ref{sec:finalcomp}. For now we state the results from \cite{cc2} on these cobordism maps which we will need in order to understand \S\ref{sec:ECHI}-\S\ref{handleslides}; a more in-depth discussion and detour through Seiberg-Witten theory is postponed to \S\ref{sec:finalcomp}. We do not need the following two results in their full strength, and so we paraphrase below. In particular, we do not need the full notion of ``composition" of exact symplectic cobordisms, so we cite the composition property in the following theorem only in the case which we will need. The fact that if $\varepsilon'<\varepsilon$ then $([\varepsilon',\varepsilon]\times Y,(1+s\fp^*H)\lambda)$ (where $s$ is the coordinate on $[\varepsilon',\varepsilon]$) is an exact symplectic cobordism from $(Y,\lambda_\varepsilon)$ to $(Y,\lambda_{\varepsilon'})$ is addressed in the proof of Proposition \ref{prop:directlimitcomputesfiberhomology} in \S\ref{subsec:PQBgens}.

\begin{theorem}[{\cite[Theorem 1.9, Remark 1.10]{cc2}}]\label{thm:cc2cobmaps} Let $\lambda_\pm$ be contact forms on closed, oriented, connected three-manifolds $Y_\pm$, with $(X,\lambda)$ an exact symplectic cobordism from $(Y_+,\lambda_+)$ to $(Y_-,\lambda_-)$. There are maps of ungraded $\Z_2$-modules:
\[
\Phi^L(X,\lambda):ECH_*^L(Y_+,\lambda_+)\to ECH^L_*(Y_-,\lambda_-),
\]
for each real number $L$ such that:

(Inclusion) If $L<L'$ then the following diagram commutes:
\begin{equation}\label{eqn:ECHcd}
\xymatrixcolsep{3pc}\xymatrix{
ECH^L(Y,\lambda_+,\Gamma) \ar[r]^{\Phi^L(X,\lambda)} \ar[d]_{\iota^{L,L'}} & ECH^L(Y,\lambda_-,\Gamma) \ar[d]^{\iota^{L,L'}}
\\ECH^{L'}(Y,\lambda_+,\Gamma) \ar[r]_{\Phi^{L'}(X,\lambda)} & ECH^{L'}(Y,\lambda_-,\Gamma)
}
\end{equation}

(Composition) Given $\varepsilon''<\varepsilon'<\varepsilon$,
\[
\Phi^L([\varepsilon'',\varepsilon]\times Y,(1+s\fp^*H)\lambda)=\Phi^L([\varepsilon'',\varepsilon']\times Y,(1+s\fp^*H)\lambda)\circ\Phi^L([\varepsilon',\varepsilon]\times Y,(1+s\fp^*H)\lambda).
\]

Furthermore, the maps $\Phi^L(X,\lambda)$ respect the decomposition (\ref{eqn:ECHdecompH1}) in the following sense: the image of $ECH_*(Y_+,\lambda_+,\Gamma_+)$ has a nonzero component in $ECH_*(Y_-,\lambda_-,\Gamma_-)$ only if $\Gamma_\pm\in H_1(Y_\pm)$ map to the same class in $H_1(X)$.
\end{theorem}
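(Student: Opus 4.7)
The plan is to bypass the usual transversality obstacles to defining cobordism maps in ECH by detouring through Seiberg--Witten Floer cohomology, using Taubes' isomorphism $ECH_*(Y,\xi,\Gamma)\cong\widehat{HM}^{-*}(Y,\mathfrak{s}_{\xi,\Gamma})$. Direct definition by counting $J$-holomorphic currents in the completed cobordism fails because generic $J$ on $X$ will not achieve transversality for index-zero multiply covered components; no candidate mod $2$ count is a priori well-defined, let alone invariant. On the Seiberg--Witten side, by contrast, the cobordism map is defined by transverse counts of solutions to suitably perturbed SW equations on $X$, which are available for generic abstract perturbations.

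First I would upgrade Taubes' isomorphism to the filtered setting, which is the technical heart of the proof. Perturbing the SW equations on $\R\times Y$ by $r$ times a two-form built from $\lambda$ and letting $r\to\infty$ identifies SW generators with ECH orbit sets; the new content is a quantitative energy estimate showing that, for $r$ large enough depending on $L$, the SW solutions of ``generalized action'' at most $L$ correspond precisely to orbit sets of symplectic action at most $L$, and that this correspondence is a chain map. This yields a filtered isomorphism
\[
ECH_*^L(Y,\lambda,\Gamma)\cong\widehat{HM}^{-*}_L(Y,\mathfrak{s}_{\xi,\Gamma})
\]
naturally compatible with the inclusion maps $\iota^{L,L'}$ on both sides.

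Next I would define $\Phi^L(X,\lambda)$ by transferring the corresponding SW cobordism map through this filtered isomorphism. Extend the symplectic form across $X$ so that it agrees with $d(e^s\lambda_\pm)$ near the ends, choose a compatible spin-c structure, and set up the $r$-perturbed SW equations on $X$ with appropriate abstract perturbations; a signed count of solutions defines the SW cobordism map on filtered $\widehat{HM}^{-*}_L$. The Inclusion property follows from naturality of SW cobordism maps under enlarging the action window. The stated Composition property is the SW composition law for the concatenation $[\varepsilon'',\varepsilon]\times Y=[\varepsilon'',\varepsilon']\times Y\cup [\varepsilon',\varepsilon]\times Y$, proved by a standard neck-stretching argument since the cobordism form has product structure. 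The $H_1$-compatibility reflects the fact that spin-c structures on $X$ restrict to compatible ones on $Y_\pm$, and that $\Gamma\mapsto\mathfrak{s}_{\xi,\Gamma}$ via $c_1(\mathfrak{s}_{\xi,\Gamma})=c_1(\xi)+2\op{PD}(\Gamma)$ is natural under $H_1(Y_\pm)\to H_1(X)$.

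The principal obstacle is the filtered Taubes isomorphism itself: one needs energy estimates for SW solutions on $\R\times Y$ sharp enough to match ECH symplectic action with error vanishing as $r\to\infty$, together with uniform versions for solutions on the cobordism $X$ whose ends are restricted to generators of action at most $L$. Once these estimates are secured, the construction of $\Phi^L(X,\lambda)$ and the verification of its Inclusion, Composition, and $H_1$-decomposition properties reduce essentially formally to the corresponding statements on the Seiberg--Witten side.
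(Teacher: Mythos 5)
Your outline correctly reproduces the strategy of Hutchings--Taubes in \cite{cc2}, which is exactly the source the paper cites for this theorem rather than reproving it: transversality obstructions block a direct current-counting definition of $\Phi^L(X,\lambda)$, so one constructs the filtered isomorphism $ECH^L_*(Y,\lambda,\Gamma)\cong\widehat{HM}^{-*}_L(Y,\lambda,\mathfrak{s}_{\xi,\Gamma})$ via energy estimates for large $r$ and then transports the Seiberg--Witten cobordism map, with Inclusion, Composition, and the $H_1$-decomposition all inherited from the SW side. This is the same route the paper tacitly relies on and whose ingredients it reviews in \S\ref{subsec:directlimit} (see Lemmas \ref{lem:CMLsubcx}, \ref{lem:HMLcobmapdef}, \ref{lem:HMLcomp} and the isomorphism (\ref{eqn:filterediso})), so there is nothing to add beyond confirming the match.
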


In order to understand the impact of these cobordism maps on the chain level in certain well-behaved cobordisms, we will also use the simplification expressed in the following lemma, which encapsulates two technical lemmas and a definition from \cite{cc2}.

\begin{lemma}[{\cite[Lemma 3.4 (d), Lemma 5.6, and Definition 5.9]{cc2}}]\label{lem:nicecobmap} Given a real number $L$, let $\lambda_t$ and $J_t$ be smooth 1-parameter families of contact forms on $Y$ and $\lambda_t$-compatible almost complex structures such that
\begin{itemize}
\item The contact forms $\lambda_t$ are of the form $f_t\lambda_0$, where $f:[0,1]\times Y\to\R_{>0}$ satisfies $\frac{\partial f}{\partial_t}<0$ everywhere.
\item All Reeb orbits of each $\lambda_t$ of length less than $L$ are nondegenerate, and there are no Reeb currents of $\lambda_t$ of action exactly $L$ (This condition is referred to in \cite{cc2} as $\lambda_t$ being ``$L$-nondegenerate.")
\item Near each Reeb orbit of length less than $L$ the pair $(\lambda_t,J_t)$ satisfies the conditions of \cite[(4-1)]{taubesechswf1}. (This condition is referred to in \cite{cc2} as $(\lambda_t,J_t)$ being ``$L$-flat.")
\item For Reeb currents of action less than $L$, the ECH differential $\partial$ is well-defined on admissible Reeb currents of action less than $L$ and satisfies $\partial^2=0$. (This is a condition on the genericity of $J_t$ described in \cite{obg2}, and referred to in \cite{cc2} as $J_t$ being ``$ECH^L$-generic.")
\end{itemize}
Then $([-1,0]\times Y,\lambda_{-t})$ is an exact symplectic cobordism from $(Y,\lambda_0)$ to $(Y,\lambda_1)$, and for all $\Gamma\in H_1(Y)$, the cobordism map $\Phi^L([-1,0]\times Y,\lambda_{-t})$ agrees with the map
\[
ECC^L_*(Y,\lambda_0,\Gamma;J_0)\to ECC^L_*(Y,\lambda_1,\Gamma;J_1),
\]
determined by the canonical bijection on generators.

\end{lemma}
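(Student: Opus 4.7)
The plan is to prove the two claims in turn: first, that the prescribed data assembles into an exact symplectic cobordism, and second, that the induced filtered ECH cobordism map agrees with the canonical chain map coming from the bijection of generators.

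For the cobordism structure, consider the $1$-form $\Lambda$ on $[-1,0]_s\times Y$ defined by $\Lambda_{(s,y)} := f_{-s}(y)\,\lambda_0$ (pulled back along the projection to $Y$), so that $\Lambda|_{\{0\}\times Y}=\lambda_0$ and $\Lambda|_{\{-1\}\times Y}=\lambda_1$. A direct computation yields
\[
d\Lambda = -\tfrac{\partial f}{\partial t}\Big|_{t=-s}\, ds\wedge\lambda_0 \;+\; f_{-s}\, d\lambda_0,
\]
and hence
\[
(d\Lambda)\wedge(d\Lambda) = -2\, f_{-s}\, \tfrac{\partial f}{\partial t}\Big|_{t=-s}\, ds\wedge\lambda_0\wedge d\lambda_0.
\]
Because $f>0$ and $\partial f/\partial t<0$ by hypothesis, and $\lambda_0\wedge d\lambda_0$ is a volume form on $Y$ by the contact condition, this $4$-form is positive with respect to the product orientation. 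Thus $d\Lambda$ is symplectic and $([-1,0]\times Y,\Lambda)$ is an exact symplectic cobordism from $(Y,\lambda_0)$ to $(Y,\lambda_1)$.

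For the second claim, I first describe the canonical bijection more precisely. Since each $\lambda_t$ is $L$-nondegenerate and $\lambda_t=f_t\lambda_0$ with $\partial_t f<0$, the action $\int_{\gamma_t}\lambda_t$ of any continuously varying family of Reeb orbits $\gamma_t$ is strictly decreasing in $t$. By $L$-nondegeneracy no Reeb orbit crosses the threshold $L$ during the homotopy, so every orbit of $\lambda_0$ of action less than $L$ persists uniquely as an orbit of $\lambda_t$ of action less than $L$ for all $t$, with hyperbolic/elliptic type locally constant. This yields the canonical bijection between generators of $ECC^L_*(Y,\lambda_0,\Gamma;J_0)$ and $ECC^L_*(Y,\lambda_1,\Gamma;J_1)$ preserving admissibility, and hence the canonical chain map referred to in the statement. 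To identify $\Phi^L([-1,0]\times Y,\Lambda)$ with this map, I would appeal to the construction of cobordism maps in \cite{cc2} via Taubes' isomorphism with Seiberg-Witten Floer cohomology: the $L$-flat hypothesis forces the perturbed Seiberg-Witten equations to reduce, near every Reeb orbit of length less than $L$, to the explicit local model of \cite[(4-1)]{taubesechswf1}, so that the low-energy Seiberg-Witten solutions matching generators of $ECC^L_*$ are canonically identified for every $(\lambda_t,J_t)$, and the $ECH^L$-genericity of $J_t$ ensures that $\partial^2=0$ so the candidate chain map is well defined.

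Using the composition property of Theorem \ref{thm:cc2cobmaps}, I would then subdivide $[-1,0]$ into subintervals so short that, on each subcobordism, the Seiberg-Witten continuation cannot carry a low-energy solution to anything other than its continuation in the family; the full cobordism map is then the composition of these subinterval identities and therefore equals the canonical chain map. The main obstacle is this last step: a quantitative estimate is needed showing that on a sufficiently fine subdivision the Seiberg-Witten perturbation is small compared to the spectral gap of the linearized equations at the solutions corresponding to generators, ruling out any spurious interpolation. This is exactly the content bundled into \cite[Lemma 3.4(d), Lemma 5.6, Definition 5.9]{cc2} and rests on the $L$-flat local model from \cite[(4-1)]{taubesechswf1}; with it in hand, the composition property of Theorem \ref{thm:cc2cobmaps} assembles the subinterval identifications into the desired global statement.
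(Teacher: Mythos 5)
The paper does not prove this lemma: as the surrounding text says, it is a consolidated restatement of \cite[Lemma 3.4(d), Lemma 5.6, and Definition 5.9]{cc2} and is cited as a black box. There is no in-paper proof to compare against, so your reconstruction is doing more work than the paper itself.

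Your cobordism computation is essentially right, though the displayed formula for $d\Lambda$ drops the term $d_Y f_{-s}\wedge\lambda_0$ coming from the $Y$-dependence of $f$; since that form contains $\lambda_0$ as a factor it contributes nothing to $(d\Lambda)\wedge(d\Lambda)$, so the positivity conclusion stands, but the intermediate expression should include it. Your description of the canonical bijection (strict monotone decrease of action along the homotopy, persistence of nondegenerate orbits, no crossing of the level $L$) is correct and more explicit than the paper's. Beyond that you defer to \cite{cc2}, which is appropriate since that is exactly what the paper does. One point worth sharpening in your sketch: a continuation map being ``close to the identity'' after subdivision does not by itself give ``equal to the identity'' at the chain level; the real content of the cited lemmas is an a priori constraint, coming from the $L$-flat structure and the energy/action monotonicity in a product cobordism, forcing every low-energy Seiberg--Witten instanton in a sufficiently short slab to be the trivial continuation of its endpoint solution. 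That rigidity, not merely a spectral-gap smallness estimate, is what should be invoked in the last step.
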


\section{The ECH index for prequantization bundles}\label{sec:ECHI}

In this section we compute the ECH index for certain Reeb currents in perturbations of prequantization bundles. The canonical contact form associated to a prequantization bundle is degenerate, so it is not possible to compute its ECH directly. Instead we introduce the perturbation
\begin{equation}\label{eqn:lambdapert}
\lambda_\varepsilon=(1+\varepsilon\fp^*H)\lambda,
\end{equation}
where $H$ is a perfect Morse function on the base $\Sigma_g$ which is $C^2$ close to 1. As explained in \S\ref{subsec:pertReeb}, we have:
\begin{lemma}\label{lem:efromL}
Fix a Morse function $H$ such that $H$ is ${C^2}$ close to 1.
\begin{enumerate}
\item[(i)] For each $L>0$, there exists $\varepsilon(L)>0$ such that for all $\varepsilon<\varepsilon(L)$, all Reeb orbits with $\mathcal{A}(\gamma) < L$ are nondegenerate and project to critical points of $H$, where $\mathcal{A}$ is computed using $\lambda_\varepsilon$.
\item[(ii)] The action of a Reeb orbit $\gamma_p^k$ of $R_\varepsilon$ over a critical point $p$ of $H$ is proportional to the length of the fiber, namely
\[
\mathcal{A}(\gamma_p^k) = \int_{\gamma_p^k} \lambda_\varepsilon = 2k \pi (1+\varepsilon \fp^*H).
\]
\end{enumerate}
\end{lemma}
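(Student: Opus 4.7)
Part (ii) is a direct computation: on the fiber over $p\in\op{Crit}(H)$, $\fp^*H$ is constant with value $H(p)$, so $\lambda_\varepsilon$ restricts to $(1+\varepsilon H(p))\lambda$ there. Since $\lambda(R)=1$ and the flow of $R$ is the $S^1$-action of period $2\pi$, a single traversal of the fiber has $\lambda$-length $2\pi$, whence $\mathcal{A}(\gamma_p^k)=\int_{\gamma_p^k}\lambda_\varepsilon=2k\pi(1+\varepsilon\fp^*H)$.

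For (i) I would argue in two steps. From the projection formula $d\fp(R_\varepsilon)=\varepsilon X_H/(1+\varepsilon\fp^*H)^2$, the projection $\fp\circ\gamma$ of any closed orbit of $R_\varepsilon$ is either constant at a zero of $X_H$, forcing $\fp\circ\gamma\in\op{Crit}(H)$, or traces out after reparametrization a non-constant closed orbit of $X_H$. The crux of the first case is a uniform lower bound $T_0>0$ on the periods of non-constant closed orbits of $X_H$ on $\Sigma_g$: near an elliptic critical point $p$ the linearization $DX_H(p)$ has imaginary eigenvalues $\pm i\omega(p)$ with $\omega(p)>0$, bounding nearby orbit periods near $2\pi/\omega(p)$; near a hyperbolic critical point the real nonzero spectrum of $DX_H(p)$ rules out nearby closed orbits; and on the compact complement of a neighborhood of $\op{Crit}(H)$, $X_H$ is nowhere zero, so compactness yields a period bound. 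Using $|H|_{C^2}<1$, for $\varepsilon<\tfrac{1}{2}$ the reparametrization ratio $ds/dt=\varepsilon/(1+\varepsilon\fp^*H)^2$ is at most $4\varepsilon$, so in the non-constant case the orbit period satisfies $T\geq T_0/(4\varepsilon)$. Choosing $\varepsilon(L)<T_0/(4L)$ forces every orbit of action below $L$ to be a fiber cover $\gamma_p^k$.

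Having reduced to $\gamma=\gamma_p^k$, part (ii) bounds the multiplicity by $k_{\max}\eqdef\lfloor L/\pi\rfloor$, independent of small $\varepsilon$. To check nondegeneracy I would compute the linearized return map $P_{\gamma_p^k}$ on $\xi_{\gamma_p^k(0)}$ using the identification $\xi\cong\fp^*T\Sigma_g$ coming from the horizontal splitting of the connection $A$: $S^1$-invariance of $A$ makes the return map of the unperturbed flow of $R=X$ equal the identity, so only the base contribution survives, and under $d\fp$ the linearized $R_\varepsilon$-flow along $\gamma_p^k$ is intertwined with the linearization at $p$ of $\varepsilon X_H/(1+\varepsilon\fp^*H)^2$ run for time $T=2k\pi(1+\varepsilon H(p))$. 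This yields
\[
P_{\gamma_p^k}\;=\;\exp\!\left(\frac{2k\pi\varepsilon}{1+\varepsilon H(p)}\,DX_H(p)\right).
\]
For elliptic $p$ the eigenvalues $e^{\pm 2\pi i k\varepsilon\omega(p)/(1+\varepsilon H(p))}$ differ from $1$ whenever $k\varepsilon\omega(p)/(1+\varepsilon H(p))\notin\Z$, which holds for all $k\leq k_{\max}$ as soon as $\varepsilon<2\pi/(k_{\max}\max_p\omega(p))$; for hyperbolic $p$ the eigenvalues are real and not equal to $1$ for any $\varepsilon>0$. Taking $\varepsilon(L)$ to be the minimum of the above bounds completes the proof. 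The main technical obstacle is making the return-map identification rigorous: it requires tracking carefully the interaction between the horizontal distribution $\xi$, $S^1$-invariance of $A$, and the implicit time-reparametrization of the Poincar\'e section inside $\xi_{\gamma_p^k(0)}$. This is the same computation behind the Conley-Zehnder identity $\CZ_\tau(\gamma_p^k)=\op{ind}_p(H)-1$ cited from \cite[\S 4]{jo2} and \cite{vknotes} in the outline.
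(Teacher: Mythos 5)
Your proof is correct and, for the main claim of (i) and all of (ii), follows essentially the same route as the paper: part (ii) is the same one-line computation, and the paper's argument for (i) also hinges on the observation that closed orbits of $X_H$ have period bounded away from zero (because $X_H$ is autonomous on a compact surface) and that the reparametrization factor $\varepsilon/(1+\varepsilon\fp^*H)^2$ forces the corresponding $R_\varepsilon$-orbits to have period, hence action, of order $1/\varepsilon$.

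Where you genuinely diverge from the paper is in the nondegeneracy step. The paper dispatches this by citing the proof of Theorem 13 in Appendix A of \cite{ABW}, whereas you compute the linearized return map directly, obtaining $P_{\gamma_p^k}=\exp\bigl(\tfrac{2k\pi\varepsilon}{1+\varepsilon H(p)}\,DX_H(p)\bigr)$ in the constant trivialization and reading off nondegeneracy from the spectrum of $DX_H(p)$. This explicit route is a legitimate alternative and is more self-contained; it also makes the elliptic/hyperbolic dichotomy of Lemma \ref{lem:orbitseh} visible at the source, rather than via the Conley--Zehnder computation. The trade-off, which you correctly flag, is that it requires a careful justification of the identification between the linearized $R_\varepsilon$-flow on $\xi$ and the linearization of the reparametrized $X_H$-flow on $T_p\Sigma_g$; the citation to \cite{ABW} sidesteps exactly this bookkeeping. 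Two small points: the threshold $\varepsilon<2\pi/(k_{\max}\max_p\omega(p))$ for the elliptic case has the wrong constant --- one actually needs $k\varepsilon\omega(p)/(1+\varepsilon H(p))<1$, which with $|H|<1$ and $\varepsilon<\tfrac12$ gives something like $\varepsilon<1/(2k_{\max}\max_p\omega(p))$ --- but the logic is unaffected. Also, your period lower bound $T_0$ deserves one more sentence: a sequence of closed $X_H$-orbits with $T_n\to 0$ must accumulate at a zero of $X_H$, and you have ruled out short nearby orbits at both elliptic and hyperbolic critical points, which is the compactness argument that closes the loop.
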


By Lemma \ref{lem:efromL} (i), it is possible to choose $\varepsilon(L)$ based on $L$ so that the embedded orbits contributing to the generators of $ECC^L_*(Y,\lambda_{\varepsilon(L)},\Gamma;J)$ consist only of fibers above critical points of $H$. In the proof of Lemma \ref{lem:efromL} in \S\ref{subsec:pertReeb} we show that $\varepsilon(L)\sim\frac{1}{L}$. Therefore, by Lemma \ref{lem:efromL} (ii), we can choose $L$ large enough so that the generators of $ECC^L_*(Y,\lambda_{\varepsilon(L)}),\Gamma;J)$ include any given Reeb current whose embedded Reeb orbits are fibers above critical points.

To capture all these filtered complexes, we prove the following result in \S\ref{subsec:PQBgens}:

\begin{proposition}\label{prop:directlimitcomputesfiberhomology} With $Y,\lambda$, and $\varepsilon(L)$ as discussed above, for any $\Gamma$, there is a direct system consisting of all the $ECH^L_*(Y,\lambda_{\varepsilon(L)},\Gamma)$. The direct limit $\lim_{L\to\infty}ECH^L_*(Y,\lambda_{\varepsilon(L)},\Gamma)$ is the homology of the chain complex generated by Reeb currents $\{(\alpha_i,m_i)\}$ where the $\alpha_i$ are fibers above critical points of $H$ and $\sum_im_i[\alpha_i]=\Gamma$.
\end{proposition}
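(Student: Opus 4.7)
The plan is to assemble the direct system from the cobordism and inclusion maps of Theorem \ref{thm:cc2cobmaps} and then identify the colimit at the chain level using Lemma \ref{lem:nicecobmap}.

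For $L<L'$, Lemma \ref{lem:efromL} lets me arrange $\varepsilon(L')\leq\varepsilon(L)$, and the one-form $(1+s\fp^*H)\lambda$ on $[\varepsilon(L'),\varepsilon(L)]\times Y$ yields the exact symplectic cobordism $X_{L,L'}$ from $(Y,\lambda_{\varepsilon(L)})$ to $(Y,\lambda_{\varepsilon(L')})$ already used in the Composition property of Theorem \ref{thm:cc2cobmaps}. Combining its cobordism map with the inclusion map defines
\[
f_{L,L'}:=\iota^{L,L'}\circ\Phi^L(X_{L,L'}):ECH^L_*(Y,\lambda_{\varepsilon(L)},\Gamma)\longrightarrow ECH^{L'}_*(Y,\lambda_{\varepsilon(L')},\Gamma).
\]
For $L<L'<L''$ the transitivity $f_{L',L''}\circ f_{L,L'}=f_{L,L''}$ follows from three ingredients: the Composition property gives $\Phi^{L''}(X_{L,L''})=\Phi^{L''}(X_{L',L''})\circ\Phi^{L''}(X_{L,L'})$, the Inclusion property of Theorem \ref{thm:cc2cobmaps} commutes $\iota$ past $\Phi$, and $\iota^{L',L''}\circ\iota^{L,L'}=\iota^{L,L''}$ is just functoriality of inclusion. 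Hence $\{f_{L,L'}\}$ assembles into a direct system.

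To identify the direct limit I pass to the chain level. By Lemma \ref{lem:efromL}(ii) any orbit set $\alpha=\{(\gamma_{p_i}^{k_i},m_i)\}$ over critical points has action $2\pi\sum_i m_i k_i(1+\varepsilon H(p_i))$, which is uniformly bounded as $\varepsilon\to 0$; thus for $L$ large enough $\alpha$ is a generator of $ECC_*^L(Y,\lambda_{\varepsilon(L)},\Gamma;J)$. Invoking Lemma \ref{lem:nicecobmap}, the chain-level map induced by $\Phi^L(X_{L,L'})$ is the canonical bijection on these generators, and this bijection intertwines the action-filtered differentials $\partial^L$ across varying $L$. Hence the direct system of chain complexes is a filtered union whose colimit is precisely the chain complex described in the proposition, and since filtered colimits of $\Z_2$-modules commute with homology, the direct limit of the $ECH^L_*$ equals the homology of this colimit complex.

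The main obstacle is verifying the hypotheses of Lemma \ref{lem:nicecobmap} for the interpolating family $\lambda_t$ between $\lambda_{\varepsilon(L)}$ and $\lambda_{\varepsilon(L')}$, most delicately the strict monotonicity $\partial_t f_t<0$: the naive linear interpolation $f_t=1+((1-t)\varepsilon(L)+t\varepsilon(L'))\fp^*H$ has $\partial_t f_t=(\varepsilon(L')-\varepsilon(L))\fp^*H$, which vanishes at the critical points of $H$ and changes sign with $H$. I expect to resolve this by replacing the linear family with $\lambda_t=c(t)(1+\varepsilon(t)\fp^*H)\lambda$ for a strictly decreasing positive function $c(t)$ whose derivative is large enough in absolute value relative to $\|H\|_{C^0}\cdot|\varepsilon'(t)|$ to force $\partial_t f_t<0$ pointwise, absorbing the bounded rescaling of the action filtration that $c$ introduces into a mild enlargement of $L$; alternatively one can decompose the cobordism into shorter pieces on which the sign condition can be enforced by hand. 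The remaining hypotheses of $L$-flatness and $ECH^L$-genericity for $(\lambda_t,J_t)$ are arranged by the standard generic perturbations of $J_t$ in tubular neighborhoods of each Reeb orbit of action less than $L$.
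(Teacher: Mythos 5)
Your approach is essentially the paper's: build the direct system from $\Phi^L(X_{\varepsilon,\varepsilon'})$ and $\iota^{L,L'}$, check transitivity via the commutative diagrams and Composition property, then identify the colimit at the chain level via Lemma~\ref{lem:nicecobmap} and the fact that filtered colimits of $\Z_2$-modules commute with taking homology. All of that is correct.

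The monotonicity issue you flag is in fact a genuine subtlety that the paper's written proof does not resolve carefully: the paper asserts that the first bullet of Lemma~\ref{lem:nicecobmap} ``follows immediately from the construction,'' but as you compute, $\partial_t f_t$ is proportional to $\fp^*H$ and therefore changes sign. In fact the issue appears already one step earlier: the paper's parenthetical claim that $X_{\varepsilon,\varepsilon'}=([\varepsilon',\varepsilon]\times Y,(1+s\fp^*H)\lambda)$ is exact symplectic because $(d\lambda_{\varepsilon,\varepsilon'})^2$ is a positive multiple of $ds\wedge\lambda\wedge d\lambda$ does not hold as stated, since $(d\lambda_{\varepsilon,\varepsilon'})^2 = 2(\fp^*H)(1+s\fp^*H)\,ds\wedge\lambda\wedge d\lambda$ changes sign wherever $H$ does. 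So the sign problem threatens not only Lemma~\ref{lem:nicecobmap} but the very existence of the cobordism map $\Phi^L(X_{\varepsilon,\varepsilon'})$ that both you and the paper use to build the direct system.

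Your proposed remedy of multiplying by a strictly decreasing $c(t)$ is the right idea, but you should be explicit that it replaces the boundary contact forms $\lambda_{\varepsilon(L)},\lambda_{\varepsilon(L')}$ by rescalings $c(0)\lambda_{\varepsilon(L)}, c(1)\lambda_{\varepsilon(L')}$, so the resulting cobordism map does not directly land in $ECH^{L'}_*(Y,\lambda_{\varepsilon(L')},\Gamma)$. To make this precise one should use the canonical isomorphism $ECH^{L}_*(Y,c\lambda,\Gamma)\cong ECH^{L/c}_*(Y,\lambda,\Gamma)$ induced by the rescaling (same Reeb orbits, actions multiplied by $c$), choose the constants so that the cumulative product of rescaling factors stays bounded away from $0$ and $\infty$ as $L\to\infty$ (this is where $\varepsilon(L)\sim 1/L$ and $|H|_{C^0}<1$ enter), and check that the resulting filtration levels still exhaust. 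Alternatively one can decompose $[\varepsilon',\varepsilon]$ into many short intervals on each of which the needed rescaling factor is close to $1$, which amounts to the same bookkeeping. ``Absorbing the bounded rescaling into a mild enlargement of $L$'' is the right instinct but skips exactly this bookkeeping, which is where the proof has to be completed.
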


Proposition \ref{prop:directlimitcomputesfiberhomology} will allow us to prove Theorem \ref{thm:mainthm} in \S\ref{sec:finalcomp}, where we will relate the direct limit to the ECH of the original prequantization bundle.  From the Conley-Zehnder index computations in in \S\ref{subsec:CZ} we have:


\begin{lemma}\label{lem:orbitseh} The fibers above the index zero and two critical points of $H$ are embedded elliptic orbits, while the fibers above the index one critical points of $H$ are embedded positive hyperbolic orbits.
\end{lemma}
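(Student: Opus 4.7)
The plan is to combine three ingredients already assembled in the excerpt: the explicit Conley-Zehnder index formula $\CZ_\tau(\gamma_p)=\op{ind}_p(H)-1$ from the introduction, computed with respect to the constant trivialization $\tau$ of $\xi=\fp^*T\Sigma_g$; the parity principle recalled in Section \ref{cz-sec}, that a nondegenerate Reeb orbit has even Conley-Zehnder index precisely when it is positive hyperbolic; and the observation that, because $\fp$ is a principal $S^1$-bundle and $\varepsilon$ is small, the linearized return map $P_{\gamma_p}$ is $C^0$-close to the identity, which rules out negative real eigenvalues.

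First I would verify embeddedness of the simple fiber $\gamma_p$ over $p\in\op{Crit}(H)$. Since $X_H(p)=0$, the formula for the perturbed Reeb vector field reduces along $\fp^{-1}(p)$ to $R_\varepsilon|_{\fp^{-1}(p)}=R/(1+\varepsilon H(p))$, a positive multiple of the original fiber-generating vector field. Hence $\fp^{-1}(p)$ is a closed orbit of $R_\varepsilon$ of period $2\pi(1+\varepsilon H(p))$, matching Lemma \ref{lem:efromL}(ii) for $k=1$; as a once-traversed fiber of an $S^1$-bundle it is automatically embedded.

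Next I would apply the Conley-Zehnder formula to obtain $\CZ_\tau(\gamma_p)=\op{ind}_p(H)-1$. If $\op{ind}_p(H)=1$, then $\CZ_\tau(\gamma_p)=0$ is even, so by the parity principle $\gamma_p$ is positive hyperbolic. If $\op{ind}_p(H)\in\{0,2\}$, then $\CZ_\tau(\gamma_p)=\mp 1$ is odd, so $\gamma_p$ is either elliptic or negative hyperbolic, and the two possibilities have to be distinguished by other means.

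The main point requiring separate justification is therefore the exclusion of the negative hyperbolic possibility in the index-$0,2$ case, which is where the small-perturbation hypothesis enters. At $\varepsilon=0$ the unperturbed return map along any fiber is the identity on $\xi_p$, which is precisely the source of degeneracy of the unperturbed contact form. Linearizing the expression $R_\varepsilon=R/(1+\varepsilon\fp^*H)+\varepsilon\tilde X_H/(1+\varepsilon\fp^*H)^2$ along $\gamma_p$, the contribution of the $\tilde X_H$ term at $p$ is identified via $\xi_p\cong T_p\Sigma_g$ with the bounded operator $d(X_H)_p$; hence $P_{\gamma_p}=\mathrm{id}+O(\varepsilon)$ and its eigenvalues lie within $O(\varepsilon)$ of $1\in\C$. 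For $\varepsilon$ sufficiently small they therefore cannot be negative real, excluding the negative hyperbolic case and leaving positive hyperbolic when $\op{ind}_p(H)=1$ and elliptic when $\op{ind}_p(H)\in\{0,2\}$, as claimed.
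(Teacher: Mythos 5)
Your proposal is correct and follows essentially the same route as the paper's proof: the Conley--Zehnder formula $\CZ_\tau(\gamma_p)=\op{ind}_p(H)-1$ from Lemma \ref{lempre}, the parity principle to separate the hyperbolic from the elliptic cases, and the observation that the linearized return map of a small perturbation of $\lambda$ is close to the identity and hence cannot have negative real eigenvalues. The explicit check that each simple fiber over a critical point is an embedded closed orbit of $R_\varepsilon$, and the slightly sharper use of parity in the $\op{ind}_p(H)=1$ case (even CZ directly forcing positive hyperbolic, without needing the no-negative-hyperbolic step there), are minor streamlinings but do not change the argument.
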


\begin{remark}[Notation] {For technical reasons}, we need to assume $H$ is perfect. We denote the corresponding embedded Reeb orbits by $e_-, e_+$, and $h_i$, respectively, and throughout the rest of the paper consider generators of the form $e_-^{m_-}h_1^{m_1}\cdots h_{2g}^{m_{2g}}e_+^{m_+}$ where $m_\pm,m_i\in\Z_\geq0$, denoting the orbit set
\[
\{(e_-,m_-),(h_1,m_1),\dots,(h_{2g},m_{2g}),(e_+,m_+)\}.
\]
When specifying a particular orbit set with multiplicative notation, we will follow the convention that $m_\pm, m_i>0$, omitting the term $\gamma^m$ if $m=0$. When using multiplicative notation to denote an unspecified or general orbit set, however, we will allow $m_\pm,m_i=0$, and it will correspond to the orbit set in the usual notation with the pair $(e_\pm,m_\pm)$ or $(h_i,m_i)$ removed.
\end{remark}

Given Reeb currents $\alpha=e_-^{m_-}h_1^{m_1}\cdots h_{2g}^{m_{2g}}e_+^{m_+}$ and $\beta=e_-^{n_-}h_1^{n_1}\cdots h_{2g}^{n_{2g}}e_+^{n_+}$, let
\[
d=\frac{M-N}{|e|},
\]
where $M:=m_-+m_1+\cdots+m_{2g}+m_+$ and similarly $N:=n_-+n_1+\cdots+n_{2g}+n_+$. (Note that $d$ corresponds to the \textit{degree} of any curves counted in $\langle\partial\alpha,\beta\rangle$, as proved in \S\ref{degree}.) We will prove

\begin{proposition}\label{prop:indexcalc} Let $(Y,\lambda)$ be a prequantization bundle over a surface $\Sigma_g$ with Euler class $e\in\Z_{<0}$. The ECH index in $ECH_*^L(Y,\lambda_{\varepsilon(L)},\Gamma)$ satisfies the following formula for any $\Gamma, L$, and Reeb current $\alpha$ and $\beta$.
\begin{equation}\label{eqn:indexformula}
I(\alpha,\beta)=\chi(\Sigma_g)d-d^2e+2dN+m_+-m_--n_++n_-.
\end{equation}
\end{proposition}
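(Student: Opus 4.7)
The plan is to unpack $I(\alpha,\beta,Z) = c_\tau(Z) + Q_\tau(Z) + CZ^I_\tau(\alpha) - CZ^I_\tau(\beta)$ term by term, using the constant trivialization $\tau$ of $\xi$ induced by the identification $\xi \cong \fp^*T\Sigma_g$ coming from the connection $A$. Independence from the choice of $Z$ is handled by the Index Ambiguity Formula (Theorem \ref{thm:Iproperties}(ii)) together with the observation, recalled just before Proposition \ref{prop:indexcalc}, that $c_1(\xi) + 2\,\mathrm{PD}(\Gamma)$ is torsion for prequantization bundles; and the existence of $Z$ forces $d = (M-N)/(-e)$ to be a nonnegative integer, since the fiber class has order $-e$ in $H_1(Y;\Z)$.

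The Conley-Zehnder and Chern terms are the easier ingredients. Using the Morse-Bott formula $CZ_\tau(\gamma_p^k) = \mathrm{ind}_p(H) - 1$ recalled in \S\ref{ssec:introoutline}, together with Lemma \ref{lem:orbitseh}, one gets $CZ_\tau(e_+^k) = 1$, $CZ_\tau(h_i^k) = 0$, and $CZ_\tau(e_-^k) = -1$ uniformly for iterates in the action-filtered regime; summing via \eqref{CZ^I} over the orbits of $\alpha$ and $\beta$ yields $CZ^I_\tau(\alpha) - CZ^I_\tau(\beta) = m_+ - m_- - n_+ + n_-$. For the Chern number, since $\tau$ extends globally, I would pull back a generic section of $T\Sigma_g$ to a section of $\xi$ over a representative $S$ of $Z$. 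On each fiber component of $\partial S$ this pulled-back section is nonvanishing with winding zero in $\tau$, so it is admissible, and $\fp_*[S] = d[\Sigma_g]$ gives $c_\tau(Z) = d\chi(\Sigma_g)$.

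The main work, and the expected obstacle, is the relative intersection pairing, with target value $Q_\tau(Z) = -d^2 e + 2dN$. Following the blueprint of \cite[\S3]{farris}, the idea is to take a nice admissible representative $S$ of $Z$ which projects to a degree-$d$ branched multisection of $\fp\colon Y \to \Sigma_g$ whose interior is embedded and disjoint from the fibers above the critical points of $H$, and then apply Lemma 3.9 of \cite{Hrevisit} to reduce to $Q_\tau(Z) = -w_\tau(S) - \eta_\tau(S)$. The key local computation is to control, near each relevant fiber $\gamma_p$, how the braids cut out by $S$ wind with respect to $\tau$: the curvature condition $dA = i\fp^*\omega$ forces parallel transport of $\tau$ around a small loop in $\Sigma_g$ to twist the fiber at a rate set by the enclosed area, so $d$ parallel sheets of a multisection should wind around one another at an overall rate quantified by $d^2 e$, producing the quadratic contribution $-d^2 e$; the linear $2dN$ term should then arise from the linking of the $N$ negative strands of $S$ with the bulk multisection. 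The asymmetry between $M$ and $N$ is only a convention, since $M - N = -ed$ implies the equivalent formulations $-d^2 e + 2dN = d^2 e + 2dM = d(M+N)$; which form is most natural may depend on how one chooses $S$.

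The technical crux is the local-to-global passage for the writhe and winding: one must fix an explicit local model for $S$ near each critical fiber exhibiting it as a meromorphic multisection of the complex line bundle associated with $Y$, track how its normal strands wind under parallel transport in $\tau$, and then assemble global values of $w_\tau(S)$ and $\eta_\tau(S)$ from these pieces. Farris's treatment \cite[\S3]{farris} outlines this strategy but leaves details to be supplied, which is what Section \ref{sec:ECHI} is organized to provide. Once $w_\tau(S)$ and $\eta_\tau(S)$ are in hand, adding $c_\tau(Z) + Q_\tau(Z) + CZ^I_\tau(\alpha) - CZ^I_\tau(\beta)$ yields \eqref{eqn:indexformula}.
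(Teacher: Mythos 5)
Your decomposition of $I$ into $c_\tau + Q_\tau + CZ^I$ and the treatment of the Conley--Zehnder and Chern class terms are essentially what the paper does (the paper derives $c_\tau(Z)=\chi(\Sigma_g)d$ by splitting its surface into a piece over a graph, contributing zero, and $d$ copies of a surface pushing forward to $[\Sigma_g]$, but your $\fp_*[S]=d[\Sigma_g]$ argument reaches the same place). The independence-from-$Z$ argument via Theorem \ref{thm:Iproperties}(ii) is likewise the paper's Lemma \ref{lem:indepZ}.

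The gap is in the $Q_\tau$ computation, which you correctly flag as the crux but do not carry out, and the route you propose does not quite get off the ground. Lemma 3.9 of \cite{Hrevisit} provides the formula $Q_\tau(Z)=-w_\tau(S)-\eta_\tau(S)$ only under the hypothesis that the orbit sets $\alpha$ and $\beta$ share no embedded orbit; in the prequantization setting $\alpha$ and $\beta$ are both built from the same finite list $e_-,h_1,\dots,h_{2g},e_+$, so they typically do share orbits and the lemma does not apply to a single admissible representative of $Z$. Your heuristic for $-d^2e$ (sheets of a degree-$d$ multisection winding past one another at a rate set by curvature) also is not the mechanism that produces it. The paper instead constructs a union $\check{S}_N\cup\check{S}_1\cup\cdots\cup\check{S}_d$, where $\check{S}_N$ is the union of fibers over a graph pairing $N$ points of $\fp(\alpha)$ with $\fp(\beta)$ and each $\check{S}_k$ is a section of $\fp$ over $\Sigma_g$ punctured at $-e$ points of $\fp(\alpha)$, lifts these to admissible surfaces $S_N,S_k\subset[-1,1]\times Y$, and computes $Q_\tau$ from the definition by counting transverse intersections and boundary linking. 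The $-d^2e$ arises from the $d^2$ pairwise contributions $Q_\tau(S_{k_1},S_{k_2})=-e$ (each pair of sections intersects or links at the $-e$ boundary points, with multiplicity), and the $2dN$ from $Q_\tau(S_N,S_k)=N$ for each of the $d$ sections; $Q_\tau(S_N)=0$ because the graph can be parameterized to avoid self-intersections in $[-1,1]\times Y$. This decomposition is exactly what lets the paper sidestep the shared-orbit hypothesis you would need. To salvage your proposed route you would either have to prove an analogue of Lemma 3.9 permitting shared orbits, or switch to the decomposition approach.
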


In particular, note that the ECH index $I(\alpha,\beta)$ depends only on the generators $\alpha$ and $\beta$ and not on a relative homology class $Z\in H_2(Y,\alpha,\beta)$. This is proved in Lemma \ref{lem:indepZ}.

This section is organized as follows. In \S\ref{subsec:CZ} we compute the Conley-Zehnder index term, in \S\ref{subsec:relfirstchern} we compute the relative first Chern class term, and in \S\ref{subsec:relselfint} we compute the relative intersection pairing term. We combine these results to prove Proposition \ref{prop:indexcalc} in \S\ref{subsec:ECHindexcomp}.

\subsection{Perturbed Reeb dynamics}\label{subsec:pertReeb}

Let $(Y,\lambda)$ be the prequantization bundle over $(\Sigma_g,\omega)$ with Euler class $e=-\frac{1}{2\pi}[\omega]<0$ and contact structure $\xi=\ker(\lambda)$. Recall that the Reeb orbits of $\lambda$ consist of the $S^1$ fibers of $\fp:Y\to\Sigma_g$, all of which have action $2\pi$. We take:
\begin{equation}
\label{perturbedform1}
\lambda_\varepsilon=(1+\varepsilon \fp^*H)\lambda.
\end{equation}

A standard computation, cf. \cite[Prop. 4.10]{jo2}, yields the following.
\begin{lemma}
The Reeb vector field of $\lambda_\varepsilon$ is given by
\begin{equation}
\label{perturbedreeb1}
R_{\varepsilon}=\frac{R}{1+\varepsilon \fp^*H} + \frac{\varepsilon \widetilde{X}_H}{{(1+\varepsilon \fp^*H)}^{2}},
\end{equation}
where $X_{H}$ is the Hamiltonian vector field\footnote{We use the convention $\omega(X_H, \cdot) = dH.$} on $\Sigma$ and $\widetilde{X}_{ H}$ is its horizontal lift.
\end{lemma}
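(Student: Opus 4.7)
The plan is to verify directly that the proposed vector field $R_\varepsilon$ satisfies the two defining conditions of the Reeb vector field of $\lambda_\varepsilon$, namely $\lambda_\varepsilon(R_\varepsilon) = 1$ and $\iota_{R_\varepsilon}\, d\lambda_\varepsilon = 0$. Uniqueness of the Reeb vector field will then complete the argument. I will assemble the following ingredients first, each of which follows from the structure of the prequantization bundle: $\lambda(R) = 1$; the horizontal lift lies in $\xi = \ker\lambda$, so $\lambda(\widetilde{X}_H) = 0$; the one-form $\fp^*H\cdot\lambda$ differentiates by the Leibniz rule; the pullback $\fp^*H$ is constant on fibers, so $d(\fp^*H)(R) = 0$; the Hamiltonian relation gives $d(\fp^*H)(\widetilde{X}_H) = dH(X_H) = 0$; and $d\lambda = \fp^*\omega$, which annihilates any vertical vector in its first slot.

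For the first condition, a short computation using $\lambda(R)=1$, $\lambda(\widetilde X_H)=0$, and pulling out the factor $(1+\varepsilon \fp^*H)$ immediately gives $\lambda_\varepsilon(R_\varepsilon)=1$. For the second condition, I would expand
\[
d\lambda_\varepsilon \;=\; \varepsilon\, d(\fp^*H)\wedge \lambda \;+\;(1+\varepsilon \fp^*H)\,\fp^*\omega,
\]
contract with $R_\varepsilon$, and simplify using the ingredients above. The wedge term contributes $-\varepsilon\, d(\fp^*H)(v)$ since $d(\fp^*H)(R_\varepsilon)=0$ and $\lambda(R_\varepsilon)=1$; the $\fp^*\omega$ term reduces to a multiple of $d(\fp^*H)(v)$ via the identity $\fp^*\omega(\widetilde X_H,v)=\omega(X_H,d\fp(v))=\pm d(\fp^*H)(v)$, together with $\fp^*\omega(R,\cdot)=0$. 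The coefficients are arranged precisely so that, after clearing the common denominator, the two contributions cancel for every $v\in TY$, proving $\iota_{R_\varepsilon} d\lambda_\varepsilon = 0$.

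The only subtlety is bookkeeping of the sign produced by the convention $X_H=\omega(\cdot,dH)$ and the correct power of $(1+\varepsilon\fp^*H)$ in the horizontal coefficient of $R_\varepsilon$; these must agree so that the contraction cancels rather than compounds. Since the identity is purely pointwise and linear in $v$, there are no analytic difficulties, and I do not expect any real obstacle beyond tracking these signs and powers correctly. Once verified, uniqueness of the Reeb vector field (a consequence of $\lambda_\varepsilon$ being contact, so that $\ker d\lambda_\varepsilon$ is one-dimensional and transverse to $\ker\lambda_\varepsilon$) finishes the proof.
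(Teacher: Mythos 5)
Your plan is correct and is exactly the ``standard computation'' the paper alludes to (it only cites Prop.\ 4.10 of \cite{jo2} rather than writing it out): verify $\lambda_\varepsilon(R_\varepsilon)=1$ using $\lambda(R)=1$, $\lambda(\widetilde X_H)=0$, and verify $\iota_{R_\varepsilon}d\lambda_\varepsilon=0$ by expanding $d\lambda_\varepsilon=\varepsilon\,d(\fp^*H)\wedge\lambda+(1+\varepsilon\fp^*H)\fp^*\omega$, using $d(\fp^*H)(R)=0$, $d(\fp^*H)(\widetilde X_H)=dH(X_H)=0$, $\fp^*\omega(R,\cdot)=0$, and $\fp^*\omega(\widetilde X_H,v)=dH(d\fp\,v)=d(\fp^*H)(v)$ under the convention $\omega(X_H,\cdot)=dH$, after which the two contributions cancel term by term. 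The ingredients and the bookkeeping you flag (the power $(1+\varepsilon\fp^*H)^{-2}$ on the horizontal part is forced so the $\fp^*\omega$-contribution, once multiplied by the prefactor $(1+\varepsilon\fp^*H)$, matches the wedge-term contribution coming from $\lambda(R_\varepsilon)=(1+\varepsilon\fp^*H)^{-1}$) are all correct, so writing out the two displayed computations completes the proof.
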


We now prove Lemma \ref{lem:efromL}.
\begin{proof}[Proof of Lemma \ref{lem:efromL}]
To prove (i), note that the horizontal lift $\widetilde{X}_{H}$  is determined  by
\[
  dh(q)\widetilde{X}_H(q) = X_{\varepsilon  H}(h(q)) \ \ \mbox{ and } \ \ \lambda(\widetilde{X}_H)=0.
\]
Thus those orbits which do not project to $p\in \mbox{Crit}(H)$ must project to $X_H$.  We have 
\[
\frac{\varepsilon}{(1+\varepsilon)^2} <  \frac{\varepsilon}{{(1+\varepsilon \fp^*H)}^{2}} < \frac{\varepsilon}{(1-\varepsilon)^2}.
\]
A Taylor series expansion shows that the $k$-periodic orbits of $X_H$ give rise to orbits of $ \frac{\varepsilon \widetilde{X}_H}{{(1+\varepsilon \fp^*H)}^{2}}$ which are $\frac{C}{\varepsilon}$-periodic for some $C$.  We note that $C$ and $k$ must be bounded away from 0 since $X_H$ is time autonomous.  Nondegeneracy of Reeb orbits $\gamma$ such that $\mathcal{A}(\gamma) <L$ follows from the proof of Theorem 13 in Appendix A of \cite{ABW}.

We obtain (ii) because the period of an orbit of $R_\varepsilon$ over a critical point $p$ of $H$ must be $1+\varepsilon H(p)$ times the period of the orbit of $R$ over $p$ by (\ref{perturbedreeb1}). \end{proof}

\subsection{Homology of prequantization bundles}\label{subsec:homology}

In this subsection we review preliminaries on the homology of prequantization bundles. We prove Lemma \ref{lem:HYreps} using the Leray-Serre spectral sequence to identify representatives of $H_*(Y;\Z)$ classes. We note that one could alternatively use the Gysin sequence, which is specialized to sphere bundles and avoids the use of spectral sequences; cf. \cite[\S 7]{spanier}.

Let $Y$ be a prequantization bundle over a two-dimensional surface $\Sigma_g$ of negative Euler class $e$. The second page of its Leray-Serre spectral sequence has terms
\[
E^2_{p,q}=H_p(\Sigma_g;\{H_q(Y_x)\})=H_p(\Sigma_g;\Z)
\]
for $q=0,1$. Since $\partial_2:E^2_{p,q}\to E^2_{p-2,q+1}$, the only differential on the second page which neither starts nor ends at a trivial group is from $E^2_{2,0}=H_2(\Sigma_g;\Z)$ to $E^2_{0,1}=H_0(\Sigma_g;\Z)$; this differential sends the element of $E^1_{2,0}$ corresponding to a closed 2-cell in $\Sigma$ to the obstruction to finding a section over $\Sigma_g$, and so the image of $\partial_2$ in $E^2_{0,1}$ is $e\Z$. The other groups are unchanged.

Since all higher differentials will either start or end at a trivial group, we obtain
\begin{equation}\label{eqn:HofPQB}
H_*(Y;\Z)=\begin{cases}\Z&*=3,
\\\Z^{2g}&*=2,
\\\Z^{2g}\oplus\Z_{-e}&*=1,
\\\Z&*=0.
\end{cases}
\end{equation}

In our computations of the ECH index we will need to understand representatives of the degree one and two homology classes.

\begin{lemma}\label{lem:HYreps}Let $Y$ be a prequantization bundle over a two-dimensional surface $\Sigma_g$ of negative Euler class $e$.
\begin{enumerate}
\item[\em (i)] Let $f_p$ denote the fiber over $p\in\Sigma_g$. Its $k$-fold cover represents the class $k \text{ mod}(-e)$ in the $\Z_{-e}$ summand of $H_1(Y)$.
\item[\em (ii)] Each $H_2(Y)$ class is represented by the union of fibers over a representative of an $H_1(\Sigma_g)$ class.
\end{enumerate}
\end{lemma}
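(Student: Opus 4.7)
The plan is to apply the Gysin sequence of the oriented $S^1$-bundle $\fp \colon Y \to \Sigma_g$. Writing $\tau \colon H_{n-1}(\Sigma_g) \to H_n(Y)$ for the transfer homomorphism, which geometrically sends a cycle $\gamma$ in $\Sigma_g$ to its preimage $\fp^{-1}(\gamma) \subset Y$, the relevant portions of the Gysin sequence read
\[
H_2(\Sigma_g) \xrightarrow{\cap e} H_0(\Sigma_g) \xrightarrow{\tau} H_1(Y) \xrightarrow{\fp_*} H_1(\Sigma_g) \to 0
\]
and
\[
0 \to H_1(\Sigma_g) \xrightarrow{\tau} H_2(Y) \xrightarrow{\fp_*} H_2(\Sigma_g) \xrightarrow{\cap e} H_0(\Sigma_g),
\]
where the connecting map in each case is cap product with the Euler class $e \in H^2(\Sigma_g;\Z)$.

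For part (i), under the identifications $H_2(\Sigma_g)\cong\Z$ and $H_0(\Sigma_g)\cong\Z$, the map $\cap e$ is multiplication by $\pm e$, with image the subgroup $(-e)\Z$ since $-e>0$. The resulting short exact sequence
\[
0 \to \Z_{-e} \to H_1(Y) \to H_1(\Sigma_g) \to 0
\]
splits because $H_1(\Sigma_g)\cong\Z^{2g}$ is free, and the inclusion $\Z_{-e}\hookrightarrow H_1(Y)$ coincides with the transfer $\tau$ on $H_0(\Sigma_g)$ modulo its kernel. Since $\tau$ sends the class of a point $p\in\Sigma_g$ to $[\fp^{-1}(p)]=[f_p]$, it follows that $[f_p]$ generates the $\Z_{-e}$ summand of $H_1(Y)$, whence the $k$-fold cover $[f_p^k]=k[f_p]$ represents the class $k\bmod(-e)$.

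For part (ii), the map $\cap e\colon H_2(\Sigma_g)\to H_0(\Sigma_g)$ is multiplication by $\pm e\neq 0$, hence injective; combined with $H_3(\Sigma_g)=0$, the second sequence collapses to an isomorphism
\[
\tau\colon H_1(\Sigma_g) \xrightarrow{\ \sim\ } H_2(Y).
\]
Since $\tau([\gamma]) = [\fp^{-1}(\gamma)]$, every class in $H_2(Y)$ is represented by the union of fibers over a representative of an $H_1(\Sigma_g)$ class, as claimed.

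The only substantive point to justify carefully is the geometric description of the transfer $\tau$ as $[\gamma]\mapsto[\fp^{-1}(\gamma)]$. This is standard for oriented $S^1$-bundles and can be seen either from the construction of the Gysin map via the edge homomorphism of the Leray--Serre spectral sequence (matching the prior spectral sequence calculation of $H_*(Y;\Z)$), or by identifying $\tau$ with cap product with the Thom class of the associated disk bundle. Beyond this identification, the argument is essentially mechanical; no significant analytic or geometric obstruction arises.
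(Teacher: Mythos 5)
Your proof is correct and closely parallels the paper's, differing mainly in packaging. The paper works directly with the Leray--Serre spectral sequence at the chain level: it uses the CW-filtration of $Y$ by preimages of skeleta of $\Sigma_g$, identifies $E^2_{0,1}$ as generated by the fiber, identifies $E^2_{2,0}$ as generated by a section of $Y$ over $\Sigma_g\setminus\{pt\}$, and then argues geometrically that $\partial_2$ sends that section class to $f_p^{e}$ (the obstruction to extending the section is the Euler class); for (ii) it directly identifies $E^2_{1,1}$ with preimages of 1-cycles. You instead invoke the homology Gysin sequence with the transfer $\tau$. For an oriented $S^1$-bundle these are really the same computation in disguise --- the Gysin sequence is exactly what the two-row $E^2$ page unrolls to, with $\cap\,e$ playing the role of $\partial_2$ --- but your route outsources the explicit chain-level obstruction argument to the standard identification of $\tau$ with the preimage map $[\gamma]\mapsto[\fp^{-1}(\gamma)]$, which you correctly flag as the one substantive point to justify. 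The tradeoff is that the paper's version is more self-contained (and mirrors the spectral-sequence computation of $H_*(Y)$ already done in \S\ref{subsec:homology}), while your version is shorter once the Gysin/transfer machinery is taken as given. Both yield the lemma; no gaps.
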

\begin{proof} Because $\Sigma_g$ is a CW complex, the Leray-Serre spectral sequence can be constructed using the filtration on $C_*(Y)$ where $F_p(C_*(Y))$ is the subcomplex consisting of singular chains supported in the preimage under $\fp$ of the $p$-skeleton of $\Sigma_g$.

To show (i), we first show that $E^2_{0,1}$ is generated by the fiber. Because $E^1_{0,1}$ is generated by the fiber and $E^1_{1,1}$ is generated by 2-chains of $Y$ over the 1-skeleton of $\Sigma_g$, the image of $E^1_{1,1}$ under $\partial_1$ is zero. Therefore $E^2_{0,1}$ is also generated by the fiber.

Secondly, we show that $E^2_{2,0}$ is generated by a section of $Y$ over $\Sigma_g-\{pt\}$. It follows from the definitions that $E^1_{2,0}$ is generated by a section of $Y$ over $\Sigma_g-\{pt\}$, and every such section is in the kernel of $\partial_1$ because its boundary is a 1-chain in $\pi^{-1}(\{pt\})$. Therefore $E^2_{2,0}=E^1_{2,0}$.

The differential $\partial_2$ takes the generator of $E^2_{2,0}$ to $f_p^e$, so (i) is proved.

To show (ii), note that $E^2_{1,1}$ consists of 2-chains over the 1-skeleton of $\Sigma_g$ whose boundaries do not wrap around fibers and which are not the boundary of a 3-chain in the preimage under $\fp$ of the 1-skeleton of $\Sigma_g$. Therefore elements of $E^2_{1,1}$ can be represented by preimages under $\fp$ of representatives of $H_1(\Sigma_g)$ classes.
\end{proof}

In an abuse of notation, we will often refer to elements of the subgroup $\{0\}\times\Z_{-e}$ of $H_1(Y)$ simply as elements of $\Z_{-e}$.

\subsection{Trivialization and Conley Zehnder index}\label{subsec:CZ}
We will use the constant trivialization as considered in \cite[\S 3.1, 4.2]{ggm1} to compute the Conley-Zehnder indices.   For any point  $q \in \fp^{-1}(p)$, a fixed trivialization of $T_p\Sigma_g$ allows us to trivialize $\xi_q$ because $\xi_q \cong T_p\Sigma$.   This trivialization is invariant under the linearized Reeb flow and can be thought of as a  \emph{constant trivialization} over the orbit $\gamma_p$ because the linearized Reeb flow, with respect to this trivialization, is the identity map.

Using this constant trivialization, we have the following result regarding the Robbin-Salamon index, see \cite[Lem. 3.3]{vknotes}, \cite[Lem. 4.8]{jo2}.

\begin{lemma}\label{consttrivlem}
Let $(Y,\lambda) \overset{\fp}{\rightarrow}(\Sigma_g, \omega)$ be a prequantization bundle of negative Euler number $e$.  Then for the constant trivialization $\tau$ along the circle fiber $\gamma = \fp^{-1}(p) $, we obtain $RS_\tau(\gamma)=0$ and $RS_\tau(\gamma^k) =0$, where $RS$ denotes the Robbin-Salamon index of the $k$-fold iterate of the fiber.\end{lemma}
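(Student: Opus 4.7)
The plan is to show that, in the constant trivialization, the linearized Reeb flow along the fiber $\gamma_p$ is the constant path $P_t\equiv I$ in $\op{Sp}(2,\R)$, and then invoke the fact that the Robbin--Salamon index of a constant symplectic path vanishes.

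First I would verify that the linearized flow is trivial. Since $\lambda = -iA$ and $A(X) = i$, the Reeb vector field $R$ coincides with the generator $X$ of the $S^1$-action, and $\xi = \ker\lambda = \ker A$ equals the horizontal distribution of the connection $A$. The connection is $S^1$-invariant by construction, so the flow $\varphi_t$ of $R$ preserves $\xi$ and satisfies $\fp\circ\varphi_t = \fp$. For any $v \in \xi_q$, the image $d\varphi_t(v)\in\xi_{\varphi_t(q)}$ is therefore horizontal and projects to $d\fp(v)\in T_p\sig$, i.e.\ $d\fp\circ d\varphi_t = d\fp$ on $\xi$. If we identify both $\xi_q$ and $\xi_{\varphi_t(q)}$ with $T_p\sig$ via $d\fp$, which is the defining property of the constant trivialization $\tau$, the matrix representing $d\varphi_t|_\xi$ is the identity for every $t\in[0,2\pi]$. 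The identification is symplectic because $d\lambda = \fp^*\omega$ ensures that $d\fp|_\xi$ intertwines $d\lambda|_\xi$ with $\omega$, so $\tau$ really is a symplectic trivialization.

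Next I would apply the definition of the Robbin--Salamon index. For a smooth path $\Psi:[0,T]\to\op{Sp}(2,\R)$, $RS(\Psi)$ is a signed count of crossings $t_0$ with $\ker(\Psi(t_0)-I)\neq 0$, each weighted by the signature of the crossing form $\Gamma(\Psi,t_0)(\zeta,\zeta)=\omega_0(\zeta,\dot\Psi(t_0)\Psi(t_0)^{-1}\zeta)$, with endpoint contributions halved. Along the constant path $\Psi(t)\equiv I$ we have $\dot\Psi\equiv 0$, so every crossing form is identically zero and every signature is zero. Consequently $RS_\tau(\gamma)=0$. Since the $k$-fold iterate $\gamma^k$ is precisely the constant path on the interval $[0,2\pi k]$, the same argument gives $RS_\tau(\gamma^k)=0$.

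I do not anticipate a serious obstacle: the only possible subtlety is verifying that the trivializations at $\xi_q$ and $\xi_{\varphi_t(q)}$ glue into a single well-defined trivialization $\tau$ over the circle $\gamma_p$, but this is immediate from the $S^1$-invariance of the horizontal distribution together with the fact that $\varphi_t$ is exactly the $S^1$-action permuting the fiber. The detailed calculation of the crossing form for the constant path is standard and is carried out for instance in \cite{vknotes, jo2}, whose conventions match ours.
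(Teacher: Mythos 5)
The paper does not actually prove this lemma; it defers the computation to the cited references [jo2, vknotes], and your argument is exactly what those references do: show the linearized return map is the constant identity path in the constant trivialization and then invoke the normalization property of the Robbin--Salamon index. Your derivation of the fact that $d\varphi_t|_\xi$ becomes the identity in the constant trivialization is correct and cleanly presented.

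One small point worth tightening: the constant path $\Psi(t)\equiv I$ is degenerate at \emph{every} $t\in[0,T]$, so the crossing-form signature formula you invoke (which presupposes isolated, regular crossings) does not apply verbatim. The correct way to conclude $RS_\tau(\gamma^k)=0$ is to appeal directly to the normalization/homotopy axioms of the Robbin--Salamon index (the index of the constant path at the identity vanishes), or equivalently to perturb the constant path to one with regular crossings and note that the perturbed index is zero by continuity. The conclusion is unaffected, but as written the crossing-form step is slightly informal.
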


We also have the following formula for the Conley-Zehnder indices of iterates of orbits which project to critical points $p$ of $H$.  We denote the $k$-fold iterate of an orbit which projects to $p \in \mbox{Crit}(H)$ by $\gamma_p^k$. 

\begin{lemma}\label{lempre}
Fix $L>0$ and $H$ a Morse-Smale function on $\Sigma$ which is $C^2$ close to 1.  Then there exists $\varepsilon >0$ such that all periodic orbits $\gamma$ of $R_\varepsilon$ with action $\mathcal{A}(\gamma) <L$ are nondegenerate and project to critical points of $H$.  The Conley-Zehnder index such a Reeb orbit over $p \in \mbox{\em Crit}(H)$ is given by
\[
\begin{array}{lcl}
CZ_\tau(\gamma_p^k) &=& {RS}_\tau(\gamma^k) -1 + \mbox{\em index}_pH,\\
&=& \mbox{\em index}_pH -1.\\
\end{array}
\]
\end{lemma}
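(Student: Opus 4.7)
The first assertion, that for sufficiently small $\varepsilon$ every orbit $\gamma$ of $R_\varepsilon$ with $\mathcal{A}(\gamma)<L$ is nondegenerate and projects to a critical point of $H$, is already established by Lemma \ref{lem:efromL}. So the task reduces to proving the formula for $\CZ_\tau(\gamma_p^k)$, and in fact, since Lemma \ref{consttrivlem} gives $RS_\tau(\gamma^k)=0$, it suffices to show $\CZ_\tau(\gamma_p^k) = \mbox{index}_p H - 1$.

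The plan is to compute the linearized return map of $R_\varepsilon$ along $\gamma_p^k$ in the constant trivialization $\tau$. I would use the identification $\xi_q \cong T_{\fp(q)}\Sigma_g$ furnished by the connection, and exploit that on the fiber over $p$ the function $\fp^*H$ equals the constant $H(p)$. Thus along $\gamma_p$,
\[
R_\varepsilon = \frac{R}{1+\varepsilon H(p)} + \frac{\varepsilon \widetilde{X}_H}{(1+\varepsilon H(p))^2},
\]
and since $\widetilde{X}_H$ vanishes on this fiber (as $X_H(p)=0$), the perturbed flow still traverses the fiber, in reparametrized time $2\pi k(1+\varepsilon H(p))$ for the $k$-fold cover. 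The linearization on $\xi$ splits into an unperturbed piece which is the identity in $\tau$ (by Lemma \ref{consttrivlem}), and a perturbative piece coming from the linearization of $\varepsilon\widetilde{X}_H/(1+\varepsilon H(p))^2$. Under the connection-induced splitting, this linearization is identified with $c_\varepsilon\cdot J\,\mbox{Hess}_p H$, where $J$ is the complex structure on $T_p\Sigma_g$ and $c_\varepsilon\to 0$ as $\varepsilon\to 0$.

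Integrating yields a linearized return map of the form $\exp(2\pi k\, c_\varepsilon\, J\,\mbox{Hess}_p H)$, a small perturbation of the identity in $\mbox{Sp}(2,\R)$. Its Conley-Zehnder index is then read off from the signature of $\mbox{Hess}_p H$: for $\mbox{index}_p H=0$ the Hessian is definite and the return map is a small elliptic rotation with $\CZ=-1$; for $\mbox{index}_p H=2$ it is definite with the opposite sign, giving $\CZ=+1$; for $\mbox{index}_p H=1$ the Hessian is indefinite and the return map is positive hyperbolic with $\CZ=0$. In all three cases $\CZ_\tau(\gamma_p^k)=\mbox{index}_p H-1$, as required.

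The main obstacle is verifying that the $O(\varepsilon^2)$ corrections to the linearization—coming from the curvature of the connection and from the $\varepsilon$-dependence of the denominator of $R_\varepsilon$—do not affect the Conley-Zehnder index. For $\varepsilon$ sufficiently small (depending on $L$), this follows from the homotopy invariance of $\CZ$ through nondegenerate paths, together with the direct observation that in each of the three cases above the perturbed return map stays away from the degeneracy locus $\{\det(P-I)=0\}$ in $\mbox{Sp}(2,\R)$. Alternatively, one can appeal to the general Morse-Bott Conley-Zehnder formula recorded in \cite{vknotes} and used in \cite[\S 4]{jo2}, which directly gives the shift $\CZ_\tau(\gamma_p^k)=RS_\tau(\gamma^k)-\tfrac{1}{2}\dim\Sigma_g+\mbox{index}_p H$ and, combined with Lemma \ref{consttrivlem}, produces both claimed equalities at once.
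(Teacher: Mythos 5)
The paper does not actually prove Lemma~\ref{lempre} in the text --- after the statement it simply remarks that ``Detailed definitions and computations\ldots can be found in \cite[\S 4]{jo2}, \cite{vknotes}.'' Your final paragraph, which observes that one can quote the Morse--Bott Conley--Zehnder formula $\CZ_\tau(\gamma_p^k)=RS_\tau(\gamma^k)-\tfrac{1}{2}\dim\Sigma_g+\mbox{index}_pH$ from those same references and then substitute Lemma~\ref{consttrivlem}, is therefore exactly the paper's route. Your direct computation is a genuine and useful elaboration of what is hidden behind that citation: you linearize $R_\varepsilon$ along the fiber, observe that along $\gamma_p$ the horizontal correction is governed by $J\,\mbox{Hess}_pH$ (and that curvature contributions vanish since $d\lambda=\fp^*\omega$ pulls back to zero along a fiber), and then read off $\CZ=-1,\,0,\,+1$ for indices $0,1,2$ using the formulas $\CZ_\tau(\gamma)=2\lfloor\vartheta\rfloor+1$ for elliptic orbits with small positive/negative rotation and $\CZ=0$ for positive hyperbolic. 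Two small points worth tightening: the prefactor in your exponential should be the full period $2\pi k(1+\varepsilon H(p))\cdot c_\varepsilon$, though this does not affect the sign argument; and the directional matching of signs (index $0\Rightarrow$ small negative rotation, index $2\Rightarrow$ small positive rotation) does depend on the paper's Hamiltonian convention $\omega(X_H,\cdot)=dH$, which you implicitly use correctly. With these noted, your argument is sound and gives a more self-contained justification than the paper itself provides.
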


Detailed definitions and computations of the Conley-Zehnder and Robbin-Salamon index as well as the proofs of the preceding standard computations can be found in \cite[\S 4]{jo2}, \cite{vknotes}.

 Lemma \ref{lem:orbitseh}, which classifies the orbits we consider, follows from the above computation:

\begin{proof}[Proof of Lemma \ref{lem:orbitseh}] By Lemma \ref{lempre}, we have
\begin{equation}\label{eqn:CZH}
CZ_\tau(\gamma_p^k)=\mbox{index}_pH-1,
\end{equation}
Since $\lambda_\varepsilon$ is a small perturbation of $\lambda$, all linearized return maps of Reeb orbits must be close to the identity. Therefore there can be no negative hyperbolic orbits.  (Alternatively, one could conclude that there are no negative hyperbolic orbits by way of the iteration properties of the Conley-Zehnder index, as summarized in \S \ref{cz-sec}.)

Positive hyperbolic orbits have even Conley Zehnder indices, so the $h_i$, which all have Conley Zehnder index zero by (\ref{eqn:CZH}), must be positive hyperbolic.  Elliptic orbits have odd Conley Zehnder indices, so the $e_\pm$, with Conley Zehnder indices $\pm1$ by (\ref{eqn:CZH}), must be elliptic.

\end{proof}

\subsection{ECH generators}\label{subsec:PQBgens}

We explain why generators of the form $e_-^{m_-}h_1^{m_1}\cdots h_{2g}^{m_{2g}}e_+^{m_+}$ are all we need to consider until \S\ref{sec:finalcomp}. Our focus through \S\ref{handleslides} will be to build the foundations necessary to understand the direct limit $\lim_{L\to\infty}ECH^L_*(Y,\lambda_{\varepsilon(L)},\Gamma;J)$. In Theorem \ref{thm:dlisECH} we will show that
\[
\lim_{L\to\infty}ECH^L_*(Y,\lambda_{\varepsilon(L)},\Gamma)=ECH_*(Y,\xi,\Gamma).
\]
We will then prove the main theorem in \S\ref{sec:finalcomp} by relating the direct limit $\lim_{L\to\infty}ECH^L_*(Y,\lambda_{\varepsilon(L)},\Gamma)$, as we understand it via Proposition \ref{prop:directlimitcomputesfiberhomology}, to the Morse homology of the base, which will require the analysis of \S\ref{sec:modspcs}-\S\ref{handleslides}. This section will be devoted to understanding the ECH index of the generators which Proposition \ref{prop:directlimitcomputesfiberhomology} tells us are relevant, i.e., those whose embedded orbits are fibers above critical points of $H$.
 
\begin{remark}
We require that $H$ be perfect, so that $H$ has exactly as many critical points of index $i$ as its $i^\text{th}$ Betti number. Let $e_+$ denote the orbit whose image is the fiber above the index two critical point of $H$. Similarly, let $e_-$ denote the orbit above the index zero critical point and let $h_1,\dots,h_{2g}$ denote the orbits above the index one critical points.
\end{remark}

The notation is derived from the fact that the orbits $e_\pm$ are elliptic, with slightly positive/negative rotation numbers, respectively, in the constant trivialization discussed in \S\ref{subsec:CZ}, and the $h_i$ are positive hyperbolic.  Heuristically, this follows from the fact that the linearized return map of an orbit projecting to a critical point $p$ of $H$ approximately agrees with a lift of the linearized flow of $\varepsilon X_H$ on $T_p\Sigma_g$. However, in \S \ref{subsec:CZ} we classified the Reeb orbits by appealing to properties of the Conley-Zehnder index.

We next prove Proposition \ref{prop:directlimitcomputesfiberhomology}.
\begin{proof}[Proof of Proposition \ref{prop:directlimitcomputesfiberhomology}]

Given $\varepsilon>\varepsilon'$ there is an exact symplectic cobordism $(X_{\varepsilon,\varepsilon'},\lambda_{\varepsilon,\varepsilon'}):=([\varepsilon',\varepsilon]\times Y,(1+s\fp^*H)\lambda)$ from $(Y,\lambda_\varepsilon)$ to $(Y,\lambda_{\varepsilon'})$. (It is symplectic because $d\lambda_{\varepsilon,\varepsilon'}^2$ is a positive multiple of $ds\wedge\lambda\wedge d\lambda$.)

Thus we have cobordism maps $\Phi^L(X_{\varepsilon,\varepsilon'},\lambda_{\varepsilon,\varepsilon'})$ as in Theorem \ref{thm:cc2cobmaps}, inclusion maps $\iota^{L,L'}$ as in \cite[Thm. 1.3]{cc2}, and a commutative diagram
\begin{equation}\label{eqn:trivialcobcd}
\xymatrixcolsep{4pc}\xymatrix{
ECH^L_*(Y,\lambda_{\varepsilon},\Gamma) \ar[r]^{\Phi^L(X_{\varepsilon,\varepsilon'},\lambda_{\varepsilon,\varepsilon'})} \ar[d]_{\iota^{L,L'}} & ECH^L_*(Y,\lambda_{\varepsilon'},\Gamma) \ar[d]^{\iota^{L,L'}}
\\ECH^{L'}_*(Y,\lambda_{\varepsilon},\Gamma) \ar[r]_{\Phi^{L'}(X_{\varepsilon,\varepsilon'},\lambda_{\varepsilon,\varepsilon'})} & ECH^{L'}_*(Y,\lambda_{\varepsilon'},\Gamma)
}
\end{equation}
by adapting (\ref{eqn:ECHcd}) from the Inclusion property of Theorem \ref{thm:cc2cobmaps}. (Because $X$ is a product of $Y$ with an interval, the cobordism maps respect the splitting.)

Because if $L<L'$ then $\varepsilon(L)>\varepsilon(L')$, from either path on the commutative diagram (\ref{eqn:trivialcobcd}) we get a well-defined map
\begin{equation}\label{eqn:directsystemoverLmaps}
ECH^L_*(Y,\lambda_{\varepsilon(L)},\Gamma)\to ECH^{L'}_*(Y,\lambda_{\varepsilon(L')},\Gamma).
\end{equation}
For the $ECH^L_*(Y,\lambda_{\varepsilon(L)},\Gamma)$ to form a direct system, it remains to show that the maps (\ref{eqn:directsystemoverLmaps}) compose. In the following denote by $\Phi^L(\varepsilon,\varepsilon')$ the cobordism map $\Phi^L(X_{\varepsilon,\varepsilon'},\lambda_{\varepsilon,\varepsilon'})$. It is enough to show that for $L''>L'>L$ and $\varepsilon''<\varepsilon'<\varepsilon$, the composition
\[
ECH^L_*(Y,\lambda_\varepsilon,\Gamma)\overset{\iota^{L,L'}}{\to}ECH^{L'}(Y,\lambda_\varepsilon,\Gamma)\overset{\Phi^{L'}(\varepsilon',\varepsilon'')\circ\Phi^{L'}(\varepsilon,\varepsilon')}{\longrightarrow}ECH^{L'}(Y,\lambda_{\varepsilon''},\Gamma)\overset{\iota^{L',L''}}{\to}ECH^{L''}(Y,\lambda_{\varepsilon''},\Gamma)
\]
equals either $\Phi^{L''}(\varepsilon,\varepsilon'')\circ\iota^{L,L''}$ or $\iota^{L,L''}\circ\Phi^{L}(\varepsilon,\varepsilon'')$. This follows from the four-fold commutative diagram consisting of the versions of (\ref{eqn:trivialcobcd}) for $(L,\varepsilon)$ to $(L',\varepsilon')$, $(L,\varepsilon')$ to $(L',\varepsilon'')$, $(L',\varepsilon)$ to $(L'',\varepsilon')$, and $(L',\varepsilon')$ to $(L'',\varepsilon'')$ in concert. In this four-fold commutative diagram, the path across the top and down the right side equals $\Phi^{L''}(\varepsilon,\varepsilon'')\circ\iota^{L,L''}$, by the Composition property of Theorem \ref{thm:cc2cobmaps}, and similarly the path down the left side and across the bottom equals $\iota^{L,L''}\circ\Phi^{L}(\varepsilon,\varepsilon'')$.

It remains to show that the direct limit is the homology of the chain complex generated by Reeb currents whose embedded orbits are fibers above the critical points of $H$ and whose multiplicities can be any element of $\Z_{>0}$.

The embedded orbits contributing to the generators of any $ECC^L_*(Y,\lambda_{\varepsilon(L)},\Gamma;J)$ must be orbits over critical points of $H$ by Lemma \ref{lem:efromL} (i). And by Lemma \ref{lem:efromL} (ii), for any pair $(\gamma,m_\gamma)$ where $\gamma$ is an orbit above a critical point of $H$ and $m_\gamma\in\Z_{>0}$, there is some (possibly very large) $L$ for which $m_\gamma\mathcal{A}(\gamma)<L$ when $\mathcal{A}$ is computed using $\lambda_{\varepsilon(L)}$.

To complete the proof we need to know that the maps (\ref{eqn:directsystemoverLmaps}) are induced by the obvious inclusion of chain complexes
\[
ECC^L_*(Y,\lambda_{\varepsilon(L)},\Gamma;J)\to ECC^{L'}_*(Y,\lambda_{\varepsilon(L')},\Gamma;J).
\]
Because $\iota^{L,L'}$ is induced by inclusion, it suffices to show that the map $\Phi^L(X,\lambda_{\varepsilon,\varepsilon'})$ is also induced by inclusion. (There is no need to check the cobordism map $\Phi^{L'}(X,\lambda_{\varepsilon,\varepsilon'})$ because the diagram commutes.)

That $\Phi^L(X,\lambda_{\varepsilon,\varepsilon'})$ is induced by inclusion follows if there is a smooth 1-parameter family $\lambda_t$ where $\lambda_t=\lambda_{\varepsilon-(\varepsilon-\varepsilon')t}$ which, for $\lambda_t$-compatible almost complex structures $J_t$, the pairs $(\lambda_t,J_t)$ satisfy the hypotheses of Lemma \ref{lem:nicecobmap} for $L$. The first and second bullet points follow immediately from the construction. The fourth bullet point generically holds. The third bullet point can then be accomplished by a deformation as in \cite[Prop. B.1]{taubesechswf1} (see also \cite[Lem. 3.6]{cc2}).
\end{proof}

\subsection{Computation of the ECH index}

In this subsection we prove Proposition \ref{prop:indexcalc}, namely:
\[
I(\alpha,\beta)=\chi(\Sigma_g)d-d^2e+2dN+m_+-m_--n_++n_-.
\]
 We note that the structure of the proof follows \cite[\S3]{farris}. The differences are the following:
 \begin{itemize}
 \item We have incorporated the notion of degree earlier in the computation.
 \item We have clarified the surfaces used for the computation. This is quite delicate, particularly when computing the relative intersection pairing in \S\ref{subsec:relselfint}, thus we devote quite a bit of time to their setup in \S\ref{subsec:surfaces}.
 \end{itemize}
 
 Let $Y$ be a prequantization bundle over a surface $\Sigma_g$ of negative Euler class $e$, and let $L\in\R$ be large. Let $\Gamma$ be a torsion element in the $\{0\}\times\Z_{-e}$ subgroup of $H_1(Y)$. In this subsection we prove Proposition \ref{prop:indexcalc}. We first introduce some notation which we will use throughout the rest of this section in our computation of the ECH index.

Given generators $\alpha=e_-^{m_-}h_1^{m_1}\cdots h_{2g}^{m_{2g}}e_+^{m_+}$ and $\beta=e_-^{n_-}h_1^{n_1}\cdots h_{2g}^{n_{2g}}e_+^{n_+}$ of $ECC^L_*(Y,\lambda_{\varepsilon(L)},\Gamma)$, let
\[
M:=m_-+m_1+\cdots+m_{2g}+m_+\text{ and }N:=n_-+n_1+\cdots+n_{2g}+n_+.
\]
Because $[\alpha]=[\beta]=\Gamma$, there is some $d\in\Z$ so that
\begin{equation}\label{eqn:diffmult-e}
M=N+(-e)d.
\end{equation}
Throughout the proof of the index formula, which occupies the rest of this section, we will assume $d\geq0$; the $d\leq0$ case is handled similarly, and signs will change appropriately.

\begin{lemma}\label{lem:indepZ} Given $\alpha$ and $\beta$ as above and $Z\in H_2(Y,\alpha,\beta)$, the ECH index $I(\alpha,\beta,Z)$ does not depend on $Z$.
\end{lemma}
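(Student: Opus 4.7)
The plan is to invoke the Index Ambiguity Formula (Theorem \ref{thm:Iproperties}(\ref{property:indexamb})), which states
\[
I(\alpha,\beta,Z) - I(\alpha,\beta,Z') = \langle Z - Z', c_1(\xi) + 2\,\mathrm{PD}(\Gamma)\rangle.
\]
Since $Z - Z' \in H_2(Y;\Z) \cong \Z^{2g}$ is free (see \eqref{eqn:HofPQB}), it suffices to show that $c_1(\xi) + 2\,\mathrm{PD}(\Gamma) \in H^2(Y;\Z)$ is a torsion class, as a torsion element of $H^2(Y;\Z)$ pairs trivially with the free abelian group $H_2(Y;\Z)$ under the Kronecker pairing.

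First I would handle the $c_1(\xi)$ term. Because $\xi = \mathfrak{p}^*T\Sigma_g$ as a complex line bundle, we have $c_1(\xi) = \chi(\Sigma_g)\,\mathfrak{p}^*u$, where $u \in H^2(\Sigma_g;\Z)$ is the positive generator. The Gysin sequence of the circle bundle $\mathfrak{p}:Y\to\Sigma_g$ with Euler class $e$ reads, in the relevant range,
\[
H^0(\Sigma_g;\Z) \xrightarrow{\,\cup e\,} H^2(\Sigma_g;\Z) \xrightarrow{\mathfrak{p}^*} H^2(Y;\Z) \xrightarrow{\mathfrak{p}_*} H^1(\Sigma_g;\Z) \xrightarrow{\,\cup e\,} H^3(\Sigma_g;\Z).
\]
The leftmost map is multiplication by $e \neq 0$, so $\mathfrak{p}^* u$ has order dividing $|e|$ in $H^2(Y;\Z)$. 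Hence $c_1(\xi)$ is torsion.

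Next I would observe that $\mathrm{PD}(\Gamma)$ is torsion. By hypothesis $\Gamma$ lies in the torsion subgroup $\{0\}\times\Z_{-e} \subset H_1(Y;\Z)$ (generated by the fiber class, cf. Lemma \ref{lem:HYreps}(i)). Poincaré duality $H_1(Y;\Z)\cong H^2(Y;\Z)$ is an isomorphism that preserves the torsion/free decomposition, so $\mathrm{PD}(\Gamma)$ lies in the torsion subgroup of $H^2(Y;\Z)$.

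Combining these two facts, $c_1(\xi) + 2\,\mathrm{PD}(\Gamma)$ is a sum of torsion classes, hence torsion in $H^2(Y;\Z)$, and the ambiguity vanishes. This argument is essentially bookkeeping once the Gysin sequence is in hand; I do not anticipate a real obstacle, only the need to be careful about the comment after \eqref{eqn:ECHdecompH1} in the introduction about upgrading the relative $\Z_d$-grading to an absolute $\Z$-grading in this setting, which is exactly the content being verified here.
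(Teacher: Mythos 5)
Your proof is correct and takes a genuinely different route from the paper's. The paper also begins with the Index Ambiguity Formula, but then evaluates the two terms of $\langle c_1(\xi)+2\mathrm{PD}(\Gamma),A\rangle = c_1(\xi)(A)+2\Gamma\cdot A$ directly against a given $A\in H_2(Y)$: using Lemma~\ref{lem:HYreps}(ii), one represents $A$ by a union of fibers over a $1$-cycle in $\Sigma_g$, so $\fp_*A$ is a $1$-cycle and $c_1(\xi)(A)=c_1(T\Sigma_g)(\fp_*A)=0$; and since $\Gamma$ is a multiple of the fiber, a representative of it can be isotoped off $A$, giving $\Gamma\cdot A=0$. (The $g=0$ case is handled separately, trivially, since $H_2(Y)=0$.) Your argument instead shows once and for all that the class $c_1(\xi)+2\mathrm{PD}(\Gamma)\in H^2(Y;\Z)$ is torsion: $c_1(\xi)=\chi(\Sigma_g)\fp^*u$ is killed by $|e|$ by the Gysin sequence, and $\mathrm{PD}(\Gamma)$ is torsion because $\Gamma$ lies in the $\Z_{-e}$ torsion summand of $H_1(Y)$ and Poincar\'e duality is a group isomorphism. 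This algebraic route buys you two small advantages: it handles $g=0$ and $g>0$ uniformly, and it directly establishes that the divisibility $d$ of $c_1(\xi)+2\mathrm{PD}(\Gamma)$ modulo torsion is zero, which is exactly the fact the paper cites this lemma for in the discussion of the $\Z$-grading. The paper's geometric argument is slightly more hands-on and recycles Lemma~\ref{lem:HYreps}, which has already been proved for other purposes in the same section; your version is cleaner if one is willing to invoke the Gysin sequence as a black box. Both are complete and correct.
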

\begin{proof} Let $A\in H_2(Y)$ and $Z\in H_2(Y,\alpha,\beta)$. From the index ambiguity formula, Theorem \ref{thm:Iproperties} (\ref{property:indexamb}), we have
\[
I(\alpha,\beta,Z+A)-I(\alpha,\beta,Z)=\langle c_1(\xi)+2PD(\Gamma),A\rangle=c_1(\xi)(A)+2\Gamma\cdot A.
\]

Assume $g>0$. Recall from Lemma \ref{lem:HYreps} (ii) that $H_2(Y)=\Z^{2g}$, and if $a_1,b_1,\dots,a_g,b_g$ generate $H_1(\Sigma)$, then the unions of fibers over representatives of $a$ and $b$ will generate $H_2(Y)$. The class $\Gamma$ can be represented by a single fiber, so a representative of $\Gamma$ can be isotoped not to intersect a representative of $A$. Thus $\Gamma\cdot A=0$.  Moreover, we have $c_1(\xi)(A)=0$, via
\[
c_1(\xi)(A)=c_1(T\Sigma_g)(\fp_*(A))=0,
\]
because $\fp_*$ will send a representative of $A$ to a representative of a 1-cycle in $\Sigma$.

If $g=0$, then we have $H_2(Y)=0$, and because $H_2(Y,\alpha,\beta)$ is affine over $H_2(Y)$, there is no possibility for index ambiguity.
\end{proof}

Therefore $I(\alpha,\beta,Z)$ is independent of $Z$, and will from now on be denoted $I(\alpha,\beta)$. Similarly, we will use $c_\tau(\alpha,\beta)$ and $Q_\tau(\alpha,\beta)$ to denote $c_\tau(Z)$ and $Q_\tau(Z)$.

We will now compute the relative first Chern class and relative intersection pairing terms in the ECH index. Lemmas \ref{lempre} and \ref{lem:orbitseh} allow us to compute the Conley Zehnder index term in the final proof of Proposition \ref{prop:indexcalc}. Throughout the computation we use the constant trivialization $\tau$ from \S\ref{subsec:CZ}.

\subsubsection{Surfaces in $Y$}\label{subsec:surfaces}

Let $\alpha$ and $\beta$ be homologous Reeb currents, thus satisfying (\ref{eqn:diffmult-e}). Before computing $c_\tau$ and $Q_\tau$, we will define surfaces in $Y$ representing $[\alpha]=[\beta]$ to be used in both the computation of the relative first Chern class in \S\ref{subsec:relfirstchern} and the computation of relative intersection pairing in \S\ref{subsec:relselfint}. In this section we use the notation $\alpha=\{(\alpha_k,m_k)\}$ and $\beta=\{(\beta_l,n_l)\}$; in particular, the $m_k$ and $n_l$ are not necessarily equal to the multiplicities of hyperbolic orbits.

Let $\fp(\alpha)$ and $\fp(\beta)$ denote the sets of points $\{\fp(\alpha_k)\}$ and $\{\fp(\beta_l)\}$, respectively, where the point $\fp(\alpha_k)$ appears with multiplicity $m_k$ and $\fp(\beta_l)$ appears with multiplicity $n_l$. Choose any subset of $\fp(\alpha)$ of total multiplicity $N$ and denote it $\fp(\alpha)_\beta$; such a subset exists because we are assuming $d\geq0$ in (\ref{eqn:diffmult-e}). Note that the multiplicity of $\fp(\alpha_k)$ in $\fp(\alpha)_\beta$ does not have to equal $m_k$, though it is at most $m_k$. Denote the set of points in $\Sigma_g$ underlying $\fp(\alpha)_\beta$ by $\{\fp(\alpha)_\beta\}$.

First we explain how to obtain a surface in $Y$ connecting a set of orbits from $\alpha$ of total multiplicity $N$ with $\beta$. Choose a graph $G_N$ embedded in $\Sigma_g$ with vertices $\{\fp(\alpha)_\beta\}\cup\{\fp(\beta)\}$, where the degree of each vertex equals its multiplicity as part of $\fp(\alpha)_\beta$ or $\fp(\beta)$. Furthermore, we require that the edges of $G_N$ partition $\{\fp(\alpha)_\beta\}\cup\{\fp(\beta)\}$ into $\fp(\alpha)_\beta$ and $\fp(\beta)$ in the sense that each edge of $G_N$ can be labeled with a pair in $\fp(\alpha)_\beta\times\fp(\beta)$ where the edge of $G_N$ connects the underlying pair in $\Sigma_g$, and all points in $\fp(\alpha)_\beta\cup\fp(\beta)$ are connected in this way. Finally, we require that the edges of $G_N$ intersect only transversely, including at their endpoints (meaning that if $x$ is an endpoint with degree at least two, the one-sided limits of the tangent vectors to those edges form a basis for $T_x\Sigma_g$). In particular, if $x\in\fp(\alpha)_\beta\cap\fp(\beta)$ then the graph can include transversely intersecting loops from $x$ to $x$. Let $\check{S}_N$ denote the union of the fibers above $G_N$.

Now we explain how to obtain a surface in $Y$ with boundary homologous to the remaining $(-e)d$ orbits in $\alpha$ (counted with multiplicity). Denote $\fp(\alpha)-\fp(\alpha)_\beta$ by $\fp(\alpha)_\alpha$. Divide the points in $\fp(\alpha)_\alpha$ into $d$ subsets, each of total multiplicity $-e$. Denote each subset by $\fp(\alpha)_\alpha^k$, for $k=1,\dots,d$, and denote the underlying set of points by $\{\fp(\alpha)_\alpha^k\}$. Let $\text{mult}_k(x)$ denote the multiplicity of $x$ as an element of $\fp(\alpha)_\alpha^k$. For each $k$ choose a section $\check{S}_k$ of $Y$ over $\Sigma_g-\{\fp(\alpha)_\alpha^k\}$ whose boundary forms a $\text{mult}_k(x)$-fold cover of the fiber over $x$, for each $x\in\{\fp(\alpha)_\alpha^k\}$.

For $z\not\in\{\fp(\alpha)_\alpha^k\}$, denote the point of $\check{S}_k$ above $z$ by $\check{S}_k(z)$.

\subsubsection{Relative first Chern class}\label{subsec:relfirstchern}

We compute the relative first Chern class $c_\tau(\alpha,\beta)$ of Reeb currents $\alpha$ and $\beta$.

\begin{lemma}\label{lem:relfirstCherncalc}
Given Reeb currents $\alpha$ and $\beta$ satisfying (\ref{eqn:diffmult-e}), their relative first Chern class $c_\tau(\alpha,\beta)$ satisfies the following formula:
\begin{equation}\label{eqn:relfirstChern}
c_\tau(\alpha,\beta)=\chi(\Sigma_g)d.
\end{equation}
\end{lemma}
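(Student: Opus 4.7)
The plan is to use the explicit admissible surface $S = \check{S}_N \cup \check{S}_1 \cup \cdots \cup \check{S}_d$ built in \S\ref{subsec:surfaces}, and to count zeros of a suitable section $\psi$ of $\xi|_S$ piece by piece. By definition $c_\tau(\alpha,\beta) = \# \psi^{-1}(0)$ for a generic section $\psi$ of $\xi|_{[\pi_Y S]}$ which is nonvanishing and constant (in the trivialization $\tau$) over a neighborhood of each end. The key geometric input is that $\xi = \fp^* T\Sigma_g$, so sections of $\xi|_S$ are pulled back from sections of $T\Sigma_g$ along the map $\fp|_S : S \to \Sigma_g$, and the constant trivialization $\tau$ of $\xi$ along a fiber $\gamma_p$ agrees with the pullback of a chosen trivialization of $T_p\Sigma_g$.

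First I would handle the piece $\check{S}_N$. Since $\check{S}_N$ is a union of fibers over a one-dimensional graph $G_N \subset \Sigma_g$, the pullback bundle $\fp^* T\Sigma_g|_{\check{S}_N}$ admits a global nonvanishing section: pick any nonvanishing vector field on a neighborhood of $G_N$ in $\Sigma_g$, arranged to agree with the constant trivialization at the vertices (the points of $\fp(\alpha)_\beta \cup \fp(\beta)$), and pull back. Thus the contribution from $\check{S}_N$ to the zero count is zero, and moreover this section extends to each $\check{S}_k$ with the correct boundary behavior along the fibers that $\check{S}_N$ and $\check{S}_k$ share.

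Next I would treat each $\check{S}_k$. Topologically $\check{S}_k$ is a section over $\Sigma_g$ minus a finite set (with multiplicities summing to $-e$), so $\fp|_{\check{S}_k}$ is a diffeomorphism onto $\Sigma_g - \{\fp(\alpha)_\alpha^k\}$. A section of $\xi|_{\check{S}_k}$ corresponds to a vector field $v_k$ on $\Sigma_g - \{\fp(\alpha)_\alpha^k\}$. The admissibility condition for $\psi$ — nonvanishing with winding number zero relative to $\tau$ along each boundary component — translates, via the identification of $\tau$ with a fixed trivialization of $T_{\fp(\alpha_k)}\Sigma_g$, into the statement that $v_k$ extends to a nonvanishing vector field in a punctured neighborhood of each puncture whose local degree at the puncture is zero. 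Such a vector field extends continuously across each puncture. Applying Poincaré–Hopf to the resulting vector field on the closed surface $\Sigma_g$ yields
\[
\# v_k^{-1}(0) = \chi(\Sigma_g).
\]
Summing over $k = 1, \dots, d$ and adding the vanishing contribution from $\check{S}_N$ gives $c_\tau(\alpha,\beta) = \chi(\Sigma_g)\, d$, as claimed.

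The only point that needs a bit of care, and which I expect to be the main thing to nail down, is that the constant trivialization really does identify ``winding number zero along the boundary of $\check{S}_k$'' with ``removable singularity of the corresponding vector field on $\Sigma_g$.'' This follows because the constant trivialization over the fiber $\gamma_{\fp(\alpha_k)}$ is, by definition, the pullback of a chosen trivialization of $T_{\fp(\alpha_k)}\Sigma_g$, so a section of $\xi|_{\partial \check{S}_k}$ with zero winding in $\tau$ corresponds to a section of $T\Sigma_g$ near $\fp(\alpha_k)$ with zero winding in that fixed trivialization, which by continuity extends to a nonvanishing germ at $\fp(\alpha_k)$. Once this compatibility is recorded, the Poincaré–Hopf step is standard and the formula follows. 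The computation is independent of $Z$ by Lemma~\ref{lem:indepZ}, consistent with the fact that the construction of $S$ involved several choices.
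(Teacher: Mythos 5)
Your decomposition into $\check{S}_N$ and the $\check{S}_k$'s, and the conclusions $c_\tau(\check{S}_N)=0$, $c_\tau(\check{S}_k)=\chi(\Sigma_g)$, are exactly the paper's. The only divergence is in how the $\check{S}_k$ contribution is computed: the paper argues homologically, observing that $[\fp(\check{S}_k)]=[\Sigma_g]\in H_2(\Sigma_g;\Z)$ and $c_1(\xi)=\fp^*c_1(T\Sigma_g)$, so the count is $\langle c_1(T\Sigma_g),[\Sigma_g]\rangle=\chi(\Sigma_g)$; you instead pass directly to the vector field $v_k$ on $\Sigma_g-\{\fp(\alpha)_\alpha^k\}$ and invoke Poincar\'e--Hopf after extending across the punctures. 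Your route is more hands-on and has the minor virtue of making explicit that the constant trivialization $\tau$ along $\gamma_p$ is pulled back from a fixed frame of $T_p\Sigma_g$, which is precisely what licenses identifying the relative Chern count (with its winding-zero boundary condition) with the closed-surface Euler characteristic. The paper leaves this compatibility implicit; otherwise the two proofs are the same argument.
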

\begin{proof}
We will use the surfaces $\check{S}_N$ and $\check{S}_k$ of \S\ref{subsec:surfaces}. It is immediate from the definition of $c_\tau$ that if $S$ and $S'$ are two admissible surfaces, then
\[
c_\tau(S\cup S')=c_\tau(S)+c_\tau(S').
\]
Therefore, we have
\begin{equation}\label{eqn:ctausum}
c_\tau(\alpha,\beta)=c_\tau(\check{S}_N\cup\check{S}_1\cdots\cup\check{S}_d)=c_\tau(\check{S}_N)+\sum_{k=1}^dc_\tau(\check{S}_k).
\end{equation}

Since $\xi=\fp^*T\Sigma_g$, the first Chern class of $\xi$ is $\fp^*c_1(T\Sigma_g)$. Since $\fp(\check{S}_N)=G_N$ represents zero in $H_2(\Sigma_g;\Z)$,
\begin{equation}\label{eqn:ctauSN}
c_\tau(\check{S}_N)=0.
\end{equation}

Since $[\fp(\check{S}_k)]=[\Sigma_g]$ in $H_2(\Sigma_g;\Z)$,
\begin{equation}\label{eqn:ctauSk}
c_\tau(\check{S}_k)=\chi(\Sigma_g).
\end{equation}

Combining equations (\ref{eqn:ctausum}), (\ref{eqn:ctauSN}), and (\ref{eqn:ctauSk}) yields the desired result.
\end{proof}

\subsubsection{Relative intersection pairing}\label{subsec:relselfint}

We compute the relative intersection pairing $Q_\tau(\alpha,\beta)$ of Reeb currents $\alpha$ and $\beta$. 

\begin{lemma}\label{lem:relselfint} 
Given Reeb currents $\alpha$ and $\beta$ satisfying (\ref{eqn:diffmult-e}), their relative intersection pairing $Q_\tau(\alpha,\beta)$ satisfies the following formula:
\begin{equation}\label{eqn:relselfint}
Q_\tau(\alpha,\beta)=-ed^2+2dN.
\end{equation}
\end{lemma}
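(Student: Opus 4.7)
Following \cite[\S 3]{farris}, the plan is to decompose $Z \in H_2(Y,\alpha,\beta)$ using the surfaces constructed in \S\ref{subsec:surfaces} and to exploit the bilinearity of $Q_\tau$. Setting $Z_N := [\check{S}_N]$ and $Z_k := [\check{S}_k]$ so that $Z = Z_N + \sum_{k=1}^d Z_k$, bilinearity yields
\[
Q_\tau(Z) \;=\; Q_\tau(Z_N) + 2\sum_{k=1}^d Q_\tau(Z_N, Z_k) + \sum_{k=1}^d Q_\tau(Z_k) + 2\sum_{1\le k<k'\le d} Q_\tau(Z_k, Z_{k'}),
\]
and I will compute each piece using admissible representatives in $[-1,1]\times Y$ together with the constant trivialization $\tau$ of \S\ref{subsec:CZ}.

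First I would show $Q_\tau(Z_N) = 0$: since $\check{S}_N$ is a union of fibers over the one-dimensional graph $G_N \subset \Sigma_g$, a generic isotopy of $G_N$ produces a disjoint representative (fibers over distinct points in $\Sigma_g$ are disjoint in $Y$), and the boundary braids in the constant trivialization contribute no writhe or linking. Next I would handle the section contributions using that $\fp: Y \to \Sigma_g$ has Euler number $e$. Each $\check{S}_k$ is a section over a punctured surface with total boundary multiplicity $-e$, and the nice admissible representative formula $Q_\tau = -w_\tau - \eta_\tau$ of \cite[Lem. 3.9]{Hrevisit} yields $Q_\tau(Z_k) = -e$; likewise, two distinct sections $\check{S}_k, \check{S}_{k'}$ can be perturbed into transverse position with algebraic intersection equal to the Euler obstruction, giving $Q_\tau(Z_k, Z_{k'}) = -e$. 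Thus
\[
\sum_k Q_\tau(Z_k) + 2\sum_{k<k'} Q_\tau(Z_k, Z_{k'}) \;=\; d(-e) + d(d-1)(-e) \;=\; -ed^2.
\]

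The cross term $Q_\tau(Z_N, Z_k) = N$ is the main obstacle. In $Y$ the section $\check{S}_k$ meets each fiber in $\check{S}_N$ exactly once, so $\check{S}_N \cap \check{S}_k$ is the one-dimensional curve $\check{S}_k(G_N)$, which must be resolved in the product cobordism. I would place the admissible lift of $\check{S}_N$ at height zero and the admissible lift of $\check{S}_k$ at a small positive height, adding cylindrical cobordisms from the boundaries out to $\{\pm 1\}\times Y$: each of the $N$ edges of $G_N$ (since the total degrees of $\fp(\alpha)_\beta$ and $\fp(\beta)$ are each $N$, so $G_N$ has $N$ edges) contributes exactly one transverse interior intersection, where the raised copy of $\check{S}_k$ crosses the vertical cylinder over the alpha-endpoint fiber of the edge. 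The boundary loops of the two surfaces lie over the disjoint subsets $\fp(\alpha)_\beta\cup\fp(\beta)$ and $\fp(\alpha)_\alpha^k$ of $\Sigma_g$, so their linking vanishes in the constant trivialization. Hence $Q_\tau(Z_N, Z_k) = N$ and $2\sum_k Q_\tau(Z_N, Z_k) = 2dN$. Assembling all pieces gives $Q_\tau(\alpha,\beta) = -ed^2 + 2dN$, as claimed. The delicate step is the resolution of the degenerate one-dimensional intersection $\check{S}_k(G_N)$ into a genuine algebraic count via a carefully chosen perturbation in $[-1,1]\times Y$, together with the accompanying bookkeeping of the boundary linking in the constant trivialization, as executed in \cite[\S 3]{farris}.
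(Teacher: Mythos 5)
The proposal takes essentially the same route as the paper: decompose the relative class via $\check{S}_N$ and the sections $\check{S}_1,\dots,\check{S}_d$, use bilinearity of $Q_\tau$, and compute $Q_\tau(Z_N)=0$, $Q_\tau(Z_N,Z_k)=N$, and $Q_\tau(Z_{k_1},Z_{k_2})=-e$; the paper organizes the last piece slightly differently, computing $Q_\tau(S_{k_1},S_{k_2})=-e$ uniformly (including $k_1=k_2$) via nested lifts rather than invoking Lemma 3.9 of \cite{Hrevisit} for the diagonal terms, but this is a cosmetic difference and both assemble to $-ed^2$.

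Two points of imprecision are worth flagging. First, the proposal justifies $\ell_\tau(S_N,S_k)=0$ by asserting that $\{\fp(\alpha)_\beta\}\cup\{\fp(\beta)\}$ and $\{\fp(\alpha)_\alpha^k\}$ are disjoint subsets of $\Sigma_g$; this is false in general, since the same underlying point of $\fp(\alpha)$ can have its multiplicity split between $\fp(\alpha)_\beta$ and $\fp(\alpha)_\alpha$ (e.g.\ $\alpha=e_-^3$, $\beta=e_+$, $e=-1$). The vanishing of the boundary linking instead comes from the specific parametrizations in the paper's proof (pinning the $S_N$ braid onto the fiber for $t$ near $1$ while the $S_k$ braid sits at a controlled radius). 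Second, the $N$ transverse intersection points between $S_N$ and $S_k$ occur at the interior parameter values $t=1-l(k)$ along each edge of $G_N$, at the slice where $S_k$ crosses the height of the edge parametrization, not ``over the alpha-endpoint fiber of the edge'' as stated. Neither issue changes the count or the overall structure of the argument, but both would need to be repaired in a careful write-up.
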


\begin{proof}
We will first lift the surfaces $\check{S}_N$ and $\check{S}_k$ in $Y$ from \S\ref{subsec:surfaces} to admissible surfaces in $[-1,1]\times Y$ to use in our computation. To lift $\check{S}_N$ to an admissible surface $S_N\subset[-1,1]\times Y$, parameterize the edges of $G_N$ by $[-1,1]$ from $\fp(\beta)$ to $\fp(\alpha)_\beta$ so that they do not intersect as parameterized curves. Denote these parameterizations by $g_i$. The non-intersecting requirement means that if $g_1,g_2$ parameterize two edges of $G_N$ which intersect in $\Sigma_g$, then we have $g_1(t_1)=g_2(t_2)$ only if $t_1\neq t_2$. Let $S_N$ be the surface
\[
\union_{i=1}^N(t,\fp^{-1}(g_i(t))).
\]

To construct an admissible surface with boundary on the remaining $(-e)d$ components of $\alpha$, we will define a family of lifts for each $\check{S}_k$ to an admissible surface $S_k\subset[-1,1]\times Y$. The lifts are isotopic, so the relative intersection pairing will not depend on our choice within the family. We will need this flexibility in order to guarantee transverse intersections.

Choose a disc neighborhood $\mathbb{D}^2_x$ for each $x\in\{\fp(\alpha)_\alpha\}$ which do not pairwise intersect, and parameterize each $\mathbb{D}^2_x$ with radial function $0\leq r_x\leq2$. For any choice of functions $\epsilon,l:\{1,\dots,d\}\to[0,2)$ with $0\leq\epsilon(k)<l(k)<2$ and $\delta:\{1,\dots,d\}\to\R_{>0}$, let $f_k:\Sigma_g\to\R$ be a smooth function for which
\[
f_k(z)=\begin{cases}
\delta(k)r_x&0\leq r_x\leq\frac{\epsilon(k)}{\delta(k)}
\\l(k)&l(k)\leq r_x\leq2
\\l(k)&\text{ outside $\bigcup_{x\in\fp(\alpha)_\alpha^k}\mathbb{D}^2_x$}
\end{cases} \text{ and }f'_k>0\text{ for }\frac{\epsilon(k)}{\delta(k)}\leq r_x<l(k).
\]
For each $k=1,\dots,d$, we define $S_k\subset[-1,1]\times Y$ to be the surface
\[
S_k:=(1-f_k(z),\check{S}_k(z)).
\]
Heuristically, $S_k$ lifts to $[-1,1]$ near $1$ by the negative of each radial direction $r_x$ times $\delta(k)$, until the $[-1,1]$ coordinate reaches $\epsilon(k)$. After some smooth interpolation depending within the $\mathbb{D}^2_x$ discs, the rest of $S_k$ simply equals $\{1-l(k)\}\times\left(\check{S}_k-\bigcup_{x\in\fp(\alpha)_\alpha^k}\mathbb{D}^2_x\right)$. 

Expanding \cite[(3.11)]{Hrevisit}, we have
\begin{equation}\label{eqn:Qtausum}
Q_\tau(S_N\cup S_1\cup\cdots\cup S_d)=Q_\tau(S_N)+2\sum_{k=1}^dQ_\tau(S_N,S_k)+Q_\tau(S_1\cup\cdots\cup S_d).
\end{equation}
We will compute each term separately.

First, we have
\begin{equation}\label{eqn:QtauSN}
Q_\tau(S_N)=0,
\end{equation}
because the graph $G_N$ has self-intersection zero as a parameterized graph. That is, any intersections between the edges of $G_N$, including self-intersections, can occur away from $
\fp(\alpha)_\beta$ and $\fp(\beta)$, and the parameterizations can be adjusted so as to avoid intersection in $[-1,1]\times Y$. In particular, the self-linking of the braids $S_N\cap\{1-\epsilon\}\times Y$ is zero because $G_N$ can be isotoped so that its edges do not intersect near $\fp(\alpha)_\beta$ and $\fp(\beta)$, even as non-parameterized curves.\footnote{Alternately, one could show that $S_N$ is a ``$\tau$-representative" of $[\fp_Y(S_N)]$, following \cite{Hindex}, an alternate construction of $Q_\tau$ for which there is no need to consider boundary self-linking.}

Second, we compute $Q_\tau(S_N,S_k)$. We can choose the parameterizations $g(t)$ of the edges of $G_N$ so that when $t=1-l(k)$, the point $g(t)$ is outside all disks $\mathbb{D}^2_x$, the derivative $g'(1-l(k))\neq0$, and if $g(t)$ has an end at $x$, that when $t\geq1-\frac{\epsilon(x)}{\delta(x)}$, the parameterization $g(t)\equiv x$.

The points $(1-l(k),\check{S}_k(z))\in(1-l(k),\fp^{-1}(g(1-l(k))))$ will then be the only points of intersection between $\check{S}_k$ and the edges of $G_N$. Each contributes to the count of intersections with sign $+1$ because, in the oriented local basis $\{\partial_s,R,\partial_1,\partial_2\}$ for $\R_s\times Y$, where $\{\partial_1,\partial_2\}$ is an oriented basis for $\Sigma_g$ and $\partial_1$ equals the tangent vector to the edge in question (i.e., $\partial_1=g'(1-l(k))$), an oriented basis for $TS_N\oplus TS_k$ at their point of intersection is
\[
\{(1,0,1,0),(0,1,0,0),(0,0,1,0),(0,0,0,1)\}.
\]
Therefore
\begin{equation}\label{eqn:QtauSNSk}
Q_\tau(S_N,S_k)=N.
\end{equation}

Finally we consider the self-intersection of the union of $S_k$s. We have
\[
Q_\tau(S_1\cup\cdots\cup S_d)=\sum_{k=1}^dQ_\tau(S_k)+\sum_{k_1\neq k_2}Q_\tau(S_{k_1},S_{k_2}).
\]
We will show that $Q_\tau(S_{k_1},S_{k_2})$ does not depend on the $k_i$ (even if $k_1=k_2$). Therefore, because
\[
d+2{d\choose2}=d+\frac{2d!}{2(d-2)!}=d+d(d-1)=d^2,
\]
we will get
\[
Q_\tau(S_1\cup\cdots\cup S_d)=d^2Q_\tau(S_1).
\]

To compute $Q_\tau(S_{k_1},S_{k_2})$ for any $k_1,k_2$, let $\delta_i,\epsilon_i,l_i$ denote $\delta(k_i)$, etc. Choose $\delta_1>\delta_2$ and $\frac{\epsilon_1}{\delta_1}>l_2$.

Because $\frac{\epsilon_1}{\delta_1}>l_2$, all intersections between $S_{k_1}$ and $S_{k_2}$ must occur at points whose projection to $\Sigma_g$ lies within the disk neighborhoods of $\fp(\alpha)_\alpha^{k_1}$.

Assume $x\in\{\fp(\alpha)_\alpha^{k_1}\}$ but $x\not\in\{\fp(\alpha)_\alpha^{k_2}\}$. In the local product coordinates $\partial_s,R,\partial_r,\partial_\theta$ determined by the section $\check{S}_{k_2}$, the intersection $S_{k_1}\cap(\{1-l_2\}\times Y)$ consists of a $(1,\text{mult}_{k_1}(x))$ torus knot about the fiber over $x$ in the $s=1-l_2$ level of $[-1,1]$ with $r_x=\frac{l_2}{\delta_1}$, and $S_{k_2}$ consists of the zero section of $\fp$, so is parameterized by $(1-l_2,0,r,\theta)$. In particular, by $T_{1,\text{mult}_{k_1}(x)}$ we are referring to $\partial_\theta$ as the meridional coordinate and $R$ as the longitudinal coordinate.

Since in oriented bases for $TS_{k_1}$ and $TS_{k_2}$ near any intersection in the $s=1-l_2$ slice only the first basis vector for $TS_{k_1}$ will have any $\partial_s$ component, and it will be positive, the intersection number in $[-1,1]\times Y$ will agree with the intersection number of the projections to $Y$ in $Y$. These projections will consist of the $T_{1,\text{mult}_{k_1}(x)}$ torus knot in the $r=\frac{l_2}{\delta_1}$ torus and the disk obtained by projecting off the Reeb direction.

Similarly, only the first basis vector for $TS_{k_2}$ will have any $\partial_r$ component, and it will be positive. Therefore the intersection number of the projections to $Y$ will agree with the intersection number in the $r_x=\frac{l_2}{\delta_1}$ torus of the $T_{1,\text{mult}_{k_1}(x)}$ torus knot and the meridian, parameterized by $\theta$. Their intersection number can easily be computed via a matrix:
\begin{equation}\label{eqn:nomult2}
\begin{vmatrix}\text{mult}_{k_1}(x)&0\\1&1\end{vmatrix}=\text{mult}_{k_1}(x).
\end{equation}

Now assume $x\in\{\fp(\alpha)_\alpha^{k_1}\}\cap\{\fp(\alpha)_\alpha^{k_2}\}$. Let $0<\epsilon<\epsilon_2$. Using local coordinates $s$, the Reeb direction coming from $\fp^{-1}(x)$, and polar coordinates $r_x,\theta$ on the base, the intersections $S_{k_i}\cap(\{1-\epsilon\}\times Y)$ consist of $T_{1,\text{mult}_{k_i}(x)}$ torus knots in the tori $r_x=\frac{\epsilon}{\delta_i}$, respectively. Because $\delta_1>\delta_2$, the $T_{1,\text{mult}_{k_1}(x)}$ torus knot lies on a torus nested ``inside" the torus of the $T_{1,\text{mult}_{k_2}(x)}$ torus knot, where ``inside" refers to the component of $Y-T^2$ containing $\fp^{-1}(x)$. From knot diagrams of the image in $\R^3$ under the diffeomorphism of \S\ref{s:writhe} (see also the version in coordinates defined between Definitions 2.7 and 2.8 in \cite{Hrevisit}), it is immediate that
\begin{equation}\label{eqn:mult12}
\ell_\tau(\pi_Y(S_{k_1}\cap(\{1-\epsilon\}\times Y)),\pi_Y(S_{k_2}\cap(\{1-\epsilon\}\times Y)))=\text{mult}_{k_1}(x).
\end{equation}

Equations (\ref{eqn:nomult2}) and (\ref{eqn:mult12}) show that each $x\in\{\fp(\alpha)_\alpha^{k_1}\}$ contributes to $Q_\tau(S_{k_1},S_{k_2})$ according to its multiplicity. Since there are no other intersections or boundary components, we obtain
\begin{equation}\label{eqn:QtauSk}
Q_\tau(S_{k_1},S_{k_2})=-e.
\end{equation}

Combining equations (\ref{eqn:Qtausum}), (\ref{eqn:QtauSN}), (\ref{eqn:QtauSNSk}), and (\ref{eqn:QtauSk}) yields the desired result.
\end{proof}

\subsubsection{Proof of the ECH index formula}\label{subsec:ECHindexcomp}

\begin{proof}[Proof of Proposition \ref{prop:indexcalc}] Combining Lemmas \ref{lem:orbitseh}, \ref{lempre} tells us that
\[
CZ^I_\tau(\alpha)-CZ^I_\tau(\beta)=m_+-m_--(n_+-n_-).
\]
Adding (\ref{eqn:relfirstChern}) and (\ref{eqn:relselfint}) proves the result.
\end{proof}

Checking that our formula satisfies the additivity property of Theorem \ref{thm:Iproperties} (iii) is straightforward. Checking that our formula satisfies the index parity property of Theorem \ref{thm:Iproperties} (iv) requires relating the sums $m_1+\cdots+m_{2g}$ and $n_1+\cdots+n_{2g}$ to $m_\pm$ and $n_\pm$ via the formula $M=N+(-e)d$ defining $d$.

\section{The many flavors of $J$}\label{sec:modspcs}

In this section we work towards proving that $\partial^{L}$ only counts cylinders which are the union of fibers over Morse flow lines in $\Sigma_g$. 
One can count cylinders with a fiberwise $S^1$-invariant almost complex structure  $\mathfrak{J} := \mathfrak{p}^*j_{\Sigma_g}$,   the $S^1$-invariant lift of $j_{\Sigma_g}$, but unfortunately we cannot use $\mathfrak{J}$ for higher genus curves because it cannot be independently perturbed at the intersection points of $\pi_Y C$ with a given $S^1$-orbit by an $S^1$-invariant perturbation; see \S \ref{degree}.   Generically, there will always exist a regular $J \in \sj(Y,\lambda)$ for moduli spaces of nonzero genus curves, but we cannot assume that this $J$ is $S^1$-invariant.

To resolve this issue, we employ Farris' strategy \cite[\S 6]{farris} of using a family of $S^1$-invariant domain dependent almost complex structures, $\bj$, for higher genus curves, which was modeled on Cieliebak and Mohnke's approach for genus zero pseudoholomorphic curves in \cite{CM}.  To an ($S^1$-invariant) domain dependent almost complex structure $\bj \in \sj_{\ds}^{S^1}$ and a map $C: (\ds, j) \to (\R \times Y, \bj)$ where $g(\ds) >0$, we associate the $(0,1)$-form
\[
\overline{\partial}_{j,\bj} : = \tfrac{1}{2}\left( dC + \bj(z,C)\circ dC \circ j\right),
\]
which at the point $z\in \ds$ is given by
\[
\overline{\partial}_{j,\bj}C(z) := \tfrac{1}{2} \left( dC(z) + \bj(z,C(z))\circ dC(z) \circ j ( z) \right).
\]
We say that $C$ is \emph{$\bj$-holomorphic} whenever $\overline{\partial}_{j,\bj}C=0$.  There are two new phenomena to be accounted for in the case of higher genus $\bj$-holomorphic curves.  The first is that higher genus Riemann surfaces have  finite nontrivial symmetry groups, so the moduli space $\mgn$ is an orbifold, and therefore the moduli spaces of $\bj$-holomorphic curves are also orbifolds.  The second is that, even when using domain dependent almost complex structures, a nodal curve with a constant component of positive genus cannot be perturbed away to achieve transversality.  However in dimension 4, we will show how Farris' index considerations obstruct the latter configurations from arising.  

Our scheme for obtaining regularity will be that if $z, z' \in \ds$ map under $C$ to the same $S^1$ orbit in $Y$, then we will perturb $\bj$ independently at $z$ and $z'$ while preserving $\bj $'s $S^1$-invariance.  We will exploit this construction to prove the existence of regular $S^1$-invariant domain dependent almost complex structures in \S \ref{sec:ddacs}-\ref{sec:reg}.  Moreover, we show that for a generic choice of $S^1$-invariant domain dependent almost complex structure $\bj$, the moduli spaces of curves of nonzero genus with ECH index 1 are empty.  In \S \ref{handleslides} we will consider a one parameter family of domain dependent almost complex structures to relate curve counts defined with a domain dependent ($S^1$-invariant) $\bj$ and a generic $\lambda$-compatible almost complex structure $J$, permitting us to conclude that the only contributions to an appropriately filtered ECH differential are from cylinders which project to Morse flow lines.   


\subsection{Degree of a completed projected curve}\label{degree}
We first review the notion of a degree of a completed projection for a pseudoholomorphic curve in the symplectization of a prequantization bundle.
\begin{definition}[Degree of a completed projection]\label{def:degree}
If we compose the $J$-holomorphic curve
\[
C: \ds \to \R \times Y,
\]
with the projection
\[
\fp: Y \to \Sigma_g,
\]
then we obtain a map 
\[
\fp \circ \pi_Y C: \ds \to \Sigma_g,
\]
which has a well-defined non-negative \emph{degree} because $\fp \circ \pi_Y C$ extends to a map of closed surfaces. We define the \textit{degree} of $C$, denoted $\op{deg}(C)$, to be the degree of this map.
\end{definition}

\begin{remark}
One should not confuse degree with \emph{multiplicity}.  Recall that in Definition \ref{covered} that the multiplicity of a pseudoholomorphic curve $C$ is given by degree of the holomorphic branched covering map between the domain of $C$ and the domain of the underlying somewhere injective curve.  The multiplicity of a somewhere injective curve is always 1.  
\end{remark}

  We can relate the degree of the completed projection map $\fp \circ \pi_Y C$ to the number of positive and negative ends via the $d\lambda_\varepsilon$-energy and Stokes' Theorem as follows.  First we note the following.
  
\begin{remark}\label{acrem}
The action of a Reeb orbit $\gamma_p^k$ of $R_\varepsilon$ over a critical point $p$ of $H$ is proportional to the length of the fiber, namely
\[
\mathcal{A}(\gamma_p^k) = \int_{\gamma_p^k} \lambda_\varepsilon = 2k \pi (1+\varepsilon \fp^*H),
\]
because $\fp^*H$ is constant on critical points $p$ of $H$. 
\end{remark}

\begin{proposition}\label{prop-degree} For all $\lambda_\varepsilon$, we have the following relation between the degree $\op{deg}(C)$ of a curve $C\in\mathcal{M}^J(\alpha,\beta)$ and the total multiplicity of the Reeb orbits at the positive and negative ends:

\[
M-N = |e|\op{deg}(C).
\]
\end{proposition}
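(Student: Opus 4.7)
The plan is to compute $\int_C d\lambda$ in two different ways and compare. Although $C$ is only $J$-holomorphic for a $\lambda_\varepsilon$-compatible $J$ rather than a $\lambda$-compatible one, the integral $\int_C d\lambda$ is still a well-defined topological quantity, and choosing $d\lambda$ rather than $d\lambda_\varepsilon$ yields the cleanest relationship with the degree because of the identity $d\lambda = \mathfrak{p}^*\omega$.

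First, Stokes' theorem applied to the asymptotically cylindrical curve $C\in\mathcal{M}^J(\alpha,\beta)$ yields
\[
\int_C d\lambda = \sum_i m_i \int_{\alpha_i}\lambda - \sum_j n_j\int_{\beta_j}\lambda.
\]
Each Reeb orbit of $R_\varepsilon$ appearing at an end of $C$ is a $k$-fold cover of a fiber above a critical point $p$ of $H$, where the horizontal-lift term in $R_\varepsilon$ vanishes and $R_\varepsilon$ differs from $R$ only by the constant rescaling $1/(1+\varepsilon H(p))$. Hence the geometric image of each such orbit is a $k$-fold cover of a fiber of $\lambda$-length $2\pi$, so $\int_C d\lambda = 2\pi(M-N)$.

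Second, the identity $d\lambda=\mathfrak{p}^*\omega$ on $Y$ implies $\pi_Y^*(d\lambda)=(\mathfrak{p}\circ\pi_Y)^*\omega$ on $\R\times Y$. Since each puncture of $\dot\Sigma$ is asymptotic under $C$ to a multiple cover of a single fiber of $\mathfrak{p}$, the composition $\mathfrak{p}\circ\pi_Y\circ C$ sends a neighborhood of each puncture to a neighborhood of a single point of $\Sigma_g$, and so extends continuously to a map $\widehat{\Sigma}\to\Sigma_g$ on the closed surface $\widehat{\Sigma}$ obtained by filling in the punctures. By Definition~\ref{def:degree}, this extension has degree $\op{deg}(C)$, hence
\[
\int_C d\lambda = \int_{\dot\Sigma}(\mathfrak{p}\circ\pi_Y\circ C)^*\omega = \op{deg}(C)\int_{\Sigma_g}\omega = 2\pi|e|\op{deg}(C),
\]
where the last equality uses the normalization $e=-\tfrac{1}{2\pi}[\omega]$ together with $e<0$. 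Equating the two computations and dividing by $2\pi$ yields $M-N=|e|\op{deg}(C)$. The argument is essentially a direct calculation; the only point that requires any care is to compute with $d\lambda$ rather than $d\lambda_\varepsilon$, since an expansion of the latter introduces an $H$-dependent term $\varepsilon\, d(\mathfrak{p}^*H)\wedge\lambda$ which must be matched against the $\varepsilon H(p)$ corrections in the actions of the asymptotic orbits, ultimately producing the same identity but less transparently.
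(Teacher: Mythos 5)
Your proof is correct, and it takes a somewhat cleaner route than the paper's. The paper computes the $d\lambda_\varepsilon$-energy $A(C)=\int_{\ds}(\pi_Y C)^*(d\lambda_\varepsilon)$ in two ways, which forces it to split $d\lambda_\varepsilon = \varepsilon\,\fp^*dH\wedge\lambda + (1+\varepsilon\fp^*H)\,d\lambda$ and then track the $\varepsilon H(p)$ corrections to orbit actions on one side against the $\varepsilon\,\fp^*dH\wedge\lambda$ contribution on the other, applying Stokes a second time to match them. You sidestep this entirely by integrating $d\lambda = \fp^*\omega$ instead: since $d\lambda$ is the pullback of $\omega$, the area side immediately gives $2\pi|e|\op{deg}(C)$ with no correction term, and since the asymptotic orbits of $R_\varepsilon$ have the same underlying image (a $k$-fold cover of a $\lambda$-length-$2\pi$ fiber) regardless of $\varepsilon$, the Stokes side gives exactly $2\pi(M-N)$. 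The one point worth noting explicitly — and you handle it implicitly by invoking asymptotic cylindricity — is that $\int_C d\lambda$ converges because $d\lambda$ vanishes identically on the trivial half-cylinders $\R\times\gamma$ (as $d\lambda(\partial_s,\cdot)=0$ and $d\lambda(R_\varepsilon,\cdot)=0$), and $C$ approaches these exponentially since the $\lambda_\varepsilon$-orbits are nondegenerate. This also quietly improves on the paper's exposition, which at one point invokes $\int_{\Sigma_g}H=0$ as though it were a standing normalization; your argument never needs any normalization of $H$.
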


\begin{proof} Note that equality of the total homology classes of $\alpha$ and $\beta$ forces
\[
M-N = 0 \mbox{ mod } |e|.
\]

Denote by $H_\pm$ the values of $H$ at $\fp(e_\pm)$, respectively, and denote by $H_i$ the value of $H$ at $\fp(h_i)$.

Recall that the $d\lambda_\varepsilon$-energy (equivalently, contact area) $A(C)$ of a $J$-holomorphic curve $C$ is given by
\[
A(C) := \int_{\ds} (\pi_YC)^*(d\lambda_\varepsilon).
\]
Stokes' Theorem yields
\begin{align}
A(C)&= \int_{\ds} (\pi_YC)^*(d\lambda_\varepsilon) \nonumber
\\& = \int_{\partial(\pi_YC_*[\ds])}\lambda_\varepsilon \nonumber
\\&=2\pi\left(M-N+\varepsilon\left((m_--n_-)H_-+(m_+-n_+)H_++\sum_{i=1}^{2g}(m_i-n_i)H_i\right)\right).\label{eqn:ACinY}
\end{align}

On the other hand, we have
\[
d\lambda_\varepsilon=\varepsilon\fp^*dH\wedge\lambda+(1+\varepsilon\fp^*H)d\lambda,
\]
where $\fp^*\omega=d\lambda$ and $\omega[\Sigma_g]=2\pi|e|$. Therefore
\begin{align}
A(C)&=\varepsilon\int_{\ds}(\pi_YC)^*(\fp^*dH\wedge\lambda)+\int_{\ds}(1+\varepsilon H\circ\fp\circ\pi_YC)(\pi_YC)^*(\fp^*\omega) \nonumber
\\&=\varepsilon\int_{\ds}(\pi_YC)^*(\fp^*dH\wedge\lambda)+\omega[(\fp\circ\pi_YC)_*{\ds}]\left(1+\varepsilon\int_{\Sigma_g}H\right) \nonumber
\\&=\varepsilon\int_{\ds}(\pi_YC)^*(\fp^*dH\wedge\lambda)+2\pi|e|\op{deg}(C), \text{ because $\int_{\Sigma_g}H=0$.} \label{eqn:ACinSigma}
\end{align}

We claim that
\begin{equation}\label{eqn:dHlambda}
\int_{\ds}(\pi_YC)^*(\fp^*dH\wedge\lambda)=2\pi\left((m_--n_-)H_-+(m_+-n_+)H_++\sum_{i=1}^{2g}(m_i-n_i)H_i\right).
\end{equation}
Assuming (\ref{eqn:dHlambda}, we obtain the desired conclusion by setting the values for $A(C)$ computed in (\ref{eqn:ACinY}) and (\ref{eqn:ACinSigma}) equal to one another.

Therefore, it remains to show the claim (\ref{eqn:dHlambda}). We again use Stokes' theorem. Because $\int_{\Sigma_g}H=0$, we have
\begin{align*}
\int_{\ds}(\pi_YC)^*(\fp^*dH\wedge\lambda)&=\int_{\ds}(\pi_YC)^*(\fp^*dH\wedge\lambda)+\omega[(\fp\circ\pi_YC)_*{\ds}]\int_{\Sigma_g}H
\\&=\int_{\ds}(\pi_YC)^*(\fp^*dH\wedge\lambda)+\int_{(\pi_YC)_*[\ds]}\fp^*(H\omega)
\\&=\int_{\ds}(\pi_YC)^*(d(H\circ\fp)\wedge\lambda+(H\circ\fp)d\lambda)
\\&=\int_{\ds}(\pi_YC)^*d((H\circ\fp)\lambda)
\\&=\int_{\partial(\pi_YC)_*[\ds])}(H\circ\fp)\lambda
\end{align*}
which equals the right hand side of (\ref{eqn:dHlambda}).
\end{proof}

As a consequence of Proposition \ref{prop-degree}, the degree of any two curves in $\mathcal{M}^J(\alpha,\beta)$ must be equal, and therefore we make the following definition.
\begin{definition}\label{degree-gen} The \textit{degree} of a pair of ECH generators $(\alpha,\beta)$, denoted $\op{deg}(\alpha,\beta)$, is
\[
\op{deg}(\alpha,\beta):=\frac{M-N}{|e|}.
\]
\end{definition}

A curve $C$ which contributes nontrivially to the ECH differential is degree zero if and only if it is a cylinder, as we explain in the following Lemma.

\begin{lemma}\label{lem:d0g0} Let $C\in\mathcal{M}^J(\alpha,\beta)$ be a $J$-holomorphic curve with domain $(\ds,j)$ and with $I(C)=1$. Then $\op{deg}(C)=0$ if and only if $\ds$ a cylinder.
\end{lemma}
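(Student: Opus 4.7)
Following Proposition~\ref{lowiprop}(ii), I would first decompose $C = C_0 \sqcup C_1$ where $C_0$ is a (possibly empty) union of trivial cylinders (which have degree $0$ and cylindrical domain) and $C_1$ is a single simple embedded curve with $\ind(C_1) = I(C_1) = 1$. Both directions of the claim thus reduce to showing that $\op{deg}(C_1) = 0$ if and only if the domain $\dot\Sigma_1$ of $C_1$ is a cylinder. The key inputs are the Fredholm index formula~\eqref{w-index}, the ECH index formula~\eqref{eqn:indexformula}, the identity $c_\tau(C_1) = \chi(\Sigma_g)\op{deg}(C_1)$ from Lemma~\ref{lem:relfirstCherncalc}, the Conley--Zehnder values $CZ_\tau(e_\pm^m) = \pm 1$ and $CZ_\tau(h_i^m) = 0$ from Lemma~\ref{lempre}, and the degree relation $M_1 - N_1 = |e|\op{deg}(C_1)$ from Proposition~\ref{prop-degree}. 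Let $a_\pm,b_\pm$ denote the numbers of positive, respectively negative, ends of $C_1$ at $e_\pm$, and let $h^\pm$ denote the numbers of positive, respectively negative, hyperbolic ends. By the ECH partition conditions (Example~\ref{ex:partitions}) applied in the equality case of Theorem~\ref{thm:indexineq}, $a_+$ equals the multiplicity $m_+$ of $e_+$ in $\alpha_1$, $a_-\in\{0,1\}$ records whether $e_-$ appears in $\alpha_1$, and symmetrically $b_+\in\{0,1\}$ and $b_- = n_-$.

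For the forward direction, assume $\op{deg}(C_1)=0$. Combining~\eqref{w-index} with $\chi(C_1) = 2 - 2g(\dot\Sigma_1) - k - l$, where $k = a_+ + a_- + h^+$ and $l = b_+ + b_- + h^-$, and with the Conley--Zehnder values above, yields the integer equation
\[
2g(\dot\Sigma_1) + 2a_+ + 2b_- + h^+ + h^- = 3.
\]
Parity forces $h^++h^-$ to be odd; together with $h^++h^-\leq 3$ this leaves $h^++h^-\in\{1,3\}$. The cases with $a_+=b_-=0$, namely $h^++h^-=3$ and the subcase $(g(\dot\Sigma_1),a_+,b_-) = (1,0,0)$ of $h^++h^-=1$, force $I(C_1) = -m_- - n_+ \leq 0$ via~\eqref{eqn:indexformula} at $d=0$, contradicting $I(C_1)=1$. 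In the two remaining subcases $(g(\dot\Sigma_1),a_+,b_-)\in\{(0,1,0),(0,0,1)\}$ with $h^++h^-=1$, the constraint $I(C_1)=1$ pins down $m_-=n_+=0$, hence $a_-=b_+=0$, and the total number of ends is $k+l=2$; so $\dot\Sigma_1$ is a cylinder.

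For the backward direction, assume $\dot\Sigma_1$ is a cylinder, so $\chi(C_1)=0$. Then~\eqref{w-index} becomes $CZ^{ind}_\tau(C_1) = 1 - (4 - 4g(\Sigma_g))\op{deg}(C_1)$. Enumerating the nine possible pairs of orbit types at the unique positive and negative ends and applying Lemma~\ref{lempre}, one finds $CZ^{ind}_\tau(C_1)\in\{-2,-1,0,1,2\}$. When $g(\Sigma_g)\in\{0\}\cup\Z_{\geq 2}$, $|4-4g(\Sigma_g)|\geq 4$, so $\op{deg}(C_1)\geq 1$ drives $CZ^{ind}_\tau(C_1)$ outside $\{-2,\dots,2\}$, forcing $\op{deg}(C_1)=0$. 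When $g(\Sigma_g)=1$ the equation reduces to $CZ^{ind}_\tau(C_1)=1$, whose only cylindrical solutions are $(\alpha_1,\beta_1)=(e_+,h_j)$ or $(h_i,e_-)$; both have $M_1=N_1=1$, so Proposition~\ref{prop-degree} again gives $\op{deg}(C_1)=0$.

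The main obstacle will be organizing the case analysis cleanly, in particular the translation between orbit-set multiplicities and numbers of asymptotic ends of $C_1$ dictated by the ECH partition conditions. The essential simplification is Lemma~\ref{lempre}: in the perturbative regime of Lemma~\ref{lem:efromL}, Conley--Zehnder indices are independent of cover multiplicity and take only three values, so $CZ^{ind}_\tau(C_1)$ is a bounded integer and the degree constraint from Proposition~\ref{prop-degree} is an arithmetic relation, reducing the problem to a finite enumeration.
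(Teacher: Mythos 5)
Your proof is correct. The direction $\op{deg}(C_1)=0\Rightarrow$ cylinder is essentially the paper's argument: both substitute the partition-condition description of the ends and the $d=0$ specialization of (\ref{eqn:indexformula}) into the Fredholm index formula; the paper's algebraic cancellation, which subtracts $I(C)=1$ from $\op{ind}(C)=1$ to land at $g(\ds)+M=1$, is a condensed form of your casework on $2g(\dot\Sigma_1)+2a_++2b_-+h^++h^-=3$. The genuine difference is the other direction. The paper proves cylinder $\Rightarrow\op{deg}(C_1)=0$ by completing the projected map to $S^2\to\Sigma_g$ and applying Riemann--Hurwitz, which is immediately inconsistent with positive degree once $g\geq 1$. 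You instead re-use the Fredholm index equation: with $\chi(C_1)=0$ it becomes $CZ^{ind}_\tau(C_1)=1-(4-4g(\Sigma_g))\op{deg}(C_1)$, and the left side being pinned in $\{-2,\dots,2\}$ rules out positive degree whenever $|4-4g(\Sigma_g)|\geq 4$. Your route avoids Riemann--Hurwitz entirely and works uniformly for $g(\Sigma_g)\neq1$ (including $g=0$, which the paper's argument cannot settle, though the lemma is only invoked for $g>0$); the paper's route is shorter and has no torus exception. For the $g(\Sigma_g)=1$ case, one load-bearing step should be made explicit: $CZ^{ind}_\tau(C_1)=1$ alone does not bound the covering multiplicities, so a priori one could have a positive end at $e_+^{m_+}$ with $m_+$ arbitrary. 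It is the equality-case partition conditions (which you do invoke in the other direction) that force $m_+=1$, since $P^+_{e_+}(m)=(1,\dots,1)$ demands $m$ separate positive ends, incompatible with a cylinder's single positive end unless $m=1$, and symmetrically $n_-=1$. With that sentence added, the argument is complete.
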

\begin{proof}
If $\ds$ is a cylinder and $\op{deg}(C)>0$, then the completion of $\fp\circ\pi_YC$ has $S^2$ as its domain. The Riemann-Hurwitz formula (\ref{eqn:RH}) tells us
\begin{equation}\label{eqn:RHcontra}
2=\op{deg}(C)\chi(\Sigma_g)-\sum_{p\in\ds}(e(p)-1),
\end{equation}
where $(e(p)-1)$ is the ramification index at $p$. Since we are assuming $g\geq1$, the right hand side is at most zero, therefore (\ref{eqn:RHcontra}) is a contradiction. Thus $\deg(C)=0$, so that Riemann-Hurwitz does not apply.

For the opposite implication, assume $\op{deg}(C)=0$. Because $I(C)=1$, Proposition \ref{lowiprop} implies that either $C$ is a trivial cylinder (in which case we are done, so we will assume $C$ is not a trivial cylinder), or $C$ is embedded and has $\op{ind}(C)=1$. By the ECH index inequality, Theorem \ref{thm:indexineq}, the positive and negative partitions of the ends of $C$ must equal the positive and negative partitions defined in Definition \ref{def:part}. Since we do not want to assume that $J$ is generic, in this setting the argument carries through as explained in \cite[Rmk. 5.10]{calabi}. One should argue as in Step 2 of the proof of \cite[Lem. 5.23]{calabi}.

By Example \ref{ex:partitions} and the analogous result for $\vartheta$ slightly smaller than zero, i.e.
\[
\begin{array}{lcl}
P_\vartheta^+(m) &=& (m) \\
P_\vartheta^-(m) &=& (1,\dots,1) \\
\end{array}
\]
and the fact that in admissible Reeb currents hyperbolic orbits have multiplicity at most one, we find that $\ds$ has exactly $1+m_1+\cdots+m_{2g}+m_+$ positive ends and exactly $n_-+n_1+\cdots+n_{2g}+1$ negative ends. Moreover,
\[
CZ_\tau^{ind}(C)=(m_+-1)-(1-n_-).
\]
Therefore,
\begin{align}
1&=\op{ind}(C)\nonumber
\\&=-2+2g(\ds)+M-m_-+1+N-n_++1+0+m_+-1-1+n_-\text{ by (\ref{eqn:relfirstChern})}\nonumber
\\&=-2+2g(\ds)+2M-m_--n_++0+m_++n_-\text{ because $M=N$}.\label{eqn:1ind}
\end{align}

Note that $I(C)=1$ and $\frac{M-N}{|e|}=\op{deg}(C)=0$, along with the index formula (\ref{eqn:indexformula}), imply
\begin{equation}\label{eqn:d0I1}
1=I(C)=m_+-m_--n_++n_-.
\end{equation}

Combining (\ref{eqn:1ind}) and (\ref{eqn:d0I1}) gives
\[
0=-2+2g(\ds)+2M \Leftrightarrow 1=g(\ds)+M.
\]

Since $M>0$, we must have $M=N=1$ and $g(\ds)=0$, therefore $\ds$ must consist of a union of cylinders.
\end{proof}

In particular, fiberwise $S^1$-invariant $\fj$-holomorphic cylinders have degree 0.  We have the following correspondence between $\fj$-holomorphic cylinders asymptotic to Reeb orbits which project to critical points of $H$ and downward gradient flow lines of $H$; this will allow us to relate the filtered ECH differential to the Morse differential on the base.

\begin{proposition}\label{cylinder-to-morse}
For suitable orientation choices, if $p$ and $q$ are critical points of $H$, then there is an orientation-preserving bijection between the moduli space of $\fj$-holomorphic cylinders $\M^{\fj}(\gamma_p^k,\gamma_q^k)$ and the moduli space $\M^{\mbox{\tiny Morse}}(p,q)$ of downward gradient flow lines of $H$ from $p$ to $q$, modulo reparametrization. Furthermore, each of the holomorphic cylinders is a $k$-fold cover which is cut out transversely. 
\end{proposition}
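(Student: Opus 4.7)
The plan is to construct explicit bijective correspondences in each direction between $\M^{\fj}(\gamma_p^k,\gamma_q^k)$ and $\M^{\mbox{\tiny Morse}}(p,q)$, and then to address transversality of $k$-fold covers via automatic transversality for cylinders in dimension four.

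First I would take $C = (a, u) \in \M^{\fj}(\gamma_p^k, \gamma_q^k)$ with domain $\R_s \times S^1_t$ and study the projection $\bar u := \fp \circ \pi_Y \circ C$. By Proposition \ref{prop-degree}, $\op{deg}(C) = 0$. Using the $S^1$-invariance of $\fj$ together with the $S^1$-invariance of the asymptotic orbits, the fiberwise $S^1$-action on $\R\times Y$ acts on $C$ by precomposition with a rotation in the $t$-coordinate of the domain, which forces $\bar u$ to be independent of $t$. I would then decompose $\partial_s C + \fj \partial_t C = 0$ with respect to the splitting $T(\R\times Y) = \R\langle\partial_s\rangle \oplus \xi \oplus \R\langle X\rangle$, using $\fj|_\xi = \fp^* j_{\Sigma_g}$ and $\fj\partial_s = R_\varepsilon$, in order to extract an ODE on $\Sigma_g$ satisfied by $\bar u(s)$.

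Next I would identify the resulting ODE with the downward gradient flow of $H$. Substituting $R_\varepsilon = R/(1+\varepsilon\fp^*H) + \varepsilon \widetilde{X}_H/(1+\varepsilon\fp^*H)^2$, using the convention $\omega(X_H,\cdot) = dH$, and the K\"ahler metric $g := \omega(\cdot, j_{\Sigma_g}\cdot)$ on $\Sigma_g$, the horizontal component of the $\fj$-holomorphic equation reduces, up to a reparametrization of $s$ by a smooth positive factor, to $\bar u'(s) = -\varepsilon\nabla_g H(\bar u(s))$; a further rescaling of $s$ absorbs the $\varepsilon$. The asymptotic conditions on $C$ yield $\bar u(+\infty) = p$ and $\bar u(-\infty) = q$, so $\bar u$ represents a downward Morse flow line. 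For the inverse correspondence, given a flow line $\gamma:\R \to \Sigma_g$ from $p$ to $q$, I would horizontally lift $\gamma$ to $\tilde\gamma:\R \to Y$ via the connection $\xi = \ker\lambda$, set $u(s,t) := \tilde\gamma(s)\cdot e^{ikt}$ so that the fibers are covered with multiplicity $k$ at the asymptotic ends, and solve the remaining scalar ODE for $a(s)$ arising from the $\partial_s$-component of the $\fj$-holomorphic equation. Uniqueness up to $\R$-translation then recovers the bijection.

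The main obstacle will be establishing that the $k$-fold covers thus obtained are Fredholm regular. The explicit parametrization makes clear that every element of $\M^{\fj}(\gamma_p^k,\gamma_q^k)$ is a $k$-fold cover of the underlying $k=1$ cylinder over a Morse flow line. To prove regularity of these covers I would invoke the Wendl-type automatic transversality criterion for cylinders in dimension four, as already exploited in \cite{jo2} to justify the use of fiberwise $S^1$-invariant $\fj$. The Conley--Zehnder indices computed in Lemma \ref{lem:orbitseh} satisfy the parity and sign conditions required for automatic transversality of the $k$-fold cover, so the linearized $\deebar$-operator has vanishing cokernel and $\M^{\fj}(\gamma_p^k,\gamma_q^k)$ is a manifold of the expected dimension one. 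Finally, orientation-preservation of the bijection will follow by comparing the ECH coherent orientation on each cover with the Morse orientation on the underlying flow line, as indicated in \cite{farris}.
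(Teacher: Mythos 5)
The paper does not supply its own proof of Proposition \ref{cylinder-to-morse}; it defers entirely to \cite[\S 5.1]{jo2} and \cite[\S 3.5.1, \S 6.1]{moreno} for the cylinder-to-Morse bijection and to \cite[\S 4.1--4.2]{dc}, \cite{Wtrans} for regularity of the multiple covers. Your outline reproduces the overall shape of those arguments (project to the base, extract a gradient flow equation, horizontally lift to construct the inverse, invoke automatic transversality), but there is a genuine gap in the central step.

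The gap is the claim that the $S^1$-invariance of $\fj$ and of the asymptotic orbits ``forces $\bar u$ to be independent of $t$.'' This is circular as stated. The fiberwise $S^1$-action on $\R\times Y$ does induce an action on the moduli space $\M^{\fj}(\gamma_p^k,\gamma_q^k)$, since $\fj$ and the asymptotic orbits are $S^1$-invariant, but for an arbitrary $C$ the curve $e^{i\theta}\cdot C$ is a priori a \emph{different} element of the moduli space, not a domain-reparametrization of $C$. The target $S^1$-action and domain rotation coincide exactly when $C$ is $S^1$-invariant, which is precisely what one needs to prove; asserting it from $S^1$-invariance of the data alone begs the question. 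Without this, the ``horizontal component'' of the $\fj$-holomorphic equation yields a Floer-type PDE for the Hamiltonian $\sim\varepsilon H$ on $\Sigma_g$, not yet an ODE, and the reduction to Morse flow does not follow. The standard ways to close this gap are: (i) derive the projected Floer equation directly and invoke the Salamon--Zehnder-type statement that, for a $C^2$-small Hamiltonian, Floer trajectories between constant $1$-periodic orbits are $t$-independent --- this is what the references the paper cites actually do; or (ii) establish automatic transversality \emph{first} (which, crucially, does not require a generic $J$), so that $\M^{\fj}/\R$ has the expected dimension, and then argue that the $S^1$-orbit of a cylinder collapses --- but this dimension count is clean only in the index-one case, whereas the proposition is stated for all $p,q$. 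Either way, the load-bearing step of your sketch is missing.

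Two smaller remarks. First, automatic transversality via \cite{Wtrans} requires checking Wendl's index inequality against the parities of the ends, and for index difference one exactly one of $p,q$ is hyperbolic (so one end has even Conley--Zehnder parity); you should say explicitly which hypothesis of \cite{Wtrans} is being verified, since it is not the same as the all-elliptic case. Second, there is a time-reversal to account for: the positive puncture of the cylinder is asymptotic to $\gamma_p^k$, so $\bar u\to p$ as $s\to+\infty$, while a downward gradient flow line from $p$ to $q$ has $\bar u(-\infty)=p$, $\bar u(+\infty)=q$; your equation $\bar u'(s)=-\varepsilon\nabla_g H(\bar u(s))$ together with $\bar u(+\infty)=p$ is internally inconsistent unless you flip $s\mapsto -s$. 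This is cosmetic but should be stated when one claims the bijection is orientation-preserving.
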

Complete details on this correspondence are found in \cite[\S 5.1]{jo2} and \cite[\S 3.5.1, \S 6.1]{moreno}; that the multiple covers are cut out transversely requires automatic transversality, cf. \cite[\S 4.1-4.2]{dc}, \cite{Wtrans}.   In ECH, we will only have somewhere injective curves, because the ECH chain complex does not admit hyperbolic orbits with multiplicity greater than one as generators.

\begin{remark}
If a $J$-holomorphic curve $C: \ds \to \R \times Y$ has {degree} $d=0,1$ then can prove directly or appeal to automatic transversality that a regular $S^1$-invariant $J$ exists.   If $d \geq 1$, the projection $\pi_Y C$ has intersection number $d$ with a given $S^1$-orbit, and hence has at least $d$ intersections, which are counted with multiplicity, since $C$ could be a nontrivial branched covering of its image.  The complex structure $\fj$ cannot be independently perturbed at these $d \geq 2$ points by an $S^1$-invariant perturbation.  By Lemma \ref{lem:d0g0}, a curve $C$ which contributes nontrivially to the ECH differential is degree zero if and only if it the union of cylinders.
 \end{remark}


\subsection{Domain dependent almost complex structures}\label{sec:ddacs}
Let $\sj(Y,\lambda)$ denote the set of $\lambda$-compatible almost complex structures and let $\sj^{S^1}(Y,\lambda) \subset \sj(Y,\lambda)$ denote the subset of $S^1$-invariant $\lambda$-compatible almost complex structures.  Since $\fj \in \sj^{S^1}(Y,\lambda)$ is always obtained from $\fp^*j_{\Sigma_g}$, there is a correspondence between the $S^1$-invariant complex structures on $\xi$ and the complex structures on $T\Sigma_g$.  Fix a ``generic" $\fj_0 \in \sj^{S^1}(Y,\lambda) $,  let $\{ N_\gamma\}$ be a disjoint union of tubular neighborhoods associated to the set of Reeb orbits $\{ \gamma \}$, and set $N = \sqcup_\gamma N_\gamma$.  We define 
\begin{equation}\label{JNs}
\sj_N : = \{ J \in \sj(Y,\lambda) \ | \ J_q = (\fj_0)_q, \mbox{ for } q\in N \},
\end{equation}
to be the subset of $\lambda$-compatible almost complex structures which agree with $\fj_0$ on $N$, and let $\sj^{S^1}_N  \subset \sj_N $ consist of the $S^1$ invariant elements of $\sj_N$. The elements of $\sj_N $ are in correspondence with complex structures on $T\Sigma_g$ which agree with a fixed $\fj_0 = \fp^*j_0$ on $N$.   We first note that we have regularity for ECH index 1 curves for generic $\lambda$-compatible $J$ satisfying the constraint $J|_N = \fj_0$.

\begin{proposition}\label{rem:Js}
Let $\alpha$ and $\beta$ be nondegenerate admissible Reeb currents in the same homology class with $I(\alpha,\beta)=1$.  If $J \in \sj_N$ is generic then $\M^{J}(\alpha, \beta)$ is cut out transversely.
 \end{proposition}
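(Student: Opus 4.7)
The plan is to adapt the standard universal moduli space plus Sard--Smale argument of Wendl (Theorem~\ref{si-thm1}) to the constrained class $\sj_N$, in which $J$ is pinned to equal $\fj_0$ on the neighborhood $N$ of the Reeb orbits. The extra care needed is that variations $Y$ of $J$ must vanish on $N$, so we must locate an injective point of the relevant curve lying \emph{outside} $\R\times N$ at which $\pi_\xi\circ dC$ is nonzero.

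First, by Proposition \ref{lowiprop}, any current $\mathcal{C}\in \M^J(\alpha,\beta)$ with $I(\mathcal{C})=1$ decomposes as $\mathcal{C}=\mathcal{C}_0\sqcup C_1$, where $\mathcal{C}_0$ is a (possibly multiply covered) union of trivial cylinders of ECH index zero and $C_1$ is embedded with $\op{ind}(C_1)=I(C_1)=1$. The trivial cylinders of $\mathcal{C}_0$ lie inside $\R\times N$ where $J=\fj_0$ is fixed; they play their usual role in the ECH formalism and do not require independent transversality. The task therefore reduces to establishing Fredholm regularity of $C_1$ for generic $J\in\sj_N$.

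I would then set up the universal moduli space $\M^{\text{univ}}$ of pairs $(C,J)$ with $J\in\sj_N$ and $C$ embedded with $\op{ind}(C)=1$. The linearized $\dbar$ operator at such a pair has the form $D(\eta,Y)=D_C\eta+\tfrac{1}{2}\,Y\circ dC\circ j$, where $Y$ is a variation of $J$ required to vanish on $N$. Surjectivity follows from the usual duality argument: given $\zeta$ in the cokernel of $D_C$, one constructs a bump-function variation $Y$ supported in a small neighborhood of a point $z\in\ds$ pairing nontrivially with $\zeta$, provided $z$ is an injective point of $C$ satisfying $C(z)\notin \R\times N$ and $\pi_\xi\circ dC(z)\neq 0$. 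The Sard--Smale theorem applied to the projection $\M^{\text{univ}}\to \sj_N$ then yields a comeager set of $J\in\sj_N$ for which $\M^J(\alpha,\beta)$ is cut out transversely.

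The main obstacle is producing such a point $z$ on $C_1$. Two ingredients are needed: first, that $C_1$ is not entirely contained in $\R\times N$, and second, that $\pi_\xi\circ dC_1$ vanishes only on a discrete set. For the first, if $C_1\subset \R\times N$ then by connectedness of $C_1$ and the disjointness of the $N_\gamma$ we would have $C_1\subset \R\times N_\gamma$ for a single embedded Reeb orbit $\gamma$, and for $N_\gamma$ sufficiently small the Hofer--Wysocki--Zehnder/Siefring normal-form analysis near $\R\times\gamma$ forces such a finite-energy curve asymptotic to covers of $\gamma$ to be a branched cover of the trivial cylinder $\R\times\gamma$, contradicting $\op{ind}(C_1)=1\neq 0$. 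For the second, since $C_1$ is not a cover of a trivial cylinder, the zero set of $\pi_\xi\circ dC_1$ is discrete by the standard Carleman-similarity/unique-continuation argument. After removing this discrete set from the open set $C_1^{-1}((\R\times Y)\setminus(\R\times N))$ (which is nonempty by the first point) we can select the required $z$, and the transversality argument goes through.
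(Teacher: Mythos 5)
Your strategy — universal moduli space plus Sard--Smale argument in the constrained class $\sj_N$, reduced to locating an injective point of $C_1$ in $\R\times (Y\setminus N)$ at which $\pi_\xi\circ dC_1\neq 0$ — is the same as the paper's. The paper takes a shortcut by invoking the subclaim in the proof of \cite[Lemma 9.12]{Hindex}, which already delivers an open dense set $U\subset C$ of points away from the orbit neighborhoods having exactly the three properties you identify (injectivity, nonsingularity, and surjectivity of the relevant derivative), so the only new observation needed is that $U\cap C^{-1}(\R\times(Y\setminus N))$ is a nonempty open set. Spelling this out directly, as you do, is a valid alternative.

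However, your direct argument that $C_1\not\subset\R\times N$ needs repair on two counts. First, the Hofer--Wysocki--Zehnder/Siefring asymptotic analysis controls the ends of $C_1$ but does not by itself constrain its interior; the reason $C_1\subset\R\times N_\gamma$ forces $C_1\subset\R\times\gamma$ in this setting is that $J=\fj_0=\fp^*j_{\Sigma_g}$ on $N$, so $\fp\circ\pi_Y\circ C_1$ is holomorphic on $C_1^{-1}(\R\times N_\gamma)$ with image in a small disk about $\fp(\gamma)$ and all asymptotic limits at $\fp(\gamma)$, hence constant by the maximum principle. Second, your claimed contradiction ``$\op{ind}(C_1)=1\neq 0$'' presumes that covers of trivial cylinders have Fredholm index zero, but Lemma \ref{lem:ht} only gives $\op{ind}\geq 0$, and branched covers of $\R\times h_j$ genuinely occur with $\op{ind}=1$ (see Lemma \ref{lem:connectortopology}(ii)). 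The clean contradiction is that $C_1$ is embedded by Proposition \ref{lowiprop}(ii) — so it cannot be a degree-$\geq 2$ cover, and a degree-one cover is a trivial cylinder with $\op{ind}=0$ — or, equivalently, that $I(C_1)=1$ while any cover of $\R\times\gamma$ regarded as a current has $I=0$ by Proposition \ref{lowiprop}(i).
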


\begin{proof}
Regularity follows from the subclaim in the proof of \cite[Lemma 9.12]{Hindex}.  We have that in the absence of trivial cylinders, there is a nonempty set $U \subset C$ away from a neighborhood of of the periodic orbits with  action $\leq \mathcal{A}(\alpha)$, such that for each $x \in U$, $\pi_Y^{-1}(\pi_Y(x))= \{x \}$, $C$ is nonsingular, and that a certain projection of the derivative of $\overline{\partial}_{j,J}(C)$ with respect to $J$ is surjective on $U$.  The proof actually shows that $U$ is an open dense subset of $C$, and because the intersection with $C^{-1}(\R \times (Y \setminus N))$ contains a nonempty open set, the result holds.
\end{proof}

When $\ds \neq \R \times S^1$, it is not possible to achieve regularity via a generic choice of $S^1$-invariant $\fj$.  Instead, we will use an $S^1$-invariant domain dependent family $\bj$ of $\lambda$-compatible almost complex structures.  To define such a $\bj$, we consider a certain class of functions on the domain $\ds$ that are independent of reparameterization, meaning that these functions are to be defined on isomorphism classes of punctured Riemann surfaces, e.g. elements of $\mgn$, where we view the punctures $\mathcal{S}$ of $\ds$ as the $n$ marked points and $g = g(\ds)$.  

\subsubsection{Preliminaries relating to $\mgn$}

To ensure that we have well-defined, nontrivial functions on $\ds$ (from which we will construct domain dependent almost complex structures), we need $\ds$ to be \emph{stable}, meaning that $\chi(\ds) < 0$, e.g. $2g+n \geq 3$, where $n$ is the number of punctures of $\ds$.   If $\ds$ is stable then $\mgn$ is an orbifold with
\[
\mbox{dim}(\mgn) = 6g - g + 2n,
\]
and the associated automorphism group  
\[
\Aut(\ds, j):=\{ \varphi \in \mbox{Diff}_+(\Sigma, \mathcal{S}) \ | \ \varphi^* j =j \}
\]
is finite for any $j \in \sj(\Sigma)$.  Here $\sj(\Sigma)$ is the set of smooth complex structures on $\Sigma$ that induce the given orientation and $\mbox{Diff}_+(\Sigma, \mathcal{S})$  is the group of orientation preserving diffeomorphisms on $\Sigma$ that fix the set of punctures $\mathcal{S}$.  Our class of domain dependent almost complex structures on $\ds$ must respect the orbifold structure of $\mgn$, meaning they must be invariant with respect to the finite symmetry groups of the orbifold points of $\mgn$.  While the derivative of such an invariant function (giving rise to $\bj$) will have nontrivial kernel at an orbifold point of $\mgn$, the set of orbifold points has positive complex codimension, meaning that there is sufficient flexibility in the normal direction.

\begin{remark}\label{unstable-cases}
Following \cite[\S 3.1]{Wtrans}, and because our pseudoholomorphic curves must all have at least one puncture, we conclude that the nonstable cases are $\ds = \R \times S^1$ and $\ds = \C$.  We previously showed that we can use a domain independent $S^1$-invariant $\fj$ to count  pseudoholomorphic cylinders, so it remains to consider when $\ds = \C$.  Since any pseudoholomorphic map $C: \C \to \R \times Y$ must be asymptotic to a Reeb orbit $\gamma$, we can consider its \emph{projected completion} to the base $(\Sigma_g, \omega)$ of the prequantization bundle 
\[
\overline{C}: S^2 \to \Sigma_g, 
\]
which is null homotopic for $g>0$.  Since $\overline{C}$ is null homotopic when $g>0$,  if we consider a sufficiently small perturbation, $\overline{C}$ must be close to constant, which means that $C$ is ``concentrated" near its limiting Reeb orbit, and thus cannot bound $\overline{C}$, because otherwise we would obtain a contradiction to the fact that far away fibers of $\fp: (Y,\lambda) \to (\Sigma_g, \omega)$ are linked. 
For this reason, in \S \ref{subsec:pfmainthm} we split the proof of Theorem \ref{thm:mainthm} into two cases, depending on whether the  genus of the base of the prequantization bundle $\Sigma_g$ is zero or positive.    When $g=0$, we show that the differential vanishes for grading reasons, which permits us to use a generic $\lambda$-compatible $J$ without appealing to the results of \S \ref{sec:modspcs}-\ref{handleslides}.
\end{remark}

In order to vary a domain dependent $\bj$ and take limits of sequences of $\bj$-holomorphic curves in the sense of \cite{BEHWZ}, we must actually work with the Deligne-Mumford compactification $\overline{\mgn}$, a compact and metrizable topological space containing $\mgn$ as an open subset,  which consists of connected stable nodal Riemann surfaces with $n$ marked points, (presupposing that $2g+n \geq 3$).

\begin{definition}
Recall that an element of $\mgn$ is \emph{stable} whenever $2g + n \geq 3$.  A \emph{stable nodal Riemann surface} is an element $(\ds, j) \in \overline{\mgn}$, which is itself a disjoint union of elements $(\ds_i,j_i) \in \sm_{g_i, n_i+m_i}$, where $\ds_i$ is a stable curve whose $n_i+m_i$ marked points consist of a subset $n_i$ of the marked points of $\ds$ (hence $\sum n_i = n$), with the induced ordering, and $m_i$ nodes.  Every node $z \in \ds_i$ is paired with another node $z' \in \ds_i'$, with the stipulation that $i'\neq i$ for at least one of the nodes of each $C_i$.  We thus obtain a connected singular surface by gluing $z$ to $z'$ for every pair $\{ z, z' \}$ of nodes.
\end{definition}

Any sequence of curves $\{\ds(k)\} \in \mgn \subset \overline{\mgn}$ has a subsequence whose limit is a nodal curve $\ds \in \overline{\mgn}$.  Furthermore, if $z_i(k) \in \ds(k)$ is a marked point, passing to a subsequence means $z_i(k)$ converges to some marked point $z\in\ds$, hence $z \in \ds_i$, where $\ds_i \in \sm_{g_i, n_i+m_i}$ for some $\ds_i \subset \ds$.  We recall the following result regarding the topology of the nodal limit, noting that further details can be found in \cite[\S 9.3.3]{wendl-sft} and \cite{ss92}.

\begin{lemma}[Lem.\ 6.1.1 \cite{farris}]\label{lem:mgn}
If $\ds \in \overline{\mgn}$ then for each component $\ds_i \in \sm_{g_i,n_i+m_i}$ of $\ds$ we have that $g_i \leq g$ and if $g_i=g$ then $n_i+m_i < n$.  \end{lemma}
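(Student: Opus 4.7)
The plan is to use the arithmetic genus formula for nodal Riemann surfaces together with the stability condition imposed on each component. First I would record the standard identity: if $\ds$ has irreducible components $\ds_1,\dots,\ds_c$ of genera $g_1,\dots,g_c$ and $N$ total nodes, then
\[
g \;=\; \sum_{i=1}^{c} g_i \;+\; N \;-\; c \;+\; 1.
\]
This follows from a direct Euler characteristic computation: the normalization has $\chi = \sum_i (2-2g_i) = 2c - 2\sum g_i$, and passing to $\ds$ identifies the two preimages of each of the $N$ nodes, decreasing $\chi$ by $N$; setting $\chi(\ds) = 2-2g$ and rearranging gives the formula. Each node contributes one preimage to each of its two (possibly coincident) components, so $\sum_i m_i = 2N$. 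Connectedness of $\ds$ forces its dual graph (a vertex per $\ds_i$, an edge per node) to be connected, hence $N \geq c-1$.

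For the first claim, I would rewrite the genus identity as
\[
g - g_i \;=\; \sum_{j \neq i} g_j \;+\; \bigl(N - (c-1)\bigr),
\]
and observe that both summands on the right are non-negative, so $g_i \leq g$.

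For the second claim, suppose $g_i = g$. Then the display above collapses to $\sum_{j \neq i} g_j = 0$ and $N = c - 1$, so every other component is rational and the dual graph is a tree. The borderline case $c = 1$ forces $\ds = \ds_i$ to be smooth with $m_i = 0$ and $n_i = n$; this falls outside the scope of a genuine nodal degeneration and will be excluded in the applications, so I would henceforth assume $c \geq 2$. Stability of each $\ds_j$ with $j \neq i$ and $g_j = 0$ requires $n_j + m_j \geq 3$. Summing over $j \neq i$ and substituting $\sum_{j \neq i} m_j = 2N - m_i = 2(c-1) - m_i$ yields
\[
\sum_{j \neq i} n_j \;\geq\; 3(c-1) - \bigl(2(c-1) - m_i\bigr) \;=\; (c-1) + m_i.
\]
Since $\sum_{j} n_j = n$, this rearranges to $n_i + m_i \leq n - (c-1) < n$.

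The substantive content is light---really just bookkeeping---so the only subtle point will be tracking how self-nodes (where a node has both preimages on the same component) contribute twice to the same $m_i$, and recognizing that the degenerate case $c = 1$ with $g_i = g$ corresponds to a smooth curve rather than a true nodal degeneration and so is implicitly excluded from the statement.
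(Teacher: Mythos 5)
Your argument takes a genuinely different route from the paper's. The paper proceeds by enumerating the two topological degenerations---marked points colliding into a genus-zero bubble, and a vanishing cycle (separating or non-separating) being crushed---and checking inductively that each step decreases the pair $(g,n)$. You instead combine the arithmetic-genus identity $g = \sum_i g_i + N - c + 1$ with the stability inequality $n_j + m_j \geq 3$ on the rational components, collapsing the whole inductive bookkeeping into a single computation; this is cleaner, yields the sharper bound $n_i + m_i \leq n - (c-1)$ essentially for free, and you correctly flag that the smooth $c=1$ case must be excluded for the second clause to hold, a point the paper elides since it only ever applies the lemma on boundary strata.

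There is, however, a factor-of-two slip in how you justify the genus identity. Gluing the two preimages of each node does reduce $\chi$ of the normalization by $1$ per node, giving $\chi(\ds) = 2c - 2\sum g_i - N$; but the topological Euler characteristic of a nodal curve of arithmetic genus $g$ is $2 - 2g + N$, not $2-2g$. Equating your expression with $2-2g$ would give $g = \sum g_i - c + 1 + N/2$, which disagrees with the (correct) formula you actually use downstream. The standard fix is to compare to a \emph{smoothing} of $\ds$ rather than to $\ds$ itself: smoothing each node replaces a pair of disks with an annulus and drops $\chi$ by $2$ per node, so $2 - 2g = 2c - 2\sum g_i - 2N$, which rearranges to $g = \sum g_i + N - c + 1$ as claimed. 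Once that is patched, both halves of your argument are tight.
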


\begin{proof}
The nodal limit $\ds$ is obtained topologically from any smooth sequence $\{ \ds(k) \}$ via the following types of degenerations.  The first degeneration is that $\ell$ marked points in some component $\ds_j$ can collide and form a bubble attached to $\ds_j$.  The genus of $\ds_j$ does not change, but it loses $\ell$ marked points and gains a node where the bubble arises.  Thus, the total number of marked and nodal points on $\ds_j$ decreases by $\ell-1$.  The bubble itself is a genus 0 component with $\ell$ marked points and one node.  If the original smooth curve $\ds(k_0)$ had genus 0, then, then it must have had more than $\ell$ marked points.  Thus each new component resulting from iterative bubbling has either genus 0 or $g$.  

The second kind of degeneration comes from letting the complex structure on the curve degenerate.  Topologically, this results in a simple closed curve on some component, i.e. the vanishing cycle, being crushed to a point.  If the vanishing cycle is a non-separating curve, it reduces the genus of a component by 1 without creating new components.  If the vanishing cycle is a separating curve, it breaks a component into two pieces, whose genera sum to the genus of the original component.  The case where one component has genus 0 and the other has full genus is topologically identical to bubbling, cf. \cite{jhol}.
\end{proof}

Lemma \ref{lem:mgn} induces an ordering on pairs $(g,n)$ wherein $(g',n') < (g,n)$ whenever $g' < g$ or  $g' =g$ and $n' < n$.  The boundary $\partial \overline{\mgn}$ is a stratified space, and each stratum containing a nodal curve $\ds$ is the product over the $\sm_{g',n'}$ for each component $\ds_i$ of $\ds$ with $\ds_i \in \sm_{g',n'}$. Moreover, we have that if $\sm_{g',n'}$ is a factor in a stratum of $\mgn$, then $(g',n') < (g,n)$ and we can distinguish one of the components of a given $\ds \in \overline{\mgn}$ whenever it contains the $n^{th}$ marked point, which we will always denote as $z_0$.  This prescribes an inductive means of coherently defining functions on all strata of $\overline{\mgn}$ simultaneously, though before we get into this we need to briefly review \cite[\S 3]{CM}.

\subsubsection{Coherent maps}
In the following discussion, we consider the Deligne-Mumford space $\overline{\sm_{g,n+1}}$ for $2g+n \geq 3$, with $n+1$-marked points $z_0,...,z_{n}$.  We will see momentarily that the point $z_0$ plays a special role, as it serves as the variable for holomorphic maps and is key in the proof of Theorem~\ref{thm:ddacs}. 

First consider the case when $g=0$.  We call a decomposition $\mathbf{I}=(I_0,...,I_\ell)$ of $\{0,...,n \}$ \emph{stable} whenever $I_0 = \{ 0\}$ and $|\mathbf{I}| := \ell+1 \geq 3$.  We will always order the $I_j$ such that the integers $i_j:=\min \{ i \ | \ i \in I_j \}$ satisfy
\[
0 = i_0 < i_1 < ... < i_\ell.
\]
Denote by $\sm_\mathbf{I} \subset \overline{\sm_{0,n+1}} $ the union over stable trees that give rise to the stable decomposition  $\mathbf{I}$. The $\sm_\bfi$ are submanifolds of $\overline{\sm_{0,n+1}} $ with 
\[
\overline{\sm_{0,n+1}} = \cup_{\bfi}\sm_\bfi 
\]
and the closure of $\sm_\bfj$ is a union of certain strata $\sm_\bfi$ with $|\bfi| \leq |\bfj|$.  The above ordering of the $I_j$ determines a projection
\[
p_\bfi: \sm_\bfi \to \sm_{|\bfi|},
\]
sending a stable curve to the special points on the component $S_{\alpha0}$.
\begin{definition}\cite[Definition 1.3]{CM}
Let $Z$ be a Banach space and $n \geq3$.  We call a continuous map $J_{0,n+1}: \overline{\sm_{g,n+1}} \to Z$ \emph{coherent} if it satisfies the following two conditions:
\begin{enumerate}[(a)]
\item $J_{0,n+1} \equiv 0$ in a neighborhood of those $\sm_{\mathbf{I}}$ with $|\mathbf{I}|=3$;
\item For every stable decomposition $\mathbf{I}$ with $|\mathbf{I}|\geq4$, there exists a smooth map $J_\bfi: \sm_{0,|\bfi|} \to Z$ such that
\[
J|_{\sm_\bfi} = J_\bfi \circ p_\bfi: \sm_\bfi \to Z.
\]
More generally, let $Z^* \subset Z$ be an open neighborhood of 0 and let $\mathcal{I}$ be a collection of stable decompositions.  Then we call a continuous map $J: \cup_{\bfi \in \mathcal{I}}\sm_\bfi \to Z^*$ coherent if it satisfies (a) and (b) and in addition:
\item The image of $J$ is contractible in $Z^*$.
\end{enumerate}
\end{definition}
The space of coherent maps from $\overline{\sm_{0,n+1}}$ to $Z$ is equipped with the $C^0$-topology on $\overline{\sm_{0,n+1}}$ and the $C^\infty$-topology on each $\sm_\bfi$ via the projection $p_\bfi$.

For $g>0$, we deploy Lemma \ref{lem:mgn}, obtaining an inductive means of coherently defining functions on all strata of $\overline{\mgn}$ simultaneously.  In particular, assume that for all $(g',n') < (g,n)$ we have defined continuous maps
\[
J_{g',n'}: \sm_{g',n'} \to Z.
\]
  Each element of $\partial \overline{\mgn}$ is a nodal curve $\ds$ with $n$ marked points.  If $p_n$ lies on $\ds' \in \sm_{g',n'}$, then by Lemma \ref{lem:mgn}, $(g',n') < (g,n)$, and by hypothesis there is a function $J_{g',n'}$ defined on $\sm_{g',n'}$.  We can define 
\[
J_{g,n}(\ds) : = J_{g',n'}(\ds').
\] 
The collection $\{J_{g',n'}\}_{(g',n') < (g,n)}$ thus determines $\{ J_{g',n'}\}_{\overline{\mgn}}$ and we can extend $J_{g,n}$ to the interior $\mgn$.  We can continue this procedure, defining $J_{g,N}$ on $\sm_{g,N}$ for all $N>n$, then $f_{g+1}, n$ for all $n$, and so on.  This inductive procedure provides the definition for our continuous maps with $g>0$ to be \emph{coherent}.  

{Before we can use this class of to define domain dependent almost complex structures, we need to review a few details regarding Banach manifolds of (parametrized) almost complex structures.}

\subsubsection{Banach manifolds of almost complex structures}
For a symplectic vector space $(V, \omega)$, denote by $\sj(V,\omega)$ the space of $\omega$-tamed almost complex structures.  The space $\sj(V,\omega)$ is a manifold with tangent space
\[
T_{J_0}\sj(V,\omega) := \{ \mathcal{Y} \in \op{End}(V) \ | \ J_0 \cy J_0 = \cy \}.
\]
If $V$ is equipped with a Euclidean metric $g$ then $\op{trace}(\cy^t\cy)$ defines a Riemannian metric on $\sj(V,\omega)$.  The exponential map of this metric defines embeddings from the open ball of radius $\rho(g,J)>0$ which continuously depend on $g$ and $J$:
\[
\exp_J:T_J \sj(V,\omega) \supset B(0, \rho(g,J)) \hookrightarrow \sj(V,\omega).
\]
We review the construction of the \emph{Floer $C^\varepsilon$-space } \cite{floer}, which circumvents the issue that naturally arising spaces of smooth functions are not Banach spaces; see also \cite[Appendix B]{wendl-sft}.  For a vector bundle $E \to X$ over a closed manifold $X$, we denote the space of Floer's $C^\varepsilon$-sections in $E$ by
\[
C^\varepsilon(X,E):= \left \{ s \in \Omega(X,E) \ \bigg \vert \ \sum_{i=1}^\infty \varepsilon_i || s||_{C^i} < \infty \right \}.
\]
Here $\Omega(X,E)$ is the space of smooth sections of $E$, $\varepsilon = (\varepsilon_i)_{i \in \N}$ is a fixed sequence of positive numbers and $||\cdot||_{C^i}$ is the $C^i$-norm with respect to some connection on $E$.  By \cite[Lemma 5.1]{floer}, if the $\varepsilon_i$ converge sufficiently fast to zero, then $C^{\varepsilon}(X,E)$ is a Banach space consisting of smooth sections with support in arbitrarily small sets in $X$.  

Next let $(X,\omega)$ be the symplectization of a closed contact manifold $Y$ (or an exact symplectic cobordism).  Fix a $\lambda$-compatible almost complex structure $J_0$ on $(X,\omega)$, e.g. a smooth section in the bundle $\sj(TX,\omega) \to X$ with fibers $\sj(T_xX,\omega_x)$.  Let $g$ be the canonical Riemannian metric on $X$ defined via $\omega$ and $J$.  Let $T_{J_0}\sj(TX,\omega) \to X$ be the vector bundle with fibers $T_{J_0(x)}\sj(T_xX,\omega_x)$ and set
\[
\begin{array}{rcl}
T_{J_0}\sj^\varepsilon & : =& C^\varepsilon(X, T_{J_0}\sj(TX,\omega) ) \\
\sj^\varepsilon &:=& \sj^\varepsilon(X, \omega) \ := \ \exp_{J_0}(B), \\
\end{array}
\]
where $B:= \{ \cy \in T_{J_0}\sj^\varepsilon \ | \ \cy(x) \in B(0, \rho(g(x),J_0(x)))  \}.$  Thus $\sj^\varepsilon$ is the space of $\lambda$-compatible almost complex structures of $(X,\omega)$ that are of class $C^\varepsilon$ over $J_0$ via $\exp_{J_0}$.  We can regard $\sj^\varepsilon$ as a Banach manifold with a single chart $\exp_{J_0}$.

Next we consider spaces of parametrized complex structures.  A \emph{complex structure on $(X, \omega)$ parametrized by a manifold $P$} is a smooth section in the pullback bundle $\sj(TX, \omega) \to P \times X$.  Fix $J_0$ as above and let $T_{J_0}\sj_P(TX, \omega) \to P \times X $ be the vector bundle with fibers $T_{J_0(p,q)}\sj(T_xX, \omega_x)$ and set 
\[
\begin{array}{rcl}
T_{J_0}\sj_P^\varepsilon & := & C^\varepsilon(P \times X, T_{J_0}\sj_P(TX,\omega) ) \\
\sj^\varepsilon_P &:=& \sj_P^\varepsilon(X, \omega) \ := \ \exp_{J_0}(B_P), \\
\end{array}
\]
where $B_P:= \{ \cy \in T_{J_0}\sj^\varepsilon_P \ | \ \cy(p, x) \in B(0, \rho(g(x),J_0(x)))  \}$.  We may think of $J \in \sj^\varepsilon_P$ as a map $P \to \sj^\varepsilon$.  For an open subset $U\subset P$, we denote by $T_J\sj^\varepsilon_U \subset T_J\sj^\varepsilon_P$ the subspace of those section having compact support in $U$.  

We will be interested in the spaces of domain dependent almost complex structures
\[
\sj^\varepsilon_{\ds}:=\sj^\varepsilon_{\overline{\mgn}} \mbox{ and } \sj_{\ds}:=\sj_{\overline{\mgn}}
\]
parametrized by the Deligne-Mumford space $\overline{\mgn}$. 

\begin{definition}
A \emph{domain dependent almost complex structure} $\bj$ is a coherent collection of $C^\ell$, $\ell >0$ maps
\[
\bj = \{ J_{g,n}: \overline{\mgn} \to \sj_N \},
\]
(recall $\sj_N$ was defined in \eqref{JNs}) that additionally satisfy the following condition. If given a sequence $\{\ds(k)\} \in \overline{\mgn}$ converging to $\ds_\infty \in \partial \overline{\mgn}$ and $\ds^n_\infty \in \sm_{g',n'}$ is the component of $\ds_\infty$ containing the $n^{th}$ marked point, then 
\[
\lim_{k \to \infty}J_{g,n}(\ds(k)) = J_{g,n}(\ds_\infty) =J_{g,n}(\ds^n_\infty).
\]
{We denote the set of all such $C^\ell$ domain dependent almost complex structures by $\sj_\ds^\ell$.}    We call a domain dependent almost complex structure $\bj$ \emph{generic} if for every $(g,n)$, the extension of $J_{g,n}$ from the boundary, where the values are determined by $J_{g',n'}$, to the interior of $\mgn$ is a generic $C^\ell$ map.  
\end{definition}
The extension to nodal maps follows from \cite[\S 5]{CM}.    The previous discussion guarantees that $\sj_\ds^\varepsilon$ is a Banach manifold.   When it is understood that we should be using the Floer $C^\varepsilon$-space we will drop $\varepsilon$ from the notation, and use $\sj_\ds$.  

\begin{remark}\label{ddacsintpos}
Since the target of our collection of functions on $\ds$ is $\sj_N$, we have that if $\bj \in \sj_\ds$ and if
\[
C: (\ds, j) \to (\R \times Y, \bj)
\]
is a $(j,\bj)$-holomorphic curve, then $C \vert_{C^{-1}(\R \times N)}$ is $(j,\fj_0)$-holomorphic.  Since $\fj_0$ is domain independent, the subset $C(\ds) \cap \R \times N \subset C(\ds)$ satisfies intersection positivity, which will be important in the proof of Proposition \ref{notopbottomconn}.
\end{remark}

Before we can conclude that this algorithm for constructing domain dependent almost complex structures is well-defined, it remains to discuss two technicalities, the first being that $\mgn$ is an orbifold, while the other concerns the special role of the ``last" marked point $z_0$.  We elucidate these points in the following remarks.

\begin{remark}[Orbifold structure of $\mgn$]
A neighborhood of a point in an $k$-dimensional orbifold is modeled on the quotient of $\R^k$ by the linear action of some finite group $G$, and a $C^\ell$ function on an orbifold in a neighborhood modeled on $\R^k/G$ is a $C^\ell$ function on $\R^k$ which is invariant under the group action $G$.    For $g>1$, the locus of points on $\mgn$ without automorphisms (e.g. the action of $G$ on $\R^k$ is nontrivial) has real codimension at least two.  Thus, a generic curve of genus $g >1$ has no nontrivial automorphisms.  For $g=0$, every stable curve has a trivial automorphism group.  For $g=1$ and $n=1$, $\mbox{dim}_\R\sm_{g,1} =2$, and a generic elliptic curve has an involution and isolated points in $\sm_{g,1}$ have additional automorphisms, hence functions on $\sm_{g,1}$ have no constraints at generic points and respect the additional symmetries at the isolated points admitting the extra automorphisms. 

{The derivative of a $G$-invariant function always has nontrivial kernel, but on any tangent space $T_z\sm_{g, n+1}$, there is a subspace of real dimension at least two on which the derivative has no constraints.  Hence there exists a map from a neighborhood of any point in $\sm_{g, n+1}$ to a neighborhood of any point $x$ in a manifold $X$ with $\dim_\R X \geq 2,$ sending a two dimensional subspace of the unconstrained subspace to any two dimensional subspace of $T_xX$.  If we fix $j$ and the  marked points $z_1,...,z_n$ on $\sm_{g, n+1}$, we may view this as a map $T_{z_{0}}\ds \to T_xX$.}
\end{remark}

\begin{remark}[The role of the special marked point $z_0$]
Recall that there is a forgetful map
\[
\pi: \overline{\sm_{g,n+1}} \to \overline{\mgn}
\]
which forgets the special marked point $z_0$ and collapses any resulting unstable components, which are necessarily of genus zero.  The fiber over $\ds \in \overline{\mgn}$ is itself isomorphic to $\ds$.  To see why this holds over the marked and nodal points, note the following.  The fiber above the $k^{th}$ marked point $z_k$ is a single nodal curve which has a genus zero component containing the marked points $z_k$ and $z_0$ as well as a node, which is glued to $z_k\in \ds$.  This component collapses when the marked point $z_0$ is removed.  The fiber above a node resulting from gluing $z' \in \ds'$ to $z''\in \ds''$ is a single curve which has a genus zero component containing two nodes and the marked point $z_0$, attached by the first node to $\ds'$ at $z'$ and to $\ds''$ at $z''$ by the second node.  This genus zero component similarly collapses when the marked point is removed.  Thus, a point of $\overline{\sm_{g,n+1}}$ is equivalent to a pair $(\ds,z)$, where $\ds\in \overline{\mgn}$ and $z\in \ds$.  

If $\ds \in\overline{\mgn}$, we can delete the first $n$ marked points to obtain an $n$-times punctured Riemann surface with one marked point $z_0$.  Fix the $n$ marked points corresponding to the $n$ punctures and let $\bj:=\{ J_{g',n'}\}$ be a domain dependent almost complex structure.  By restricting $J_{g,n+1}$ to $\ds \cong \pi^{-1}(\ds) \subset \sm_{g,n+1}$, we obtain a family of almost complex structures on $\xi$ parametrized by $\ds$, which we denote by $\bj_\ds$.  Rather than writing $\bj_\ds$, we will work under the assumption that in the Cauchy-Riemann equation below, the domain of $\bj$ is restricted to $\pi^{-1}(\ds)$, where $(\ds, j) \in \mgn$.  Returning to the perspective of $\ds$ as a $n$-times punctured Riemann surface, a map $C: (\ds, j) \to (\R \times Y, \bj)$ can be associated with the $(0,1)$-form
\[
\overline{\partial}_{j,\bj} : = \tfrac{1}{2}\left( dC + \bj(z,C)\circ dC \circ j\right),
\]
which at the point $z\in \op{int}(\ds)$ is given by
\[
\overline{\partial}_{j,\bj}C(z) := \tfrac{1}{2} \left( dC(z) + \bj(z,C(z))\circ dC(z) \circ j ( z) \right).
\]
We say that $C$ is \emph{$\bj$-holomorphic} whenever $\overline{\partial}_{j,\bj}C=0$.  
\end{remark}


\subsection{Regularity for generic $S^1$-invariant domain dependent $\bj$}\label{sec:reg}
In this section, we prove that a generic $S^1$-invariant domain dependent almost complex structure $\bj$ is regular.  We note that the weaker statement that a generic $\bj \in \sj_\ds$ is regular follows similarly.

\begin{theorem}[Thm.\ 6.2.1 \cite{farris}]\label{thm:ddacs}
Let $\alpha$ and $\beta$ be nondegenerate Reeb currents with $\op{deg}(\alpha,\beta) >0$.  If $\bj \in \sj_\ds^{S^1}$ is generic and $\ds$ does not include $\C$ or a union of cylinders, then any nonconstant holomorphic curve $C \in \M^{\bj}(\alpha, \beta)$ is regular, meaning that the linearization $D\overline{\partial}_{\bj}(C)$ is surjective and a neighborhood of $C \in  \M^{\bj}(\alpha, \beta)$ naturally admits the structure of a smooth orbifold of dimension given by the Fredholm index $\ind(C)$, whose isotropy group at $C$ is 
\[
\op{Aut}(C) := \{ \varphi \in \op{Aut}(\ds, j) \ | \ C = C \circ \varphi  \}
\]
and there is a natural isomorphism
\[
T_C \M^{\bj}(\alpha, \beta) = \ker D\overline{\partial}_{\bj}(j, C) / \mathfrak{aut}(\ds, j).
\] 
\end{theorem}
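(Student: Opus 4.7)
The plan is to follow the Floer--Hofer--Salamon style universal moduli space argument, where the key new ingredient (as in \cite[\S 6]{farris} and \cite{CM}) is that the domain dependence of $\bj$ effectively breaks the $S^1$-symmetry at the level of infinitesimal perturbations, permitting transversality for positive-genus curves that would be obstructed for a domain-independent $S^1$-invariant $\fj$. First I would form the universal moduli space
\[
\M^{\text{univ}}(\alpha, \beta) := \{ (C, \bj) \mid \bj \in \sj_\ds^{S^1},\ C \in \M^{\bj}(\alpha, \beta) \text{ somewhere injective} \},
\]
working with $\bj$ of class $C^\varepsilon$ so that $\sj_\ds^{S^1}$ is a genuine Banach manifold, and $C$ living in an appropriate weighted Sobolev Banach manifold of asymptotically cylindrical maps. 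Surjectivity of the universal linearization at each $(C, \bj)$ would imply that $\M^{\text{univ}}$ is a Banach manifold, and Sard--Smale applied to the projection $\M^{\text{univ}} \to \sj_\ds^{S^1}$, followed by a standard Taubes Baire-category argument, would then yield a comeager set of smooth regular $\bj$.

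The main step, and the principal obstacle, is verifying surjectivity of
\[
L_{(C, \bj)}(\eta, Y) = D\overline{\partial}_\bj(C)\eta + \tfrac{1}{2} Y(\cdot, C) \circ dC \circ j.
\]
Supposing $\sigma$ lies in the $L^2$-cokernel, $\sigma$ satisfies the formal adjoint equation and enjoys unique continuation by Aronszajn's theorem. Pairing $\sigma$ with arbitrary $Y \in T_\bj \sj_\ds^{S^1}$ gives vanishing of an integral over $\ds$; to force $\sigma \equiv 0$ I would locate a point $z_0 \in \ds$ satisfying: (i) $C(z_0) \notin \R \times N$; (ii) $z_0$ is an injective point of $C$ with $\pi_\xi \circ dC(z_0) \neq 0$; and (iii) no other $z$ in a punctured neighborhood has $\fp \circ \pi_Y \circ C(z) = \fp \circ \pi_Y \circ C(z_0)$. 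Condition (iii) is the crux: the $S^1$-invariance of $\bj$ couples perturbations at $(z_0, C(z_0))$ and $(z_0, e^{i\theta}\cdot C(z_0))$, but the domain dependence permits independent perturbations at distinct $z \in \ds$ even when they map to the same $S^1$-fiber in $Y$.

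The hard part is confirming that such a $z_0$ exists. Because $\op{deg}(\alpha, \beta) > 0$ and by hypothesis $\ds$ is neither $\C$ nor a union of cylinders, the completed projected map $\fp \circ \pi_Y \circ C : \overline{\ds} \to \Sigma_g$ is a non-constant holomorphic branched cover, whose branch locus is finite and whose generic fibers consist of $\op{deg}(\alpha, \beta)$ distinct points. The set of points at which $C$ is somewhere injective with $\pi_\xi \circ dC \neq 0$ is open and dense by \cite[\S 3.2]{jo1} and the proof technique of Proposition \ref{rem:Js}; intersecting with $C^{-1}(\R \times (Y \setminus N))$ and removing the finitely many branch points and finitely many multi-preimage fibers still leaves a non-empty open set, from which $z_0$ may be selected. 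I would then build a test $Y$ supported near the isomorphism class of $(\ds, j)$ inside $\sm_{g,n+1}$ with last marked point at $z_0$, and in a small $S^1$-invariant tubular neighborhood of the orbit through $C(z_0)$ in $\R \times Y$, averaging over the $S^1$-action to preserve membership in $T_\bj \sj_\ds^{S^1}$; a standard bump-function argument then forces $\sigma(z_0) = 0$, and unique continuation gives $\sigma \equiv 0$.

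Finally, the orbifold dimension and tangent space identification would follow as in Theorem \ref{folk0}: the Fredholm index of $D\overline{\partial}_\bj(C)$ agrees with $\ind(C)$ of \eqref{w-index} since the domain-dependent perturbation does not alter the principal symbol of the linearization, the isotropy at $C$ arises from automorphisms of $(\ds, j)$ preserving $C$, and $T_C \M^{\bj}(\alpha, \beta) = \ker D\overline{\partial}_\bj(j, C)/\mathfrak{aut}(\ds, j)$.
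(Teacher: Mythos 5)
Your outline correctly identifies the Cieliebak--Mohnke-style strategy that the paper follows: a universal moduli space over the Floer $C^\varepsilon$-Banach manifold $\sj_\ds^{S^1}$, Sard--Smale applied to the projection, and surjectivity of the universal linearization via unique continuation after killing a putative cokernel element at a carefully chosen point $z_0$ whose $\fp\circ\pi_Y\circ C$-fiber is isolated from its other preimages. The role of domain dependence in decoupling perturbations at points of $\ds$ that map to the same $S^1$-orbit is also stated correctly.

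However, you omit what the paper treats as one of the two essential ``new phenomena'' for positive-genus curves with domain-dependent $\bj$: ruling out nodal curves with a constant component of positive genus. Such a component cannot be perturbed away (its $\overline{\partial}$-equation is trivially satisfied and insensitive to $\bj$), and if one arises the moduli space fails to be regular. The paper excludes it by a Fredholm index argument special to $\dim(\R\times Y)=4$: if $C = C_1\cup C_2$ with $C_2$ constant of positive genus and $\ind(C)=1$, then since $C^*T(\R\times Y)|_{\ds_2}$ is trivial and all punctures lie on $\ds_1$ one gets $\ind(C_1)<\ind(C)$, forcing $\ind(C_2)>0$; but for generic almost complex structures no positive-index positive-genus curves exist, a contradiction. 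Your proof has no analogue of this step, and without it the conclusion that $\M^\bj(\alpha,\beta)$ is a smooth orbifold of the expected dimension does not follow.

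Two smaller points. First, your description of $\fp\circ\pi_Y\circ C$ as a ``holomorphic branched cover'' is not literally correct for a genuinely domain-dependent $\bj$: the projected map satisfies a Cauchy--Riemann equation with a $z$-dependent complex structure on $\Sigma_g$, not a fixed $j_{\Sigma_g}$. The conclusion you need (preimages of a given point are finite) is still true, but the paper derives it via an accumulation/unique-continuation argument rather than Riemann--Hurwitz. Second, your universal moduli space is restricted to somewhere injective $C$, whereas the theorem applies to any nonconstant $C$; the raison d'\^etre of domain-dependent almost complex structures here is precisely that one can attain transversality for multiple covers, since distinct domain points in a fiber of $\fp\circ\pi_Y\circ C$ can receive independent perturbations. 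You should drop the somewhere-injective hypothesis.
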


\begin{remark}
At an orbifold point $C \in \M^{\bj}(\alpha, \beta)$,  $C$ has a nontrivial automorphism group with respect to which $C$ is invariant, so $C$ factors through the branched covering $\ds \to \frac{\ds}{\op{Aut}(\ds, j)}$.  Additional multiple covers may arise which do not come from automorphisms of the domain, but the use of domain-dependent almost complex structures permits us to perturb away the multiple covers of the latter type by choosing different perturbations at the different points in $C^{-1}(C(q))$.    However, multiple covers coming from automorphisms of the domain remain because the functions from which we defined domain dependent almost complex structures are invariant with respect to the orbifold symmetry groups of $\mgn$.  Since the subset of orbifold points of $\overline{\mgn}$ has real codimension at least 2 in $\overline{\mgn}$, we can conclude that the subset of $\bj$-holomorphic curves in the moduli space whose domains are orbifolds also has real codimension at least 2.  Thus a generic $\bj$-holomorphic curve is not an orbifold point in its moduli space, and a generic path of $\bj$-holomorphic curves avoids the locus of orbifold points.
\end{remark}

Before giving the proof, we provide the corollary, which demonstrates that positive degree curves do not contribute to $ECH$ index 1 moduli spaces.

\begin{corollary}[Cor.\ 6.2.3 \cite{farris}]\label{nogenus}
Let $\alpha$ and $\beta$ be nondegenerate admissible Reeb currents and $\bj \in  \sj_\ds^{S^1}$ be generic.  If $\op{deg}(\alpha,\beta) >0$ and $I(\alpha,\beta)=1$ then $\M^{\bj}(\alpha,\beta) = \emptyset$.
\end{corollary}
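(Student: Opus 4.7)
The plan is to decompose any candidate current via Proposition \ref{lowiprop}, reduce to controlling a single positive-degree embedded component $C_1$, apply the regularity Theorem \ref{thm:ddacs} to it, and extract a dimension contradiction from the $S^1$-symmetry of $\bj$. First, suppose $\mathcal{C} \in \M^{\bj}(\alpha,\beta)$; by Lemma \ref{lem:indepZ} the ECH index does not depend on the relative homology class, so $I(\mathcal{C}) = I(\alpha,\beta) = 1$. Proposition \ref{lowiprop}, whose proof is purely ECH-index-theoretic and carries over to $\bj$-holomorphic currents, then furnishes $\mathcal{C} = \mathcal{C}_0 \sqcup C_1$, with $\mathcal{C}_0$ a union of multiply covered trivial cylinders of $I$-index zero, and $C_1$ embedded, somewhere injective, with $\ind(C_1) = I(C_1) = 1$.

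Next I would verify the hypotheses of Theorem \ref{thm:ddacs} at $C_1$. Since each trivial cylinder $\R \times \gamma$ projects to a single point in $\Sigma_g$ and so carries degree zero, additivity of degree forces $\op{deg}(C_1) = \op{deg}(\alpha,\beta) > 0$. Lemma \ref{lem:d0g0} applied to $C_1$ then forbids its domain from being a disjoint union of cylinders, and a $\C$-component is ruled out because the projected completion $S^2 \to \Sigma_g$ is null-homotopic whenever $g(\Sigma_g) > 0$ (the $g=0$ case is treated separately via grading, as indicated in Remark \ref{unstable-cases}). Consequently, for generic $\bj \in \sj_\ds^{S^1}$, Theorem \ref{thm:ddacs} guarantees that a neighborhood of $C_1$ in $\M^{\bj}(\alpha,\beta)$ is a smooth orbifold of dimension exactly $\ind(C_1) = 1$.

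The contradiction then comes from the symmetries of $\bj$. Because $\bj(\ds)$ is $S^1$-invariant for every domain $\ds$, the fiberwise $S^1$-action on $\R \times Y$ sends $\bj$-holomorphic maps to $\bj$-holomorphic maps and hence acts on $\M^{\bj}(\alpha,\beta)$, as does $\R$-translation. Since $\op{deg}(C_1) > 0$, the image of $\pi_Y \circ C_1$ is not contained in a single $S^1$-orbit of $Y$, so the $S^1$-action moves $C_1$ nontrivially; combined with the independent $\R$-translation action, the orbit of $C_1$ under $\R \times S^1$ is two-dimensional inside $\M^{\bj}(\alpha,\beta)$. This contradicts the local dimension being $1$, forcing $\M^{\bj}(\alpha,\beta) = \emptyset$.

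The step I expect to be most delicate is confirming that the $\R \times S^1$-orbit through $C_1$ is genuinely two-dimensional, which requires that no one-parameter subgroup of $\R \times S^1$ stabilize $C_1$: invariance under pure $\R$-translation would force $C_1$ to be a trivial cylinder (excluded by $\op{deg}(C_1) > 0$), invariance under pure $S^1$-rotation would force the two-dimensional image $\pi_Y(C_1)$ to consist of one-dimensional $S^1$-orbits, and mixed ``screw-motion'' stabilizers are ruled out by the asymptotic behavior at the ends together with $S^1$-invariance of the limit Reeb orbits. A secondary technical point is to confirm that the $\bj$-holomorphic versions of Proposition \ref{lowiprop} and the underlying regularity theory behave exactly as in the domain-independent case; this holds because the ECH-index inequality and the perturbation theory for $\bj$ closely mirror the domain-independent versions.
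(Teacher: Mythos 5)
There is a genuine gap in your argument, and it occurs at the very first step. You invoke Proposition \ref{lowiprop} for $\bj$-holomorphic currents, asserting that its proof is ``purely ECH-index-theoretic'' and therefore carries over. This is exactly the step that cannot be taken for free. The proof of Proposition \ref{lowiprop} (see \cite[\S3.5]{Hrevisit} and \cite[\S3]{Hu2}) rests on the ECH index inequality, Theorem \ref{thm:indexineq}, and the index inequality in turn relies on intersection positivity (to control $\delta(C)$ in the relative adjunction formula) and on the writhe bounds, neither of which is automatic for a domain-dependent $\bj$. Indeed the paper says this explicitly in the remark at the end of \S\ref{sec:obg}: ``\ldots such an argument typically relies on the ECH index inequality, Theorem \ref{thm:indexineq}, which does not apply for domain dependent curves, because its proof requires intersection positivity.'' So you cannot conclude from $I(\mathcal{C})=1$ alone that $\mathcal{C}$ splits off an embedded component $C_1$ with $\ind(C_1)=I(C_1)=1$; that is precisely the packaging the domain-dependent setting fails to supply.

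The paper's proof avoids this by never applying the index inequality (or Proposition \ref{lowiprop}) to a $\bj$-holomorphic object. Instead it works with the Fredholm index directly: the Fredholm index is a deformation invariant determined by the topological type of the domain and the Conley--Zehnder data at the ends, so one may perturb $\bj$ to a generic domain-independent $J\in\sj_{reg}(Y,\lambda)$ without changing $\ind$, apply the index inequality to the $J$-holomorphic side to get $\ind(C_{\bj})=\ind(C_J)\le I(\alpha,\beta)=1$, and separately use Theorem \ref{thm:ddacs} to identify $\dim\M^{\bj}(\alpha,\beta)$ with $\ind(C_{\bj})$. The $\R\times S^1$ action argument then produces the same dimension contradiction you reach, but with the index bound justified on the domain-independent side only. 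Your $\R\times S^1$-orbit analysis, including the discussion of one-parameter stabilizers, is sound and in fact more careful than the paper's version; the issue is purely the route by which you establish $\ind(C_1)=1$ (equivalently, that the relevant Fredholm index is at most one). To repair the argument, replace the appeal to Proposition \ref{lowiprop} with the Fredholm-index perturbation step, and apply Theorem \ref{thm:ddacs} to the full $\bj$-holomorphic curve rather than to a decomposed piece.
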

\begin{proof} Given a generic $\bj \in \sj_\ds^{S^1}$ consider a $\bj$-holomorphic curve $C: \ds \to \R \times Y$ with $\op{deg}(\alpha,\beta) >0$.  Take $\bj' \in \sj_\ds$ to be generic, then $\ind(C_\bj) = \ind(C_{\bj'})$.  Moreover, if we take $J \in \sj_{reg}(Y,\lambda)$ to be sufficiently close to $\bj'$ then $\ind(C_{\bj'}) = \ind(C_{J})$.  

By the definition of degree, the domain cannot be a union of cylinders, so $S^1$ acts locally freely on $\M^{\bj}(\alpha,\beta) $.  Since $\R$ acts freely on $\M^{\bj}(\alpha,\beta)$ and, because these actions commute, we have that $\dim \M^{\bj}(\alpha, \beta) \geq 2$ whenever $\M^{\bj}(\alpha, \beta) \neq \emptyset$.   This is because Theorem \ref{thm:ddacs} guarantees that $\M^{\bj}(\alpha, \beta)$ is cut out transversely and has dimension equal to the Fredholm index.  But by the ECH index inequality property, Theorem \ref{thm:indexineq}, we have $\ind(C_{\bj}) = \ind(C_J) \leq I(C_J) = I(\alpha,\beta) =1$, a contradiction because if $I(\alpha,\beta)=1$ then $\op{ind}(C_J) \leq 1$, .   
\end{proof}

Prior to proving our main regularity result, we provide some definitions and construct the universal moduli space, mostly following \cite[\S 4-5]{CM} and \cite[\S 7.2]{wendl-sft}.  Assuming $kp>2$, let 
\[
\B^{k,p,\delta} :=\W(\ds,\R \times Y; \alpha,\beta) \subset C^0(\ds,\R \times Y)
\]
be the usual smooth, separable, and metrizable Banach manifold of {exponentially weighted Sobolev spaces of maps which are asymptotically cylindrical curves to the Reeb currents $\alpha$ and $\beta$ at the ends}.  The tangent space to $\B^{k,p,\delta}$ at $C\in \B^{k,p,\delta}$ can be written as 
\[
T_C\B^{k,p,\delta} = \W(C^*(\R \times Y) ) \oplus V_\cs,
\]
where $V_\cs \subset \Gamma(C^*(\R \times Y))$ is a non-canonical choice of a $2|\cs|$-dimensional vector space of smooth sections asymptotic at the punctures to constant linear combinations of the vector fields spanning the canonical trivialization of the first factor in $T(\R \times Y) = \epsilon \oplus \xi$.  The space $V_\cs$ appears due to the fact that two distinct elements of $\B^{k,p,\delta}$ are generally asymptotic to collections of trivial cylinders that differ from each other by $|\cs|$ pairs of constant shifts $(a,b) \in \R \times S^1$.

Fix $\bj \in \sj_\ds^{\varepsilon}$.  The nonlinear Cauchy-Riemann operator is then defined as a smooth section 
\[
\overline{\partial}_{j,\bj}: \B^{k,p,\delta} \to \E^{k-1,p,\delta}; C \mapsto TC + \bj \circ TC \circ j
\]
of a Banach space bundle
\[
\E^{k-1,p,\delta} \to \B^{k,p,\delta} 
\]
with fibers
\[
\E^{k-1,p,\delta}_C = W^{k-1,p,\delta}(\overline{\op{Hom}}_\C(T\ds, C^*(\R \times Y))).
\]
The zero set of $\overline{\partial}_{j,\bj}$ is the set of all maps $C \in \B^{k,p,\delta} $ that are $\bj$-holomorphic.  

More generally, the \emph{universal Cauchy-Riemann operator} is the section
\[
\overline{\partial}: \B^{k,p,\delta}  \times \sj_\ds^{\varepsilon} \to \E^{k-1,p,\delta}; (C, \bj) \mapsto \overline{\partial}_{j,\bj}C
\]
of a Banach space bundle 
\[
\E^{k-1,p,\delta}   \to   \B^{k,p,\delta} \times  \sj_\ds^{\varepsilon} 
\]
with fibers
\[
\E^{k-1,p,\delta}_C = W^{k-1,p,\delta}(\overline{\op{Hom}}_\C(T\ds, C^*(\R \times Y))).
\]
The zero section gives rise to the \emph{universal moduli space}:
\[
\M(\sj_{\ds}^{\varepsilon}):= \{ (C,\bj) \ | \ \bj \in \sj_{\ds}^{\varepsilon}, \ \overline{\partial}_{j,\bj}C=0  \}.
\]
Arguments similar to \cite[Lemma 7.15]{wendl-sft} demonstrate that the universal moduli space $\M(\sj_{\ds}^{\varepsilon})$ is a smooth separable Banach manifold, and the projection $\M(\sj_{\ds}^{\varepsilon}) \to \sj_{\ds}^{\varepsilon}; (C,\bj) \mapsto \bj$ is smooth.  

For any $C \in \overline{\partial}_{j,\bj}^{-1}(0)$, the linearization
\[
D\overline{\partial}_{j,\bj}: T_\bj \sj_\ds^{\varepsilon}  \times T_C\B^{k,p,\delta}  \to \E^{k-1,p,\delta}_C
\]
defines a bounded linear operator
\[
D: W^{k,p,\delta}(C^*T(\R \times Y)) \oplus T_\bj \sj_\ds^{\varepsilon} \oplus V_\cs \to W^{k-1,p,\delta}(\overline{\op{Hom}}_\C(T\ds, C^*(\R \times Y)))
\]
Since $V_\cs$ is finite dimensional,  $D$ will be Fredholm if and only if its restriction to the first two factors is Fredholm; denote this restriction by
\[
\D:= \D_C + \D_\bj :  W^{k,p,\delta}(C^*T(\R \times Y)) \oplus T_\bj \sj_\ds^{\varepsilon} \to W^{k-1,p,\delta}(\overline{\op{Hom}}_\C(T\ds, C^*(\R \times Y)))
\]

We will view the $n$ punctures of the domain $\ds$ of $C$ as fixed, with $j$ varying on $\op{int}(\ds)$, so that the tangent space to $\sm_{g,n+1}$ at a point $(\Sigma, j, z_0,z_1,...,z_n)$ is $T_j\sj(\ds) \oplus T_{z_0}\ds$.  If $V = (a, A) \in T_\bj(\sj_\ds^{S^1, \varepsilon})$, where $A: T_j(\ds) \to T_\bj \sj_\ds^{S^1, \varepsilon}$ and $a \in \overline{\op{End}}_j(T\ds)$, then 
\[
\D_\bj(V)=A \circ du \circ j_\ds + \bj \circ du \circ a.
\]

\begin{proof}[Proof of Theorem~\ref{thm:ddacs}.]
We begin by recalling a few observations in the proof of \cite[Theorem 6.2.1]{farris}.  {Since $\deg(\alpha,\beta)>0$, the domain $\ds$ cannot solely consist of a union of cylinders.}  The ECH index is additive and positive for pseudoholomorphic curves which are not themselves cylinders, so there is a unique noncylindrical component $\ds'$ of $\ds$. Trivial cylinders are always cut out transversely \cite[Proposition 8.2]{wendl-sft}, as are somewhere injective cylinders \cite[\S 7-8]{wendl-sft}.  In light of Remark \ref{unstable-cases} and without loss of generality, we may prove the theorem in the case when $\ds = \ds'$ and $\ds$ is stable.  Since $C$ is not a trivial cylinder, $C^{-1}(\R \times (Y \setminus N))$ contains a nonempty open set of $\ds$.  

Next we show that $C$ cannot be a nodal curve with a constant component of positive genus, which crucially relies on $\op{dim}(\R \times Y) = 4$, noting this is in part why \cite{CM} restricts to genus 0 curves.  Suppose to the contrary that $C$ is the union of a nodal curve $C_1$ with a constant component of positive genus $C_2$ and that $\ind(C)=1$, then
\[
\ind(C_1) + \ind(C)=1.
\]
Since $C|_{\ds_2}$ is constant, the restricted pullback $C^*(T(\R \times Y)|_{\ds_2}$ is trivial, thus
\[
c_1(C^*(T(\R \times Y))= c_1(C^*(T(\R \times Y))|_{\ds_1} + c_1(C^*(T(\R \times Y))|_{\ds_2} = c_1(C^*(T(\R \times Y))|_{\ds_1}.
\]
Denote
\[
c_1|_{\ds_1}:=c_1(C^*(T(\R \times Y))|_{\ds_1}.
\]
Because $C$ maps $\ds_2$ to a constant, all the punctures must lie on $\ds_1$.  Thus the Fredholm index contribution of the Conley-Zehnder indices of the orbits asymptotic to the ends of $\ds$ and $\ds_1$ must agree.  Denote this contribution by $CZ_\tau^{ind}(C_1)$.  By hypothesis, we have that $g(\ds_2) > 0$, $g(\ds_1) < g(\ds)$, and $\chi(\ds_1) > \chi(\ds)$.  Thus
\[
\begin{array}{r c l c l}
1&= &\op{ind}(C) &= & -\chi(C) + c_1|_{\ds_1} + CZ_\tau^{ind}(C_1), \\
&&\op{ind}(C_1)& = & -\chi(C_1) + c_1|_{\ds_1} + CZ_\tau^{ind}(C_1), \\
\end{array}
\]
hence $\op{ind}(C_1)<\op{ind}(C) =1$. Therefore $\op{ind}(C_2) = 1-\op{ind}(C_1) > 0$.  Since we assumed that $\bj$ is a generic $S^1$-invariant domain dependent almost complex structure, we have that all of its restrictions to $\partial \overline{\mgn}$ are generic, which determine the almost complex structure on $\ds_1$ and $\ds_2$.  {However, for generic almost complex structures, positive index curves of positive genus do not exist.} Thus $C$ is not constant on a component of positive genus.  Since constant components of genus 0 can be eliminated by reparametrization, we can assume without loss of generality that $C$ is not constant on any component of $\ds$, hence the zeros of $dC$ are isolated.  Note that the above argument also holds if $\ind(C)>1$.  
The remainder of the argument is similar to that of \cite[Lemmas 4.1, 5.4, 5.6]{CM}, \cite[Proposition 3.4.2]{jhol}, \cite[\S 8]{wendl-sft}, so we only sketch the argument.

Let $C: \ds \to \R \times Y$ be a $\bj$-holomorphic map.  The set of regular points $z$ of $\ds$ such that $\pi_Y C(z)$ is a regular value of $\pi_Y \circ C$ form an open dense subset of $\ds$.  If we intersect the set of regular points with the set of points $z \in \ds$ where $\op{im}(dC_z) = \xi_{\pi_YC(z)}$, it remains open and dense because the projection $\pi_Y$ is already open and dense by the nonintegrability of $\xi$.  Denote the further intersection of these sets with $C^{-1}(\R \times (Y\setminus N))$ by $U$.  Note that $U$ contains a nonempty open set.

After fixing $(C,\bj) \in \M(\sj_{\ds}^{S^1})$,\footnote{We are now dropping the $\varepsilon$ from the notation, as it is understood we should be working with the Floer $C^\varepsilon$-space.} we want to show that the linearization $\D= \D_C + \D_\bj$ is surjective.  Since $\D_C$ is Fredholm, $\D$ has closed range and hence surjectivity is equivalent to the triviality of the annihilator of $\op{Im}(\D)$.  
We prove the result for $k=1$, noting that $k>1$ follows by elliptic regularity, cf. \cite[Theorem C.2.3]{jhol}.  When $k=1$, we have that the dual space of any space of sections of class $L^{p,\delta}$ can be identified with sections of class $L^{q,-\delta}$ for $\frac{1}{p} + \frac{1}{q} =1$ \cite[Remark 7.7]{wendl-sft}.  Using the nondegenerate 2-form $d(e^r\lambda)$ on $\R \times Y$ we can use it to define a nondegenerate $L^2$-pairing
\[
\langle \cdot, \cdot \rangle = L^{p,\delta} \times L^{q,-\delta}.
\]
Moreover $\left( L^{p,\delta }\right)^* \cong L^{q,\delta}$, so we can consider the formal adjoint $\D_C^*$ of $\D_C$.  Let $\eta \in \op{coker}(\D)$, then the splitting and dualization yield that orthogonality to $\op{Im}(\D)$ amounts to the equations
\begin{equation}\label{eqs}
\begin{array}{cc}
\langle \D_C(\zeta),\eta\rangle =0 \\
\langle \D_\bj(V),\eta \rangle =0 \\
\end{array}
\end{equation}
for all $\zeta \in T_C \B^{k,p,\delta}$ and $V \in T_\bj(\sj_{\ds}^{S^1})$.  By the first equation, $\eta \in \op{ker}(\D_C^*)$.  By elliptic regularity, $\eta$ is smooth.  The second equation implies that if $\eta$ vanishes on an open set, then $\eta \equiv 0$ by unique continuation; cf. \cite[Lemma 3.4.7]{jhol}.  

The remainder of the argument is similar to the original proof by Farris.  Assume that $\eta_z \neq 0$ for some $z \in U \subset \ds$.  This implies that 
\[
\eta_z \in \overline{\op{Hom}}_{\bj(z,C(z))}(T_z\ds, T_{C(z)}T (\R \times Y)) \mbox{ and } dC_z \circ j_z \in \overline{\op{Hom}}_{\bj(z,C(z))}(T_z\ds, T_{C(z)}T (\R \times Y)) 
\]
are injective maps.  Thus given any $0\neq v \in T_z\ds$ we have that 
\[
\eta_z(v)\neq 0,\ \ \ dC_z \circ j_z(v) \neq 0. 
\]
Next we find some $A_z \in \overline{\op{End}}_{\bj(z,C(z))}(T (\R \times Y), d\lambda_{C(z)})$ such that
\[
A_z ( dC_z \circ j_z(v)) = \eta_z.
\]
On the set $U$, we have that $\xi_{C(z)}$ and $\op{im}(dC_z)$ are distinct complex subspaces which span $T_{C(z)}(\R \times Y)$.  Hence the codomain of $\D$, admits the following splitting:
\[
\overline{\op{Hom}}_{\bj(z,C(z))}(T_z\ds, T_{C(z)}T (\R \times Y)) = \overline{\op{Hom}}_{\bj(z,C(z))}(T_z\ds, \xi_{C(z)}) \oplus \overline{\op{End}}_{\bj(z,C(z))}(T_z\ds).
\]
We split $\eta_z$ accordingly:
\[
\eta_z =  \eta_\ds + \eta_\xi,
\]
where
\[
\begin{array}{lcl}
\eta_\xi & =& \eta_{\xi_{C(z)}}, \\
\eta_\ds & = &  \eta_{T_z\ds}. \\
\end{array}
\]
Since $(\bj \circ dC)_z$ is injective, for any given $\nu_z \in T_z\ds$, we can choose $a_z \in  \overline{\op{End}}_{j(z)}(T_z\ds)$ so that
\[
(\bj \circ du \circ a)_z(\nu_z) = \eta_\ds.
\]
Next, we consider the $\xi$-component. Since  $\overline{\op{End}}_{\bj(z,C(z))}(\xi_{C(z)}) = {T_{\bj(z,C(z))} \sj_\ds^{S^1}}$ is complex one dimensional, and for any given $v_z,w_z \in \xi_z$, there is an element $B_z \in \overline{\op{End}}_{\bj(z,C(z))}(\xi_{C(z)})$ sending $v_z$ to $w_z$.  Hence we take $B_z:T_z \ds \to T_{\bj(z,C(z))}\sj_\ds^{S^1}$ sending $(du \circ j)_z(v_z)$ to $\eta_\xi$.  Thus
\[
A_z := (a_z, B_z): (\nu_z,v_z) \mapsto (\eta_\ds, \eta_\xi),
\]
as desired.

We now need to suitably extend $A_z$ to an $A \in T_{\bj(z,C(z))} \sj_\ds^{S^1}$. When $J_{g,n}$ is restricted to $\ds \in \sm_{g,n+1}$, $V \in T_\bj(\sj_{\ds}^{S^1})$ depends on the special marked point $z_0 \in \ds$ and $\fp(\pi_YC(z)) \in \Sigma_g$.  In order to extend $A$ to all of $T_{\bj} \sj_\ds^{S^1}$, we must let it vary with the complex structure $j$ on $\ds$.  The domain of $V(p,q)$ is $\sm_{g,n+1} \times \Sigma_g$.  

Define a smooth cutoff function $\kappa: \Sigma_g \to \R$ which is nonnegative, takes the value one at $\fp(\pi_YC(z))$, and the value zero outside some open neighborhood of $\fp(\pi_YC(z_0))$ which does not contain any critical points of the perfect Morse function used to define $\lambda_\varepsilon$.  Let $\nu: \sm_{g,n+1} \to \R$ be a smooth nonnegative function which is one at $(\ds, j, z_0,z_1,...,z_n)$ and zero outside an appropriately small open neighborhood of this point.  The neighborhoods of the $z_i$ should not intersect each other, and the neighborhood $U'$ of the special marked point $z_0$ should not contain any preimages of $\fp(\pi_YC(z_0))$ besides $z_0$ itself.  Note that these preimages are finite in number, as otherwise they would accumulate, forcing $C$ to be globally constant.  

Choose an arbitrary smooth extension $A'$ of $A_{z_0}$, shrinking neighborhoods as necessary to ensure that 
\[
\langle A(j, z_0, z_1, ... , z_n, q) \circ dC_{z_0} \circ j_{z_0}, \eta_{z_0} \rangle >0 
\]
whenever $q \in \op{supp}(\kappa)$ and $(j,z_0,...,z_n) \in \op{supp}(\nu)$.  We define
\[
A(j,z_0,...,z_n,q) : = \kappa(q) \nu(j, z_0,...,z_n) A'(j, z_0,...,z_n,q). 
\]
{Since $\langle \D_\bj(A)_z, \eta_z \rangle >0$, we obtain a contradiction to the assumption that $\eta \in \op{coker}(\D)$.  Thus $\eta \equiv 0$ and $\D$ is surjective as claimed.} It follows from surjectivity of $D\overline{\partial}_{\bj}(C)$ that $\M^{\bj}(\alpha, \beta)$ naturally admits the structure of a smooth orbifold of dimension given by the Fredholm index by way of a virtual repeat of \cite[Theorem 0]{Wtrans}.

 \end{proof}

\section{Classification of ECH connectors}\label{sec:connectors}
In this section we carry out some index calculations which allow us to classify \emph{connectors} $\conn$, which are defined to be branched and unbranched covers of a union of trivial cylinders \\ $\bigsqcup_i (\gamma_i \times \R)$.   We will also use intersection theory to show, under sufficient genericity assumptions, that certain sequences of holomorphic curves cannot converge to a building which has  a connector at the top most or bottom most level.   Subsequently in \S \ref{handleslides}, we use these classification results to invoke the obstruction bundle gluing theorems \cite{obg1, obg2} and prove that the appearance of ECH handleslides does not impact the homology.

\subsection{Buildings and connectors}

As in \S \ref{sec:freddie}, if $C$ is a $J$-holomorphic curve with positive ends at Reeb orbits $\alpha_1,\ldots,\alpha_k$ and negative ends at Reeb orbits $\beta_1,\ldots,\beta_l$, then the Fredholm index of $C$ is given by the formula
\begin{equation}
\label{eqn:ind}
\op{ind}(C)=-\chi(C) + 2c_\tau(C) + \sum_{i=1}^k\CZ_\tau(\alpha_i) - \sum_{j=1}^l\CZ_\tau(\beta_j).
\end{equation}
Here $\chi(C)$ denotes the Euler characteristic of the domain of $C$, so if $C$ is irreducible of genus $g$ then
\begin{equation}
\label{eqn:chi}
\chi(C) = 2-2g-k-l.
\end{equation}

Next we recall the definition of a pseudoholomorphic building from \cite{BEHWZ}; see also \cite[\S 9.4]{wendl-sft}.  In our setting all the curves and their limits are non-nodal and unmarked.

\begin{definition}\label{building} 
For our purposes, a \emph{holomorphic building} is an $m$-tuple $(u_1,\ldots,u_m)$, for some positive integer $m$, of (possibly disconnected) $J$-holomorphic curves $u_i$ in $\R\times Y$, called {\em levels}. Although our notation does not indicate this, the building also includes, for each $i\in\{1,\ldots,m-1\}$, a bijection between the negative ends of $u_i$ and the positive ends of $u_{i+1}$, such that paired ends are at the same Reeb orbit\footnote{One might also want a holomorphic building to include appropriate gluing data when Reeb orbits are multiply covered, but we will not need this.}. If $m>1$ then we assume that for each $i$, at least one component of $u_i$ is not a trivial cylinder\footnote{ A {\em trivial cylinder\/} is a $J$-holomorphic cylinder $\R\times\gamma$ in $\R\times Y$ where $\gamma$ is a Reeb orbit, which is not required to be embedded.
}. A {\em positive end} of the building $(u_1,\ldots,u_m)$ is a positive end of $u_1$, and a {\em negative end} of $(u_1,\ldots,u_m)$ is a negative end of $u_m$. The {\em genus} of the building $(u_1,\ldots,u_m)$ is the genus of the Riemann surface obtained by gluing together negative ends of the domain of $u_i$ and positive ends of the domain of $u_{i+1}$ by the given bijections (when this glued Riemann surface is connected).
\end{definition}

 We define the \emph{Fredholm index of a holomorphic building} by 
 \[
 \op{ind}(u_1,\ldots,u_m) : = \sum_{i=1}^m\op{ind}(u_i).
\]

We recall the Riemann-Hurwitz theorem, in part to fix notation.

\begin{theorem}[Hartshorne, Corollary IV.2.4]\label{RH}
Let $\varphi:\widetilde{\dot{\Sigma}} \to \ds$ be a compact $k$-fold cover of the punctured Riemann surface $\ds$.  Then
\begin{equation}\label{eqn:RH}
\chi(\widetilde{\dot{\Sigma}}) =  k\chi(\ds) - \sum_{p \in\widetilde{\dot{\Sigma}}} (e(p)-1),
\end{equation}
where $e(p)-1$ is the ramification index of $\varphi$ at $p$.
\end{theorem}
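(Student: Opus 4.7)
The plan is to prove this classical formula via the standard triangulation argument, adapted to the punctured setting. First, I would pass to the compactifications $\overline{\Sigma}$ and $\overline{\widetilde{\Sigma}}$ obtained by filling in the punctures, noting that since a $k$-fold cover of a punctured disk is a disjoint union of punctured disks (each of some ramification order summing to $k$), the map $\varphi$ extends to a branched cover $\overline{\varphi}:\overline{\widetilde{\Sigma}} \to \overline{\Sigma}$. Moreover, $\chi$ is additive over the removal of finitely many points, and the contribution of a puncture of $\widetilde{\ds}$ lying over a puncture of $\ds$ already enters the right-hand side of \eqref{eqn:RH} as a ramification term $e(p)-1$ (with $e(p)$ equal to the local degree at that puncture), so it suffices to establish the formula on the compactifications.

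Next, I would choose a triangulation of $\overline{\Sigma}$ fine enough that every branch value of $\overline{\varphi}$ (and every image of a puncture) appears as a vertex. Let $V, E, F$ denote the numbers of vertices, edges, and faces. Away from the branch locus, $\overline{\varphi}$ is a local homeomorphism, so each open edge lifts to exactly $k$ disjoint open edges in $\overline{\widetilde{\Sigma}}$, and similarly each open face lifts to $k$ open faces. This pulls back to a triangulation of $\overline{\widetilde{\Sigma}}$ with $kE$ edges and $kF$ faces.

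The vertex count requires more care: in local holomorphic coordinates centered at a ramification point $p$ with ramification index $e(p)$, the map $\overline{\varphi}$ has the form $z\mapsto z^{e(p)}$, so $e(p)$ sheets coincide at $p$. Thus for a vertex $v\in\overline{\Sigma}$, the fiber $\overline{\varphi}^{-1}(v)$ contains
\[
\sum_{p\in\overline{\varphi}^{-1}(v)} 1 = k - \sum_{p\in\overline{\varphi}^{-1}(v)}\bigl(e(p)-1\bigr)
\]
points, giving a total of $kV - \sum_p(e(p)-1)$ vertices upstairs. Taking the alternating sum yields
\[
\chi(\overline{\widetilde{\Sigma}}) = kV - \sum_p\bigl(e(p)-1\bigr) - kE + kF = k\chi(\overline{\Sigma}) - \sum_p\bigl(e(p)-1\bigr),
\]
and restoring the punctures on both sides gives \eqref{eqn:RH}.

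The only real obstacle is the bookkeeping at the punctures, i.e., ensuring the ramification sum on the right is taken over all of $\widetilde{\ds}$ in a consistent way after compactification; this amounts to the observation above that a finite cover of a punctured disk is a disjoint union of punctured disks, each with a well-defined local degree, so the notion of ramification index extends unambiguously to the punctures. Every other step is a routine cell count.
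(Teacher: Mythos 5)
The paper cites this as Hartshorne, Corollary IV.2.4 and does not prove it, so there is no internal argument to compare against. Your triangulation argument in the closed case is the standard one and is correct: away from the branch locus (which lies in the vertex set) the map $\overline{\varphi}$ is an honest covering, so edges and faces lift with multiplicity $k$, while the vertex count picks up the ramification defect, giving $\chi(\overline{\widetilde{\Sigma}}) = k\,\chi(\overline{\Sigma}) - \sum_p (e(p)-1)$.

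The one place that needs more care is the passage between the punctured and the closed versions. As written, the sum in \eqref{eqn:RH} is indexed over $p\in\widetilde{\dot\Sigma}$, which does not contain the filled-in points, so the ramification at a puncture does not literally ``enter the right-hand side of \eqref{eqn:RH}'' as your opening remark claims. Under that reading, what makes the formula survive compactification is a cancellation you should record: filling in a puncture $q$ of $\ds$ with $r$ preimages increases $\chi(\widetilde{\dot\Sigma})$ by $r$, increases $k\chi(\ds)$ by $k$, and contributes $k-r$ to the closed-surface ramification sum; thus both sides of the closed identity exceed the corresponding sides of \eqref{eqn:RH} by the same amount $r$, and the punctured identity follows. (If instead you read the sum as running over the compactification---which is how the paper itself uses the formula in, e.g., equation (\ref{eqn:RHcontra})---then your remark is accurate but there is then nothing left to ``restore.'') Either way, this bookkeeping is the only step that is not a direct cell count, and given the ambiguity in the statement it deserves to be written out rather than compressed into one sentence.
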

At unbranched points $p$ we have $e(p)-1=0$, thus for any $q \in \ds$, 
\[
\sum_{p \in \varphi^{-1}(q)}e(p)=k.
\]

The Riemann-Hurwitz theorem provides us with the number of punctures of the cover. The multiplicities of the Reeb orbits at the punctures  are determined by the monodromy of the local behavior of a curve near its punctures \cite{MiWh, s1}, which are in turn governed by the monodromy of the covering. 

\subsection{Low index connectors}
  In this section we investigate the relation between low ECH and Fredholm index connectors $\conn$ and the configurations of Reeb orbits at the ends of the components of each $\conn$.    Recall that a \emph{connector} $\conn$ is a branched cover of a union of trivial cylinders, and all or some of the components may be unbranched.

First, we recall that in a symplectization of a contact 3-manifold, all covers of trivial cylinders have non-negative Fredholm index.

\begin{lemma}[Lem.\ 1.7 \cite{obg1}]\label{lem:ht}
Let $C \in \M^J(\alpha,\beta)$ be a branched or unbranched cover of a trivial cylinder $\R \times \gamma$, where $\gamma$ is an embedded Reeb orbit. Then $\op{ind}(C)\ge 0$, with equality only if
\begin{enumerate}[\em (a)]
\item Each component of the domain $\ds$ of $C$ has genus 0.
\item If $\gamma$ is hyperbolic, then the covering $C: \ds \to \R \times \gamma$ has no branch points.
\end{enumerate}
\end{lemma}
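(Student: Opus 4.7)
The plan is to verify the inequality by direct computation of the Fredholm index formula \eqref{eqn:ind}, exploiting the fact that a cover of a trivial cylinder has vanishing relative first Chern class and Euler characteristic controlled entirely by interior branching.

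First I would establish two auxiliary facts. The Chern class term vanishes: since $C^*\xi$ pulls back from $\xi|_{\R\times\gamma}$, which is trivialized by $\tau$, we have $c_\tau(C)=0$. For the Euler characteristic, compactify $\R\times\gamma$ to $S^2$ and apply Riemann--Hurwitz (Theorem \ref{RH}) to the resulting branched cover $\varphi:\widetilde{\Sigma}\to S^2$ of some degree $d$, with multiplicities $\{a_i\}_{i=1}^k$ at preimages of $+\infty$ and $\{b_j\}_{j=1}^l$ at preimages of $-\infty$, and interior total ramification $R_{\op{int}}=\sum_{p\in\op{int}(\widetilde\Sigma)}(e(p)-1)$. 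Then
\[
\chi(\widetilde\Sigma)=2d-\sum_i(a_i-1)-\sum_j(b_j-1)-R_{\op{int}},
\]
and since $\sum a_i=\sum b_j=d$ and $\chi(C)=\chi(\widetilde\Sigma)-(k+l)$, we obtain $\chi(C)=-R_{\op{int}}$. By additivity over connected components, this holds component-wise as well.

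Next I would handle the hyperbolic case, which is immediate: $\CZ_\tau(\gamma^m)=mr$ is linear in $m$ (with $r$ even or odd according to positive/negative hyperbolicity), so $\sum_i\CZ_\tau(\gamma^{a_i})-\sum_j\CZ_\tau(\gamma^{b_j})=r(d-d)=0$. Substituting into \eqref{eqn:ind} gives $\op{ind}(C)=R_{\op{int}}\geq 0$, with equality if and only if the cover is unbranched. Since an unbranched cover of a cylinder is a disjoint union of cylinders, this simultaneously yields both conclusions (a) and (b) in the hyperbolic case.

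The elliptic case is the main obstacle. Here $\CZ_\tau(\gamma^m)=2\lfloor m\vartheta\rfloor+1$ with $\vartheta\in\R\setminus\Q$, and \eqref{eqn:ind} becomes
\[
\op{ind}(C)=R_{\op{int}}+2\Bigl(\sum_i\lfloor a_i\vartheta\rfloor-\sum_j\lfloor b_j\vartheta\rfloor\Bigr)+k-l.
\]
I would work one connected component at a time and reformulate the partition contribution using the lattice-path description from Definition \ref{def:part}: the sum $\sum_i \lfloor a_i\vartheta\rfloor$ with multiplicities $\{a_i\}$ summing to $d$ is bounded above by $\lfloor d\vartheta\rfloor$ (with maximum realized by $P^+_\vartheta(d)$) and bounded below by $\lceil d\vartheta\rceil-k$ (with minimum realized by $P^-_\vartheta(d)$), by iterated subadditivity of $\lfloor\cdot\vartheta\rfloor$. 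Combining the upper bound on the positive partition defect with the lower bound on the negative partition defect gives a non-negativity estimate for the bracketed quantity that, together with $R_{\op{int}}\geq 0$, yields $\op{ind}(C)\geq 0$. For the equality analysis, the partition extremality forces both ends to realize extremal paths with matching lattice-point counts, and $R_{\op{int}}=0$ then forces each component to be genus $0$ by Riemann--Hurwitz applied to that component (so that $2-2g-k-l=-R_{\op{int}}=0$ with $k+l\geq 2$ forces $g=0$), establishing (a). The hardest bookkeeping is tracking the inequality component-by-component so as to rule out the possibility that negative index on one component is compensated by positive index on another; this is handled by applying the above estimates to each connected cover separately and noting that each contributes non-negatively.
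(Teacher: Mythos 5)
The hyperbolic case of your argument is correct and complete: $c_\tau(C)=0$, the Riemann--Hurwitz computation $\chi(C)=-R_{\op{int}}$, and the cancellation of the $\CZ$ terms via linearity $\CZ_\tau(\gamma^m)=mr$ give $\op{ind}(C)=R_{\op{int}}\geq 0$ with equality iff the cover is unbranched, which yields both (a) and (b) at once.

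The elliptic case, however, has a gap precisely where you assert that the partition bounds ``together with $R_{\op{int}}\geq 0$'' yield $\op{ind}(C)\geq 0$. Plugging your two estimates $\sum_i\lfloor a_i\vartheta\rfloor \geq \lceil d\vartheta\rceil - k$ and $\sum_j\lfloor b_j\vartheta\rfloor\leq\lfloor d\vartheta\rfloor$ into your displayed index formula gives, for a connected component,
\[
\op{ind}(C)\;\geq\; R_{\op{int}} + 2\bigl(\lceil d\vartheta\rceil - k - \lfloor d\vartheta\rfloor\bigr) + k - l \;=\; R_{\op{int}} + 2 - k - l,
\]
and this is \emph{not} non-negative from $R_{\op{int}}\geq 0$ alone, since $2-k-l$ can be arbitrarily negative. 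What closes the argument is the Riemann--Hurwitz identity for the connected component, namely $R_{\op{int}} = 2g + k + l - 2$ (which you in fact have, since you computed $\chi(C)=-R_{\op{int}}$ and $\chi(C)=2-2g-k-l$), giving $\op{ind}(C)\geq 2g\geq 0$. That inequality, not $R_{\op{int}}\geq 0$, is the missing step; it also immediately delivers conclusion (a) at equality, since $\op{ind}(C)=0$ forces $g=0$.

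A second, related error appears in your equality analysis: you assert that equality forces $R_{\op{int}}=0$ (and hence $k+l=2$ in your parenthetical). This is true in the hyperbolic case but false in the elliptic case: a genus-zero branched cover of $\R\times e_+$ with a single positive end and several negative ends has $\op{ind}=0$ yet $R_{\op{int}}=k+l-2>0$ (this is exactly the configuration of Lemma \ref{lem:connectortopology}(i.b) in this paper, and is why conclusion (b) is stated only for $\gamma$ hyperbolic). The correct deduction from $\op{ind}(C)=0$ via $\op{ind}(C)\geq 2g$ is only $g=0$; branch points are \emph{not} excluded for elliptic $\gamma$, and the argument should not suggest otherwise.
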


The remainder of this section concerns the proof of the following result.

\begin{lemma}[Lem.\ 7.2.1 \cite{farris}]\label{lem:connectortopology} Let $\conn:\dot{\Sigma} \to\R\times Y$ be a connector, where $\conn=\union_iC_i$ and each $C_i$ is connected. The ECH index $I(\conn)=0$ and the genus of each component $C_i$ is zero.{Further assuming that the Fredholm index $\op{ind}(\conn)\in\{0,1\}$}, then:
\begin{enumerate}[\em (i)]
\item If $\ind(\conn)=0$ then $\ind(C_i)=0$ for all $i$, and either
	\begin{enumerate}[\em a.]
	\item $C_i$ is an unbranched cover of a trivial cylinder.
	\item $C_i$ is branched, covers $\R\times e_+$, and has a single positive end.
	\item $C_i$ is branched, covers $\R\times e_-$, and has a single negative end.
	\end{enumerate}
\item If $\ind(\conn)=1$ then $\conn=C_0\cup\union_iC_i$ where $\ind(C_0)=1, \ind(C_i)=0$, and $C_0$ is a branched cover of $\R\times h_j$ for some $j\in\{1,\dots,2g\}$ with either one positive end and two negative ends, or two positive ends and one negative end.  {Each of the $C_i$ is an unbranched cover of a trivial cylinder.}
\end{enumerate}
\end{lemma}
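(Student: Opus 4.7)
The plan is to analyze each connected component $C_i$ separately, using that each covers some trivial cylinder $\R\times\gamma_i$ of degree $k_i$. The two unconditional claims will follow quickly: the positive and negative orbit sets of $\conn$ coincide as multisets $\{(\gamma_i,k_i)\}$, so $\pi_Y\conn$ is supported on embedded Reeb orbits and represents the zero class in $H_2(Y,\alpha,\alpha)$. Taking $Z=0$ and the constant trivialization $\tau$ gives $c_\tau(Z)=Q_\tau(Z)=0$ while the two $CZ^I_\tau$ terms cancel, yielding $I(\conn)=0$. For the genus claim I would first appeal to Lemma~\ref{lem:ht}, which gives $\ind(C_i)\geq 0$ and genus zero whenever $\ind(C_i)=0$; additivity of the Fredholm index under disjoint union, together with $\ind(\conn)\in\{0,1\}$, then forces at most one component of index $1$, and a direct inspection of~(\ref{eqn:ind}) rules out positive genus on such a component, since positive genus would push $\ind$ up by at least~$2$.

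The bulk of the work is a case analysis of index-$0$ and index-$1$ components by target orbit using Lemmas~\ref{lem:orbitseh} and~\ref{lempre}, which give $CZ_\tau(e_+^k)=+1$, $CZ_\tau(e_-^k)=-1$, and $CZ_\tau(h_j^k)=0$ for the relevant covers. Combined with $c_\tau=0$ on a trivial cylinder and $\chi(C_i)=2-a-b$ for a genus-$0$ cover with $a$ positive and $b$ negative ends, the index formula~(\ref{eqn:ind}) collapses to
\[
\ind(C_i)\;=\;\begin{cases}2a-2,&\gamma_i=e_+,\\2b-2,&\gamma_i=e_-,\\a+b-2,&\gamma_i=h_j.\end{cases}
\]
For $\ind(C_i)=0$: the $e_+$ case forces $a=1$, Riemann--Hurwitz~(\ref{eqn:RH}) gives ramification $b-1$, yielding case~(a) when $b=1$ and case~(b) when $b>1$; the $e_-$ case is symmetric, giving (a) or (c); and the $h_j$ case forces $a+b=2$, with Lemma~\ref{lem:ht}(b) forbidding branching, hence $a=b=1$ and an unbranched trivial cylinder, i.e.\ case~(a). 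For $\ind(C_i)=1$: parity rules out $e_\pm$, so the distinguished $C_0$ must cover some $h_j$ with $a+b=3$, producing the two configurations $(a,b)\in\{(1,2),(2,1)\}$ asserted in~(ii).

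The main obstacle I anticipate is the last clause of~(ii), that the remaining components $C_i$ with $i\neq 0$ are actually unbranched rather than potentially of type (b) or (c). The index analysis above already admits branched index-$0$ covers over $e_\pm$, so eliminating them needs more than Lemma~\ref{lem:ht}: my plan is to use that connectors arise as top or bottom levels of a holomorphic building whose nontrivial neighbouring level is an embedded, Fredholm and ECH index~$1$ curve, and that the ECH partition conventions (Definition~\ref{def:part}) prescribe the positive and negative partitions at the shared elliptic orbits $e_\pm$. Matching these prescribed partitions against the multiplicity partition of $C_i$ forces the latter to be $(1,\ldots,1)$ with $a=b$, collapsing any branched $e_\pm$ component to a disjoint union of unbranched cylinders; since each $C_i$ is connected, this completes the classification.
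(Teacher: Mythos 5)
Your treatment of the two unconditional claims, the index--zero case analysis, and the $C_0$ analysis in~(ii) matches the paper's proof in substance (the paper derives $I(\conn)=0$ from formula~(\ref{eqn:indexformula}) rather than via $Z=0$, but your route is equivalent and a touch more elementary). Your index formula $\ind(C_i)=2a-2, 2b-2, a+b-2$ for genus-zero covers of $e_+, e_-, h_j$ respectively is exactly how the paper organizes the case split, and the genus-zero conclusion for the index-one component via parity is correct.

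You are right that the last clause of~(ii) does not follow from the index analysis: by part~(i), an index-zero component is permitted to be a branched cover of $\R\times e_\pm$. In fact the paper's own proof also stops after analyzing $C_0$ and does not address this clause at all, so you identified a genuine omission. However, your proposed repair via the ECH partition conditions does not close the gap, for two reasons. First, for $e_+$ with small positive rotation angle $\vartheta$ one has $P^-_{e_+}(m)=(m)$ and $P^+_{e_+}(m)=(1,\dots,1)$ (Example~\ref{ex:partitions}), so matching a connector's positive end against the negative ends of the level above and its negative ends against the positive ends of the level below forces a \emph{single} positive end of multiplicity $m$ together with $m$ negative ends of multiplicity one --- exactly the branched configuration of case~(i.b), not an unbranched one. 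The analogous statement holds for $e_-$. So partition matching reproduces the possible branching rather than excluding it. Second, the partition conditions come from the ECH index inequality (Theorem~\ref{thm:indexineq}) whose proof requires intersection positivity; the paper remarks after Theorem~\ref{thm:obg1} that this cannot be invoked in the domain-dependent setting where these connectors arise. Finally, note that your premise --- a connector at the top or bottom of the building, abutting an embedded $\ind=I=1$ curve --- is precisely what Proposition~\ref{notopbottomconn} rules out; the connectors that survive are interior levels. The branched $e_\pm$ components are instead dispensed with elsewhere in the paper by the writhe and adjunction estimates of Proposition~\ref{notopbottomconn}, cases~(i.b) and~(i.c), and the unbranching of top/bottom covers in Lemma~\ref{lem:slide} is derived from condition~(d) of a gluing pair rather than from Lemma~\ref{lem:connectortopology}(ii). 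So the correct response to the gap is not to try to derive the last clause of~(ii), but to note that it is not proved and not needed.
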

\begin{proof}

Let $m_\pm,m_j$ denote the multiplicities of the ends of $\conn$ at the orbits $e_\pm,h_j$: the multiplicities at the positive and negative ends will be the same because $\conn$ covers a union of trivial cylinders. In particular, the difference between the total multiplicities at the positive and negative ends of $\conn$ will be zero. Therefore, from the ECH index formula (\ref{eqn:indexformula}), we have
\[
I(\conn)=\chi(\Sigma)\cdot0-0^2e+2\cdot0\cdot\left(m_++\sum_jm_j+m_-\right)+m_+-m_--m_++m_-=0.
\]
Note that in particular $c_\tau(\conn)=0$. These formulas also hold for each component $C_i$ of $\conn$.

Let $p_\pm(C_i)$ denote the number of positive and negative ends of $C_i$, respectively, and let $g(C_i)$ denote the genus of $C_i$. Recall that the Euler characteristic of a surface with $p$ punctures is $2-2g-p$.

\textbf{Case  (i)} If $\ind(\conn)=0$, then $\ind(C_i)=0$ for all $i$ by Lemma \ref{lem:ht}.

\textbf{Case (i.a)} Assume $u(C_i)$ is a branched cover of $\R\times h_j$. Because $c_\tau(C_i)=0$ and hyperbolic orbits have Conley-Zehnder index zero, we have
\[
0=\ind(C_i)=-\chi(C_i).
\]
The Euler characteristic of a cylinder is 0, therefore the Riemann-Hurwitz Theorem (\ref{eqn:RH}) gives us
\[
0=\chi(C_i)=-\sum_{p\in\dot\Sigma}(e(p)-1).
\]
Because $e(p)\geq1$ for all $p$, each term $e(p)-1\geq0$, so we must have $e(p)=0$ for all $p$. Therefore,  $C_i$ is unbranched. Moreover,
\[
0=2g(C_i)-2+p_+(C_i)+p_-(C_i)\Leftrightarrow 2=2g(C_i)+p_+(C_i)+p_-(C_i).
\]
Because $C_i$ is a cover of a cylinder, $p_\pm(C_i)\geq1$. Therefore $g(C_i)=0$, both $p_\pm(C_i)=1$, and $C_i$ unbranched cover of a cylinder.

\textbf{Case  (i.b)} Because the Conley-Zehnder index of a cover of $e_+$ is always 1, we have
\begin{align}
0&=\ind(C_i)\nonumber
\\&=2g(C_i)-2+p_+(C_i)+p_-(C_i)+p_+(C_i)-p_-(C_i),\label{eqn:coveringe+}
\end{align}
hence
\[
1=g(C_i)+p_+(C_i).
\]
Therefore, because $p_+(C_i)\geq1$, we have $g(C_i)=0$ and $p_+(C_i)=1$.

\textbf{Case (i.c)} By the same argument as for i.(b), using the fact that the Conley-Zehnder index of a cover of $e_-$ is always $-1$, we get $g(C_i)=0$ and $p_-(C_i)=1$.

\textbf{Case (ii)}   {If $\ind(\conn)=1$ then $\ind(C_i)\leq1$ for all $i$. Because $\ind(C_i)\geq0$ for all $i$ by Lemma \ref{lem:ht}, there must be one component $C_0$ with $\ind(C_0)=1$ and all other $C_i$ have $\ind(C_i)=0$.}

If $C_0$ were a branched cover of $\R\times e_+$, then setting the analogue of the right hand side of (\ref{eqn:coveringe+}) equal to $\ind(C_0)$ would imply that
\[
1=2(g(C_0)-1+p_+(C_0)),
\]
a contradiction. Similarly $C_0$ being a branched cover of $\R\times e_-$ would lead to a contradiction.

Therefore $C_0$ must be a branched cover of $\R\times h_j$. In this case, because hyperbolic orbits have Conley-Zehnder index zero, we have
\[
1=2g(C_0)-2+p_+(C_0)+p_-(C_0)\Leftrightarrow 3=2g(C_0)+p_+(C_0)+p_-(C_0).
\]
Because $p_\pm(C_0)\geq1$, this implies $1\geq2g(C_0)$, requiring $g(C_0)=0$. Therefore either $(p_+(C_0),p_-(C_0))=(1,2)$ or $(p_+(C_0),p_-(C_0))=(2,1)$.
\end{proof}

\subsection{Classification of connectors arising in buildings}

In this section we use intersection theory and higher asymptotics of holomorphic curves to rule out connectors from appearing at the top-most and bottom-most level of a building arising as a limit of a (sub)sequence of holomorphic curves defined in terms of a one parameter family of domain dependent almost complex structures, cf. Proposition \ref{notopbottomconn}. This result will be key in \S \ref{handleslides}.  We begin by recalling some needed results about the asymptotics of holomorphic curves from \cite[\S 3.1]{dc}.

Let $\gamma$ be an embedded Reeb orbit, and let $N$ be a tubular neighborhood of $\gamma$. We can identify $N$ with a disk bundle in the normal bundle to $\gamma$, and also with $\xi|_\gamma$. Let $\zeta$ be a braid in $N$, i.e.\ a link in $N$ such that that the tubular neighborhood projection restricts to a submersion $\zeta\to\gamma$. Given a trivialization $\tau$ of $\xi|_\gamma$, one can then define the {\em writhe\/} $w_\tau(\zeta)\in\Z$. To define this one uses the trivialization $\tau$ to identify $N$ with $S^1\times D^2$, then projects $\zeta$ to an annulus and counts crossings of the projection with (nonstandard) signs; see \S \ref{s:writhe}, \cite[\S2.6]{Hrevisit}, or \cite[\S3.3]{Hu2} for details. 

Now let $C$ be a $J$-holomorphic curve in $\R\times Y$. Suppose that $C$ has a positive end at $\gamma^d$ which is not part of a multiply covered component. Results of Siefring \cite[Cor.\ 2.5 and 2.6]{s1} show that if $s$ is sufficiently large, then the intersection of this end of $C$ with $\{s\}\times N\subset\{s\}\times Y$ is a braid $\zeta$, whose isotopy class is independent of $s$. We will need bounds on the writhe $w_\tau(\zeta)$, which are provided by the following lemma.

\begin{lemma}[Lemma 3.2 \cite{dc}]\label{lem:positivewrithe}
Let $\gamma$ be an embedded Reeb orbit, let $C$ be a $J$-holomorphic curve in $\R\times Y$ with a positive end at $\gamma^d$ which is not part of a trivial cylinder or a multiply covered component, and let $\zeta$ denote the intersection of this end with $\{s\}\times Y$. If $s \gg 0$, then the following hold:
\begin{description}
\item{\em (a)} $\zeta$ is the graph in $N$ of a nonvanishing section of $\xi|_{\gamma^d}$. Thus, using the trivialization $\tau$ to write this section as a map $\gamma^d\to\C\setminus\{0\}$, it has a well-defined winding number around $0$, which we denote by $\op{wind}_\tau(\zeta)$.
\item{\em (b)} $\op{wind}_\tau(\zeta) \le \floor{\op{CZ}_\tau(\gamma^d)/2}$.
\item{\em (c)} If $J$ is generic, $\op{CZ}_\tau(\gamma^d)$ is odd, and $\op{ind}(u)\le 2$, then equality holds in (b).
\item{\em (d)} $w_\tau(\zeta) \le (d-1)\op{wind}_\tau(\zeta)$.
\end{description}
\end{lemma}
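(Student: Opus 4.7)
The plan is to invoke Siefring's refined asymptotic formula \cite{s1} for the positive end of $C$ at $\gamma^d$. Since this end is not part of a trivial cylinder or a multiply covered component, there are holomorphic cylindrical coordinates $(s,t) \in [s_0,\infty) \times \R/d\Z$ on the end, and a normal bundle neighborhood $N \cong S^1 \times D^2$ of $\gamma^d$ trivialized by $\tau$, in which the end is described by a section
\[
h(s,t) = e^{\mu s}\bigl(e_\mu(t) + r(s,t)\bigr),
\]
where $\mu < 0$ is an eigenvalue of the asymptotic operator $A_{\gamma^d}$ acting on sections of $(\gamma^d)^*\xi$, $e_\mu$ is a nontrivial eigenfunction with eigenvalue $\mu$, and $r(s,t) \to 0$ exponentially as $s \to \infty$. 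Because eigenfunctions of $A_{\gamma^d}$ are nowhere vanishing (a standard Sturm-Liouville type property of the asymptotic operator, cf.\ \cite[\S 3.2]{wendl-sft}), the section $h(s,\cdot)$ is nonvanishing for all sufficiently large $s$, which gives (a), with $\op{wind}_\tau(\zeta) = \op{wind}_\tau(e_\mu)$.

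For (b), I would invoke the Hofer-Wysocki-Zehnder characterization of the spectrum of $A_{\gamma^d}$: the eigenvalues of $A_{\gamma^d}$ accumulate only at $\pm\infty$, each eigenvalue has multiplicity at most two, and the winding numbers of eigenfunctions (with respect to $\tau$) are a nondecreasing function of the eigenvalue, taking each integer exactly twice; moreover the Conley-Zehnder index is characterized by $\op{CZ}_\tau(\gamma^d) = 2\alpha + p$, where $\alpha = \max\{\op{wind}_\tau(e_\mu) : \mu < 0\}$ and $p \in \{0,1\}$ counts whether an eigenfunction with winding $\alpha$ has eigenvalue zero. In particular $\op{wind}_\tau(e_\mu) \leq \alpha = \lfloor \op{CZ}_\tau(\gamma^d)/2 \rfloor$ for any negative eigenvalue $\mu$, giving (b).

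For (c), the strategy is to show that under the genericity and index assumptions, the leading eigenvalue $\mu$ actually attains the maximum winding number $\lfloor \op{CZ}_\tau(\gamma^d)/2 \rfloor$. If $\op{wind}_\tau(e_\mu) < \alpha$, then there is a negative eigenvalue $\mu'$ with $\mu < \mu' < 0$ whose eigenfunction has larger winding number. The condition that the asymptotic leading term of $C$ has suboptimal winding imposes constraints that cut out a positive codimension stratum of the moduli space, and one shows (compare \cite[\S 3]{dc}, following the argument behind the ECH index inequality and Wendl's automatic transversality results \cite{Wtrans}) that for generic $J$ this stratum cannot be realized by curves with $\op{ind}(C) \leq 2$ when $\op{CZ}$ is odd. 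I expect this to be the main obstacle: carefully tracking the codimension bookkeeping requires Siefring's intersection-theoretic interpretation of the asymptotic defect and a linearized analysis.

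Finally for (d), I would argue combinatorially. The braid $\zeta$ is, up to exponentially small error, the graph of the leading eigenfunction $e_\mu$ lifted to the $d$-fold cover; equivalently, $\zeta$ sits inside a small neighborhood of the embedded loop $t \mapsto (t, e_\mu(t))$ in $S^1 \times (\C \setminus 0)$, which winds with winding number $\op{wind}_\tau(\zeta)$ about the zero section. Projecting to an annulus and counting crossings, one observes that the $d$ strands can be parametrized so that each pair of strands differs by a shift in $t$ of order $1/d$, and each such pair contributes at most $\op{wind}_\tau(\zeta)$ crossings of a single sign to the signed count. Summing over the $\binom{d}{2}$ pairs and using that the writhe is half the signed crossing count, one obtains the inequality $w_\tau(\zeta) \leq (d-1)\op{wind}_\tau(\zeta)$; this is the content of \cite[Lem.\ 5.5]{Hrevisit}, which I would cite directly.
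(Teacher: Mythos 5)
The paper itself does not prove this lemma; it cites \cite{dc}, whose proof in turn rests on the Hofer--Wysocki--Zehnder/Siefring asymptotic analysis and Hutchings' writhe estimates. Your reconstruction follows exactly this route, so at the level of strategy it is correct and complete: (a) and (b) from the asymptotic expansion and the spectral characterization of $\CZ_\tau$, (c) from a transversality/codimension argument, and (d) by citing the braid estimate.

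Two of your side remarks are wrong, however, and are worth correcting even though they do not undermine the argument since you ultimately cite the relevant lemmas. First, in (b) you write that $p\in\{0,1\}$ ``counts whether an eigenfunction with winding $\alpha$ has eigenvalue zero.'' Since $\gamma^d$ is nondegenerate, zero is never an eigenvalue of the asymptotic operator; rather $p = \alpha_+(\gamma^d) - \alpha_-(\gamma^d)\in\{0,1\}$, where $\alpha_-$ (resp.\ $\alpha_+$) is the maximal (resp.\ minimal) winding number of an eigenfunction with negative (resp.\ positive) eigenvalue, and $p$ encodes the parity of $\CZ_\tau(\gamma^d)$. Second, your combinatorial heuristic for (d) does not produce the stated bound. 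The writhe is the \emph{full} signed count of crossings, not half of it (halving the signed inter-strand crossings computes the linking number of a two-component sublink). If each of the $\binom{d}{2}$ strand pairs contributed up to $\op{wind}_\tau(\zeta)$ crossings, summing would give $\binom{d}{2}\op{wind}_\tau(\zeta) = \tfrac{d(d-1)}{2}\op{wind}_\tau(\zeta)$, which exceeds $(d-1)\op{wind}_\tau(\zeta)$ for $d>2$. The sharp case is a $(d,m)$-torus braid, whose writhe is $m(d-1)$, so the correct per-pair average is roughly $2\op{wind}_\tau(\zeta)/d$, and the actual proof in the literature tracks the angular coordinate of the leading eigenfunction rather than bounding each pair by $\op{wind}_\tau(\zeta)$. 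Also, the strands are offset by integer shifts in the $\R/d\Z$-parametrization, not by shifts of order $1/d$. Since you defer to the cited lemma for (d), these are flaws in the motivation rather than gaps in the proof.
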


Symmetrically to Lemma~\ref{lem:positivewrithe}, we also have the following: 

\begin{lemma}
\label{lem:negativewrithe}
Let $\gamma$ be an embedded Reeb orbit, let $C$ be a $J$-holomorphic curve in $\R\times Y$ with a negative end at $\gamma^d$ which is not part of a trivial cylinder or multiply covered component, and let $\zeta$ denote the intersection of this end with $\{s\}\times Y$. If $s\ll0$, then the following hold:
\begin{description}
\item{\em (a)} $\zeta$ is the graph of a nonvanishing section of $\xi|_{\gamma^d}$, and thus has a well-defined winding number $\op{wind}_\tau(\zeta)$.
\item{\em (b)} $\op{wind}_\tau(\zeta) \ge \ceil{\op{CZ}_\tau(\gamma^d)/2}$.
\item{\em (c)} If $J$ is generic, $\op{CZ}_\tau(\gamma^d)$ is odd, and $\op{ind}(u)\le 2$, then equality holds in (b).
\item{\em (d)} $w_\tau(\zeta) \ge (d-1)\op{wind}_\tau(\zeta)$.
\end{description}
\end{lemma}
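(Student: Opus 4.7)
The plan is to prove Lemma \ref{lem:negativewrithe} by reducing it to Lemma \ref{lem:positivewrithe} via the reflection $\rho: \R \times Y \to \R \times Y$, $(s,y) \mapsto (-s,y)$. Given a $J$-holomorphic curve $C$ with a negative end at $\gamma^d$, the composition $\rho \circ C$ is holomorphic with respect to a reflected almost complex structure $\widetilde J$ that is still compatible with a contact structure on $Y$ (one adjusts signs so that $\widetilde J$-compatibility holds), and now has a \emph{positive} end at $\gamma^d$. Under this reflection, Conley--Zehnder indices negate, asymptotic winding numbers negate, writhes negate, and Fredholm indices are preserved. The main bookkeeping task is to verify this dictionary of sign changes, after which each of (a)--(d) follows from the corresponding part of Lemma \ref{lem:positivewrithe}.

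Granting the dictionary, part (a) follows because Siefring's asymptotic analysis \cite{s1} produces a graphical asymptotic form for the braid $\zeta$ at either type of puncture, the only difference being that the end behavior at a negative puncture is governed by a positive eigenvalue of the asymptotic operator $A_{\gamma^d}$ rather than a negative one. Part (b) follows from the positive-end inequality $\op{wind}_\tau(\zeta) \le \lfloor \op{CZ}_\tau(\gamma^d)/2 \rfloor$ together with the identity $-\lfloor -x \rfloor = \lceil x \rceil$ applied to the negated Conley--Zehnder index and winding number of the reflected curve. Part (c) uses genericity of $J$ together with the Fredholm index bound $\op{ind}(u) \le 2$ to force $\zeta$ to be asymptotic to the extremal eigenvector (in the negative-end case, the eigenvector corresponding to the \emph{smallest} positive eigenvalue of $A_{\gamma^d}$), yielding equality when $\op{CZ}_\tau(\gamma^d)$ is odd; genericity transfers across the reflection because $J \mapsto \widetilde J$ is a diffeomorphism of the relevant spaces of compatible almost complex structures preserving transversality.

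Part (d) is essentially topological: once $\zeta$ is known to lie $C^0$-close to a $d$-fold cover of the graph of an asymptotic eigensection with winding number $\op{wind}_\tau(\zeta)$, the writhe is controlled by $(d-1)\op{wind}_\tau(\zeta)$, with the \emph{direction} of the resulting inequality determined by the orientation of the $\R$-factor used to count crossings. Reversing this orientation (the effect of moving to $s \to -\infty$ rather than $s \to +\infty$) flips the sign of each crossing and hence the inequality, yielding $w_\tau(\zeta) \ge (d-1)\op{wind}_\tau(\zeta)$. The main obstacle in the argument is the careful verification of sign conventions---particularly for writhe, which uses the nonstandard convention of \S\ref{s:writhe}---so that the reflection genuinely swaps $\le$ for $\ge$ consistently across both (b) and (d); once these conventions are nailed down, the proof is essentially a diagram-chase transporting Lemma \ref{lem:positivewrithe} across $\rho$.
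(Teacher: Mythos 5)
You reach the right conclusion, and the paper itself gives no argument for Lemma~\ref{lem:negativewrithe}---it is simply announced as the symmetric partner of Lemma~\ref{lem:positivewrithe}, with the real content deferred to \cite{dc}, \cite{Hindex}, and \cite{s1}, where both the positive- and negative-end statements are proved directly from Siefring's asymptotic expansion. But the reflection you propose does not literally deliver the reduction you want, and the obstruction is worth naming. Set $\rho(s,y)=(-s,y)$. Then $\rho\circ C$ is holomorphic for $\widetilde J:=\rho_*J\rho_*^{-1}$, and a direct computation gives $\widetilde J(\partial_s)=-R$ while $\widetilde J|_\xi=J|_\xi$. So $\widetilde J$ violates the defining condition $J(\partial_s)=R$ of a $\lambda$-compatible structure; and if you try to repair this by passing to the contact form $-\lambda$ (whose Reeb field is $-R$), then $\widetilde J|_\xi=J|_\xi$ becomes \emph{anti}-compatible with $d(-\lambda)=-d\lambda$ on the contact planes. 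Conjugating the domain as well merely trades one mismatch for the other. The root cause is that $\rho^*d(e^s\lambda)=d(e^{-s}\lambda)$, whose square $-2e^{-2s}\,ds\wedge\lambda\wedge d\lambda$ has the opposite sign to $(d(e^s\lambda))^2$; so $\widetilde J$ is compatible with a symplectic form inducing the reversed orientation of $\R\times Y$, a setting in which Lemma~\ref{lem:positivewrithe} as stated does not apply. Your ``dictionary of sign changes'' is exactly what one would need to prove to fix this, and verifying it amounts to rerunning the asymptotics at negative ends, so the reduction is essentially circular once made precise.

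The argument the references actually use, and the one you should reproduce, is direct. Siefring's formula says that for $s\ll 0$ the negative end at $\gamma^d$ is $C^0$-close to the graph of a sum of eigenfunctions of the asymptotic operator $A_{\gamma^d}$ with \emph{positive} eigenvalue; at a positive end one instead sees negative eigenvalues. The winding number of an eigenfunction is nondecreasing in the eigenvalue, and the winding-number characterization of the Conley--Zehnder index identifies the eigenfunction at the smallest positive eigenvalue as having winding $\ceil{\op{CZ}_\tau(\gamma^d)/2}$, versus $\floor{\op{CZ}_\tau(\gamma^d)/2}$ at the largest negative eigenvalue. This is precisely why the floor in Lemma~\ref{lem:positivewrithe}(b) becomes a ceiling in (b) here and the inequality reverses: the dominant asymptotic eigenfunctions live on the other side of the spectrum, not on the other side of a reflection. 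Parts (a), (c), and (d) then follow from the same genericity, extremality, and braid-approximation arguments as in the positive-end case, verbatim with ``negative eigenvalue'' replaced by ``positive eigenvalue'' and the resulting reversal of each inequality.
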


\begin{remark}
\label{rem:improved}
Lemma~\ref{lem:positivewrithe}(b),(d) imply that
\[
w_\tau(\zeta) \le (d-1)\floor{\CZ_\tau(\gamma^d)/2}.
\]
{In fact one can improve this to}
\begin{equation}
\label{eqn:improved}
w_\tau(\zeta) \le (d-1)\floor{\CZ_\tau(\gamma^d)/2} - \op{gcd}\left(d,\floor{\CZ_\tau(\gamma^d)/2}\right)+1,
\end{equation}
see \cite{s2}.   Recent work of Cristofaro-Gardiner - Hutchings - Zhang obtains equality in \eqref{eqn:improved} in the following situation.
\end{remark}

\begin{lemma}[{\cite[Cor. 5.3]{CGHZ}}]\label{lem:writheCZequality} Let $\gamma$ be an embedded Reeb orbit, let $C$ be a $J$-holomorphic curve in $\R\times Y$ with only one positive end at $\gamma^d$, and let $\zeta$ denote the intersection of this end with $\{s\}\times Y$ for $s \gg 0$. Suppose $CZ_\tau(\gamma^d)$ is odd, the Fredholm index of $u$ is at most 2, and $J$ is generic. Then  $\zeta$ is isotopic to the braid given by a regular end and
\[
w_\tau(\zeta)=(d-1)\floor{\op{CZ}(\gamma^d)/2}-\gcd\left(d,\floor{\op{CZ}(\gamma^d)/2}\right)+1.
\]
\end{lemma}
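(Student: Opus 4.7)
The starting point for my plan is that Lemma~\ref{lem:positivewrithe}(b)--(c) already forces equality in the winding bound: under the hypotheses ($\CZ_\tau(\gamma^d)$ odd, $\op{ind}(u)\leq 2$, $J$ generic), the braid $\zeta$ is the graph of a nonvanishing section of $\xi|_{\gamma^d}$ and satisfies $\op{wind}_\tau(\zeta)=\lfloor \CZ_\tau(\gamma^d)/2\rfloor$. This reduces the problem to the following two substeps: (a) upgrade the winding equality to a statement about the isotopy class of $\zeta$, and (b) compute the writhe of the resulting model braid.

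For (a), I would invoke the higher asymptotic expansion of Siefring (as refined in \cite{CGHZ}). In a neighborhood of $\gamma^d$, using cylindrical coordinates and the trivialization $\tau$, the end of $C$ can be written as $\exp_{\conn_{\gamma^d}}(h(s,t))$ where $h$ admits the asymptotic form
\[
h(s,t) = e^{\mu s}\bigl(e_\mu(t)+r(s,t)\bigr),
\]
with $\mu$ a negative eigenvalue of the asymptotic operator $A_{\gamma^d}$, $e_\mu$ an eigensection, and $r(s,t)\to 0$. The winding number of $e_\mu$ equals $\op{wind}_\tau(\zeta)=\lfloor \CZ_\tau(\gamma^d)/2\rfloor$. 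The CGHZ refinement shows that under our index/genericity hypotheses $e_\mu$ does not vanish and that the correction $r$ is of strictly smaller exponential order, so that for $s\gg 0$ the braid $\zeta$ is isotopic through embedded braids to the graph of $e_\mu$; this is the precise meaning of ``regular end'' in the statement.

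For (b), once $\zeta$ has been identified with the graph of a nonvanishing eigensection $e_\mu$ of winding $w:=\lfloor \CZ_\tau(\gamma^d)/2\rfloor$ over the $d$-fold cover $\gamma^d\to \gamma$, the braid $\zeta$ is ambient isotopic to the standard $(d,w)$-torus braid in $S^1\times D^2$. The writhe of a $(d,w)$-torus braid is a classical combinatorial computation: the projection to an annulus has $(d-1)w - \gcd(d,w)+1$ crossings, all of the same sign under our writhe convention. This gives exactly
\[
w_\tau(\zeta) = (d-1)\lfloor \CZ_\tau(\gamma^d)/2\rfloor - \gcd\bigl(d,\lfloor \CZ_\tau(\gamma^d)/2\rfloor\bigr)+1,
\]
matching the refined Siefring bound of Remark~\ref{rem:improved} with equality.

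The main obstacle is substep (a): going from winding to isotopy class requires genuinely new input beyond Siefring's original work, because a priori a nontrivial braid could arise from lower order terms in the asymptotic expansion if the leading eigensection were allowed to vanish at points of $S^1$. The content of \cite[Cor.~5.3]{CGHZ} is precisely that genericity together with $\op{ind}\leq 2$ and odd $\CZ$ rules this out. Once that is in hand, step (b) is a standard torus-braid calculation and requires no further analysis of $C$.
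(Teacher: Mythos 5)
The paper does not prove this lemma: it is cited verbatim as \cite[Cor.~5.3]{CGHZ}, and the surrounding text explicitly defers to CGHZ's Definition~1.3 (``regular end'') and Theorem~1.4 (genericity of regular ends). So you are attempting to reconstruct an argument the paper itself treats as a black box.

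Your substep (a) captures the right spirit of the CGHZ refinement, but substep (b) has a genuine error that lands precisely on the interesting case. If $e_\mu$ is a nonvanishing eigensection over $\gamma^d$ with winding $w$ and $g:=\gcd(d,w)>1$, then $e_\mu$ necessarily factors through the $(d/g)$-fold cover of $\gamma$, so its ``graph'' in $N\cong S^1\times D^2$ is $g$ strands lying exactly on top of one another --- it is not an embedded braid at all. The leading term of the asymptotic expansion therefore does not determine $\zeta$ up to isotopy; one must examine subleading eigenfunctions to split the $g$ strands apart, and the resulting braid is a nested family of smaller torus braids rather than a single $(d,w)$-torus braid. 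This is the entire content of CGHZ's definition of a regular end, which you cannot avoid. Relatedly, your combinatorial claim is off: the standard $(d,w)$-torus braid (the closure of $(\sigma_1\cdots\sigma_{d-1})^w\in B_d$) has $(d-1)w$ crossings all of one sign, hence writhe $(d-1)w$, which agrees with the asserted formula $(d-1)w-\gcd(d,w)+1$ only when $\gcd(d,w)=1$. Your model braid would therefore give the wrong answer whenever $g>1$. When $g=1$ your argument collapses to the already-known Hutchings bound \cite[Lem.~6.7]{Hindex} and no CGHZ input is needed; when $g>1$ the argument as written both misidentifies the braid and miscomputes the writhe.
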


The definition of a regular end is lengthy, see \cite[Def. 1.3]{CGHZ}.  It ensures that the topology of the braid near an embedded Reeb orbit is completely determined by the total multiplicity of the orbit and the corresponding partition numbers.  However, \cite[Thm. 1.4]{CGHZ}, guarantees that for generic $J$, every generic curve has regular positive and negative ends.  Symmetrically to Lemma \ref{lem:writheCZequality} we have the following result for a negative end.

\begin{lemma}\label{lem:writheCZequalityneg} 
Let $\gamma$ be an embedded Reeb orbit, let $C$ be a $J$-holomorphic curve in $\R\times Y$ with only one negative end at $\gamma^d$, and let $\zeta$ denote the intersection of this end with $\{s\}\times Y$ for $s \ll 0$. Suppose $CZ_\tau(\gamma^d)$ is odd, the index of $u$ is at most 2, and $J$ is generic. Then  $\zeta$ is isotopic to the braid given by a regular end and
\[
w_\tau(\zeta)=(d-1)\ceil{\op{CZ}(\gamma^d)/2}+\gcd\left(d,\ceil{\op{CZ}(\gamma^d)/2}\right)-1.
\]
\end{lemma}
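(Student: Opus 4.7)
The plan is to deduce Lemma \ref{lem:writheCZequalityneg} from Lemma \ref{lem:writheCZequality} via the standard reflection trick in the symplectization. Let $\sigma: \R \times Y \to \R \times Y$ be the diffeomorphism $(s,y) \mapsto (-s,y)$. The composition $\widetilde{C} := \sigma \circ C$ is pseudoholomorphic with respect to the almost complex structure $\widetilde{J} := \sigma_*J$, which is compatible with the contact form $-\lambda$ on $Y$ (whose Reeb vector field is $-R$). Under this correspondence, the unique negative end of $C$ at $\gamma^d$ is transformed into a unique positive end of $\widetilde{C}$ at $\widetilde{\gamma}^d$, where $\widetilde{\gamma}$ denotes $\gamma$ with reversed orientation, viewed as an embedded Reeb orbit of $-\lambda$. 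The Fredholm index is preserved, and $\widetilde{J}$ is generic when $J$ is, so Lemma \ref{lem:writheCZequality} applies to $\widetilde{C}$.

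Three transformation laws are then needed. First, the Conley-Zehnder index satisfies $CZ_\tau(\widetilde{\gamma}^d) = -CZ_\tau(\gamma^d)$, since the linearized return map along $\widetilde{\gamma}$ is the inverse of that along $\gamma$. Second, under the ECH sign convention of \S\ref{s:writhe}, which pairs counterclockwise $D^2$-rotations with the Reeb direction on $S^1$, reversing the Reeb direction produces
\[
w_\tau(\widetilde{\zeta}) = -w_\tau(\zeta),
\]
where $\widetilde{\zeta}$ is the image braid under $\sigma$ in the reversed symplectization. Third, the notion of regular end from \cite[Def. 1.3]{CGHZ} is characterized by the topology of the braid together with leading asymptotic data, both of which transform equivariantly under $\sigma$, so $\widetilde{\zeta}$ is isotopic to the braid given by a regular end precisely when $\zeta$ is.

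Applying Lemma \ref{lem:writheCZequality} to $\widetilde{C}$ at its unique positive end at $\widetilde{\gamma}^d$ yields
\[
w_\tau(\widetilde{\zeta}) = (d-1)\floor{CZ_\tau(\widetilde{\gamma}^d)/2} - \gcd\!\left(d,\floor{CZ_\tau(\widetilde{\gamma}^d)/2}\right) + 1.
\]
Substituting $CZ_\tau(\widetilde{\gamma}^d) = -CZ_\tau(\gamma^d)$, using the identity $\floor{-x/2} = -\ceil{x/2}$ (valid because $CZ_\tau(\gamma^d)$ is odd, so $CZ_\tau(\gamma^d)/2$ is not an integer), and using $\gcd(d,-k) = \gcd(d,k)$, this becomes
\[
w_\tau(\widetilde{\zeta}) = -(d-1)\ceil{CZ_\tau(\gamma^d)/2} - \gcd\!\left(d,\ceil{CZ_\tau(\gamma^d)/2}\right) + 1.
\]
Finally, applying $w_\tau(\zeta) = -w_\tau(\widetilde{\zeta})$ gives the desired formula.

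The main technical obstacle will be rigorously verifying the sign change $w_\tau(\widetilde{\zeta}) = -w_\tau(\zeta)$ from the reflection, which requires carefully tracking the sign conventions in \S\ref{s:writhe} for oriented crossings under simultaneous reversal of the longitudinal orientation of the braid. An alternative route, if these conventions prove delicate, is to reprove \cite[Cor.\ 5.3]{CGHZ} directly for negative ends by running the asymptotic analysis of CGHZ with $s \to -\infty$ rather than $s \to +\infty$; their proof is symmetric with respect to positive and negative ends, and the inequality in Lemma \ref{lem:negativewrithe}(d) becomes equality under the same index hypotheses, producing the formula with $\ceil{\cdot}$ in place of $\floor{\cdot}$ and the opposite sign on the $\gcd$ correction.
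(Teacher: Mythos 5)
The paper offers no proof of this lemma at all; it is asserted with the single phrase ``Symmetrically to Lemma \ref{lem:writheCZequality},'' so in spirit your reflection argument is exactly what is being invoked, and your algebraic bookkeeping (the identity $\floor{-x}=-\ceil{x}$ for $x\notin\Z$, the sign of the gcd term, the sign flip on the writhe) does reproduce the stated formula once the transformation laws are granted.

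However, there is a genuine gap in the geometric form of the reflection as you have set it up. You claim $\widetilde J := \sigma_*J$ is compatible with $-\lambda$. It is not. With $\sigma(s,y)=(-s,y)$ one has $(\sigma_*J)(\partial_s)=-R$, which is the Reeb field of $-\lambda$, so the Liouville direction is fine; but since $\sigma$ is the identity on each $\{s\}\times Y$, one also has $(\sigma_*J)|_\xi=J|_\xi$, whereas $d(-\lambda)|_\xi=-d\lambda|_\xi$. Hence $d(-\lambda)(v,\widetilde Jv)=-d\lambda(v,Jv)<0$ for $0\neq v\in\xi$, and the positivity required by $\lambda$-compatibility (and used throughout the index inequality and asymptotic machinery, e.g.\ intersection positivity) fails with this sign. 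A genuine $(-\lambda)$-compatible structure must have $J'|_\xi=-J|_\xi$. To obtain a curve that is holomorphic for such a $J'$ out of $C$ you must also precompose with an antiholomorphic involution of the domain (or, equivalently, reverse the complex structure $j\mapsto -j$), and then the end in question is no longer flipped in the way you want; chasing all of these reflections through $\sigma$, the domain conjugation, and the negation of $J$ simultaneously is exactly the delicate bookkeeping you need to nail down, and as written your argument cannot simply cite Lemma \ref{lem:writheCZequality} as a black box. Your second suggested route -- rerunning the asymptotic analysis of \cite{CGHZ} for $s\to-\infty$, where the relevant eigenvalues of the asymptotic operator are the positive ones, so that $\floor{\cdot}$ becomes $\ceil{\cdot}$ and the gcd correction changes sign -- avoids this issue entirely, because the asymptotic operator and its spectrum do not care about the sign of the symplectic form, and I would recommend it as the reliable proof; it is also the argument implicitly behind the paper's ``symmetrically,'' as the corresponding positive/negative statements in \cite{Hindex} and \cite{dc} (your Lemmas \ref{lem:positivewrithe} and \ref{lem:negativewrithe}) are proved this way.
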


The proof of the main classification result, Proposition \ref{notopbottomconn}, requires the following direct computation of asymptotic writhes and linking numbers, which uses the preceding lemmas.
\begin{lemma}\label{lem:writhecomputations} Let $J$ be generic.  Let $\zeta_i,\zeta_j$ be connected braids about an embedded Reeb orbit $\gamma$ with multiplicities $d_i, d_j$. If both $\zeta_i, \zeta_j$ arise from either the positive or the negative ends of a curve which covers $\gamma$, then
\begin{enumerate}[\em (i)]
\item Assuming $\gamma=e_+$:
	\begin{enumerate}[\em a.]
	\item There is only one positive end $\zeta_+$, and $w_\tau(\zeta_+)=1-d_+$.
	\item If the $\zeta_i,\zeta_j$ are negative ends, then $w_\tau(\zeta_i)=d_i-1$, $w_\tau(\zeta_j)=d_j-1$, and 
	\[\ell_\tau(\zeta_i,\zeta_j)=\min(d_i,d_j).\]
	\end{enumerate}
\item Assuming $\gamma=e_-$:
	\begin{enumerate} [\em a.]
	\item If the $\zeta_i,\zeta_j$ are positive ends, then $w_\tau(\zeta_i)=1-d_i$, $w_\tau(\zeta_j)=1-d_j$, and \[\ell_\tau(\zeta_i,\zeta_j)=-\min(d_i,d_j).\]
	\item There is only one negative end $\zeta_-$, and $w_\tau(\zeta_-)=d_--1$.
	\end{enumerate}
\end{enumerate}
\end{lemma}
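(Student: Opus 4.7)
The plan is to combine three ingredients: the classification of connector components from Lemma \ref{lem:connectortopology}, the asymptotic writhe equalities of Cristofaro-Gardiner--Hutchings--Zhang (Lemmas \ref{lem:writheCZequality}--\ref{lem:writheCZequalityneg}), and a direct linking computation for nested torus braids around $e_\pm$.

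For the single-end statements (i.a) and (ii.b), I would first invoke Lemma \ref{lem:connectortopology}(i.b)--(i.c), which guarantees that a connector component branching over $\R\times e_\pm$ has only one end at $e_\pm$ of the corresponding sign. Applying Lemma \ref{lem:writheCZequality} to the single positive end at $e_+^{d_+}$ with $CZ_\tau(e_+^{d_+})=1$, so that $\lfloor 1/2\rfloor=0$ and $\gcd(d_+,0)=d_+$, yields
\[
w_\tau(\zeta_+)=(d_+-1)\cdot 0-d_++1=1-d_+.
\]
The symmetric calculation via Lemma \ref{lem:writheCZequalityneg} for (ii.b) with $CZ_\tau(e_-^{d_-})=-1$, $\lceil -1/2\rceil=0$, and $\gcd(d_-,0)=d_-$ gives $w_\tau(\zeta_-)=d_--1$.

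For the multi-end statements (i.b) and (ii.a), Lemmas \ref{lem:writheCZequality}--\ref{lem:writheCZequalityneg} do not apply directly since they require a single end at the orbit. The plan instead is to use Lemma \ref{lem:negativewrithe}(c) and Lemma \ref{lem:positivewrithe}(c) to pin down the winding number of each individual end under the genericity of $J$: since $CZ_\tau(e_\pm)=\pm 1$ is odd and the relevant connector components have Fredholm index at most $1$, each negative end at $e_+$ satisfies $\op{wind}_\tau(\zeta_i)=\lceil 1/2\rceil=1$, and each positive end at $e_-$ satisfies $\op{wind}_\tau(\zeta_i)=\lfloor -1/2\rfloor=-1$. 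The regular-end classification of \cite[Thm.\ 1.4]{CGHZ} then identifies each braid $\zeta_i$, up to isotopy, with the standard $(d_i,\pm 1)$-cable torus braid in a tubular neighborhood of $e_\pm$, whose writhe computes to $\pm(d_i-1)$, giving the writhe formulas.

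The linking formulas $\ell_\tau(\zeta_i,\zeta_j)=\pm\min(d_i,d_j)$ will then follow because distinct ends at $e_\pm$ correspond to distinct asymptotic eigenfunctions and therefore lie on disjoint nested tori around the orbit; a direct signed crossing count in the knot diagram of \S\ref{s:writhe} for two nested $(d_i,w)$- and $(d_j,w)$-cable torus braids with common meridional winding $w=\pm 1$ produces $w\cdot\min(d_i,d_j)$. The primary technical obstacle will be rigorously identifying the braid isotopy class in the multi-end case, since each such connector component has multiple ends at the same orbit; this is precisely where the regular-end characterization of \cite[Thm.\ 1.4]{CGHZ} together with the genericity of $J$ become essential.
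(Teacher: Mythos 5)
Your treatment of cases (i.a) and (ii.b) is exactly the paper's: invoke Lemma \ref{lem:connectortopology} to get a unique end, then apply Lemmas \ref{lem:writheCZequality} and \ref{lem:writheCZequalityneg} with $CZ_\tau(e_\pm^k)=\pm1$. The winding number computations from Lemmas \ref{lem:positivewrithe}(b,c) and \ref{lem:negativewrithe}(b,c) in cases (i.b) and (ii.a) are also the same. For the writhe of each individual braid in those cases, however, you invoke the regular-end characterization of \cite[Thm.\ 1.4]{CGHZ}, whereas the paper takes a more elementary route: once $\op{wind}_\tau(\zeta_i)=\pm1$, one has $\gcd(d_i,\op{wind}_\tau(\zeta_i))=1$, which is precisely the subcase in the proof of \cite[Lemma 6.7]{Hindex} where $\zeta_i$ is shown directly to be isotopic to a $(d_i,\pm1)$ torus braid and $w_\tau(\zeta_i)=(d_i-1)\op{wind}_\tau(\zeta_i)$. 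Your route works, but it is heavier machinery than needed; the $\gcd=1$ observation is the cleaner path.

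The genuine gap is in the linking number argument. You assert that ``distinct ends at $e_\pm$ correspond to distinct asymptotic eigenfunctions and therefore lie on disjoint nested tori,'' and then do a crossing count. But distinct ends of a holomorphic curve near the same Reeb orbit can a priori share the same leading asymptotic eigenvalue; what is true, and what must be cited, is \cite[Proposition 3.9]{obg2}, which rules out exactly the degenerate case where both the eigenvalue and the coefficient of the leading eigenfunction agree. With that input, the proof of \cite[Lemma 6.9]{Hindex} considers the remaining two cases (distinct eigenvalues, or same eigenvalue with distinct coefficients) and in both obtains the \emph{equality} $\ell_\tau(\zeta_i,\zeta_j)=\min\{d_i,d_j\}$ (the general statement of that lemma is only an inequality). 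Your ``distinct eigenfunctions'' claim overstates what is available and, as written, would not handle the same-eigenvalue-different-coefficient case, where the braids have equal winding but live at different radii. You should also note that \cite[Lemma 6.9]{Hindex} is stated for negative ends only, so in case (ii.a) its proof must be rerun using the asymptotic expansion of positive ends from \cite{hwz1}, together with a pullback-to-covers argument (as in \cite[Lemma 2.11(a)]{obg1}) to identify $-d_j$ with $-\min\{d_i,d_j\}$; your proposal does not address this asymmetry between positive and negative ends.
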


\begin{proof} We proceed casewise.

\textbf{Case (i.a)} By Lemma \ref{lem:connectortopology} {(i.b)}, the end $\zeta_+$ is the only positive end. Therefore Lemma \ref{lem:writheCZequality} applies, giving us
\begin{align*}
w_\tau(\zeta_+)&=(d_+-1)\floor{\op{CZ}(\gamma^{d_+})/2}-\gcd\left(d_+,\floor{\op{CZ}(\gamma^{d_+})/2}\right)+1
\\&=(d_+-1)\floor{\frac{1}{2}}-\gcd\left(d_+,\floor{\frac{1}{2}}\right)+1
\\&=0-\gcd(d_+,0)+1
\\&=1-d_+.
\end{align*}

\textbf{Case (i.b)} Firstly, we immediately have $\op{wind}_\tau(\zeta_i)=1$ by Lemma \ref{lem:negativewrithe} (b,c):
\[
\op{wind}_\tau(\zeta_i)=\ceil{CZ_\tau(\zeta_i^{d_i})/2}=\ceil{\frac{1}{2}}=1.
\]

Therefore $\gcd(d_i,\op{wind}_\tau(\zeta_i))=\gcd(d_i,1)=1$, which is a sub-case in the proof of \cite[Lemma 6.7]{Hindex}.  There the equality
\[
w_\tau(\zeta_i)=(d_i-1)\op{wind}_\tau(\zeta_i)
\]
is proven by showing that the $\zeta_i$ are isotopic to $(d_i,1)$ torus braids when $\gcd(d_i,\op{wind}_\tau(\zeta_i))=1$. Therefore $w_\tau(\zeta_i)=d_i-1$.

For the claim on linking, let $\lambda_i$ denote the smallest eigenvalue of the asymptotic operator $L_{d_i}$ associated to $\gamma^{d_i}$ in the expansion of $\zeta_i$. The proof of \cite[Lemma 6.9]{Hindex} proceeds without loss of generality by considering three cases: when $\lambda_i<\lambda_j$, when $\lambda_i=\lambda_j$ and the coefficients of the corresponding eigenfunctions are different, and when $\lambda_i=\lambda_j$ and the coefficients of the corresponding eigenfunctions are the same.  We are guaranteed by \cite[Proposition 3.9]{obg2} that we are in either of the first two cases, while the proof of \cite[Lemma 6.9]{Hindex} gives the equality
\[
\ell_\tau(\zeta_i,\zeta_j)=\min\{d_i,d_j\}
\]
in both of those cases, which is stronger than its general result.

\textbf{Case (ii.a)} We immediately have $\op{wind}_\tau(\zeta_i)=-1$ by Lemma \ref{lem:positivewrithe}(b,c):
\[
\op{wind}_\tau(\zeta_i)=\floor{CZ_\tau(\zeta_i^{d_i})/2}=\floor{-\frac{1}{2}}=-1.
\]

The proof that $w_\tau(\zeta_i)=(d_i-1)\op{wind}_\tau(\zeta_i)$ and hence that $w_\tau(\zeta_i)=1-d_i$ is a virtual repeat of the proof for negative ends from \cite[Lemma 6.7]{Hindex} as in Case (i.b).

For the claim on linking, we can repeat the proof in Case (i.b). Note that \cite[Lemma 6.9]{Hindex} only applies to negative ends, but the proof will work using the asymptotic expansion of a positive end from \cite{hwz1}, written in our notation as \cite[Lemma 5.2]{Hu2}. If $\lambda_i<\lambda_j$, or $\lambda_i=\lambda_j$ with corresponding eigenfunctions having different multiplicities in the $\zeta_i$, we know that the braid $\zeta_j$ must be nested inside $\zeta_i$, therefore
\[
\ell_\tau(\zeta_i,\zeta_j)=\op{wind}_\tau(\zeta_i)d_j=-d_j.
\]
We have
\[
-d_j=-\min\{d_i,d_j\}
\]
because, by pulling back both $\zeta_i$ to covers of $\gamma^{d_id_j}$, we multiply their winding numbers by $d_j$ and $d_i$, respectively, and can apply the analytic perturbation theory of \cite[\S3]{hwz2}, written in our notation as \cite[Lemma 2.11 (a)]{obg1}, to obtain
\[
d_j\op{wind}_\tau(\zeta_i)\geq d_1\op{wind}_\tau(\zeta_j)\Leftrightarrow d_j\leq d_i.
\]

\textbf{Case (ii.b)} By Lemma \ref{lem:connectortopology}(i.c), the end $\zeta_-$ is the only negative end. Therefore Lemma \ref{lem:writheCZequalityneg} applies, giving us
\begin{align*}
w_\tau(\zeta_-)&=(d_--1)\ceil{\op{CZ}(\gamma^{d_-})/2}+\gcd\left(d_1,\ceil{\op{CZ}(\gamma^{d_-})/2}\right)-1
\\&=(d_--1)\ceil{-\frac{1}{2}}+\gcd\left(d_-,\ceil{-\frac{1}{2}}\right)-1
\\&=0+\gcd(d_-,0)-1
\\&=d_--1.
\end{align*}
\end{proof}

Finally, we need the following inequality from intersection theory of holomorphic curves, cf. \cite[\S 3.2]{dc}, which is proven similarly to the relative adjunction formula, Lemma \ref{lem:adjunction}.  As before, let $\gamma$ be an embedded Reeb orbit with tubular neighborhood $N$, and let $\tau$ be a trivialization of $\xi|_\gamma$.

\begin{lemma}\label{lem:adj}
Let $C$ be a $J$-holomorphic curve in $[s_-,s_+] \times N$ with no multiply covered components and with boundary $\zeta_+ - \zeta_-$ where $\zeta_\pm$ is a braid in $\{ s_\pm \} \times N$.  Then
\[
\chi(C) + w_\tau(\zeta_+) - w_\tau(\zeta_-) = 2\delta(C) \geq 0,
\]
where $\chi(C)$ denotes the Euler characteristic of the domain of $C$ and $\delta(C)$ is a count of the singularities of $C$ in $Y$ with positive integer weights.
\end{lemma}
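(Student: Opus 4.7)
My plan is to deduce Lemma \ref{lem:adj} as a localized version of the relative adjunction formula in Lemma \ref{lem:adjunction}(iii), exploiting the fact that working in a tubular neighborhood $N$ of a single embedded Reeb orbit $\gamma$ kills the global topological terms. To set up, I would identify $N$ with $S^1 \times D^2$ via the trivialization $\tau$, so that $\xi|_N$ is trivialized compatibly with $\tau$ over $\gamma$. In this product picture, the curve $C \subset [s_-,s_+] \times N$ has boundary $\zeta_+ - \zeta_-$, and the trivialization extends canonically over all of $[s_-,s_+] \times N$.

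The main steps of the argument are as follows. First, using intersection positivity for $J$-holomorphic curves with no multiply covered components (as in \cite[\S2]{MiWh}), I would perturb $C$ rel boundary to an immersed surface $S$ in $[s_-,s_+] \times N$ whose only self-intersections are transverse positive double points, with total count exactly $\delta(C)$. Second, I would apply the adjunction formula for immersed surfaces in a trivialized four-manifold to $S$: since $T([s_-,s_+]\times N)$ is trivialized using $\tau$ together with the obvious framing in the $\R$ and Reeb directions, one obtains
\[
\chi(S) + [S]\cdot[S]_\tau = 2\delta(C),
\]
where $[S]\cdot[S]_\tau$ denotes the self-intersection of $S$ with a pushoff specified by $\tau$ at the boundary, computed following the conventions of \cite[\S3.3]{Hu2}. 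Third, I would identify the boundary contribution of $[S]\cdot[S]_\tau$: because the normal pushoff agrees with the $\tau$-framing at both ends, the difference between $[S]\cdot[S]_\tau$ and the count of interior transverse intersections is exactly $w_\tau(\zeta_+) - w_\tau(\zeta_-)$ by the definition of writhe as a signed count of crossings in the $\tau$-framed projection, using the convention from \S\ref{s:writhe}.

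Combining these three ingredients gives
\[
\chi(C) + w_\tau(\zeta_+) - w_\tau(\zeta_-) = 2\delta(C),
\]
and the nonnegativity $\delta(C) \ge 0$ then follows from intersection positivity in dimension four, which also guarantees that the perturbation in the first step can be arranged so each double point contributes $+1$.

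The main obstacle I anticipate is bookkeeping in the second and third steps, namely carefully tracking sign conventions so that the writhe terms appear with the correct signs. In particular, the nonstandard sign convention for writhe from \S\ref{s:writhe} (counterclockwise rotation in the $D^2$ direction contributes positively as one travels counterclockwise around $S^1$) must be matched against the convention implicit in the oriented self-intersection pairing $[S]\cdot[S]_\tau$. A secondary subtlety is ensuring that the perturbation of $C$ to $S$ genuinely preserves $\delta(C)$: one must check that singularities of $J$-holomorphic curves only contribute positively to $\delta$ and can be resolved into the prescribed number of transverse double points, which follows from the local model for pseudoholomorphic singularities of \cite{MiWh}.
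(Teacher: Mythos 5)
Your overall strategy — localize the relative adjunction formula by exploiting the fact that $\xi$ is trivialized via $\tau$ over the tubular neighborhood $N$, so that the global Chern class and intersection terms die — is exactly the route the paper intends: it cites \cite[\S 3.2]{dc} and says the lemma ``is proven similarly to the relative adjunction formula, Lemma \ref{lem:adjunction}.'' Your first step (resolve $C$ to an immersion $S$ with $\delta(C)$ transverse positive double points via positivity of intersections and \cite{MiWh}) and the final appeal to positivity for $\delta(C)\geq 0$ are both fine.

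However, combining your steps 2 and 3 as written does not reproduce the lemma — the $2\delta$ term cancels. Your step 2 asserts $\chi(S) + [S]\cdot[S]_\tau = 2\delta(C)$, while your step 3 asserts that $[S]\cdot[S]_\tau$ minus the count of interior transverse self-intersections (which is $2\delta(C)$) equals $w_\tau(\zeta_+) - w_\tau(\zeta_-)$, i.e. $[S]\cdot[S]_\tau = w_\tau(\zeta_+) - w_\tau(\zeta_-) + 2\delta(C)$. Substituting this into step 2 yields $\chi(C) + w_\tau(\zeta_+) - w_\tau(\zeta_-) = 0$, not $= 2\delta(C)$. Something in the double-point bookkeeping is off by $2\delta$: you are subtracting the singularity correction once in the adjunction formula (step 2) and once again inside the writhe identification (step 3). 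The cleaner route, and the one that makes the signs transparent, is to invoke Lemma \ref{lem:adjunction}(iii) directly: $c_1^\tau(Z) = \chi(u) + Q_\tau(Z) + w_\tau(u) - 2\delta(u)$. You then need two localization claims: first, $c_1^\tau(Z) = 0$, because $\tau$ extends to a trivialization of $\xi$ over all of $N$ and so one may take a nonvanishing constant section; and second, $Q_\tau(Z) = 0$, because the relative homology group $H_2([s_-,s_+]\times N,\zeta_+,\zeta_-)$ is trivial (as $N$ deformation retracts to $\gamma$) and the relative intersection pairing with respect to the $\tau$-pushoff vanishes in this product neighborhood. With both terms zero, the relative adjunction formula reads $0 = \chi(u) + w_\tau(u) - 2\delta(u)$, which is the lemma, with $w_\tau(u) = w_\tau(\zeta_+) - w_\tau(\zeta_-)$ by definition of the total writhe. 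Your proposal identifies the vanishing of $c_1^\tau$ but never isolates the vanishing of $Q_\tau$ as a separate claim; instead it is folded implicitly (and incorrectly) into the writhe identification, which is where the factor of $2\delta$ is getting double-counted.
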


With these preliminaries in place, we are now ready to prove the key classification result which excludes connectors from appearing in the top most or bottom most level of a building arising as a limit in the sense of \cite{BEHWZ}.
\begin{proposition}\label{notopbottomconn}
Let  $\{\cj_t\}_{t\in [0,1]}$ be a generic family of domain dependent almost complex structures and $\alpha$ and $\beta$ be admissible Reeb currents with $I(\alpha,\beta)=1$.  Let $C(t) \in \M^{\cj_t}(\alpha,\beta)$ be a sequence of Fredholm index 1 curves, which, as $t \to 1$, converges in the sense of \cite{BEHWZ}  to a building  with $n$ levels given by $C_{i} \in \M^{\cj_1}(\gamma_{i-1},\gamma_{i})$, $i=1,...,n$, where $\gamma_0 = \alpha$ and $\gamma_n = \beta$.  Then neither the top most level $C_1$ nor the bottom most level $C_n$ are connectors.
\end{proposition}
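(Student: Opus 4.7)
The plan is a proof by contradiction: suppose the top-most level $C_1$ is a connector, and derive an incompatibility between the ECH partition conditions imposed on the somewhere injective Fredholm index one piece of $C(t)$ and the branched-end structures cataloged in Lemma~\ref{lem:connectortopology}. By Proposition~\ref{lowiprop}, each $C(t)$ for $t < 1$ decomposes as $\mathcal{C}_0(t) \sqcup C^\ast(t)$, with $\mathcal{C}_0(t)$ a union of covers of trivial cylinders and $C^\ast(t)$ embedded, somewhere injective, and satisfying $\op{ind}(C^\ast(t)) = I(C^\ast(t)) = 1$. The trivial cylinder piece remains trivial cylinders in the SFT limit, so the limiting building splits disjointly as the union of trivial cylinders (from $\mathcal{C}_0$) and a sub-building arising from $C^\ast(t)$; let $\tilde{C}$ denote the top-most level of that sub-building. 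Since $C_1$ is assumed to be a connector, $\tilde{C}$ itself must consist of covers of trivial cylinders.

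The core step is to rule out branched components of $\tilde{C}$. Since $\op{ind}(C^\ast(t)) = I(C^\ast(t)) = 1$, the ECH index inequality of Theorem~\ref{thm:indexineq} forces the positive end multiplicities of $C^\ast(t)$ at each embedded Reeb orbit to realize the ECH positive partition. SFT convergence preserves asymptotic partitions, so the positive ends of $\tilde{C}$ at each embedded orbit have the same partitions, which by Example~\ref{ex:partitions} are $(1,1,\ldots,1)$ at $e_+$, a single part $(m_{-}^{\ast})$ at $e_-$, and at most a single part $(1)$ at each $h_j$ (using admissibility of $\alpha$ to bound the multiplicity of the hyperbolic $h_j$ by one). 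These partitions are incompatible with each type of branched component permitted by Lemma~\ref{lem:connectortopology}: at $e_+$, a branched component (i.b) has one positive end of multiplicity $\geq 2$, contradicting the all-ones partition; at $e_-$, a branched component (i.c) has at least two positive ends, contradicting the single-part partition; at $h_j$, a branched component (ii) has either two positive ends or one positive end of multiplicity $\geq 2$, contradicting the partition of at most one part of multiplicity one. Hence $\tilde{C}$ contains no branched components and is composed entirely of unbranched covers of trivial cylinders.

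It remains to derive a contradiction from $\tilde{C}$ being all trivial cylinders. If the sub-building of $C^\ast(t)$ has more than one level, the building definition requires each level (in particular $\tilde{C}$) to contain at least one component that is not a trivial cylinder, a contradiction. If the sub-building has only one level, then $\tilde{C}$ is simply the limit of $C^\ast(t)$ with preserved topology and inherits $I(\tilde{C}) = I(C^\ast(t)) = 1$, but any disjoint union of covers of trivial cylinders has ECH index zero by Lemma~\ref{lem:connectortopology}, producing the contradiction $1 = 0$. The symmetric argument for the bottom-most level $C_n$ replaces positive ECH partitions with negative ones (yielding $(1,\ldots,1)$ at $e_-$ and a single part at $e_+$) and uses Lemma~\ref{lem:connectortopology}(i.c) and (i.b) with positive and negative ends interchanged. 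The main anticipated obstacle is the rigorous justification that SFT convergence transfers partition data through the neck regions; this is precisely where the intersection-theoretic approach, through the adjunction inequality Lemma~\ref{lem:adj}, the writhe computations in Lemma~\ref{lem:writhecomputations}, and the refined asymptotic estimates from \cite{CGHZ} (Lemmas~\ref{lem:writheCZequality} and \ref{lem:writheCZequalityneg}), enters to make the end-behavior comparisons rigorous by directly computing $\chi(C^\ast(t)\cap([s_-,s_+]\times N)) + w_\tau(\zeta_+^t) - w_\tau(\zeta_-^t) = 2\delta$ and passing to the limit to detect negative values that preclude any branched structure for $\tilde{C}$.
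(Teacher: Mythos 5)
Your proposal has a genuine gap, and it is precisely the one the paper's actual proof is engineered to avoid. Both the decomposition from Proposition~\ref{lowiprop} (splitting $C(t)$ into trivial cylinders plus an embedded index-one piece) and the ECH partition conditions from Theorem~\ref{thm:indexineq} (forcing the positive partitions of $C^\ast(t)$ to be the ECH positive partitions) are consequences of the ECH index inequality, whose proof relies on global intersection positivity. But the curves $C(t)$ here are $\cj_t$-holomorphic for \emph{domain-dependent} almost complex structures $\cj_t$, and the paper explicitly emphasizes (see the remark following Theorem~\ref{thm:obg1}, and the sentence opening Remark~\ref{ddacsintpos}) that intersection positivity, and hence the ECH index inequality, does \emph{not} hold globally in this setting. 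So you cannot decompose $C(t)$ as $\mathcal{C}_0(t)\sqcup C^\ast(t)$ with $C^\ast(t)$ embedded, and you cannot conclude that $C^\ast(t)$ realizes the ECH positive partitions. The entire downstream argument (comparing those partitions against the connector classification and deriving incompatibilities) therefore has no starting point.

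The paper's proof is structured specifically to work around this obstruction: by Remark~\ref{ddacsintpos}, $\cj_t$ agrees with the fixed domain-independent $S^1$-invariant $\fj_0$ inside the tubular neighborhood $N$ of the Reeb orbits, so $C(t)$ is honestly pseudoholomorphic there and intersection positivity, relative adjunction (Lemma~\ref{lem:adj}), and the writhe machinery all apply \emph{locally}. The proof then restricts attention to $C(t)\cap([s_0,\infty)\times N_\gamma)$, which for $t$ near $1$ can be identified with the components of the putative top-level connector $C_1$ covering $\R\times\gamma$, and computes $2\delta(C) = \chi(C) + w_\tau(\zeta_+) - w_\tau(\zeta_-)$ directly using Lemma~\ref{lem:writhecomputations}. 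For each connector configuration allowed by Lemma~\ref{lem:connectortopology} one obtains $2\delta(C)\le -2$, contradicting $\delta(C)\ge 0$. The case of a branched component over a hyperbolic $h_j$ is ruled out by admissibility of $\alpha$ and $\beta$ instead. Your final remark hints at this intersection-theoretic route, but frames it as a technical reinforcement of the partition-transfer step rather than what it actually is: the replacement for a global index-inequality argument that is simply unavailable here.
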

\begin{proof}

We assume that the proposition is false and set up some notation.  Suppose to get a contradiction that there exists a sequence of $\{\cj_t\}$-holomorphic curves $\{ C(t)\} \in \M^{\cj_t}(\alpha,\beta)$ which converges in the sense of \cite{BEHWZ} to a $n$-level building which has either $C_1$ or $C_n$ as a connector.   {Recall that $C_i$ is an equivalence class of holomorphic curves in $\R \times Y$, where two holomorphic curves are equivalent iff they differ by $\R$-translation in $\R \times Y$.}  In the following, we will choose a representative of this equivalence class and still denote it by $C_i$. If necessary, translate the holomorphic curve $C_1$ upward and $C_n$ downward so that Lemmas \ref{lem:positivewrithe}-\ref{lem:writheCZequalityneg} apply, cf. \cite[\S 3.3]{dc}.

Without loss of generality, we can work under the assumption that the connector appears in the top most level,  $C_1 \in \M^{\cj_1}(\alpha, \alpha)$.  Consider an embedded Reeb orbit $\gamma$ appearing in the orbit set $\alpha$.  Let $N_\gamma$ be a tubular neighborhood of the Reeb orbit $\gamma$.  For some sufficiently large  $s_0 \gg 0$ and some $t$ close to 1, the intersection $C(t) \cap ( [s_0, \infty) \times N_\gamma)$  can be identified with the union of components of $C_1$ that cover $\R \times \gamma$.  Denote both by $C$.  Note that as a subset of $C(t)$, $C$ is not a trivial cylinder, but rather an embedding in the complement of a finite number of singular points.  

While intersection positivity is not true in general for domain dependent almost complex structures, by Remark \ref{ddacsintpos} if
\[
C: (\ds, j) \to (\R \times Y, \bj)
\]
is a $(j,\bj)$-holomorphic curve, then $C \vert_{C^{-1}(\R \times N)}$ is $(j,\fj_0)$-holomorphic.  Since $\fj_0$ is domain independent, the subset $C(\ds) \cap \R \times N \subset C(\ds)$ satisfies intersection positivity.  Thus we will be in a situation to apply relative adjunction as in Lemma \ref{lem:adj} because intersection positivity holds.  Moreover, the count of singularities of $C$, satisfies $\delta(C) \geq 0$ with equality if and only if $C$ is embedded.  

We will show that if $C$ arises from a nontrivial connector appearing at the top most level, then relative adjunction as in Lemma \ref{lem:adj} will imply that $\delta(C) < 0$, a contradiction.  Note that a connector cannot be trivial in the sense that it exclusively consists of unbranched components, e.g. trivial cylinders, as explained in \cite[Remark 9.26]{wendl-sft}.

There are three cases to consider, corresponding to connectors containing components satisfying the conclusions of Lemma \ref{lem:connectortopology}, (i.b), (i.c), and (ii).

\textbf{Case (i.b)} Assume $C$ is a component of a connector covering $\R\times e_+$. Then by Lemma \ref{lem:connectortopology} we have $g(C)=0$ and by Lemma \ref{lem:connectortopology}(i.b) $C$ has a single positive end. Let $d_+$ denote the covering multiplicity of this end, and let $d_i$ denote the covering multiplicity of the $i^\text{th}$ negative end of $C$. Because $C$ covers a trivial cylinder, $d_+=\sum_{i=1}^{p_-(C)}d_i$. For $t$ sufficiently close to 1, there is a representative $C$ with the following properties.

\begin{enumerate}
\item $C^{-1}([0,\infty) \times Y)$ is an annulus with one puncture, which is mapped by $C$ to $[0,\infty) \times N_{e_+}$
\item $C^{-1}((-\infty, 0] \times Y)$ consists of as many half cylinders $C_i$ as there are $p_-(C)$ negative ends of $C$.  
\item $C(C_i)$ is contained in $(-\infty, 0] \times N_{e_+}$ and $C(C_i) \cap (\{ 0 \} \times N)$ is a braid $\zeta_i$ which projects to $e_+$ with degree $d_i$ and has distance at most $\frac{\varepsilon}{p_-(C) +1}$ from $e_+$.
\end{enumerate}

\noindent Also let $\zeta_+$ denote the braid corresponding to the positive end of $C$ at $e_+^{d_+}$.   It follows that the union $\bigcup_i \zeta_i$ is a braid.  We obtain a contradiction:
\begin{align*}
2\delta(C)&=2-p_+(C)-p_-(C)+w_\tau(\zeta_+) - {w_\tau\left(\bigcup_i \zeta_i\right)}
\\&=1-p_-(C)+(1-d_+)-\left(\sum_{i=1}^{p_-(C)}(d_i-1)+\sum_{i\neq j}\min(d_i,d_j)\right)\text{ by Lemma \ref{lem:writhecomputations}(i) and (\ref{eqn:writhelinking})}
\\&=2-2d_+-\sum_{i\neq j}\min(d_i,d_j)
\\&\leq -2.
\end{align*}
(Note that our notation $\sum_{i\neq j}\min(d_i,d_j)$ accounts for the factor of two in (\ref{eqn:writhelinking}).) In the inequality we have used the fact that $\sum_{i\neq j}\min(d_i,d_j)\geq2$ whenever there are at least two negative ends, because $d_i\geq1$. There can never be just one negative end lest $C$ be topologically a cylinder and therefore unbranched, by the Riemann-Hurwitz Theorem.

\textbf{Case (i.c)} Assume $C$ is a component of a connector covering $\R\times e_-$. Then by Lemma \ref{lem:connectortopology} we have $g(C)=0$ and by Lemma \ref{lem:connectortopology}(i.c) $C$ has a single negative end. Let $d_-$ denote the covering multiplicity of this end, and let $d_i$ denote the covering multiplicity of the $i^\text{th}$ positive end of $C$. For $t$ sufficiently close to 1, there is a representative $C$ with the following properties.

\begin{enumerate}
\item $C^{-1}((-\infty, 0] \times Y)$ is an annulus with one puncture, which is mapped by $C$ to $(\infty,0] \times N_{e_-}$
\item $C^{-1}([0,\infty) \times Y)$ consists of as many half cylinders $C_i$ as there are $p_+(C)$ positive ends of $C$.  
\item $C(C_i)$ is contained in $[0,\infty) \times N_{e_-}$ and $C(C_i) \cap (\{ 0 \} \times N)$ is a braid $\zeta_i$ which projects to $e_-$ with degree $d_i$ and has distance at most $\frac{\varepsilon}{p_+(C) +1}$ from $e_-$.
\end{enumerate}

\noindent Also let $\zeta_-$ denote the braid corresponding to the negative end of $C$ at $e_-^{d_-}$.  It follows that the union $\bigcup_i \zeta_i$ is a braid. We obtain a contradiction:
\begin{align*}
2\delta(C)&=2-p_+(C)-p_-(C) + {w_\tau\left(\bigcup_i \zeta_i\right)} - w_\tau(\zeta_-) 
\\&=1-p_+(C)+\left(\sum_{i=1}^{p_+(C)}(1-d_i)-\sum_{i\neq j}\min(d_i,d_j)\right)-(d_--1)\text{ by Lemma \ref{lem:writhecomputations}(ii) and (\ref{eqn:writhelinking})}
\\&=2-2d_--\sum_{i\neq j}\min(d_i,d_j)
\\&\leq-2.
\end{align*}
As in Case (i.b), we must have $p_+(C)\geq2$, hence $\sum_{i\neq j}\min(d_i,d_j)\geq2$.

\textbf{Case (ii)} If a branched component of the connector at the top (respectively, the bottom) covers $\R\times h$, where $h$ is hyperbolic, then by Lemma \ref{lem:connectortopology}(ii), its ends must be asymptotic to $h^2$. Therefore $\alpha$ (respectively, $\beta$) must include the pair $(h,m)$ with $m\geq2$, which contradicts the fact that $\alpha$ (respectively, $\beta$) is an ECH chain complex generator.
\end{proof}

\section{From domain dependent $\bj$ to domain independent $J$}\label{handleslides}
In Corollary \ref{nogenus}, we saw that for a generic $S^1$-invariant domain dependent almost complex structure $\bj \in  \sj_\ds^{S^1}$ that ECH index one moduli spaces of nonzero genus curves are empty.  However ECH is defined using a domain independent generic $\lambda$-compatible $J$, so we must prove the analogous result when $J$ is a generic $\lambda$-compatible almost complex structure.   In order to do so, we consider a generic one parameter family $\{ \cj_t\}_{t\in[0,1]}$ of domain dependent almost complex structures interpolating between a generic $\cj_0 :=\bj \in \sj_\ds^{S^1}$ and a domain independent generic $\lambda$-compatible $\cj_1:=J \in \sj_{reg}(Y,\lambda)$ and show that the computation of ECH is not affected.

\subsection{Overview and sketch of proof}
Our main result is the following.

\begin{proposition}\label{ECH-ok}
Let $\alpha$ and $\beta$ be admissible Reeb currents with $I(\alpha,\beta)=1$  and $\op{deg}(\alpha,\beta) >0.$  For generic paths $\{ J_t\}_{t\in[0,1]}$ connecting $\cj_0 := \bj \in \sj_\ds^{S^1}$ and $\cj_1 := J \in \mathscr{J}_{reg}(Y,\lambda)$, the moduli space $\mathcal{M}_t := \mathcal{M}^{\cj_t}(\alpha, \beta)$  is cut out transversely save for a discrete number of times $t_0,...,t_\ell \in (0,1)$.  At each such $t_i$,  the ECH differential can change either by:
 \begin{enumerate}[\em(a)]
 \item The creation or destruction of a pair of oppositely signed  curves.\footnote{Because we are using $\Z_2$-coefficients, we will not sort through the signs.}
 \item An ``ECH handleslide.''
\end{enumerate}
However, in either case, the homology is unaffected. 
 \end{proposition}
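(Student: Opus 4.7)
The plan is to follow Farris' approach \cite[\S 7]{farris}, performing a parametric transversality analysis along $\{\cj_t\}$ and inducting on $\deg(\alpha,\beta)$, with the full proposition assumed for orbit set pairs of strictly smaller degree. First, I would extend Theorem \ref{thm:ddacs} to the parametric setting by working over the enlarged Banach manifold $[0,1]\times\sj_\ds^{S^1}$: the cutoff construction in the proof of Theorem \ref{thm:ddacs}, which localizes the perturbation $A$ via $\kappa$ on $\Sigma_g$ and $\nu$ on $\overline{\sm_{g,n+1}}$, adapts without difficulty to the universal linearization over $[0,1]$, yielding parametric transversality. For generic $\{\cj_t\}$, the parametric moduli space $\widetilde{\M}:=\bigsqcup_t\{t\}\times\M^{\cj_t}(\alpha,\beta)$ is then a smooth $1$-manifold with boundary over $\{0,1\}$, except at a finite exceptional set $t_0,\dots,t_\ell\in(0,1)$ at which either Fredholm regularity fails at some $C_\star\in\M^{\cj_{t_i}}(\alpha,\beta)$, giving case (a), or a sequence $C(t_n)$ with $t_n\to t_i$ breaks in the sense of \cite{BEHWZ}, giving case (b). Case (a) is then handled by the usual cokernel analysis: near $C_\star$, $\widetilde{\M}$ is locally cut out by a map $\R\times\ker D\to\op{coker}D$ having a generic fold, which produces a birth/death of oppositely signed curves, so the mod-$2$ count is unchanged across $t_i$.

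For case (b), SFT compactness \cite{BEHWZ} gives a limiting holomorphic building of total ECH and Fredholm indices both equal to $1$. Proposition \ref{lowiprop}, Lemma \ref{lem:ht}, and additivity across levels force this building to decompose into a single Fredholm-and-ECH-index-$1$ simple component, finitely many Fredholm-and-ECH-index-$0$ simple components, and connectors. Proposition \ref{notopbottomconn} excludes connectors from the topmost and bottommost levels, and Lemma \ref{lem:connectortopology} restricts each interior connector to the explicit topological types enumerated there. I would then apply the obstruction bundle gluing theorems of \cite{obg1,obg2} to each such building at $t=t_i$ to produce a signed count of honest nearby index-$1$ curves just before and just after $t_i$, given by the algebraic count of zeros of an explicit obstruction section on the relevant moduli space of branched covers. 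Since every index-$0$ factor appearing in the building has strictly smaller degree than $(\alpha,\beta)$, the inductive hypothesis controls those contributions; the resulting jump in $\#_{\Z_2}\M^{\cj_t}(\alpha,\beta)$ across $t_i$ assembles, summed over all $i$, into a chain-level handleslide identifying the induced maps of $\partial^L_{\cj_0}$ and $\partial^L_{\cj_1}$ on $ECH^L_*$ up to homology.

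The main obstacle is carrying out the obstruction-bundle-gluing step in each of the topological types permitted by Lemma \ref{lem:connectortopology}: the branched covers of $\R\times e_\pm$ with a single distinguished end, and the $(1,2)$ and $(2,1)$ branched covers of $\R\times h_j$. For the elliptic cases this reduces, as in \cite[\S 9]{obg1}, to an explicit calculation of the algebraic zero count of the obstruction section on a moduli space of branched covers, which I would analyze using the eigenfunction asymptotics of \cite[\S 3]{obg2} together with the writhe and linking estimates of Lemma \ref{lem:writhecomputations}. The hyperbolic case demands additional care with orientations so that the gluings on either side of $t_i$ assemble into the advertised chain-level handleslide; this is the step I expect to require the most technical work, and where the refined asymptotic estimates of \cite{CGHZ} invoked in \S \ref{sec:connectors} are most essential.
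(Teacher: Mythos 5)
Your proposal assembles most of the right ingredients — parametric transversality along $\{\cj_t\}$, SFT compactness, Proposition \ref{notopbottomconn}, Lemma \ref{lem:connectortopology}, obstruction bundle gluing, and an induction on degree — but its central step is an \emph{explicit} computation of the gluing coefficient $\#G(C_+,C_-)$, and this is exactly where the paper's argument deliberately goes another way. As the remark following Theorem \ref{thm:obg1} explains, $c_\gamma(C_+,C_-)=1$ is guaranteed only when the ECH partition conditions hold, and one cannot verify those conditions here: the ECH index inequality, Theorem \ref{thm:indexineq}, requires intersection positivity, which fails for $\bj$-holomorphic curves with domain-dependent $\bj$. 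Reducing the elliptic and hyperbolic cases to the obstruction-section analysis of \cite[\S 9]{obg1}, as you propose, presupposes exactly the partition data that is unavailable in this setting.

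The paper arranges instead that the gluing coefficient is never computed. From (\ref{obg-handleslide}) the handleslide correction is $\#G(C_+,C_-)\cdot\#\M_{t_i}(\alpha,\gamma_+)$, so it suffices to show $\#\M_{t_i}(\alpha,\gamma_+)=0$. Note also that your degree-drop observation is stated backwards: what is used is that the Fredholm-and-ECH-index-\emph{one} component $C_+\in\M_{t_i}(\alpha,\gamma_+)$ satisfies $\op{deg}(C_+)<\op{deg}(\alpha,\beta)$, because the index-zero handleslide curve $C_-$ must have strictly positive degree by Lemma \ref{lem:d0g0} (a degree-zero $C_-$ would be a union of cylinders and hence have positive Fredholm index). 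Your assertion that ``every index-$0$ factor...has strictly smaller degree'' fails if $C_+$ is a cylinder, in which case $\op{deg}(C_-)=\op{deg}(\alpha,\beta)$. Finally, invoking the proposition for the smaller-degree pair $(\alpha,\gamma_+)$ does not supply what you need: that statement only concerns the endpoints $t=0,1$ of a chosen path, and says nothing about the count $\#\M_{t_0}(\alpha,\gamma_+)$ at the \emph{interior} almost complex structure $\cj_{t_0}$. The paper's proof instead re-runs the entire handleslide argument along a \emph{fresh} generic path $\{\cj'_t\}_{t\in[0,t_0]}$ from a new $S^1$-invariant domain-dependent $\bj'_0$ (where Corollary \ref{nogenus} gives the base case $\M^{\bj'_0}(\alpha,\gamma_+)=\emptyset$) to the specific $\cj_{t_0}$, and iterates: the degree strictly decreases at each subsequent handleslide, and the iteration terminates once the degree reaches $\leq 1$ (or $\leq 2$), where $S^1$-invariant transversality already suffices. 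This iterative rerunning, rather than a direct inductive invocation or an explicit gluing computation, is what closes the argument.
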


 For each $\cj_t$ we consider the moduli space $\M_t(\alpha,\beta)$ of $\cj_t$-holomorphic curves where $\alpha$ and $\beta$ are admissible Reeb current satisfying $I(\alpha,\beta)=1$ and $\op{deg}(\alpha,\beta) >0$.  That  $\op{deg}(\alpha,\beta) >0$ rules out moduli spaces of $\cj_t$-holomorphic cylinders, for which the domain dependent almost complex structures cannot be used, cf. Lemma \ref{lem:d0g0}.  We have that $\M_t(\alpha,\beta)$ is cut out transversely save a discrete number of times $t_i\in[0,1]$ and at such a nonregular $\cj_{t_i}$, the differential can be impacted by either the creation or destruction of a pair of oppositely signed curves or by an ``ECH handleslide."  In the former case, the signed and mod 2 counts of curves in $\M_{t_i-\varepsilon}$ and $\M_{t_i+\varepsilon}$ are the same. 
 
  The differential can change at an ``ECH handleslide," at which a sequence of Fredholm and ECH index 1 curves $\{C(t)\}$ breaks into a holomorphic building in the sense of \cite{BEHWZ} into components consisting of an ECH and Fredholm index 0 curve, an ECH and Fredholm index 1 curve, and some ``connectors," which are Fredholm index 0 branched covers of a trivial cylinder $ \R \times \gamma$.    In \S \ref{sec:connectors} we previously demonstrated that connectors cannot appear at the top most or bottom most level of the building via intersection theory arguments similar to \cite[\S 4]{dc}.  As a result, in \S \ref{sec:ECHok} we can appeal to the obstruction bundle gluing theorems of \cite{obg1, obg2} in conjunction with an inductive argument involving the degree of a completed curve, cf. Definition \ref{degree} and Proposition \ref{prop-degree}, to show that the the presence of an ECH handleslide does not have an impact after passing to homology.  
  
Proposition \ref{ECH-ok} yields the the domain independent analogue of Corollary \ref{nogenus}:

\begin{corollary}\label{nogenusJ}
Let $\alpha$ and $\beta$ be nondegenerate admissible Reeb currents and $J \in \sj(Y,\lambda)$ be generic.  If $\op{deg}(\alpha,\beta) >0$ and $I(\alpha,\beta)=1$ then the mod 2 count $\#_{\Z_2}\M^{J}(\alpha,\beta) = 0$.  If $\alpha$ and $\beta$ are associated to $\lambda_\varepsilon$ as in Lemma \ref{lem:efromL} and $\mathcal{A}(\alpha), \mathcal{A}(\beta) <L(\varepsilon)$ then $\langle \partial^{L(\varepsilon)} \alpha, \beta \rangle =0$.\end{corollary}

In \S \ref{sec:handles} we demonstrate that the classification results for connectors in \S \ref{sec:connectors} ensure that at an ECH handleslide $t_i$,  a sequence of $\cj_{t}$-holomorphic curves $\{ C_k \ | \ 
\mbox{ind}(C_k) =1 \}$ breaks into an \emph{ECH handleslide building} $(C_+, \conn, C_-)$ wherein:
\begin{enumerate}[(i)]
 \item The top most curve $C_+$ has either index 1 or index 0;
\item Connectors $\conn$ with  $\mbox{ind}(\conn)=0$ appearing in the middle; 
\item The bottom most curve $C_-$ has $\ind(C_-) = 1-\mbox{ind}(C_+)$.
\end{enumerate}
Moreover, the index 0 curve occurring at either the top most or bottom most level cannot contain any connectors.
\begin{definition}
We define an \emph{ECH handleslide} to be the index 0 curve which is not a connector in an \emph{ECH handleslide building} $(C_+, \conn, C_-)$, in analogy with Morse theory.  
\end{definition}

As observed in \cite[\S 7.1.1]{farris}, because connectors, the branched covers of trivial cylinders, cannot appear as the top-most or bottom-most level by Proposition \ref{notopbottomconn}, we can appeal to the obstruction bundle gluing theorems \cite{obg1,obg2} to relate the curve counts occurring immediately prior to and following the appearance of an ECH handleslide at time $t_i$.    If we assume that the ECH handleslide is $C_-$, then as explained in Remark \ref{rem:handleslide} we obtain:
\begin{equation}\label{eq:handle}
{\# \mathcal{M}_{t_i + \epsilon}(\alpha,\beta)= \# \mathcal{M}_{t_i - \epsilon}(\alpha, \beta) + \# G(C_+, C_-) \cdot \# \mathcal{M}_{t_i }(\alpha, \gamma_+),}
\end{equation}
where $\gamma$ is another (admissible) Reeb current such that $I(\alpha,\gamma_+)=1$ with $C_+ \in \mathcal{M}_{t_i }(\alpha, \gamma_+)$. Note that the connector $\conn \in \mathcal{M}_{t_i }(\gamma_+, \gamma_-)$ and the ECH handleslide curve $C_- \in \mathcal{M}_{t_i }(\gamma_-, \beta)$.  As explained in \S \ref{sec:obg}, obstruction bundle gluing gives a combinatorial formula for $\#G(C_+, C_-) \in \Z$, based on the negative asymptotic ends of $C_+$, the positive asymptotic ends of $C_-$, and the partitions associated to the ends of the connectors $\conn$.  For each embedded Reeb orbit $\gamma$, the total covering multiplicity of Reeb orbits covering $\gamma$ in the list $\gamma_+$ is the same as the total for $\gamma_-$.  (In contrast, for the usual form of Floer theory gluing, one would assume that $\gamma_+=\gamma_-$.) In \S \ref{sec:ECHok} we complete the proof of Proposition \ref{ECH-ok} by way of an inductive argument involving the degree, which precludes the need to explicitly compute $\#G(C_+, C_-) $ as we obtain $\# \mathcal{M}_{t_i}(\alpha,\gamma_+) =0$ for all admissible $\gamma_+$ such that $I(\alpha,\gamma_+)=1$.

\subsection{Handleslides and bifurcations}\label{sec:handles}

 An \emph{ECH handleslide building} is a building arising as a limit of $I(C)=1$, $\op{ind}(C)=1$ curves in $\R\times Y$ as the complex structure varies through domain-dependent almost complex structures. The terminology arises from the fact that such a building might include levels with $I(C)=0$ which do not consist solely of trivial cylinders, in analogy to the Morse index zero gradient trajectories which arise during a handleslide in a generic homotopy of Morse functions.  Note that the characterization from Proposition \ref{lowiprop} does not apply to moduli spaces defined using domain-dependent almost complex structures.

\begin{lemma}[Configuration of an ECH handleslide]\label{lem:slide}
Fix admissible Reeb currents $\alpha$ and $\beta$ with $\op{deg}(\alpha,\beta)>0$ and $I(\alpha,\beta)=1$.  Let $\{\cj_t\}_{t\in [0,1]}$ be a one generic parameter family of almost complex structures.  Consider the corresponding moduli spaces $\mathcal{M}_t := \mathcal{M}^{\cj_t}(\alpha, \beta)$; label the times at which $\mathcal{M}_t$ is not cut out transversely by $t_0,...,t_\ell \in (0,1)$.  Let $C(t) \in \M_t(\alpha,\beta)$ with $t \to t_i$.  Then after passing to a subsequence, $\{ C(t) \}$ converges in the sense of \cite{BEHWZ} either to a curve in $\M_{t_i}(\alpha,\beta)$ or to
an \emph{ECH handleslide building} with
\begin{enumerate}[\em (i)]
        \setlength{\itemsep}{0pt}
 \item An index 1 curve  at the top most level $C_+$ (or at bottom most level $C_-$);
\item Connectors $\conn$ with  $\mbox{ind}(\conn)=0$; 
\item An index 0 curve, the ECH handleslide, at the bottom most level $C_-$ (or at the top most level $C_+$).
\end{enumerate}
\end{lemma}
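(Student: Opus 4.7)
The plan is to combine SFT compactness with the connector classification of Section~\ref{sec:connectors} and an index-accounting argument. By \cite{BEHWZ}, after passing to a subsequence $\{C(t)\}$ converges to a $\cj_{t_i}$-holomorphic building $(C_1, \ldots, C_n)$. If $n = 1$ the limit is a single curve, providing an element of $\mathcal{M}_{t_i}(\alpha,\beta)$, so I may assume $n \geq 2$ and must control the configuration of the levels.

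First I would exploit additivity of both the ECH index and the Fredholm index to constrain the building. The total ECH index is $I(\alpha,\beta) = 1$ by Theorem~\ref{thm:Iproperties}(iii), and each connector $\conn$ satisfies $I(\conn) = 0$ by Lemma~\ref{lem:connectortopology}. Each non-connector component is somewhere injective on each non-trivial-cylinder piece (trivial-cylinder components being absorbed into the connector count), and the ECH index inequality (Theorem~\ref{thm:indexineq}) together with Lemma~\ref{lem:ht} gives $\op{ind}(C_i) \leq I(C_i)$ on non-connectors and $\op{ind}(\conn) \geq 0$ on connectors. Additivity of the Fredholm index yields $\sum_i \op{ind}(C_i) = \op{ind}(C(t)) = 1$. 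For a generic family $\{\cj_t\}$, every non-connector level in the limit has $\op{ind} \geq 0$, and the parametric moduli space of $\op{ind} = 0$ non-connector $\cj_t$-curves is one-dimensional, so only isolated such curves arise at each $t_i$; the $1$-parameter transversality argument, adapted to $\sj_\ds$ via Theorem~\ref{thm:ddacs} (applicable since $\op{deg}(\alpha,\beta) > 0$ rules out cylindrical domains by Lemma~\ref{lem:d0g0}), shows that at a generic bifurcation time exactly one non-connector $\op{ind} = 0$ curve appears. Combining these ingredients forces the building to contain a unique non-connector level of Fredholm index $1$ and, when a bifurcation is present, a unique additional non-connector level of Fredholm index $0$ (the handleslide), all remaining levels being connectors with $\op{ind} = 0$.

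Finally I would invoke Proposition~\ref{notopbottomconn} to locate the non-connector components. That result forbids connectors from occupying the top-most or bottom-most level of the limiting building. Consequently the two distinguished non-connector levels are forced to be $C_+ = C_1$ and $C_- = C_n$, with one of them of Fredholm index $1$ and the other of Fredholm index $0$, while the connectors $\conn$ lie strictly in between. This is precisely the asserted ECH handleslide configuration (i)--(iii).

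The main obstacle is the genericity step isolating $\op{ind} = 0$ non-connector handleslides to a discrete set of times $t_i$ and a single handleslide at each such time. This is essentially the standard $1$-parameter transversality argument applied to the universal moduli space over $\{\cj_t\}$ restricted to somewhere injective non-cylindrical curves, adapted to the domain-dependent setting via Theorem~\ref{thm:ddacs}; the hypothesis $\op{deg}(\alpha,\beta) > 0$ is essential here, as it both rules out cylindrical domains (for which domain-dependence would be unavailable) and enables the regularity results of Section~\ref{sec:modspcs} to apply uniformly along the interior of the path.
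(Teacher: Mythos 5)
Your skeleton --- SFT compactness, Fredholm index additivity, Lemma~\ref{lem:ht}, Proposition~\ref{notopbottomconn} --- is the right one and matches the paper's proof. However, invoking the ECH index inequality (Theorem~\ref{thm:indexineq}) to bound the Fredholm index of non-connector components is a genuine flaw: the proof of that inequality requires positivity of intersections, which does not hold for domain-dependent almost complex structures $\cj_{t_i} \in \sj_\ds$. The paper flags this explicitly, both in the remark following Theorem~\ref{thm:obg1} (the ECH index inequality ``does not apply for domain dependent curves, because its proof requires intersection positivity'') and at the opening of \S\ref{sec:handles}, where it is noted that the characterization of Proposition~\ref{lowiprop} does not apply to moduli spaces defined with domain-dependent almost complex structures. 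The paper's own proof avoids the ECH index inequality entirely.

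Fortunately your remaining ingredients suffice, and the paper's argument is essentially yours minus the ECH index appeal. Fredholm index conservation gives $\sum_i \op{ind}(C_i) = 1$; Lemma~\ref{lem:ht} gives $\op{ind}\geq 0$ on branched covers of trivial cylinders; parametric transversality (via the domain-dependent regularity of Theorem~\ref{thm:ddacs}, as you note) gives $\op{ind}\geq 0$ on somewhere-injective non-connectors in a generic one-parameter family; and Proposition~\ref{notopbottomconn} then pins down the configuration. The one piece you should still supply is the identification of the middle-level components as connectors (rather than arbitrary index-$0$ curves) and the $\R$-invariant cylinders at the extreme levels as unbranched: the paper derives both from Lemma~\ref{lem:ht} together with the obstruction-bundle-gluing framework --- specifically Definition~\ref{def:close} (the set $\mathcal{G}_\delta$) and condition (d) of Definition~\ref{def:gluepair} --- rather than from any index inequality.
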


\begin{proof}
By the compactness theorem in \cite{BEHWZ}, any sequence of ECH and Fredholm index 1 curves in $\M^{\cj_t}(\alpha,\beta)$ has a subsequence which converges to some broken curve as $t \to t_i$.  Moreover, the indices of the levels of the broken curve sum to 1.  By Proposition \ref{notopbottomconn} we cannot have connectors appear at the top most or bottom most level.  Moreover by compactness and the conservation of Fredholm index, a Fredholm index one connector cannot appear as a middle level in a handleslide building.   If the sequence is close to breaking, cf. Definition \ref{def:close}, then by Lemma \ref{lem:ht} and the definition of $\mathcal{G}_\delta$, one of the following two scenarios occurs:
\begin{enumerate}[(i)]
\item The top most level of the broken curve contains the index 1 component $C_+$ and some lower level contains the index 0 ECH handleslide $C_-$. 
\item The bottom most level of the broken curve contains the index 1 component, $C_-$, and the top most level contains the index 0 ECH handleslide,  $C_+$. 
\end{enumerate}
Moreover, all other components of all levels are index zero branched covers of $\R$-invariant cylinders, e.g. connectors.  By analogy with condition (d) in the definition of a gluing pair, Definition \ref{def:gluepair}, any covers of $\R$-invariant cylinders in the top and bottom levels of the broken curve must be unbranched.  
\end{proof}

Finally, we review the possible bifurcations that appear in a generic 1-parameter family $\{ \cj_t \}_{t\in[0,1]}$:
\begin{proposition}
Fix  a nondegenerate contact form $\lambda$. Then for a 1-parameter family $\{ \cj \}_{t\in[0,1]}$ of $\lambda$-compatible domain dependent almost complex structures with fixed endpoints we may arrange that the only possible bifurcations are:
\begin{enumerate}[\em(a)]
        \setlength{\itemsep}{0pt}
\item A cancellation of two oppositely signed holomorphic curves.
\item An ECH handleslide. 
\end{enumerate}
\end{proposition}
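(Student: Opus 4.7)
The plan is to apply parametric transversality to the universal moduli space over the family and then use SFT compactness together with the connector classification of Lemma \ref{lem:connectortopology} to pin down which degenerations can appear at non-generic times. Fix admissible orbit sets $\alpha, \beta$ with $I(\alpha,\beta)=1$ and $\op{deg}(\alpha,\beta) > 0$; the case $\op{deg}(\alpha,\beta)=0$ is covered by domain-independent automatic transversality for cylinders as in Proposition \ref{cylinder-to-morse}, so it suffices to treat the domain-dependent setting.

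First, I would upgrade Theorem \ref{thm:ddacs} to a parametric version: for a generic smooth path $\{\cj_t\}_{t\in[0,1]}$ with $\cj_0, \cj_1$ fixed regular endpoints, the universal moduli space
\[
\widetilde{\M}(\alpha,\beta) := \{(t,C) \ | \ t\in[0,1],\ C \in \M^{\cj_t}(\alpha,\beta)\}
\]
is a smooth manifold of dimension $\ind(C) + 1 = 2$. This follows from the same analytic scheme used to prove Theorem \ref{thm:ddacs}: one enlarges the domain of the universal Cauchy--Riemann operator to include the path parameter and applies Sard--Smale. The projection $\pi:\widetilde{\M}(\alpha,\beta)\to[0,1]$ is then a Fredholm map of index $1$; its set of regular values is open and dense, and its critical values form the discrete subset $\{t_0,\dots,t_\ell\}\subset(0,1)$.

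Second, I would apply the compactness theorem of \cite{BEHWZ} at each critical time $t_i$. Given a sequence $C(t_k)\in\M_{t_k}(\alpha,\beta)$ with $t_k \to t_i$, after passing to a subsequence we obtain either (i) a limit $C\in\M_{t_i}(\alpha,\beta)$ that is not cut out transversely, or (ii) a genuine multi-level $\cj_{t_i}$-holomorphic building. Case (i) is an interior failure of transversality; generically along the path this is a fold singularity of $\pi$, which is exactly the birth/death bifurcation \textbf{(a)}. Standard Kuranishi-model arguments (or the parametric analogue of \cite[Thm.~0]{Wtrans}) show that near a fold the moduli space $\M_t$ acquires or loses a pair of oppositely signed curves as $t$ crosses $t_i$.

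Third, in case (ii) I would use index additivity to constrain the building. Since $\sum_j \ind(C_j) = 1$ and each connector has $\ind \ge 0$ by Lemma \ref{lem:ht}, exactly one level contains a non-connector component of Fredholm index 1 (or a pair of components summing to $1$), and all other components are connectors of index $0$. Proposition \ref{notopbottomconn} forbids connectors from appearing at the top-most or bottom-most level, so the building must be of the shape described in Lemma \ref{lem:slide}: a top-level $C_+$, a middle stack of index $0$ connectors, and a bottom-level $C_-$, with $\{\ind(C_+),\ind(C_-)\}=\{0,1\}$. This is precisely the ECH handleslide case \textbf{(b)}.

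The hard part will be the codimension bookkeeping that ensures no more complicated bifurcations appear for a generic path. Specifically, one must verify that buildings with two or more non-connector levels, buildings supported on orbit sets $\gamma$ for which $I(\alpha,\gamma)\neq 0$, or simultaneous birth/death and handleslide events, all correspond to strata of codimension at least $2$ in the space of paths and are therefore avoided after a generic perturbation. This requires a careful gluing-theoretic dimension count in the obstruction bundle framework of \cite{obg1,obg2}, combined with the observation that each connector at an intermediate level absorbs one dimension of moduli via its $\R$-translation freedom. The remaining verification, that each such $t_i$ is isolated and that the bifurcations (a), (b) exhaust the possibilities, is then a direct consequence of the Fredholm index count together with the compactness/connector classification above.
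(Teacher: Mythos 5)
Your approach is essentially the one the paper has in mind, and it lines up with the supporting machinery it actually develops: the parametric-transversality/SFT-compactness/index-additivity chain you outline is exactly the content of Lemma~\ref{lem:slide}, which uses \cite{BEHWZ} compactness, Lemma~\ref{lem:ht}, and Proposition~\ref{notopbottomconn} to constrain the limit buildings to the handleslide shape. Note, however, that the paper does not actually prove this proposition; it states it and cites the corresponding bifurcation analyses in Morse theory (\cite[Lemma~2.11(b)]{torsion}) and in Seiberg--Witten Floer theory (end of \cite{taubes-wconj}), relying on Lemma~\ref{lem:slide} and Proposition~\ref{notopbottomconn} for the concrete curve-level statements. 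Your sketch therefore goes further than the text by spelling out the parametric Sard--Smale step and the fold-singularity reading of case (a), and you correctly flag the residual codimension bookkeeping (ruling out buildings with two or more non-connector levels, simultaneous events, etc.) as the genuinely nontrivial step that the paper outsources to the cited references. One small caution: since the ECH index inequality depends on intersection positivity and does not apply verbatim for domain-dependent almost complex structures (the paper makes this point explicitly just before Lemma~\ref{lem:slide}), the index constraints you use should be phrased in terms of Fredholm index nonnegativity for connectors (Lemma~\ref{lem:ht}) rather than $I \geq 0$ for arbitrary currents, as you in fact do in the body of your argument but not in the aside about ``orbit sets $\gamma$ for which $I(\alpha,\gamma)\neq 0$.''
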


In the case of Morse theory, the corresponding transversality statement is \cite[Lemma 2.11(b)]{torsion}.  In the context of Seiberg-Witten Floer homology, Taubes completes this bifurcation analysis at the end of \cite{taubes-wconj}.  Note that cancellation of two oppositely signed curves does not change the differential.  The presence of an ECH handleslide does, change the differential, but in \S \ref{sec:ECHok}, we show that it does not have an impact after passing to homology.

\subsection{Recap of obstruction bundle gluing}\label{sec:obg}
In this section we collect the results from \cite{obg1} that will be used in the proof of Proposition \ref{ECH-ok}. We state everything in the context considered in proving $\partial^2=0$, and explain in a subsequent remark the difference and continued applicability in the setting under consideration.   In connection with the index calculations for branched covered cylinders over an elliptic embedded Reeb orbit, cf. Lemma \ref{lem:ht}, we can define a partial order on the associated set of partitions, which will be used in the construction of a gluing pair.

\begin{definition}[Partial order $\geq_\vartheta$]\label{def:partial}
Let $\gamma$ be a nondegenerate elliptic embedded Reeb orbit with a fixed irrational rotation number $\vartheta$, cf. \S \ref{cz-sec}. Writing $\alpha = \left(\gamma^{a_1},...,\gamma^{a_k}\right)$ and $\beta=\left(\gamma^{b_1},...,\gamma^{b_\ell}\right)$, consider $C \in \M^J(\alpha,\beta)$a branched cover of $\R \times \gamma$.  We say
\[
(a_1,...,a_k) \geq_\vartheta (b_1,...,b_\ell)
\]
whenever there exists an index zero branched cover of $\R \times \gamma \in \M^J\left(\left(\gamma^{a_1},...,\gamma^{a_k}\right),\left(\gamma^{b_1},...,\gamma^{b_\ell}\right)\right).$
\end{definition}

Following \cite[\S 1.3-1.4]{obg1} we define a gluing pair, prepare for the definition of the count $\#G(C_+, C_-)$, and state the main obstruction bundle gluing theorem.  \begin{definition}\label{def:gluepair}
A \emph{gluing pair} is a pair of immersed $J$-holomorphic curves $C_+(\alpha,\gamma_+)$ and $C_-(\gamma_-,\beta)$ such that:
\begin{enumerate}[(a)]
\item $\ind(C_+)=\ind(C_-)=1$.
\item $C_+$ and $C_-$ are not multiply covered, except that they may contain unbranched covers of $\R$-invariant cylinders.  
\item For each embedded Reeb orbit $\gamma$, the total covering multiplicity of Reeb orbits covering $\gamma$ in the list $\gamma_+$ is the same as the total for $\gamma_-$.  (In contrast, for the usual form of Floer theory gluing, one would assume that $\gamma_+=\gamma_-$.)
\item If $\gamma$ is an elliptic embedded Reeb orbit with rotation angle $\vartheta$, let $m_1',...,m_k'$ denote the covering multiplicities of the $\R$-invariant cylinders over $\gamma$ in $C_+$ and let $n_1',...,n_j'$  denote the corresponding multiplicities in $C_-$.  Then under the partial order $\geq_\vartheta$ in Definition \ref{def:partial}, the partition $(m_1',...,m_k')$ is minimal, and the partition $(n_1',...,n_j')$ is maximal.
\end{enumerate}
\end{definition}

Let $(C_+,C_-)$ be a gluing pair.  The main gluing result of \cite{obg1,obg2} computes an integer $\#G(C_+,C_-)$ which, roughly speaking is a signed count of ends of the index two part of the moduli space $\M^J(\alpha,\beta)/\R$ that break into $C_+$ and $C_-$ along with some index zero connectors (branched covers of $\R$-invariant cylinders between them).  When $C_\pm$ contain covers of $\R$-invariant cylinders, there are some subtleties which require the use of condition (d) above in showing that $\#G(C_+,C_-)$ is well-defined.

Before giving the definition of the count $\#G(C_+,C_-)$, we first define a set $\mathcal{G}_\delta(C_+,C_-)$ of index two curves in $\M^J(\alpha,\beta)$ which, are close to breaking in the above manner.  For the following definition, choose an arbitrary product metric on $\R \times Y$.

\begin{definition}\label{def:close}
For $\delta >0$, define $\mathfrak{C}_\delta(C_+,C_-)$ to be the set of immersed (except possibly at finitely many singular points) surfaces in $\R \times Y$ that can be decomposed as $\mathbf{C}_- \cup \mathbf{C}_0 \cup \mathbf{C}_+$ such that the following hold:
\begin{itemize}
\item There is a real number $R_-$, and a section $\psi_-$ of the normal bundle to $C_-$ with $|\psi_-|<\delta$, such that $\mathbf{C}_-$ is the set $s \mapsto s + R_-$ translate of the $s \leq \frac{1}{\delta}$ portion of the image of $\psi_-$ under the exponential map.
\item Similarly, there is a real number $R_+$, and a section $\psi_+$ of the normal bundle to $C_+$ with $|\psi_+|<\delta$, such that $\mathbf{C}_+$ is the set $s \mapsto s + R_+$ translate of the $s \geq -\frac{1}{\delta}$ portion of the image of $\psi_+$ under the exponential map.
\item $R_+ - R_- > \frac{2}{\delta}.$
\item  $\mathbf{C}_0 $ is contained in the union of the radius $\delta$ tubular neighborhoods of the cylinders $\R \times \gamma$, where $\gamma$ ranges over the embedded Reeb orbits covered by orbits in $\gamma_\pm$
\item $\partial \mathbf{C}_0 = \partial \mathbf{C}_- \sqcup \partial \mathbf{C}_+$, where the positive boundary circles of $\mathbf{C}_-$ agree with the negative boundary circles of $\mathbf{C}_0$, and the positive boundary circles of $\mathbf{C}_0$ agree with the negative boundary circles of $\mathbf{C}_+$.
\end{itemize}
Let $\mathcal{G}_\delta(C_+,C_-)$ denote the set of index two curves in $\M^{J}(\alpha,\beta) \cap \mathfrak{C}_\delta(C_+,C_-)$.
\end{definition}

To see that this definition works as expected, we have the following lemma.  We include the proof, as it elucidates why we can invoke the obstruction bundle gluing formalism in the setting under consideration.

\begin{lemma}\label{lem:glue}
Given a gluing pair $(C_+,C_-)$, there exists $\delta_0 >0$ with the following property.  Let $\delta \in (0,\delta_0)$ and let $\{C(k) \}_{k=1,2,...}$ be a sequence in $\mathcal{G}_\delta(C_+,C_-)/\R$.  Then there is a subsequence which converges in the sense of \cite{BEHWZ} either to a curve in $\M^{J}(\alpha,\beta)/\R $ or to a broken curve in which the top level is $C_+$, the bottom level is $C_-$, and all intermediate levels are unions of index zero branched covers of $\R$-invariant cylinders.  
\end{lemma}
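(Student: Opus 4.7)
The plan is to apply the SFT compactness theorem of \cite{BEHWZ} to a subsequence of $\{C(k)\}$, and then exploit the constraints imposed by the $\mathcal{G}_\delta(C_+,C_-)$ condition along with index conservation to pin down the structure of the limiting building. First I would choose $\delta_0$ small enough that the radius $\delta_0$ tubular neighborhoods of the trivial cylinders $\R\times\gamma$ appearing in the definition of $\mathbf{C}_0$ are disjoint from each other, lie outside neighborhoods of any other Reeb orbits of action $\leq\mathcal{A}(\alpha)$, and are small enough to be ``tubular" in the sense compatible with Siefring's asymptotic analysis (so that any $J$-holomorphic component lying entirely inside such a neighborhood and asymptotic to covers of $\gamma$ is forced to be a branched cover of $\R\times\gamma$).

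Next, since every $C(k)\in\mathcal{G}_\delta(C_+,C_-)/\R$ has ECH index $2$ and fixed asymptotics $\alpha,\beta$, SFT compactness yields a subsequence converging to a holomorphic building $\mathcal{B}$ in $\M^J(\alpha,\beta)/\R$, with total Fredholm index $2$. Using the $\mathcal{G}_\delta$ decomposition $\mathbf{C}_-\cup\mathbf{C}_0\cup\mathbf{C}_+$ and the associated real numbers $R_\pm(k)$, I would split into the two cases: if $R_+(k)-R_-(k)$ stays bounded along the subsequence, then after modding out by $\R$ translation the sequence $\{C(k)\}$ does not break, producing a genuine limit in $\M^J(\alpha,\beta)/\R$, which is the first alternative of the lemma. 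Otherwise $R_+(k)-R_-(k)\to\infty$ and the building $\mathcal{B}$ has at least two levels. The top level is the limit (after $\R$-translation by $-R_+(k)$) of $\mathbf{C}_+$, which by construction is the image of $C_+$ under a normal graph of size $<\delta$; extracting a further subsequence, the normal sections converge, and by the standard uniqueness of asymptotic limits this graph limit must be a $J$-holomorphic curve with the same asymptotics as $C_+$ lying in a $C^0$-neighborhood of $C_+$. The same analysis applied to $C_-$ identifies the bottom level; for $\delta_0$ small enough, $C_\pm$ are the only such curves in these neighborhoods, and so the top and bottom levels are exactly $C_+$ and $C_-$.

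The remaining intermediate levels of $\mathcal{B}$ form a subbuilding whose positive asymptotics match the negative ends of $C_+$ (i.e.\ $\gamma_+$) and whose negative asymptotics match the positive ends of $C_-$ (i.e.\ $\gamma_-$). Moreover, by construction each intermediate level is a limit of pieces of $C(k)$ sitting inside the radius $\delta$ tubular neighborhoods of the $\R$-invariant cylinders $\R\times\gamma$ for $\gamma$ an embedded orbit underlying $\gamma_\pm$. The choice of $\delta_0$ ensures that any such $J$-holomorphic component is a branched cover of the corresponding trivial cylinder. Conservation of Fredholm index gives that the intermediate levels have total Fredholm index $2-\op{ind}(C_+)-\op{ind}(C_-)=0$, and Lemma~\ref{lem:ht} asserts that each connected component of a branched cover of a trivial cylinder has nonnegative Fredholm index; hence every intermediate component has Fredholm index exactly $0$. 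The corresponding ECH index statement follows from the additivity of $I$ and $I(C(k))=2=I(C_+)+I(C_-)$.

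The main obstacle is the identification of the top and bottom levels of $\mathcal{B}$ with $C_+$ and $C_-$ themselves rather than just with curves $C^0$-close to them. This requires choosing $\delta_0$ uniformly small with respect to the family of $J$-holomorphic curves asymptotic to $\alpha,\gamma_+$ and $\gamma_-,\beta$ respectively, using that immersed somewhere-injective curves are isolated in their moduli spaces near $C_\pm$ (both of which have Fredholm index one and are thus rigid modulo $\R$-translation). A subtlety is that $C_\pm$ may contain unbranched covers of $\R$-invariant cylinders by Definition~\ref{def:gluepair}(b); condition (d) in the definition of a gluing pair, together with the partial order $\geq_\vartheta$ of Definition~\ref{def:partial}, is what rules out stray index-zero branched covers from migrating into the top or bottom level during the degeneration, and I would invoke this exactly when distributing the trivial-cylinder covers among the three parts $\mathbf{C}_-,\mathbf{C}_0,\mathbf{C}_+$ of the limit.
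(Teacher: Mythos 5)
Your proof is correct and takes essentially the same approach as the paper: apply SFT compactness, use the $\mathcal{G}_\delta$ constraint together with Lemma~\ref{lem:ht} to force all extra components to be index-zero covers of $\R$-invariant cylinders, and invoke gluing-pair condition (d) to keep branched covers out of the extremal levels. Your case split on the boundedness of $R_+(k)-R_-(k)$ is a slightly more explicit reformulation of the paper's dichotomy between the one-level and multi-level limits, and your rigidity argument for identifying the top and bottom levels with $C_\pm$ supplies more detail than the paper's terse appeal to condition~(d), but the overall structure matches.
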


\begin{proof}
By the compactness theorem in \cite{BEHWZ}, any sequence of index 2 curves in $\M^J(\alpha,\beta)/\R$ has a subsequence which converges to some broken curve.  Moreover, the indices of the levels of the broken curve sum to 2.  If the sequence is in $\mathcal{G}_\delta(C_+,C_-)/\R$ with $\delta>0$ sufficiently small then by Lemma \ref{lem:ht} and the definition of $\mathcal{G}_\delta$, one of the following two scenarios occurs:
\begin{enumerate}[(i)]
\item One level of the broken curve contains the index 1 component of $C_+$, and some lower level contains the index 1 component of $C_-$.
\item Some level contains two index 1 components or one index 2 component.
\end{enumerate}
Moreover, all other components of all levels are index zero branched covers of $\R$-invariant cylinders.  By condition (d) in the definition of a gluing pair, any covers of $\R$-invariant cylinders in the top and bottom levels of the broken curve must be unbranched.  It follows that in Case (i), the top level is $C_+$ and the bottom level is $C_-$, while in Case (ii), there are no other levels.
\end{proof}

\begin{definition}\label{def:U}
Fix coherent orientations and generic $\lambda$-compatible $J$ and let $(C_+,C_-)$ be a gluing pair.  If $\delta \in (0,\delta_0)$, then by Lemma \ref{lem:glue}, one can choose an open set $U \subset \M^J(\alpha,\beta)/\R$ such that:
\begin{itemize}
\item $\mathcal{G}_{\delta'}(C_+,C_-)/\R \subset U \subset \mathcal{G}_\delta(C_+,C_-)/\R$ for some $\delta' \in (0,\delta)$.
\item The closure $\overline{U}$ has finitely many boundary points.
\end{itemize} 
Define $\# G(C_+,C_-) \in \Z$ to be minus the signed count of boundary points of $\overline{U}$.  By Lemma \ref{lem:glue}, this does not depend on the choice of $\delta$ or $U$.
\end{definition}

Note that by Lemma \ref{lem:ht}, if $\#G(C_+,C_-)\neq 0$ then for each hyperbolic Reeb orbit $\gamma$, the multiplicities of the negative ends of $C_+$ at covers of $\gamma$ agree, up to reordering, with the multiplicities of the positive ends of $C_-$ at covers of $\gamma$.  When this is the case, assume that the orderings of the negative ends of $C_+$ and of the positive ends of $C_-$ are such that for each positive hyperbolic orbit $\gamma$, the aforementioned multiplicities appear in the same order for $C_+$ and for $C_-$.  With this ordering convention, the statement of the main gluing theorem is as follows:

\begin{theorem}{\em \cite[Theorem 1.13]{obg1}}\label{thm:obg}\label{thm:obg1}
Fix coherent orientations.  If $J$ is generic and if $(C_+,C_-)$ is a gluing pair then
\begin{equation}
\#G(C_+,C_-) = \epsilon(C_+)\epsilon(C_-)\prod_\gamma c_\gamma(C_+,C_-).
\end{equation}
Here the product is over embedded Reeb orbits $\gamma$ such that $C_+$ has a negative end at a cover of $\gamma$.  The integer $c_\gamma(C_+,C_-)$ depends only on $\gamma$ and on the multiplicities of the $\R$-invariant and non-$\R$-invariant negative ends of $C_+$ and positive ends of $C_-$ at covers of $\gamma$.
\end{theorem}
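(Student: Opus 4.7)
The plan is to follow the obstruction bundle gluing strategy of Hutchings--Taubes. First, for each large gluing parameter $R$ and each branched cover $\Sigma_0$ of an $\R$-invariant cylinder interpolating between the negative ends of $C_+$ and the positive ends of $C_-$, I would construct a preglued approximate solution $\Psi_R(C_+,\Sigma_0,C_-)$ by cutting off the relevant ends and patching them together along a long cylindrical neck of length $\sim 2R$. The moduli space of such intermediate branched covers $\Sigma_0$ decomposes as a product $\prod_\gamma \mathcal{M}_\gamma$ over those embedded Reeb orbits $\gamma$ for which $C_+$ has a negative end at a cover of $\gamma$, where $\mathcal{M}_\gamma$ parametrizes branched covers of $\R\times\gamma$ with the multiplicities of the negative ends of $C_+$ at the top and the multiplicities of the positive ends of $C_-$ at the bottom. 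Condition (d) in Definition \ref{def:gluepair} ensures that $\mathcal{M}_\gamma$ has the ``right'' top and bottom strata to support a well-defined signed count.

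Next, I would analyze the linearized Cauchy--Riemann operator at $\Psi_R$. Branched covers of $\R$-invariant cylinders over elliptic orbits are generically not cut out transversely (only their Fredholm index is nonnegative by Lemma \ref{lem:ht}), so the linearization has nontrivial cokernel even in the limit $R\to\infty$. These cokernels glue into a finite-rank obstruction bundle $\mathcal{O}\to\prod_\gamma\mathcal{M}_\gamma$, and a contraction-mapping argument in appropriate weighted Sobolev spaces yields a gluing section $\mathfrak{s}_R$ of $\mathcal{O}$ whose transverse zeros biject with honest $J$-holomorphic curves close to breaking into $(C_+,\Sigma_0,C_-)$. By the analogue of Lemma \ref{lem:glue} together with the implicit function theorem away from the zero set, the boundary points of the set $\bar U$ from Definition \ref{def:U} biject (with signs matching those of $\mathfrak{s}_R$ up to the global factor $\epsilon(C_+)\epsilon(C_-)$ coming from coherent orientations) with the signed zeros of $\mathfrak{s}_R$ for $R$ large. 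Hence $\#G(C_+,C_-)=\epsilon(C_+)\epsilon(C_-)\,\#\mathfrak{s}_R^{-1}(0)$.

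The factorization over $\gamma$ then comes from a corresponding splitting of both the obstruction bundle and the section. The obstruction bundle decomposes as $\mathcal{O}=\bigoplus_\gamma\mathcal{O}_\gamma$ where $\mathcal{O}_\gamma$ is supported on $\mathcal{M}_\gamma$, because a cokernel element concentrated near $\R\times\gamma$ only pairs with deformations whose ends approach covers of $\gamma$. To leading order in $R$, the section $\mathfrak{s}_R$ likewise decouples into components $\mathfrak{s}_\gamma$ on $\mathcal{M}_\gamma$ whose precise form is governed by the eigenvalues and eigenfunctions of the asymptotic operator at $\gamma$, together with the partition data of the ends of $C_\pm$ at covers of $\gamma$. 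One then \emph{defines} $c_\gamma(C_+,C_-)$ to be the signed count of zeros of $\mathfrak{s}_\gamma$; by construction it depends only on $\gamma$ and the end multiplicities listed in the theorem, and the product of these counts gives $\#\mathfrak{s}_R^{-1}(0)$.

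The main obstacle is the asymptotic analysis that justifies the factorization and ensures $\mathfrak{s}_R$ is a proper section of $\mathcal{O}$ on an appropriate compactification of $\mathcal{M}_\gamma$. One must track the leading behavior of $\mathfrak{s}_R$ uniformly as branch points of $\Sigma_0$ collide or escape along the cylindrical neck, since these degenerations correspond to further strata in $\overline{\mathcal{M}_\gamma}$ where $\mathcal{O}_\gamma$ may change rank. Without sharp exponential decay estimates on pseudoholomorphic cylinders approaching $\gamma$ and a delicate iterated gluing near the boundary strata of $\overline{\mathcal{M}_\gamma}$, one cannot show either that the zero count is independent of auxiliary choices or that it reduces to a purely combinatorial invariant of the asymptotic data; this analysis constitutes the technical heart of \cite{obg1,obg2}, and any implementation for the present purposes would have to cite it wholesale rather than redo it.
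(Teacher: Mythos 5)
This theorem is quoted verbatim from Hutchings--Taubes \cite[Theorem 1.13]{obg1}, and the paper supplies no proof of its own: the preceding remark explicitly says the explicit computation of $c_\gamma(C_+,C_-)$ is omitted and the reader is pointed to \cite[\S 1.5--1.6]{obg1}. Your sketch is a faithful high-level summary of the actual obstruction-bundle strategy from \cite{obg1,obg2} (pregluing over intermediate branched covers, the splitting of the intermediate moduli space and the obstruction bundle over embedded Reeb orbits $\gamma$, the gluing section $\mathfrak{s}_R$, its zero count reproducing $\#G(C_+,C_-)$, and the properness/compactification issues as the technical heart), and you correctly acknowledge at the end that these estimates must be cited wholesale rather than redone. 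So your proposal is consistent with how the paper uses the result — as an imported black box — but the paper itself contains no argument against which the finer points of your sketch could be checked.
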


We omit the discussion of the explicit computation of the gluing coefficient $c_\gamma(C_+,C_-)$, as we will multiply this by zero; the budding obstruction bundle enthusiast can find further details in \cite[\S 1.5-1.6]{obg1}.  

\begin{remark}
We have that $c_\gamma(C_+,C_-)=1$ if and only if the ECH partition conditions are met, cf. \S \ref{ECH-ineq}.  However, we cannot guarantee this in practice, as such an argument typically relies on the ECH index inequality, Theorem \ref{thm:indexineq}, which does not apply for domain dependent curves, because its proof requires intersection positivity.
\end{remark}

As a result of \S \ref{sec:handles} (and the classification of connectors in \S \ref{sec:connectors}), we can analogously define  a gluing pair for an ECH handleslide building and invoke Theorem \ref{thm:obg1}.  It remains to explain how this yields \eqref{eq:handle}.

\begin{remark}\label{rem:handleslide}
Roughly speaking and along the lines of \cite{fl}, \cite[\S 3.3]{torsion}, for $\epsilon$ small, the results of \S \ref{sec:handles} yield a one-dimensional cobordism:
\[
\partial \left( \bigcup_{t\in[t_i - \epsilon, t_i + \epsilon]}  \mathcal{M}_{t}(\alpha,\beta) \right) =  \mathcal{M}_{t_i + \epsilon}(\alpha,\beta) -  \mathcal{M}_{t_i + \epsilon}(\alpha,\beta) \mp  \mathcal{M}_{t_i}(\alpha,\gamma_+) \bigsqcup_{\conn \in \M_{t_i}(\gamma_+,\gamma_-)} \mathcal{M}_{t_i}(\gamma_-,\beta).
\]
However, technically speaking we cannot compute the boundary of the compactified moduli space on the left hand side.  Instead, we must truncate this moduli space in order to invoke the obstruction bundle gluing theorem and  obtain \eqref{eq:handle} similarly to the proof that the ECH differential squares to zero, cf.  \cite[Theorem 7.20]{obg1}.  In particular, if $(C_+,C_-)$ is a gluing pair arising from an ECH handleslide in which $C_+ \in \mathcal{M}_{t_i}(\alpha,\gamma_+) $ and $C_- \in \mathcal{M}_{t_i}(\gamma_-,\beta)$, let $V(C_+,C_-) \subset \M_t(\alpha,\beta)$  for $t\in[t_i - \epsilon, t_i + \epsilon]$ be an open set like the open set $U$ in Definition \ref{def:U}, but where the curves do not have any asymptotic markings or orderings of the ends.  We truncate the interior of the cobordism by removing curves which are close to breaking,
\[
\overline \M: = \bigcup_{t\in[t_i - \epsilon, t_i + \epsilon]}  \mathcal{M}_{t}(\alpha,\beta) \ \setminus \bigsqcup_{(C_-,C_+)} V(C_-,C_+).
\]
By the analogue of \cite[Lem.~7.23]{obg1}, namely Lemma \ref{lem:slide}, we have that $\overline \M$ is compact. Because the handleslide is isolated, the signed count of truncated boundary points is
\[
0 = \# \partial \overline{\M} = - \# \partial \overline{V(C_+,C_-)}.
\]
The count $\#G(C_+,C_-) = -\#\partial\overline{U}$ distinguishes curves in $\partial\overline{U}$  that have different asymptotic markings and orderings of the ends, but represent the same element of $\partial\overline{V(C_+,C_-)}$, resulting in \eqref{eq:handle}.

\end{remark}

\subsection{Proof of Proposition \ref{ECH-ok}}\label{sec:ECHok}
Similarly to \cite[\S 7.1.1]{farris}, we complete the proof of Proposition \ref{ECH-ok}, by demonstrating that the occurrence of ECH handleslides do not impact the homology.

\begin{proof}[Proof of Proposition~\ref{ECH-ok}.]
Number the ECH handleslides $t_0,...,t_k$.  We omit the cancellation bifurcations as they do not change the curve counts, and note that one should occur before the first ECH handleslide, as otherwise the moduli spaces under consideration are empty {by Corollary \ref{nogenus}}. Without loss of generality we will always assume that  $C_+$ is the index 1 curve and $C_-$ is the index 0 ECH handleslide curve in an ECH handleslide building.

By Remark \ref{rem:handleslide} we have that at each handleslide $t_i$,
\begin{equation}\label{obg-handleslide}
{\# \mathcal{M}_{t_i + \epsilon}(\alpha,\beta)= \# \mathcal{M}_{t_i - \epsilon}(\alpha, \beta) + \# G(C_+, C_-) \cdot \# \mathcal{M}_{t_i }(\alpha, \gamma_+),}
\end{equation}
where $\gamma_+$ is another (admissible) Reeb current such that $I(\alpha,\gamma_+)=1$ with $C_+ \in \mathcal{M}_{t_i }(\alpha, \gamma_+)$. Note that the connector $\conn \in \mathcal{M}_{t_i }(\gamma_+, \gamma_-)$ and the ECH handleslide curve $C_- \in \mathcal{M}_{t_i }(\gamma_-, \beta)$.  

Since $\cj_0:=\bj$ is a generic $S^1$-invariant domain dependent almost complex structure, by Corollary \ref{nogenus},
\[
\M_{t_0-\epsilon}(\alpha,\beta) = \emptyset.
\]
If we can show for all possible $\gamma_+$ that 
\begin{equation}\label{desire}
\# \M_{t_0}(\alpha, \gamma_+) =0, 
\end{equation}
then \eqref{obg-handleslide} yields
\[
\# \M_{t_0+\epsilon}(\alpha,\beta) = \# \M_{t_0-\epsilon}(\alpha,\beta) + 0 =0. 
\]
An inductive argument on $k \in {0,...,\ell}$, where each $t_k$ realizes an ECH handleslide, would then complete the proof.

It remains to show \eqref{desire}; this will be done by a reductive degree argument. The degree of a connector, as defined in \S \ref{degree}, is always zero.  The degree is also additive, hence 
\[
\op{deg}(C(t)) = \op{deg}(C_+) + \op{deg}(C_-).
\]
Moreover, if the ECH handleslide curve $C_-$ has degree 0, then it is a union of branched covers of cylinders, at least one of which is not a trivial cylinder, as otherwise $C_-$ contain a connector.  But a nontrivial cylinder, and hence the union of cylinders including it, has positive Fredholm index.  Thus the ECH handleslide curve $C_-$ must have positive degree (and positive genus) by Lemma \ref{lem:d0g0}.  Hence, if at a handleslide, $\{C(t) \} \in \M_t(\alpha,\beta)$ converges to an ECH handleslide building $(C_+,\conn, C_-)$ then 
\begin{equation}\label{eq:deg}
\op{deg}(C_+) < \op{deg}(C(t)).
\end{equation}
Going back to the task at hand, consider $\M_{t_0}(\alpha,\gamma_+)$, a smooth one dimensional moduli space of Fredholm and ECH index 1 curves.  We wish to show that 
\[
\# \M_{t_0}(\alpha, \gamma_+) =0.  
\]
This will be accomplished by inductive iteration:  
\begin{enumerate}
\item  Consider another generic one parameter family  $\{\cj'_t\}_{t \in [0,t_0]}$ of domain dependent almost complex structures from $\bj_0'$ to  $\bj_{t_0}$.  Note that before the first handleslide, call it $t_0'$, in this new deformation we have 
\[
\M_{t_0'-\epsilon}(\alpha,\gamma_+) = \emptyset.
\]
\item Use \eqref{obg-handleslide} to relate the curve counts occurring immediately prior to and following the first appearance of an ECH handleslide at $t_0'$ in this new deformation $\{\cj_t'\}_{t\in [0,t_0]}$:
\[
\begin{array}{lcl}
\# \mathcal{M}_{t_0' + \epsilon}(\alpha,\gamma_+) &= &\# \mathcal{M}_{t_0' - \epsilon}(\alpha, \gamma_+) + \# G(C_+', C_-') \cdot \# \mathcal{M}_{t_0' }(\alpha, \gamma'_+) \\
&= & \# G(C_+', C_-') \cdot \# \mathcal{M}_{t_0' }(\alpha, \gamma'_+). \\
\end{array}
\]
\item  Observe the degree reduction  \eqref{eq:deg} for the resulting ECH index 1 component $C_+' \in \M_{t_0'}(\alpha,\gamma'_+)$ in the handleslide, namely,
\[
\op{deg}(C_+')< \op{deg}(C_+)
\]
 because the index 0 ECH handleslide curve must always have positive degree.  
\end{enumerate}
We repeat this process until either the resulting index 1 curve $C_+^{(k)}\in\M_{{t}^{(k)}_0}(\alpha, \gamma^{(k)}_+)$ arising from \eqref{obg-handleslide} associated to $\left\{\cj^{(k)}_t\right\}$ for ${t \in \left[0,t_0^{(k-1)}\right]}$ at the ``next" handleslide at time $t_0^{(k)}$ can no longer degenerate via handleslides or is a degree 1 curve. (Note that we could also stop at degree 2, since transversality for degree 1 curves can be achieved by $S^1$-invariant domain dependent almost complex structures.)  This permits us to conclude that \[
\# \M_{t_0}(\alpha, \gamma_+) =0.  
\]
for all admissible $\gamma_+$ with $I(\alpha,\gamma_+)=1$ as desired.

\end{proof}


\section{Computation of $ECH$}\label{sec:finalcomp}

In this section we prove Theorem \ref{thm:mainthm}. 

Before invoking the results in \S\ref{sec:modspcs}-\S\ref{handleslides} to prove the first and second conclusions of Theorem \ref{thm:mainthm}, which relate the chain complex $\lim_{L\to\infty}ECH_*^L(Y,\lambda_{\varepsilon(L)},\Gamma)$ to $\Lambda^*H_*(\Sigma_g,\Z_2)$, we must first relate $\lim_{L\to\infty}ECH_*^L(Y,\lambda_{\varepsilon(L)},\Gamma)$ to the ECH of the original prequantization bundle. In \S\ref{subsec:directlimit}, we prove 
\begin{theorem}\label{thm:dlisECH} With $Y,\lambda,\varepsilon(L)$ as in Lemma \ref{lem:efromL}, for any $\Gamma \in H_1(Y;\Z)$,
\[
\lim_{L\to\infty}ECH_*^L(Y,\lambda_{\varepsilon(L)},\Gamma)=ECH_*(Y,\xi,\Gamma).
\]
\end{theorem}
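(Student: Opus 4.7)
The plan is to factor the direct limit through filtered Seiberg--Witten Floer cohomology, using the Hutchings--Taubes correspondence established in \cite{cc2} together with Taubes' isomorphism \cite{taubesechswf1}--\cite{taubesechswf4}. For each $L$ such that $\lambda_{\varepsilon(L)}$ is $L$-nondegenerate, \cite[Theorem 1.3]{cc2} supplies a canonical isomorphism
\[
\Psi^L : ECH_*^L(Y,\lambda_{\varepsilon(L)},\Gamma) \xrightarrow{\cong} \widehat{HM}^{-*}_L(Y,\mathfrak{s}_{\xi,\Gamma}),
\]
where the right-hand side denotes the filtered version of Seiberg--Witten Floer cohomology built from a perturbation of the Seiberg--Witten equations associated to $r\lambda_{\varepsilon(L)}$ for $r$ sufficiently large.

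Next I would verify that $\Psi^L$ intertwines the direct system maps constructed in the proof of Proposition \ref{prop:directlimitcomputesfiberhomology} with the natural direct system maps on the filtered Seiberg--Witten side. On the ECH side the system combines the cobordism maps $\Phi^L(X_{\varepsilon,\varepsilon'},\lambda_{\varepsilon,\varepsilon'})$ and the inclusion maps $\iota^{L,L'}$ as in the commutative diagram \eqref{eqn:trivialcobcd}. On the Seiberg--Witten side one has the analogous inclusion-induced maps (arising from the action filtration of the monopole complex) and the ``change of perturbation'' maps induced by the trivial exact symplectic cobordism $(X_{\varepsilon,\varepsilon'},\lambda_{\varepsilon,\varepsilon'})$; compatibility with $\Psi^L$ is contained in the naturality portion of \cite[Theorem 1.9]{cc2}.

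With this compatibility in hand, passing to direct limits (which commutes with homology) yields
\[
\lim_{L\to\infty} ECH_*^L(Y,\lambda_{\varepsilon(L)},\Gamma) \;\cong\; \lim_{L\to\infty} \widehat{HM}^{-*}_L(Y,\mathfrak{s}_{\xi,\Gamma}) \;=\; \widehat{HM}^{-*}(Y,\mathfrak{s}_{\xi,\Gamma}),
\]
where the final equality holds because the unfiltered Seiberg--Witten Floer cohomology is a topological invariant of $(Y,\mathfrak{s}_{\xi,\Gamma})$ and its filtered pieces exhaust it as $L\to\infty$: any Seiberg--Witten Floer generator is captured in $\widehat{HM}^{-*}_L$ for $L$ sufficiently large, and the fact that $\varepsilon(L)\to 0$ only adjusts the auxiliary perturbation. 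Combining with Taubes' isomorphism $\widehat{HM}^{-*}(Y,\mathfrak{s}_{\xi,\Gamma}) \cong ECH_*(Y,\xi,\Gamma)$ then gives the theorem.

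The main obstacle will be cleanly verifying that the exact symplectic cobordism maps induced by $(X_{\varepsilon,\varepsilon'},\lambda_{\varepsilon,\varepsilon'})$ correspond under $\Psi^L$ to the appropriate ``change of perturbation'' maps on filtered $\widehat{HM}$, and that together with the inclusion-induced maps these fit into a coherent cofinal direct system whose colimit recovers the full $\widehat{HM}^{-*}$. This is essentially bookkeeping with the naturality statements in \cite{cc2} together with Taubes' construction, rather than any new geometric input.
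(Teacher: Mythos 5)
Your proposal follows essentially the same route as the paper: reduce to filtered Seiberg--Witten Floer cohomology via the isomorphism of \cite[Lem.~3.7]{cc2}, check that this isomorphism intertwines the cobordism and inclusion maps building the direct system, and then pass to the limit on the Seiberg--Witten side using Taubes' results. The overall strategy, the key input isomorphisms, and the identification of where care is needed (naturality of $\Psi^L$ plus a cofinality argument) all match the paper. However, you compress into a single ``essentially bookkeeping'' step what is actually the bulk of the paper's argument. The diagonal direct limit $\lim_{L\to\infty}\widehat{HM}^{-*}_L(Y,\lambda_{\varepsilon(L)},\mathfrak{s}_{\xi,\Gamma})$, in which $L$ and $\varepsilon(L)$ move simultaneously, is not the object to which the Taubes exhaustion result directly applies --- that result, via the last equation of \cite[\S3.5]{cc2}, identifies $\lim_{L\to\infty}\widehat{HM}^{-*}_L(Y,\lambda_\varepsilon,\mathfrak{s}_{\xi,\Gamma})$ with $\widehat{HM}^{-*}(Y,\mathfrak{s}_{\xi,\Gamma})$ for \emph{fixed} $\varepsilon$. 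The paper therefore constructs an explicit comparison map $F$ from the diagonal colimit to the iterated colimit $\lim_{\varepsilon\to0}\lim_{L\to\infty}$, and verifies well-definedness, injectivity, and surjectivity one by one; this factoring of limits is the proof's main technical content, not merely naturality bookkeeping. Two finer points you omit: first, the groups $\widehat{HM}^{-*}_L$ still carry the contact form $\lambda_{\varepsilon(L)}$ in their definition (your notation suppresses this dependence), which is exactly why the double-limit manipulation is needed; second, for each fixed $\varepsilon>0$ the form $\lambda_\varepsilon$ is Morse--Bott degenerate, so the exhaustion result of \cite[\S3.5]{cc2}, stated for nondegenerate forms, must be observed to extend to this degenerate case, as the paper remarks.
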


In \S\ref{subsec:pfmainthm}, we will use Theorem \ref{thm:dlisECH} to prove Theorem \ref{thm:mainthm} by showing the $\Z_2$-graded isomorphism of $\Z_2$-modules
\begin{equation}\label{eqn:computedl}
\bigoplus_{\Gamma\in H_1(Y)}\lim_{L\to\infty}ECH_*^L(Y,\lambda_{\varepsilon(L)},\Gamma)\cong\Lambda^*H_*(\Sigma_g;\Z_2).
\end{equation}
The idea of the proof is as follows. As a consequence of Lemma \ref{lem:HYreps} (i), the groups $ECC^L_*(Y,\lambda_{\varepsilon(L)},\Gamma;J)$ are all zero unless $\Gamma$ is in the $\Z_{-e}$ summand of $H_1(Y)$. We abuse notation by using $\Gamma$ to also indicate the corresponding element of $\Z_{-e}$. By Proposition \ref{prop:directlimitcomputesfiberhomology} we can restrict attention to orbits above critical points of the Morse function $H$, and by the analysis of \S\ref{sec:modspcs}-\S\ref{handleslides}, the ECH differential ``agrees" with the Morse differential in the sense that if a $J$-holomorphic curve count contributing to the ECH differential were nonzero, then it must equal a count of gradient flow lines defining the Morse differential on $\Sigma_g$. However, because $H$ is perfect, all such counts of gradient flow lines are zero. By analyzing the implications of index parity and the fact that hyperbolic orbits can appear with multiplicity at most one, we prove that the chain complexes have zero differential and satisfy
\begin{equation}\label{eqn:dlG}
\lim_{L\to\infty}ECH^L_*(Y,\lambda_{\varepsilon(L)},\Gamma)\cong\bigoplus_{d\in\Z_{\geq0}}\Lambda^{\Gamma+(-e)d}H_*(\Sigma_g;\Z_2)
\end{equation}
as $\Z_2$-graded $\Z_2$-modules, which implies (\ref{eqn:computedl}).

We conclude this introductory section by proving the third \eqref{eqn:0grading} and fourth \eqref{eqn:relgrading} conclusions of Theorem \ref{thm:mainthm}, assuming the first and second.  

\begin{proof}[Theorem \ref{thm:mainthm}, $\Z$-grading]
In \S\ref{subsec:pfmainthm} we will show that (\ref{eqn:dlG}) follows from the natural map taking an admissible Reeb current to the wedge product the homology classes of the critical points to which $\fp$ sends the orbits. Note that the images of generators with $\op{deg}(\alpha,e_-^\Gamma)=d$ lie in $\Lambda^{\Gamma+(-e)d}H_*(\Sigma_g;\Z_2)$. The $\Z$-valued grading of the image of $e_-^{m_-}h_1^{m_1}\cdots h_{2g}^{m_{2g}}e_+^{m_+}$ under this map is
\[
|e_-^{m_-}h_1^{m_1}\cdots h_{2g}^{m_{2g}}e_+^{m_+}|_\bullet = m_1+\cdots+m_{2g}+2m_+ = M+m_+-m_- = \Gamma-ed+m_+-m_-,
\]
where $d=\op{deg}(e_-^{m_-}h_1^{m_1}\cdots h_{2g}^{m_{2g}}e_+^{m_+},e_-^\Gamma)=\frac{M-\Gamma}{-e}$.
On the other hand,
\begin{align*}
I(e_-^{m_-}h_1^{m_1}\cdots h_{2g}^{m_{2g}}e_+^{m_+},e_-^\Gamma)&=-ed^2+(\chi(\Sigma_g)+2\Gamma)d+m_+-m_-+\Gamma
\\&=-ed^2+(\chi(\Sigma_g)+2\Gamma)d+|e_-^{m_-}h_1^{m_1}\cdots h_{2g}^{m_{2g}}e_+^{m_+}|_\bullet+ed.
\end{align*}
\end{proof}

\subsection{Direct limits and Seiberg-Witten Floer cohomology}\label{subsec:directlimit}

Because there are no Morse-Bott methods for ECH, we must compute $ECH_*(Y,\xi,\Gamma)$ by relating it to $\lim_{L\to\infty}ECH^L_*(Y,\lambda_{\varepsilon(L)},\Gamma)$. Our discussion so far allows us to understand the latter. In this section we prove Theorem \ref{thm:dlisECH}, obtaining
\[
\lim_{L\to\infty}ECH^L_*(Y,\lambda_{\varepsilon(L)},\Gamma)=ECH_*(Y,\xi,\Gamma).
\]

It is not possible to prove Theorem \ref{thm:dlisECH} solely within the realm of ECH. This is because we cannot relate the filtered homologies computed for $\varepsilon>0$ to any chain complex when $\varepsilon=0$, since the contact form is degenerate and the filtered homology is not defined for any $L$. However, ECH is isomorphic to Seiberg-Witten Floer cohomology (including at the level of filtrations), and in the latter case the degeneracy of $\lambda$ is not an issue. The key result is the last equation in \cite[\S3.5]{cc2}, whose role we explain in the proof of Theorem \ref{thm:dlisECH} in \S\ref{sssec:pfThm7.1} below.

In \S\ref{sssec:SWFc} we review Seiberg-Witten Floer cohomology.  In \S\ref{sssec:cfpert} we describe the perturbation of the Seiberg-Witten equations which allows us to incorporate a contact form into the chain complex. In \S\ref{sssec:HML} we explain an energy filtration analogous to that in ECH and collect results from \cite{cc2} about cobordism maps on filtered Seiberg-Witten Floer cohomology necessary for our arguments in \S\ref{sssec:HML}. Our review follows \S\cite[\S2]{cc2}. The proof of Theorem \ref{thm:dlisECH} is completed in \S\ref{sssec:pfThm7.1}.

\subsubsection{Seiberg-Witten Floer cohomology}\label{sssec:SWFc}

For a definition of Seiberg-Witten Floer cohomology in our setting, we refer the reader to \cite[\S2]{cc2}. In this section we quickly review the necessary notation, following \cite[\S2]{cc2}.

Let $Y$ be a closed oriented connected three-manifolds equipped with a Riemannian metric $g$. Recall that a \textit{spin-c structure} on $Y$ is a rank two Hermitian vector bundle $\mathbb{S}$ on $Y$ together with a \textit{Clifford multiplication} $\text{cl}:TY\to\text{End}(\mathbb{S})$.
 We denote a spin-c structure by $\mathfrak{s}=(\mathbb{S},\text{cl})$. Sections of $\mathbb{S}$ are called \textit{spinors}. A \textit{spin-c connection} on a spin-c structure $\mathfrak{s}$ is a connection $\mathbb{A}_\mathbb{S}$ on $\mathbb{S}$ which is compatible with the Clifford multiplication, meaning that if $v$ is a vector field on $Y$ and $\psi$ is a spinor, then
\[
\nabla_{\mathbb{A}_\mathbb{S}}(\text{cl}(v)\psi)=\text{cl}(\nabla v)\psi+\text{cl}(v)\nabla_{\mathbb{A}_\mathbb{S}}\psi,
\]
where $\nabla v$ denotes the covariant derivative of $v$ with respect to the Levi-Civita connection of $g$. Such a spin-c connection is equivalent to a Hermitian connection $\mathbb{A}$ on the determinant line bundle $\det(\mathbb{S})$. The \textit{Dirac operator} $D_\mathbb{A}$ of $\mathbb{A}_\mathbb{S}$ is the composition
\begin{equation}\label{eqn:Do}
C^\infty(Y;\mathbb{S})\overset{\nabla_{\mathbb{A}_\mathbb{S}}}{\to}C^\infty(Y;T^*Y\otimes\mathbb{S})\overset{\text{cl}}{\to}C^\infty(Y;\mathbb{S}),
\end{equation}
where $\text{cl}$ is the composition $T^*Y\overset{g}{\cong}TY\overset{\text{cl}}{\to}\text{End}(\mathbb{S})$.

Let $\eta$ be an exact 2-form on $Y$. Let $\mathbb{A}$ be a Hermitian connection on $\det(\mathbb{S})$ and $\Psi$ be a spinor. Define a bundle map $\tau:\mathbb{S}\to iT^*Y$ by $\tau(\Psi)(v)=g(\text{cl}(v)\Psi,\Psi)$. The \textit{Seiberg-Witten equations} for $(\mathbb{A},\Psi)$ with perturbation $\eta$ are
\begin{equation}\label{eqn:SWeqns}
D_\mathbb{A}\Psi=0 \text{ and }*F_\mathbb{A}=\tau(\Psi)+i*\eta.
\end{equation}
Seiberg-Witten Floer cohomology is generated by certain solutions $(\mathbb{A},\Psi)$ to the (\ref{eqn:SWeqns}). The \textit{gauge group} $\mathcal{G}:=C^\infty(Y;S^1)$ acts on the set of all pairs $(\mathbb{A},\Psi)$ by
\[
u\cdot(\mathbb{A};\Psi):=(\mathbb{A}-2u^{-1}du,u\Psi),
\]
and if $(\mathbb{A},\Psi)$ is a solution to the Seiberg-Witten equations, so is $u\cdot(\mathbb{A},\Psi)$. Solutions are \textit{(gauge) equivalent} if they are equivalent under the action of $\mathcal{G}$. If $\eta$ is generic then modulo gauge equivalence there are only finitely many solutions with $\Psi\not\equiv0$, and each is cut out transversely. Such solutions are called \textit{irreducible} (those with $\Psi\equiv0$ are \textit{reducible}). We assume $\eta$ is sufficiently generic in this way.

Denote by $\widehat{CM}^*_\text{irr}$ the free $\Z_2$-module generated by the irreducible solutions to the Seiberg-Witten equations, modulo gauge equivalence.

We next describe the part of the Seiberg-Witten differential which maps $\widehat{CM}^*_\text{irr}$ to itself. Let $(\mathbb{A}_\pm,\Psi_\pm)$ be solutions to the Seiberg-Witten equations. An \textit{instanton} from $(\mathbb{A}_-,\Psi_-)$ to $(\mathbb{A}_+,\Psi_+)$ is a smooth one-parameter family $(\mathbb{A}(s),\Psi(s))$ parameterized by $s\in\R$, for which
\begin{align}
&\frac{\partial}{\partial s}\Psi(s)=-D_{\mathbb{A}(s)}\Psi(s) \nonumber
\\&\frac{\partial}{\partial s}\mathbb{A}(s)=-*F_{\mathbb{A}(s)}+\tau(\Psi(s))+i*\eta
\\&\lim_{s\to\pm\infty}(\mathbb{A}(s),\Psi(s))=(\mathbb{A}_\pm,\Psi_\pm). \nonumber
\end{align}
The gauge group and $\R$ both act on the space of instantons.

If $(\mathbb{A}_\pm,\Psi_\pm)$ are irreducible, then the differential coefficient $\langle\partial(\mathbb{A}_+,\Psi_+),(\mathbb{A}_-,\Psi_-)\rangle$ is a count of instantons from $(\mathbb{A}_-,\Psi_-)$ to $(\mathbb{A}_+,\Psi_+)$, modulo the actions of $\mathcal{G}$ and $\R$, living in a moduli space of local expected dimension one. This local expected dimension defines a relative $\Z/d(c_1(\mathfrak{s}))$ grading on the chain complex, and the differential increases this grading by one. More generally, so long as there is no moduli space of instantons to $(\mathbb{A}_+,\Psi_+)$ from a reducible solution to the Seiberg-Witten equations of local expected dimension one, then $\partial(\mathbb{A}_+,\Psi_+)\in\widehat{CM}^*_\text{irr}$. For a discussion of further abstract perturbations necessary to obtain the transversality required to fully define the differential, see \cite[\S2.1]{cc2}; they are not necessary in our arguments.

There is a differential on the chain complex generated over $\Z_2$ by all solutions to the Seiberg-Witten equations, modulo gauge equivalence, whose differential extends the differential from $\widehat{CM}^*_\text{irr}$ to itself discussed above. We will not need to discuss this extension further here, because the key to the proof of Theorem \ref{thm:dlisECH} is a filtered version of Seiberg-Witten Floer cohomology whose generators are all irreducible, introduced in \S\ref{sssec:HML}. However, we do mention that the homology of the chain complex including reducible solutions is denoted by $\widehat{HM}^*(Y,\mathfrak{s};g,\eta)$. Because this homology is independent of the choices of $(g,\eta)$, we denote the canonical isomorphism class of all such homologies by $\widehat{HM}^*(Y,\mathfrak{s})$.

By $\mathfrak{s}_{\xi,\Gamma}$ we denote the spin-c structure $\mathfrak{s}_\xi+PD(\Gamma)$ on $Y$, where $\mathfrak{s}_\xi$ is the spin-c structure determined by $\xi$ as in \cite[Example 2.1]{cc2}. Taubes \cite{taubesechswf1}-\cite{taubesechswf4} has shown
\begin{equation}\label{eqn:fullECHHMiso}
ECH_*(Y,\lambda,\Gamma;J)\cong\widehat{HM}^{-*}(Y,\mathfrak{s}_{\xi,\Gamma});
\end{equation}
here we use the notation for ECH emphasizing the roles of $\lambda$ and $J$, although inherent in the result is the fact that both sides do not depend on $\lambda$ or $J$ but only on $(Y,\xi,\Gamma)$.

\subsubsection{The contact form perturbation of the Seiberg-Witten equations}\label{sssec:cfpert}

If $Y$ has a contact form $\lambda$, let $J$ be an almost complex structure on $\xi$ which extends to a $\lambda$-compatible almost complex structure on $\R\times Y$. From $\lambda$ and $J$ we obtain a metric $g$ for which $g(R,R)=1$ and $g(R,\xi)=0$. In particular, on $\xi$, we have $g(v,w)=\frac{1}{2}d\lambda(v,Jw)$. 
Any spin-c structure $\mathfrak{s}=(\mathbb{S},\text{cl})$ can be canonically decomposed into eigenbundles of $\text{cl}(\lambda)$, i.e. $\mathbb{S}=E\oplus\xi E$, where $E$ is the $i$ eigenbundle and concatenation denotes the tensor product of line bundles. In this decomposition, a connection $\mathbb{A}$ on $\det(\mathbb{S})=\xi E^2$ can be written $\mathbb{A}=A_\xi+2A$ for some connection $A$ on $E$. Similarly to (\ref{eqn:Do}) we can define the Dirac operator $D_A$ of $A$.

Let $r>0$ and $\mu$ be an exact 2-form satisfying the genericity conditions described in \cite[\S2.2]{cc2}. Replacing $D_\mathbb{A}$ with $D_A$ and setting
\[
\eta=-rd\lambda+2\mu,\; \psi=\frac{1}{\sqrt{2r}}\Psi
\]
in (\ref{eqn:SWeqns}) gives us the perturbed Seiberg-Witten equations 
for the pair $(A,\psi)$:
\begin{equation}\label{eqn:pSWeqns}
D_A\Psi=0 \text{ and } *F_A=r(\tau(\Psi)-i\lambda)-\frac{1}{2}*F_{A_\xi}+i*\mu.
\end{equation}

Taking into account abstract perturbations as in \cite[\S2.2]{cc2}, the chain complex $\widehat{CM}^*(Y,\mathfrak{s};\lambda,J,r)$ is generated by the solutions to (\ref{eqn:pSWeqns}), modulo gauge equivalence, and its homology is denoted $\widehat{HM}^*(Y,\mathfrak{s};\lambda,J,r)$. As in the original chain complex, if $(A_\pm,\psi_\pm)$ are irreducible solutions, then $\langle\partial(A_+,\psi_+),(A_-,\psi_-)\rangle$ is a count (modulo the actions of $\mathcal{G}$ and $\R$) of solutions to the perturbed instanton equations
\begin{align}
&\frac{\partial}{\partial s}\psi(s)=-D_{A(s)}\psi(s) \nonumber
\\&\frac{\partial}{\partial s}A(s)=-*F_{A(s)}+r(\tau(\psi(s))-i\lambda)-\frac{1}{2}*F_{A_\xi}+i*\mu
\\&\lim_{s\to\pm\infty}(A(s),\psi(s))=(A_\pm,\psi_\pm), \nonumber
\end{align}
which live in a moduli space of local expected dimension one. Again we denote by $\widehat{CM}^*_\text{irr}$ the component of $\widehat{CM}^*(Y,\mathfrak{s};\lambda,J,r)$ generated by irreducible solutions to (\ref{eqn:pSWeqns}). Although we have used the notation $\widehat{CM}^*_\text{irr}$ for two different subcomplexes (as is the convention in \cite{cc2}), from now on we only use it to denote the subcomplex generated by the irreducible solutions subject to the contact form perturbation.

\subsubsection{The energy filtration on $\widehat{HM}^*$}\label{sssec:HML}
Analogous to the action of Reeb currents, the \textit{energy} of a solution $(A,\psi)$ to the perturbed Seiberg-Witten equations (\ref{eqn:pSWeqns}) is defined as
\[
E(A):=i\int_Y\lambda\wedge F_A.
\]
In analogy to $ECH^L_*$, for $L>0$, define $\widehat{CM}^*_L$ to be the submodule of $\widehat{CM}^*_\text{irr}$ generated by the irreducible solutions to (\ref{eqn:pSWeqns}) with $E(A)<2\pi L$.

Note that the energy of a reducible solution $(A,0)$ to (\ref{eqn:pSWeqns}) is a linear increasing function in $r$, so if $r$ is sufficiently large then the condition that elements of $\widehat{CM}^*_L$ be elements of $\widehat{CM}^*_\text{irr}$ is redundant: if $E(A)<2\pi L$ then if $r$ is large enough, the pair $(A,0)$ cannot be a solution to (\ref{eqn:pSWeqns}).

We quote a lemma necessary for defining the homology of the submodule $\widehat{CM}^*_L$:
\begin{lemma}[{\cite[Lem. 2.3]{cc2}}]\label{lem:CMLsubcx}
Fix $Y,\lambda, J$ as above and $L\in\R$. Suppose that $\lambda$ has no Reeb current of action exactly $L$. Fix $r$ sufficiently large, and a 2-form $\mu$ so that all irreducible solutions to (\ref{eqn:pSWeqns}) are cut out transversely. Then for every $\mathfrak{s}$ and for every sufficiently small generic abstract perturbation, $\widehat{CM}^*_L(Y,\mathfrak{s};\lambda,J,r)$ is a subcomplex of $\widehat{CM}^*(Y,\mathfrak{s};\lambda,J,r)$.
\end{lemma}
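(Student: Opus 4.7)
The plan is to exhibit the instanton equations \eqref{eqn:pSWeqns} as the negative gradient flow of a perturbed Chern-Simons-Dirac functional $\mathfrak{a}_{r,\mu}$ whose only $r$-linear piece on Seiberg-Witten solutions is $-\tfrac{r}{2}E(A)$. Monotonicity of $\mathfrak{a}_{r,\mu}$ along instantons will then yield a monotonicity of the energy $E(A)$ up to $O(1/r)$, and for $r$ sufficiently large this forces $\widehat{CM}^*_L$ to be closed under the differential.

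First I would recall from Taubes \cite{taubesechswf1} the construction of $\mathfrak{a}_{r,\mu}$: a functional on the configuration space whose formal $L^2$-gradient coincides (up to a gauge-fixing term irrelevant to its values at critical points or along instantons) with the right-hand side of \eqref{eqn:pSWeqns}. Consequently $\mathfrak{a}_{r,\mu}$ is non-increasing along any instanton from $(A_-,\psi_-)$ to $(A_+,\psi_+)$, giving $\mathfrak{a}_{r,\mu}(A_+,\psi_+) \leq \mathfrak{a}_{r,\mu}(A_-,\psi_-)$.

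Second, I would compute that at any configuration satisfying \eqref{eqn:pSWeqns} the unique $r$-linear term of $\mathfrak{a}_{r,\mu}$ equals $-\tfrac{r}{2}E(A)$ modulo a topological constant depending only on $(Y,\lambda,\mathfrak{s})$, while the remaining pieces (the Chern-Simons piece, the spinorial pairing with $D_A\psi$, and the $\mu$-perturbation) are bounded in absolute value by a constant $C$ independent of $r$ and of the particular solution. The hard part of the argument is establishing this uniform bound: it rests on Taubes' a priori pointwise estimates on $\psi$ and the accompanying curvature estimates for solutions of the perturbed Seiberg-Witten equations, which are the technical backbone of the ECH/SWF correspondence. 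Putting the two steps together yields $E(A_-) \leq E(A_+) + 4C/r$ for every instanton connecting $(A_-,\psi_-)$ to $(A_+,\psi_+)$.

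To finish, the differential of $\widehat{CM}^*(Y,\mathfrak{s};\lambda,J,r)$ expresses $\partial(A_+,\psi_+)$ as a sum over exactly such pairs $(A_-,\psi_-)$. The hypothesis that no orbit set of $\lambda$ has action equal to $L$, combined with Taubes' convergence theorem identifying the energies of irreducible Seiberg-Witten solutions with $2\pi$ times the actions of orbit sets up to $o(1)$ as $r \to \infty$, provides a fixed $\delta > 0$ and some $r_0$ such that for all $r \geq r_0$ every irreducible solution has $E(A) \notin [2\pi L - \delta,\, 2\pi L + \delta]$. Taking $r$ so large that $4C/r < \delta$ then forces $E(A_+) < 2\pi L$ to imply $E(A_-) < 2\pi L$, so $\widehat{CM}^*_L$ is $\partial$-closed. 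Finally, since the abstract perturbations of \cite[\S2.1]{cc2} may be chosen arbitrarily small in the relevant Banach norm, the same monotonicity inequality persists after such a perturbation, yielding the subcomplex statement for the fully perturbed chain complex.
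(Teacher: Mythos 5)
Your proof sketch captures the correct mechanism: gradient-flow monotonicity of Taubes' perturbed Chern--Simons--Dirac functional $\mathfrak{a}_{r,\mu}$, a quantitative comparison between $\mathfrak{a}_{r,\mu}$ and $-\tfrac{r}{2}E(A)$ on Seiberg--Witten solutions, and a spectral gap around $2\pi L$ coming from the hypothesis that no orbit set has action exactly $L$. This is indeed the backbone of the Hutchings--Taubes argument for \cite[Lem.~2.3]{cc2}. You also correctly observe that the claim reduces to showing the energy $E$ is ``approximately monotone'' along the index-one instantons that feed the differential, and that the hypothesis on $L$ supplies the buffer needed to absorb the error term.

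The main inaccuracy is in the second step. You assert that, modulo the $r$-linear piece $-\tfrac{r}{2}E(A)$, the remaining contributions to $\mathfrak{a}_{r,\mu}$ at a solution --- in particular the Chern--Simons piece $\mathfrak{cs}(A)$ --- are bounded by a constant $C$ that is independent of $r$ and of the solution. That is not what Taubes' a priori estimates give. The Chern--Simons functional $\mathfrak{cs}(A)$ is genuinely unbounded in $r$: for solutions with $E(A)$ under control, the estimate one obtains (ultimately from the pointwise curvature bounds and the spectral flow asymptotics) is of the form $|\mathfrak{cs}(A)| \lesssim r^{2/3}$ with extra powers of $E$, not $O(1)$. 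This does not invalidate the conclusion --- dividing by $r$ still produces an error that tends to $0$, so $E(A_-) \leq E(A_+) + o(1)$ as $r \to \infty$ --- but it misrepresents where the real analytic work lives. Proving that $\mathfrak{cs}$ is $o(r)$ rather than the naive $O(r)$ is precisely the delicate part of the ECH/SWF estimates, and one should not elide it into a constant $C$. There is also a subtlety you pass over: the Taubes bound on $\mathfrak{cs}(A_-)$ itself presupposes an a priori bound on $E(A_-)$, which is what you are trying to establish; closing this loop requires either working at fixed spectral flow (so that the grading restriction controls $\mathfrak{cs}$ through $\mathfrak{cs} \approx -4\pi^2 f$) or arguing along the instanton by continuity. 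Finally, your gap argument implicitly uses that the set of orbit-set actions is discrete near $L$; this does follow from $L$-nondegeneracy in the paper's setting, but since the lemma's hypothesis as stated is only that no orbit set has action \emph{exactly} $L$, it is worth saying why a genuine gap $\delta > 0$ exists.
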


When the hypotheses of Lemma \ref{lem:CMLsubcx} apply, we denote the homology of $\widehat{CM}^*_L(Y,\mathfrak{s};\lambda,J,r)$ by $\widehat{HM}^*_L(Y,\lambda,\mathfrak{s})$. In particular, if $r$ is sufficiently large then this homology is independent of $\mu$ and $r$, and it is also independent of $J$, as shown in \cite[Cor. 3.5]{cc2}. We will use the notation $\widehat{HM}^*_L(Y;\lambda,J,r)$ when we wish to emphasize the roles of $J$ and $r$.

Filtered Seiberg-Witten Floer cohomology is isomorphic to ECH:
\begin{lemma}[{\cite[Lem. 3.7]{cc2}}] Suppose that $\lambda$ is $L$-nondegenerate and $J$ is $ECH^L$-generic (see Lemma \ref{lem:nicecobmap}). Then for all $\Gamma\in H_1(Y)$, there is a canonical isomorphism of relatively graded $\Z_2$-modules
\begin{equation}\label{eqn:filterediso}
\Psi^L:ECH^L_*(Y,\lambda,\Gamma;J)\overset{\cong}{\to}\widehat{HM}^{-*}_L(Y,\lambda,\mathfrak{s}_{\xi,\Gamma}).
\end{equation}
\end{lemma}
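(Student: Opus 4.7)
The plan is to adapt Taubes' proof of the unfiltered isomorphism (\ref{eqn:fullECHHMiso}) from \cite{taubesechswf1}--\cite{taubesechswf4} to the filtered setting by carefully tracking how the symplectic action filtration on ECH corresponds to the energy filtration on $\widehat{CM}^*$. I would construct $\Psi^L$ at the chain level as the map sending an admissible orbit set $\alpha$ of action less than $L$ to the gauge equivalence class of the irreducible Seiberg--Witten solution that Taubes' machinery associates to $\alpha$, and then verify both that this bijection respects the two filtrations and that it intertwines the differentials.

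First I would invoke Taubes' bijection between gauge equivalence classes of irreducible solutions $(A,\psi)$ to the perturbed Seiberg--Witten equations (\ref{eqn:pSWeqns}) in spin-c structure $\mathfrak{s}_{\xi,\Gamma}$ and admissible orbit sets $\alpha$ with $[\alpha]=\Gamma$, and quantify its interaction with filtrations. Taubes' curvature estimates show that $\psi$ concentrates near $\alpha$ as $r\to\infty$, with
\[
\frac{1}{r}E(A)\longrightarrow 2\pi\mathcal{A}(\alpha).
\]
Using $L$-nondegeneracy of $\lambda$ to avoid borderline behavior, this implies that for $r$ sufficiently large depending on $L$, a solution satisfies $E(A)<2\pi L$ precisely when the corresponding orbit set has action less than $L$. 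By Lemma \ref{lem:CMLsubcx}, $\widehat{CM}^*_L(Y,\mathfrak{s}_{\xi,\Gamma};\lambda,J,r)$ is a subcomplex, and one obtains a canonical bijection of its generators with those of $ECC^L_*(Y,\lambda,\Gamma;J)$. Taubes further identifies instantons for large $r$ with ECH index one $J$-holomorphic currents between the corresponding orbit sets; the $ECH^L$-genericity hypothesis on $J$ ensures that the relevant moduli spaces are cut out transversely. Because $E(A(s))$ is monotonically nonincreasing along instantons---the Seiberg--Witten analogue of Stokes' theorem applied to $\lambda\wedge F_A$, matching the action-decreasing property of $J$-holomorphic currents---any instanton with endpoints in $\widehat{CM}^*_L$ remains in the subcomplex throughout, so Taubes' chain-level identification restricts to one between $\widehat{CM}^*_L$ and $ECC^L_*$. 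The grading shift $-*$ arises because ECH is graded by ECH index while $\widehat{HM}^*$ is graded by minus the expected dimension of the instanton moduli space, and Taubes' correspondence exchanges these conventions.

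The main obstacle will be the simultaneous analytic control required to make all three pieces---bijection of generators, matching of differential coefficients, and confinement of instantons to the subcomplex---work for a single threshold $r_L$ depending only on $L$. In particular, one must choose the abstract perturbation needed for transversality of reducibles small enough not to disturb the irreducible-to-orbit-set correspondence nor to introduce instantons violating monotonicity of $E$; this requires spectral flow and exponential decay estimates uniform in the relevant compact range of parameters. Once these are in place, independence of $\Psi^L$ from the auxiliary choices of $r$, $\mu$, $J$, and the abstract perturbations (which yields canonicity up to the relatively graded isomorphism class) follows from a standard continuation argument along a generic path of $L$-nondegenerate and $ECH^L$-generic data, showing that the induced continuation chain maps act as the identity on generators.
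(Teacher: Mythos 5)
This lemma is quoted verbatim from Hutchings--Taubes (cited as \cite[Lem.\ 3.7]{cc2}); the paper supplies no proof of its own, so there is no internal argument to compare your sketch against. That said, your outline does capture the main ingredients of the actual proof in that reference: Taubes' bijection between gauge classes of irreducible solutions and admissible orbit sets, the asymptotic relation $E(A)/r \to 2\pi\mathcal{A}(\alpha)$ used to match the energy threshold $2\pi L$ with the action threshold $L$ for $r$ large, the role of $L$-nondegeneracy in avoiding borderline cases, the identification of low-index instantons with ECH index one currents, and the reversal of grading convention.

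One technical point to be careful about: you assert that $E(A(s))$ is monotonically nonincreasing along instantons as if this were an immediate Stokes-type identity. What actually drives the subcomplex property (Lemma \ref{lem:CMLsubcx}, i.e.\ \cite[Lem.\ 2.3]{cc2}) is monotonicity of the perturbed Chern--Simons--Dirac functional together with a priori estimates relating its variation to the variation of $E$; the energy $E$ itself is not literally monotone along the flow, and the required inequality $E(A_-)\le E(A_+)$ between the two endpoints of an instanton needs those estimates and the large-$r$ regime. Your overall plan is otherwise consistent with \cite[\S 3]{cc2}, but since the paper itself treats this lemma as a black box, the level of rigor you would need to actually carry out the proposal is exactly that of the Hutchings--Taubes argument, and there is no shortcut afforded by the present paper's setup.
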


Analogous to the cobordism maps on $ECH^L_*$, there are cobordism maps on $\widehat{HM}^*_L$. The following is a modified version of \cite[Cor. 5.3 (a)]{cc2} which keeps track of the spin-c structures in our setting. Note that therefore our notation for the cobordism maps on $\widehat{HM}^*_L$ differs slightly from that of \cite{cc2}.
\begin{lemma}\label{lem:HMLcobmapdef} Let $(X,\lambda)$ be an exact symplectic cobordism from $(Y_+,\lambda_+)$ to $(Y_-,\lambda_-)$ where $\lambda_\pm$ is $L$-nondegenerate. Let $\mathfrak{s}$ be a spin-c structure on $X$ and let $\mathfrak{s}_\pm$ denote its restrictions to $Y_\pm$, respectively. Let $J_\pm$ be $\lambda_\pm$-compatible almost complex structures. Suppose $r$ is sufficiently large. Fix 2-forms $\mu_\pm$ and small abstract perturbations sufficient to define the chain complexes $\widehat{CM}^*(Y_\pm,\mathfrak{s}_\pm;\lambda_\pm,J_\pm,r)$. Then there is a well-defined map
\begin{equation}\label{eqn:HMLcobmap}
\widehat{HM}^*_L(X,\lambda,\mathfrak{s}):\widehat{HM}^*_L(Y_+,\mathfrak{s}_+;\lambda_+,J_+,r)\to\widehat{HM}^*_L(Y_-,\mathfrak{s}_-;\lambda_-,J_-,r),
\end{equation}
depending only on $X,\mathfrak{s},\lambda,L,r,J_\pm,\mu_\pm$, and the perturbations, such that if $L'<L$ and if $\lambda_\pm$ are also $L'$-nondegenerate, then the diagram
\begin{equation}\label{eqn:HMLcd}
\xymatrixcolsep{5pc}\xymatrix{
\widehat{HM}^*_{L'}(Y_+,\mathfrak{s}_+;\lambda_+,J_+,r) \ar[r]^{\widehat{HM}^*_{L'}(X,\lambda,\mathfrak{s})} \ar[d] & \widehat{HM}^*_{L'}(Y_-,\mathfrak{s}_-;\lambda_-,J_-,r) \ar[d]
\\\widehat{HM}^*_L(Y_+,\mathfrak{s}_+;\lambda_+,J_+,r) \ar[r]_{\widehat{HM}^*_L(X,\lambda,\mathfrak{s})} & \widehat{HM}^*_L(Y_-,\mathfrak{s}_-;\lambda_-,J_-,r)
}
\end{equation}
commutes, where the vertical arrows are induced by inclusions of chain complexes.
\end{lemma}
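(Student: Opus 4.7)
The plan is to obtain Lemma \ref{lem:HMLcobmapdef} by invoking \cite[Cor.\ 5.3(a)]{cc2} and then refining its output according to the spin-c decomposition. In the Hutchings--Taubes setup the total cobordism map is constructed from moduli spaces of solutions to the four-dimensional perturbed Seiberg--Witten equations on $X$ that interpolate between chain-level generators on $Y_\pm$; the energy/filtration control is achieved by a specific choice of metric, of a symplectic form extending $d\lambda_\pm$, and of the large-$r$ perturbation of the three-dimensional equations on the ends. I would begin by recalling this construction verbatim (without specifying $\mathfrak{s}$) so that I may immediately quote the existence of a chain map, and then the inclusion-compatible homomorphism on filtered homologies, produced there.

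The next step is bookkeeping: every solution on $X$ lies on a well-defined spin-c structure $\mathfrak{s}_X$, whose restrictions to the ends are precisely the spin-c structures of the asymptotic generators on $Y_\pm$. Consequently, the moduli spaces used in \cite[Cor.\ 5.3(a)]{cc2} split as a disjoint union indexed by spin-c structures on $X$, and the total cobordism map correspondingly splits as a direct sum of components, one for each $\mathfrak{s}$, mapping $\widehat{HM}^*_L(Y_+,\mathfrak{s}|_{Y_+};\lambda_+,J_+,r)$ to $\widehat{HM}^*_L(Y_-,\mathfrak{s}|_{Y_-};\lambda_-,J_-,r)$. I would define the map (\ref{eqn:HMLcobmap}) to be the $\mathfrak{s}$-summand of this splitting. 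Independence on auxiliary data (abstract perturbations, genericity choices), beyond what is listed in the statement, follows because the chain homotopies constructed in \cite{cc2} to prove that independence are themselves counts of parameterized moduli spaces, which also decompose by $\mathfrak{s}_X$; restricting to the $\mathfrak{s}$-component of such a chain homotopy gives a chain homotopy between the $\mathfrak{s}$-components of the two chain maps.

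For the commutativity of diagram (\ref{eqn:HMLcd}), the inclusion maps $\widehat{HM}^*_{L'}\to\widehat{HM}^*_L$ are induced by inclusions of subcomplexes generated by solutions of bounded energy, and these inclusions manifestly preserve the spin-c decomposition on each $Y_\pm$ (since a solution on $Y_\pm$ determines its spin-c structure there). The unrefined diagram commutes by \cite[Cor.\ 5.3(a)]{cc2}; restricting both horizontal arrows to the $\mathfrak{s}$-component and both vertical arrows to the $\mathfrak{s}|_{Y_\pm}$-components then yields commutativity of (\ref{eqn:HMLcd}).

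The only potential obstacle is checking that all abstract perturbations employed by \cite{cc2}---both the three-dimensional ones used to achieve transversality of the chain complexes on $Y_\pm$ and the four-dimensional ones used for cobordism transversality---can be arranged to preserve the spin-c decomposition. This holds because these perturbations act pointwise on the space of pairs $(\mathbb{A},\Psi)$ without altering the bundle $\mathbb{S}$, hence do not mix spin-c sectors; thus everything reduces to bookkeeping rather than new analytic input.
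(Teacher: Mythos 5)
Your proposal matches the paper's approach: the paper states this lemma as ``a modified version of \cite[Cor. 5.3 (a)]{cc2} which keeps track of the spin-c structures in our setting'' and offers no further proof, so the content you have supplied---that the cobordism moduli spaces and the chain homotopies proving independence of auxiliary data decompose by the spin-c structure on $X$, that one defines the map as the $\mathfrak{s}$-summand of this decomposition, and that the $L' \le L$ inclusions respect the spin-c decomposition on each $Y_\pm$ so that the commutative square restricts sector by sector---is exactly the bookkeeping that the paper's remark leaves implicit. Your concluding observation, that the abstract perturbations do not mix spin-c sectors because they do not alter the bundle $\mathbb{S}$, is the correct justification that nothing beyond bookkeeping is required.
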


Finally, in order to define direct systems, we will need to compose cobordism maps on $\widehat{HM}^*_L$. For certain cobordisms (e.g. those defining the direct system $\lim_{\varepsilon\to0}\widehat{HM}^{-*}_{L(\varepsilon)}(Y,\lambda_\varepsilon,\mathfrak{s}_{\xi,\Gamma})$ it is enough to use the composition property \cite[Lem. 3.4 (b)]{cc2}, but we will need to understand cobordism maps on slightly more complex cobordisms as well. The next lemma is a version of \cite[Prop. 5.4]{cc2} explaining the composition law for $\widehat{HM}^*_L$ in our setting.

In the following lemma, we will consider the following composition. Assume $\varepsilon''<\varepsilon'<\varepsilon$. We consider the exact symplectic cobordism $([\varepsilon'',\varepsilon]\times Y,(1+s\fp^*H)\lambda)$ from $(Y,\lambda_\varepsilon)$ to $(Y,\lambda_{\varepsilon''})$. It is the composition of the exact symplectic cobordism $([\varepsilon'',\varepsilon']\times Y,(1+s\fp^*H)\lambda)$ from $(Y,\lambda_{\varepsilon'})$ to $(Y,\lambda_{\varepsilon''})$ with the exact symplectic cobordism $([\varepsilon',\varepsilon]\times Y,(1+s\fp^*H)\lambda)$ from $(Y,\lambda_\varepsilon)$ to $(Y,\lambda_{\varepsilon'})$ in the sense of \cite[\S1.5]{cc2}, where $\lambda_\varepsilon, \lambda_{\varepsilon'}$, and $\lambda_{\varepsilon''}$ are $L$-nondegenerate. We also assume $J, J'$, and $J''$ are $\lambda_\varepsilon$-, $\lambda_{\varepsilon'}$-, and $\lambda_{\varepsilon''}$-compatible almost complex structures, respectively. Further, we choose a spin-c structure $\mathfrak{s}''$ on $[\varepsilon'',\varepsilon]\times Y$ which restricts to spin-c structures $\mathfrak{s}'$ and $\mathfrak{s}$ on $[\varepsilon'',\varepsilon']\times Y$ and $[\varepsilon',\varepsilon]\times Y$, respectively, where $\mathfrak{s}'$ restricts to $\mathfrak{s}_2$ on $\{\varepsilon''\}\times Y$, $\mathfrak{s}$ restricts to $\mathfrak{s}_0$ on $\{\varepsilon\}\times Y$, and both $\mathfrak{s}'$ and $\mathfrak{s}$ restrict to $\mathfrak{s}_1$ on $\{\varepsilon'\}\times Y$. Finally we choose abstract perturbations and $r$ large enough to define the chain complexes $\widehat{CM}^*_L$.
\begin{lemma}\label{lem:HMLcomp} The maps of Lemma \ref{lem:HMLcobmapdef} for the above data satisfy
\[
\widehat{HM}^*_L([\varepsilon'',\varepsilon]\times Y,(1+s\fp^*H)\lambda,\mathfrak{s})=\widehat{HM}^*_L([\varepsilon'',\varepsilon']\times Y,(1+s\fp^*H)\lambda,\mathfrak{s}^+)\circ\widehat{HM}^*_L([\varepsilon',\varepsilon]\times Y,(1+s\fp^*H)\lambda,\mathfrak{s}^-).
\]
\end{lemma}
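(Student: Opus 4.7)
The plan is to deduce this statement directly from \cite[Proposition 5.4]{cc2}, which establishes the composition law for cobordism maps on filtered Seiberg-Witten Floer cohomology, with the caveat that we must carefully track the spin-c labelings, since our version of the cobordism map (Lemma \ref{lem:HMLcobmapdef}) is indexed by a single spin-c structure rather than summed over all spin-c structures on $X$. The hypotheses on $L$-nondegeneracy of $\lambda_\varepsilon, \lambda_{\varepsilon'}, \lambda_{\varepsilon''}$, largeness of $r$, and sufficient genericity of abstract perturbations are already built in, so all three cobordism maps in the statement are well-defined.

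First I would verify that $([\varepsilon'',\varepsilon]\times Y, (1+s\fp^*H)\lambda)$ really is the composition, in the sense of \cite[\S 1.5]{cc2}, of the two sub-cobordisms along the common boundary $(Y,\lambda_{\varepsilon'})$. As smooth manifolds this is the obvious decomposition $[\varepsilon'',\varepsilon]\times Y = ([\varepsilon'',\varepsilon']\times Y)\cup_{\{\varepsilon'\}\times Y}([\varepsilon',\varepsilon]\times Y)$, and the one-form $(1+s\fp^*H)\lambda$ restricts on $\{\varepsilon'\}\times Y$ to $\lambda_{\varepsilon'}$ from either side, so the exact symplectic data glue correctly. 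The composition construction in \cite{cc2} technically requires cylindrical collars near the gluing hypersurface; one produces these by an $s$-reparametrization of $[\varepsilon'',\varepsilon']$ and $[\varepsilon',\varepsilon]$ that flattens the coefficient $(1+s\fp^*H)$ to the constant $(1+\varepsilon'\fp^*H)$ on a thin neck around $\{\varepsilon'\}\times Y$. This reparametrization is a deformation of exact symplectic cobordisms through $L$-nondegenerate data, so by the deformation invariance of $\widehat{HM}^*_L$ cobordism maps (\cite[Lem.\ 3.4]{cc2}), it does not alter either of the three cobordism maps appearing in the statement.

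Second, I would track the spin-c structures. Our hypothesis provides a spin-c structure $\mathfrak{s}''$ on $[\varepsilon'',\varepsilon]\times Y$ whose restrictions are $\mathfrak{s}$ on $[\varepsilon',\varepsilon]\times Y$ and $\mathfrak{s}'$ on $[\varepsilon'',\varepsilon']\times Y$, which share the restriction $\mathfrak{s}_1$ on the common slice $\{\varepsilon'\}\times Y$. Because the Seiberg-Witten chain complex and instanton moduli spaces split canonically over spin-c structures on the ends (and on the relative spin-c structures on the cobordism, once its boundary spin-c labels are fixed), the sum-over-spin-c identity furnished by \cite[Prop.\ 5.4]{cc2} restricts to the single-spin-c identity we want by extracting the component corresponding to $\mathfrak{s}_0$ on the positive end and $\mathfrak{s}_2$ on the negative end. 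The matching $\mathfrak{s}|_{\{\varepsilon'\}\times Y} = \mathfrak{s}'|_{\{\varepsilon'\}\times Y} = \mathfrak{s}_1$ ensures that, in the broken/neck-stretched picture used to prove \cite[Prop.\ 5.4]{cc2}, only instantons whose middle-level ends carry spin-c structure $\mathfrak{s}_1$ contribute, so no cross-terms appear.

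The main obstacle will be this spin-c bookkeeping: one must verify that the neck-stretching argument used in \cite[Prop.\ 5.4]{cc2} preserves the spin-c decomposition and that the identifications of the ends are the canonical ones determined by the underlying spin-c structure on the composed cobordism. This is essentially a routine check, since a spin-c structure on a composed cobordism is the same data as spin-c structures on the two pieces with a chosen isomorphism over the gluing hypersurface, and no extra choices enter the cobordism maps beyond those already implicit in the definitions.
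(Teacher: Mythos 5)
Your proposal takes essentially the same route as the paper: appeal to \cite[Prop.\ 5.4]{cc2} for the composition law and observe that the spin-c decomposition is preserved because the proof there proceeds by neck-stretching, so the sum-over-spin-c statement restricts to the single-spin-c identity. The paper disposes of the spin-c bookkeeping in a single sentence after the lemma statement (citing \cite[Rmk.\ 1.10]{cc2}), whereas you spell out the collar/reparametrization step and the restriction to a fixed spin-c component more explicitly, but these are elaborations of the same argument rather than a different approach.
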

Note that \cite[Prop. 5.4]{cc2} does not discuss the spin-c structures, but since it is proved with a neck-stretching argument for holomorphic curves whose ends must be homologous, it will preserve spin-c structures in the case considered in Lemma \ref{lem:HMLcomp}, see \cite[Rmk. 1.10]{cc2}.

\subsubsection{$ECH_*$ via $\widehat{HM}^*_L$}\label{sssec:pfThm7.1}

In this section we prove Theorem \ref{thm:dlisECH} using the machinery from Seiberg-Witten theory reviewed in the previous sections.

\begin{proof}[Proof of Theorem \ref{thm:dlisECH}]
Because all $\lambda_\varepsilon$ have the same contact structure $\xi$ as $\lambda$, we have
\[
\lim_{L\to\infty}ECH^L_*(Y,\lambda_\varepsilon,\Gamma)=ECH_*(Y,\xi,\Gamma).
\]
However, if $\varepsilon>\varepsilon(L)$ then we cannot compute $ECH^L_*(Y,\lambda_\varepsilon,\Gamma)$ using our methods, because the chain complex $ECC^L_*(Y,\lambda_\varepsilon,\Gamma;J)$ may contain orbits which do not project to critical points of $H$. If $\varepsilon$ is fixed and only $L$ is sent to $\infty$, then because $\varepsilon(L)\sim\frac{1}{L}$, there will be some $L$ beyond which $\varepsilon>\varepsilon(L)$ and we can no longer compute $ECH^L_*(Y,\lambda_\varepsilon,\Gamma)$.

Instead, we will explain how to obtain $ECH_*(Y,\xi,\Gamma)$ from $\lim_{L\to\infty}ECH^L_*(Y,\lambda_{\varepsilon(L)},\Gamma)$. Let $L(\varepsilon)$ denote the value of $L$ for which $\varepsilon(L)=\varepsilon$. Note that for all $L<L(\varepsilon)$, the generators of $ECH^L_*(Y,\lambda_\varepsilon,\Gamma)$ all project to critical points of $H$. In particular,
\[
ECH^{L(\varepsilon)}_*(Y,\lambda_\varepsilon,\Gamma)=ECH^L_*(Y,\lambda_{\varepsilon(L)},\Gamma)
\]
and therefore
\[
\lim_{L\to\infty}ECH^L_*(Y,\lambda_{\varepsilon(L)},\Gamma)=\lim_{\varepsilon\to0}ECH^{L(\varepsilon)}_*(Y,\lambda_\varepsilon,\Gamma).
\]

To prove Theorem \ref{thm:dlisECH} we will prove the following sequence of isomorphisms:
\begin{align}
\lim_{\varepsilon\to0}ECH_*^{L(\varepsilon)}(Y,\lambda_\varepsilon,\Gamma)&\cong\lim_{\varepsilon\to0}\widehat{HM}^{-*}_{L(\varepsilon)}(Y,\lambda_\varepsilon,\mathfrak{s}_{\xi,\Gamma})\label{eqn:ECHeHMe}
\\&\cong\lim_{\varepsilon\to0}\lim_{L\to\infty}\widehat{HM}^{-*}_L(Y,\lambda_\varepsilon,\mathfrak{s}_{\xi,\Gamma})\label{eqn:factoringdleL}
\\&\cong\lim_{\varepsilon\to0}\widehat{HM}^{-*}(Y,\mathfrak{s}_{\xi,\Gamma})\label{eqn:sendLtoinfty}
\\&\cong ECH_*(Y,\xi,\Gamma).\label{eqn:HMtoECH}
\end{align}
Note that the groups $\widehat{HM}^{-*}_L(Y,\lambda_\varepsilon,\mathfrak{s}_{\xi,\Gamma})$ on the right hand side of the second equation (\ref{eqn:factoringdleL}) are only defined for $L$ and $\varepsilon$ such that $\lambda_\varepsilon$ has no Reeb currents of action exactly $L$. This includes all $L\leq L(\varepsilon)$ (similarly all $\varepsilon<\varepsilon(L)$); for a given $\varepsilon$, this is still a full measure set of $L$ because for generic perfect $H$, the set of actions of orbits of $X_H$ is discrete.

The direct limit on the right hand side of the first equation (\ref{eqn:ECHeHMe}) is defined using either composition in the commutative diagram (\ref{eqn:HMLcd}) and the exact symplectic cobordisms $([\varepsilon',\varepsilon]\times Y,(1+s\fp^*H)\lambda)$ discussed in the proof of Proposition \ref{prop:directlimitcomputesfiberhomology} in \S\ref{subsec:PQBgens}. That these maps compose properly is derived from the composability of the cobordism maps in Lemma \ref{lem:HMLcomp}, following the same logic in the proof of Proposition \ref{prop:directlimitcomputesfiberhomology} to prove that the maps in the direct system on the left hand side of (\ref{eqn:ECHeHMe}) compose. (\ref{eqn:ECHeHMe}) follows from the isomorphism (\ref{eqn:filterediso}).

The direct limit in $L$ on the right hand side of the second equation (\ref{eqn:factoringdleL}) is defined using the maps induced on homology by the inclusion of chain complexes. 
The direct limit in $\varepsilon$ is defined using the cobordism maps (\ref{eqn:HMLcobmap}) and the same exact symplectic cobordisms as in the above paragraph.

To obtain (\ref{eqn:factoringdleL}), we first show that the ``obvious" map is well-defined:
\[
F:\lim_{\varepsilon\to0}\widehat{HM}^{-*}_{L(\varepsilon)}(Y,\lambda_\varepsilon,\mathfrak{s}_{\xi,\Gamma})\to\lim_{\varepsilon\to0}\lim_{L\to\infty}\widehat{HM}^{-*}_L(Y,\lambda_\varepsilon,\mathfrak{s}_{\xi,\Gamma}).
\]
Note that every element of the direct system on the left hand side appears on the right hand side; the map $F$ is given by sending the equivalence class of $a\in\widehat{HM}^{-*}_{L(\varepsilon)}(Y,\lambda_\varepsilon,\mathfrak{s}_{\xi,\Gamma})$ as a member of the left hand direct limit to its equivalence class as a member of the right hand direct limit. Throughout the proof of (\ref{eqn:factoringdleL}) we will use the notation
\[
\widehat{HM}^{-*}_L(\varepsilon):=\widehat{HM}^{-*}_{L}(Y,\lambda_\varepsilon,\mathfrak{s}_{\xi,\Gamma}).
\]

To show that $F$ is well-defined, assume $a\in\widehat{HM}^{-*}_{L(\varepsilon)}(\varepsilon), b\in\widehat{HM}^{-*}_{L(\varepsilon')}(\varepsilon')$, and $a\sim b$ as elements of $\lim_{\epsilon\to0}\widehat{HM}^{-*}_{L(\varepsilon)}(\varepsilon)$. That means there is some $\varepsilon''$ for which the image of $a$ under the map
\begin{equation}\label{eqn:aLHS}
\widehat{HM}^{-*}_{L(\varepsilon)}(\varepsilon)\to\widehat{HM}^{-*}_{L(\varepsilon'')}(\varepsilon'')
\end{equation}
equals the image of $b$ under the map
\[
\widehat{HM}^{-*}_{L(\varepsilon')}(\varepsilon')\to\widehat{HM}^{-*}_{L(\varepsilon'')}(\varepsilon'').
\]
Call this shared image $c$. On the right hand side, we also have $F(a)\sim F(c)$ under the composition
\begin{equation}\label{eqn:RHScomp}
\widehat{HM}^{-*}_{L(\varepsilon)}(\varepsilon)\to\widehat{HM}^{-*}_{L(\varepsilon'')}(\varepsilon)\to\widehat{HM}^{-*}_{L(\varepsilon'')}(\varepsilon''),
\end{equation}
where the first map comes from the first direct limit (and is defined because $\varepsilon''<\varepsilon$) and the second from the second, because (\ref{eqn:RHScomp}) is precisely one of the compositions defining (\ref{eqn:aLHS}) in the commutative diagram (\ref{eqn:HMLcd}). Similarly, we have $F(b)\sim F(c)$.

Next we show that $F$ is an injection. Let $a\in\widehat{HM}^{-*}_{L(\varepsilon)}(\varepsilon), b\in\widehat{HM}^{-*}_{L(\varepsilon')}(\varepsilon')$, but we do not assume $a\sim b$ on the left hand side. Assume $F(a)\sim F(b)$ on the right hand side. We want to show $a\sim b$. Because $F(a)\sim F(b)$, there are $L''\geq L(\varepsilon), L(\varepsilon')$ and $\varepsilon''\leq\varepsilon,\varepsilon'$ for which the image of $a$ under
\[
\widehat{HM}^{-*}_{L(\varepsilon)}(\varepsilon)\to\widehat{HM}^{-*}_{L''}(\varepsilon)\to\widehat{HM}^{-*}_{L''}(\varepsilon'')
\]
equals the image of $b$ under
\[
\widehat{HM}^{-*}_{L(\varepsilon')}(\varepsilon')\to\widehat{HM}^{-*}_{L''}(\varepsilon)\to\widehat{HM}^{-*}_{L''}(\varepsilon'').
\]
Call this shared image $d$. Let $\varepsilon'''=\min\{\varepsilon'',\varepsilon(L'')\}$. We have $d=F(c)$, where $c$ is the image of $a$ under
\[
\widehat{HM}^{-*}_{L(\varepsilon)}(\varepsilon)\to\widehat{HM}^{-*}_{L(\varepsilon''')}(\varepsilon'''),
\]
as well as the image of $b$ under
\[
\widehat{HM}^{-*}_{L(\varepsilon')}(\varepsilon')\to\widehat{HM}^{-*}_{L(\varepsilon''')}(\varepsilon'''),
\]
both again by the definition (\ref{eqn:HMLcd}) of the maps in the direct limit on the left hand side. Therefore $a\sim b$ on the left hand side.

Finally we show that $F$ is a surjection, essentially because the direct systems on the right hand side of (\ref{eqn:factoringdleL}) are very simple. If $d\in\widehat{HM}^{-*}_L(\varepsilon)$ with $L<L(\varepsilon)$, then it is eventually equivalent to some element of $\widehat{HM}^{-*}_{L(\varepsilon)}(\varepsilon)$ because there is an inclusion map sending $\widehat{HM}^{-*}_L(\varepsilon)$ to $\widehat{HM}^{-*}_{L(\varepsilon)}(\varepsilon)$. If $d\in\widehat{HM}^{-*}_L(\varepsilon)$ with $L>L(\varepsilon)$, then it is eventually equivalent to some element of $\widehat{HM}^{-*}_L(\varepsilon(L))$ because there is a cobordism map sending $\widehat{HM}^{-*}_L(\varepsilon)$ to $\widehat{HM}^{-*}_{L}(\varepsilon(L))$.

We have that (\ref{eqn:sendLtoinfty}) follows from the last equation of \cite[\S3.5]{cc2}, which itself follows from \cite[Thm. 4.5]{taubesechswf1}. Note that although the equation in \cite[\S3.5]{cc2} is only required to hold for nondegenerate $\lambda$, it is true for all $\lambda$.

We obtain (\ref{eqn:HMtoECH}) by the fact that the groups $\widehat{HM}^{-*}(Y,\mathfrak{s}_{\xi,\Gamma})$ on the right hand side of (\ref{eqn:sendLtoinfty}) are all equal and independent of $\varepsilon$, together with the isomorphism (\ref{eqn:fullECHHMiso}).

\end{proof}

\subsection{Proof of the main theorem}\label{subsec:pfmainthm}

We split the proof of the first two conclusions of Theorem \ref{thm:mainthm} into the case where $g>0$ and the case where $g=0$. This is because our methods for $g>0$ with domain-dependent almost complex structures do not work when $g=0$: see Remark \ref{unstable-cases}. Instead, because a perfect Morse function on $S^2$ has only elliptic critical points, the differential vanishes by index parity (Theorem \ref{thm:Iproperties}), as explained in \S\ref{sssec:pfg=0}.

\subsubsection{Proof of the main theorem when $g>0$}\label{sssec:pfg>0}

In this section we prove the first two conclusions of Theorem \ref{thm:mainthm} in the case $g>0$.

\begin{proof}[Proof of Theorem \ref{thm:mainthm}, assuming $g>0$]
By Theorem \ref{thm:dlisECH} and the discussion in the introduction to \S\ref{sec:finalcomp}, it is enough to show (\ref{eqn:dlG}). By Proposition \ref{prop:directlimitcomputesfiberhomology}, the direct limit on the left hand side of (\ref{eqn:dlG}) is the homology of the chain complex generated by Reeb currents of the form $e_-^{m_-}h_1^{m_1}\cdots h_{2g}^{m_{2g}}e_+^{m_+}$ in the class $\Gamma$. We will denote this chain complex by $(C_*,\partial)$.

Recall that we are abusing notation by thinking of $\Gamma$ as an element of $\Z_{-e}$ rather than as an element of the $\Z_{-e}$ summand of $H_1(Y)$. Therefore $e_-^{m_-}h_1^{m_1}\cdots h_{2g}^{m_{2g}}e_+^{m_+}$ is in the class $\Gamma$ precisely when $M=m_-+m_1+\cdots+m_{2g}+m_+=\Gamma+(-e)d$ for some $m\in\Z_{\geq0}$.

We claim that the chain complex $C_*$ splits into the submodules $\Lambda^{\Gamma+(-e)d}H_*(\Sigma_g;\Z_2)$ on the right hand side of (\ref{eqn:dlG}). Let $E_-$ denote the index zero generator of $H_*(\Sigma_g;\Z_2)$, let $H_i$ denote the $i^\text{th}$ index one generator, and let $E_+$ denote the index two generator. If $A$ is a generator of $H_*(\Sigma_g;\Z_2)$ let $A^m$ denote the $m$-fold wedge product $A\wedge\cdots\wedge A$, where $m=0$ indicates that there is no factor of $A$ in the wedge product. The $\Z_2$ grading on $E_-^{m_-}\wedge H_1^{m_1}\wedge\cdots\wedge H_{2g}^{m_{2g}}\wedge E_+^{m_+}$ is the $\Z_2$ equivalence class of
\begin{align}
m_-|E_-|+m_1|H_1|+\cdots+m_{2g}|H_{2g}|+m_+|E_+|&\equiv_2 0+m_1+\cdots+m_{2g}+2m_+ \nonumber
\\&\equiv_2 m_1+\cdots+m_{2g} \nonumber
\\&\equiv_2 I(e_-^{m_-}h_1^{m_1}\cdots h_{2g}^{m_{2g}}e_+^{m_+},e_-^\Gamma) \label{eqn:Imod2}
\end{align}
where (\ref{eqn:Imod2}) follows from the Index Parity property of the ECH index, see Theorem \ref{thm:Iproperties} (iv). Moreover, the exterior product $\Lambda^{\Gamma+(-e)d}H_*(\Sigma_g;\Z_2)$ consists precisely of wedge products of the $E_\pm$ and $H_i$ of total multiplicity $\Gamma+(-e)d$, where for all $A,B$ generators of $H_*(\Sigma_g;\Z_2)$,
\[
B\wedge A=(-1)^{|A|\cdot|B|}A\wedge B.
\]
Therefore all elements of the exterior product can be rearranged so that all $E_-$ terms occur first and all $E_+$ terms occur last. However, no $H_i$ term can occur twice, and because we are using $\Z_2$ coefficients,
\[
H_j\wedge H_i=(-1)^{1\cdot1}H_i\wedge H_j=(-1)H_i\wedge H_j=H_i\wedge H_j,
\]
hence all $H_i$ terms can be arranged in the order of ascending index. Therefore the map
\[
e_-^{m_-}h_1^{m_1}\cdots h_{2g}^{m_{2g}}e_+^{m_+}\leftrightarrow E_-^{m_-}\wedge H_1^{m_1}\wedge\cdots\wedge H_{2g}^{m_{2g}}\wedge E_+^{m_+}
\]
defines a bijection
\[
C_*\leftrightarrow \bigoplus_{d\in\Z_{\geq0}}\Lambda^{\Gamma+(-e)d}H_*(\Sigma_g;\Z_2),
\]
which respects the splitting over $d$ in the sense that there is a splitting of $C_*$ over $d$ such that $M=\Gamma+(-e)d$, and respecting the mod two gradings.

Finally, we show that the differential vanishes, so that these submodules are in fact subcomplexes, and (\ref{eqn:dlG}) holds. By Corollary \ref{nogenusJ} we can assume that all all ECH index one curves contributing to $\partial$ have degree zero and thus consist of a union of cylinders by Lemma \ref{lem:d0g0}. By Proposition \ref{cylinder-to-morse}, we know that any moduli space of cylinders which could contribute a nonzero coefficient to $\partial$ must have the same mod two count as the space of currents consisting of trivial cylinders together with a single ECH index one cylinder above a gradient flow line of $H$ which contributes to the Morse differential, and that the count must be the same as the corresponding count for the Morse differential. Specifically, if there is a pseudoholomorphic current contributing to $\langle\partial(e_-^{m_-}h_1^{m_1}\cdots h_{2g}^{m_{2g}}e_+^{m_+}), e_-^{m'_-}h_1^{m'_1}\cdots h_{2g}^{m'_{2g}}e_+^{m'_+}\rangle$ then either
\begin{itemize}
\item There is some $h_i$ for which $m_i=1, m'_i=0$, and $m'_-=m_-+1, m'_+=m_+$, and for $j\neq i$, $m'_j=m_j$.
\item There is some $h_i$ for which $m_i=0, m'_i=1$, and $m'_+=m_+-1, m'_-=m_-$, and for $j\neq i$, $m'_j=m_j$.
\end{itemize}
In either case, the mod two count of all such pseudoholomorphic currents must equal the count of Morse flow lines from $\fp(h_i)$ to $\fp(e_-)$ or $\fp(e_+)$ to $\fp(h_i)$, respectively. Because $H$ is perfect, both of the latter counts are zero.

\end{proof}

\subsubsection{Proof of the main theorem when $g=0$}\label{sssec:pfg=0}

In this section we prove the first two conclusions of Theorem \ref{thm:mainthm} in the case $g=0$. Note that the manifolds in question are the lens spaces $L(-e,1)$.

\begin{theorem}\label{thm:echS2} Let $(Y,\lambda)$ be the prequantization bundle of Euler number $e\in\Z_{<0}$ over $(S^2,\omega)$ with contact structure $\xi$.  (We assume the cohomology class $\frac{[\omega]}{2\pi}$ admits an integral lift in $H^2(S^2,\Z)$.  Moreover $e=\frac{-1}{2\pi}[\omega]$).  Let $\Gamma$ be a class in $H_1(Y;\Z)=\Z_{-e}$. Then
\begin{equation}\label{eqn:mainthmg=0}
\bigoplus_{\Gamma\in H_1(Y)}ECH_*(Y,\xi,\Gamma)\cong\Lambda^*H_*(S^2;\Z_2)
\end{equation}
as $\Z_2$-graded $\Z_2$-modules. Furthermore, as a $\Z$-module,
\begin{equation}\label{eqn:ECHg=0}
ECH_*(Y,\xi,\Gamma)=\begin{cases}
\Z&\text{ if $*\in2\Z_{\geq0}$},
\\0&\text{ else}.
\end{cases}
\end{equation}
\end{theorem}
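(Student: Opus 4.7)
The plan is to follow the same framework as the $g>0$ case (Section \ref{sssec:pfg>0}), with the essential simplification that a perfect Morse function on $S^2$ has no critical points of index $1$. First I would invoke Theorem \ref{thm:dlisECH} to rewrite $ECH_*(Y,\xi,\Gamma)$ as $\lim_{L\to\infty}ECH^L_*(Y,\lambda_{\varepsilon(L)},\Gamma)$, and Proposition \ref{prop:directlimitcomputesfiberhomology} to describe this direct limit as the homology of a chain complex whose generators are orbit sets $e_-^{m_-}e_+^{m_+}$ in the class $\Gamma$, with $\Gamma$ identified with an integer in $\{0,\dots,-e-1\}$ via Lemma \ref{lem:HYreps}(i).

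The key observation is that with no index-$1$ critical points of $H$, Lemma \ref{lem:orbitseh} produces no hyperbolic orbits, so $\varepsilon(\alpha)=\varepsilon(\beta)=1$ for every pair of chain complex generators. The Index Parity property (Theorem \ref{thm:Iproperties}(iv)) then forces $I(\alpha,\beta)$ to be even for every such pair, and hence the ECH differential, which counts ECH index $1$ currents, must vanish identically: $\partial\equiv 0$. This sidesteps the domain-dependent almost complex structure analysis of Sections \ref{sec:modspcs}--\ref{handleslides}, which is unavailable for genus-zero curves (see Remark \ref{unstable-cases}); the obstruction to using those methods is precisely what makes the parity argument necessary, but fortunately the parity argument is also sufficient.

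For the isomorphism with $\Lambda^*H_*(S^2;\Z_2)$, I would mirror the construction from Section \ref{sssec:pfg>0}: both generators $E_-,E_+$ of $H_*(S^2;\Z_2)$ lie in even degree, so there are no sign constraints, and the natural correspondence $e_-^{m_-}e_+^{m_+}\leftrightarrow E_-^{m_-}\wedge E_+^{m_+}$ yields a bijection of $\Z_2$-graded $\Z_2$-modules (both sides concentrated in degree $0$ mod $2$). For the refined $\Z$-grading statement recovering \cite[Cor.~3.4]{kmos}, I would apply the index formula of Proposition \ref{intro:ECHindex} with $\chi(S^2)=2$, setting $a:=-e>0$ and taking $e_-^\Gamma$ as the reference, to obtain
\[
I(e_-^{m_-}e_+^{m_+},\,e_-^\Gamma)\;=\;ad(d-1)+2(\Gamma+1)d+2m_+,
\]
where $m_-+m_+=\Gamma+ad$ and $0\leq m_+\leq \Gamma+ad$. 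For fixed $d$ this sweeps out a consecutive string of even integers, and a short calculation shows that the maximum for parameter $d$ is exactly two less than the minimum for parameter $d+1$, while the $d=0$ block starts at $0$. Hence the indices tile $2\Z_{\geq 0}$ with multiplicity one, and combining with $\partial\equiv 0$ gives the stated computation.

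The main obstacle is conceptual rather than technical: one has to be confident that the index parity obstruction really does kill every possible differential on the nose, since the usual transversality tools developed for $g>0$ are unavailable. Once this is accepted, the only remaining work is the tiling verification, which is a routine arithmetic bookkeeping exercise from the index formula.
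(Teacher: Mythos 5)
Your proposal is correct and follows essentially the same approach as the paper: invoke Theorem \ref{thm:dlisECH} and Proposition \ref{prop:directlimitcomputesfiberhomology} to reduce to the fiber-critical-point chain complex, kill the differential via index parity (Theorem \ref{thm:Iproperties}(iv)) since a perfect Morse function on $S^2$ has no index-$1$ critical points, and then establish the $\Z$-graded refinement by showing the map $e_-^{m_-}e_+^{m_+}\mapsto I(e_-^{m_-}e_+^{m_+},e_-^\Gamma)$ is a bijection onto $2\Z_{\geq0}$. The only stylistic difference is in that last step: the paper frames the bijection geometrically, identifying generators with lattice points in a cone in $\R^2$ and relating $c_\tau$ and $Q_\tau$ to the height and twice the area of an associated triangle, whereas you derive the closed-form expression $I=ad(d-1)+2(\Gamma+1)d+2m_+$ (with $a=-e$) directly from Proposition \ref{intro:ECHindex} and verify the interval tiling by arithmetic. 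Both routes reduce to the same final identity $I(e_-^{M+(-e)},e_-^\Gamma)=I(e_+^M,e_-^\Gamma)+2$, and I've checked that your displayed formula for $I$ agrees with the paper's (\ref{eqn:indexformula}) after substituting $M=\Gamma+ad$; your algebraic presentation is arguably more self-contained, while the paper's lattice picture supplies geometric intuition aligned with combinatorial ECH for toric contact forms.
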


\begin{remark}
Recall that we may use $\Z$-coefficients rather than $\Z_2$-coefficients when the moduli spaces defining $\partial$ are empty, a fact we prove here. See Remark \ref{rmk:Zcoeff} for further discussion on coefficients in ECH.
\end{remark}

\begin{proof}
By Theorem \ref{thm:dlisECH}, it is enough to understand each $\lim_{L\to\infty}ECH^L_*(Y,\lambda_{\varepsilon(L)},\Gamma)$, which, by Proposition \ref{prop:directlimitcomputesfiberhomology}, is the homology of the chain complex generated by Reeb currents of the form $e_-^{m_-}e_+^{m_+}$ in the class $\Gamma$. We will denote this chain complex by $(C_*,\partial)$.

Because a perfect Morse function on $S^2$ has only elliptic critical points, index parity (Theorem \ref{thm:Iproperties}) tells us that $\partial=0$. Therefore
\begin{equation}\label{eqn:g=0diffl0}
ECH_*(Y,\xi,\Gamma)\cong C_*.
\end{equation}
Recall that we are abusing notation by thinking of $\Gamma$ as an element of $\Z_{-e}$ rather than as an element of $H_1(Y)$. The generator $e_-^{m_-}e_+^{m_+}$ is therefore in the class $\Gamma$ if $m_-+m_+=\Gamma+(-e)d$ for some $d\in\Z_{\geq0}$.

If $E_-,E_+$ denote the grading zero and two homology classes in $H_*(S^2;\Z_2)$, respectively, then $\Lambda^\delta H_*(S^2;\Z_2)$ is the group generated over $\Z_2$ by terms $E_-^{m_-}E_+^{m_+}$ with $m_-+m_+=\delta$, where $E^m$ denotes the $m$-fold wedge product $E\wedge\cdots\wedge E$. By a simplification of the proof of the analogous fact in \S\ref{sssec:pfg>0}, we obtain
\[
C_*\cong\bigoplus_{d\in\Z_{\geq0}}\Lambda^{\Gamma+(-e)d}H_*(S^2;\Z_2)
\]
as $\Z_2$-graded $\Z_2$-modules. Invoking (\ref{eqn:g=0diffl0}) and taking the sum over all $\Gamma\in H_1(Y)$ proves (\ref{eqn:mainthmg=0}).

To prove the improvement (\ref{eqn:ECHg=0}), we will prove that the ECH index is a bijection from the generators of $(C_*,\partial)$ to $2\Z_{\geq0}$ which sends $e_-^\Gamma$ to zero. Therefore
\[
C_*=\begin{cases}
\Z&\text{ if $*\in2\Z_{\geq0}$}
\\0&\text{ else}
\end{cases}
\]
and by (\ref{eqn:g=0diffl0}), we obtain (\ref{eqn:ECHg=0}).

The remainder of the proof of (\ref{eqn:ECHg=0}) therefore consists of proving that the ECH index is a bijection from generators $e_-^{m_-}e_+^{m_+}$ to $2\Z_{\geq0}$. Our perspective is similar to that of Choi \cite{choi}, but the contact forms are not the same (by choosing a specific perturbation function $H$ we could make them essentially the same, but do not need to do so).

The index bijection factors through a bijection to a lattice in the fourth quadrant in $\R^2$ determined by the vertical axis and the line through the origin of slope $\frac{1}{-e}$. We will first describe the bijection between generators of $C_*$ and this lattice, and then describe the bijection between the lattice and the nonnegative even integers.

The generators of $C_*$ in the class $\Gamma$ are of the form $e_-^{m_-}e_+^{m_+}$, where $m_\pm\in\Z_{\geq0}$ and $m_-+m_+\equiv_{-e}\Gamma$. See Figure \ref{fig:lattice_fig_722}. The union of all such generators over $\Gamma\in\Z_{-e}$ are in bijection with the intersection of the lattice spanned by $(1,0)$ and $\left(0,\frac{1}{-e}\right)$ with the fourth quadrant determined by the vertical axis and the line through the origin of slope $\frac{1}{-e}$, where the bijection is given by
\[
e_-^{m_-}e_+^{m_+}\mapsto\left(m_-,\frac{m_--m_+}{-e}\right).
\]
The image of $e_-^{m_-}e_+^{m_+}$ is to the right of the vertical axis, inclusive, because $m_-\geq0$, and is below the line through the origin of slope $\frac{1}{-e}$, inclusive, because
\[
\frac{m_--m_+}{-e}\leq\frac{m_-}{-e}.
\]
The map is a bijection because it has an inverse, which can be computed directly from the formula. Let $V(m_-,m_+)$ denote $\left(m_-,\frac{m_--m_+}{-e}\right)$.

\begin{figure}
 \begin{center}
 \begin{overpic}[width=.7\textwidth]{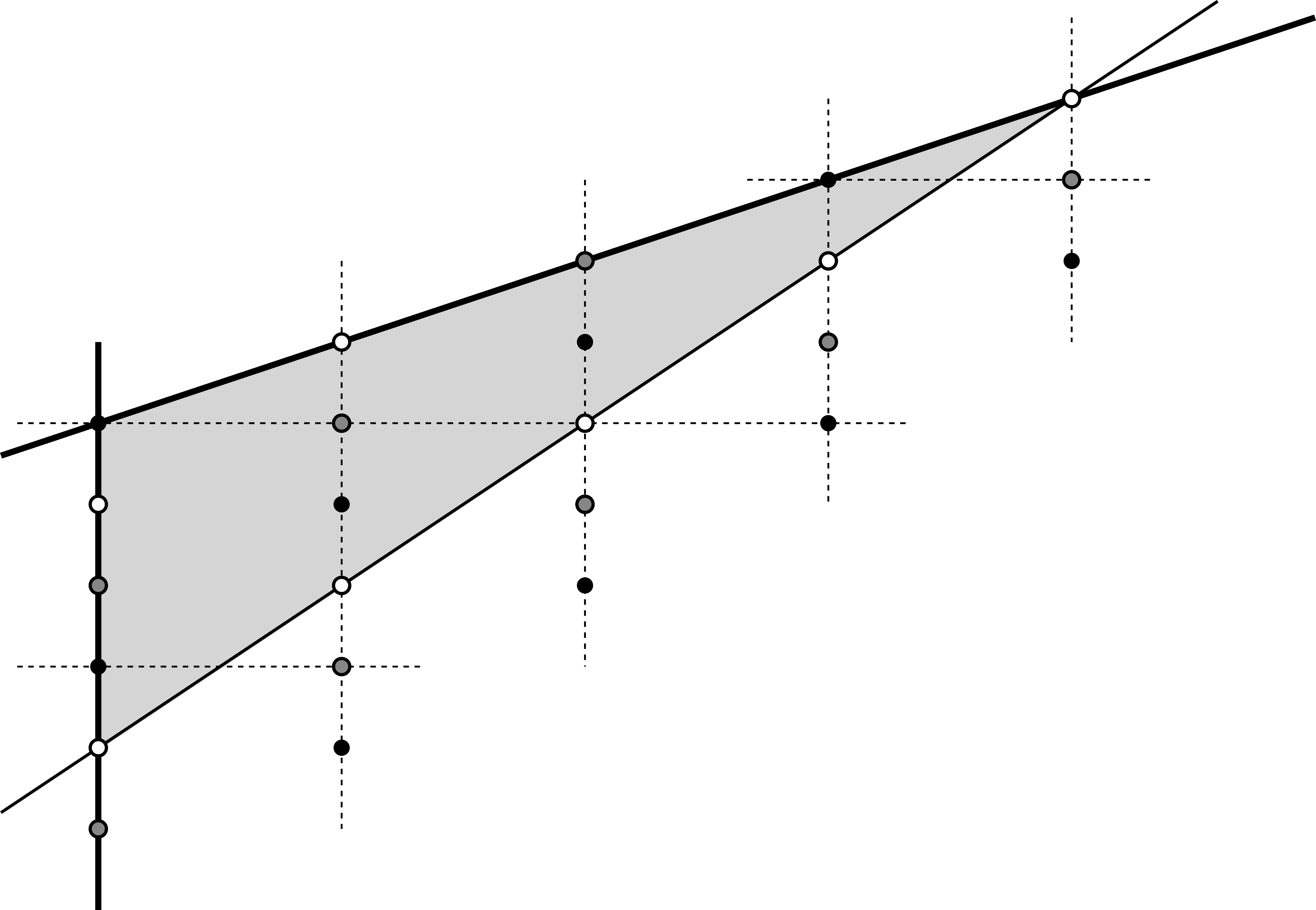}
\put(-8,128){\footnotesize $V(0,0)$}
\put(149,111){\footnotesize $V(2,2)$}
\put(88,73){\footnotesize $V(1,3)$}
\put(28,32){\footnotesize $V(0,4)$}
\put(269,193){\footnotesize $V(4,0)$}
\put(209,152){\footnotesize $V(3,1)$}
\end{overpic}
\end{center}
\caption{Depicted is the lattice for $e=-3$. The thicker solid lines indicate the axes while the dashed lines indicate the grid spanned by the standard lattice generated by $(1,0)$ and $(0,1)$. The $\Gamma=0$ sublattice is indicated by black points, the $\Gamma=1$ sublattice by white points, and the $\Gamma=2$ sublattice by dark grey points. The light grey triangle is $T(0,4)=T(1,3)=T(2,2)=T(3,1)=T(4,0)$.}
\label{fig:lattice_fig_722}
\end{figure}

The lattice splits into $-e$ sublattices, each corresponding to the homology class $\Gamma$. Each is spanned by $\left(1,\frac{2}{-e}\right)$ and $(0,1)$, but they can be differentiated by a translation: they contain the points $\left(0,-\frac{\Gamma}{-e}\right)$, respectively.

Next we explain the bijections between each of these sublattices and the nonnegative even integers. The essential idea is the following. Let $T(m_-,m_+)$ denote the triangle bounded by the axes and the line through $V(m_-,m_+)$ of slope $\frac{2}{-e}$. The relative first Chern class records the approximate height of $T(m_-,m_+)$, the relative intersection pairing term records approximately twice its area; when this line moves to the right and/or down, both the height and area of $T(m_-,m_+)$ increase, so moving the line to the right and down groups points in the lattice into batches of roughly increasing ECH index. The Conley-Zehnder index differentiates between lattice points on the same line of slope $\frac{2}{-e}$ by increasing the index by two as $m_1$ increases and $m_0$ decreases, and makes the indices of the groups on different lines of slope $\frac{2}{-e}$ match up exactly.

Because these correspondences are only approximate when $\Gamma\neq0$, we will explain them in detail to show that the index is a bijection to $2\Z_{\geq0}$.

The index difference between $e_-^{m_-}e_+^{m_+}$ and $e_-^\Gamma$ is
\[
I(e_-^{m_-}e_+^{m_+},e_-^\Gamma)=\frac{2(m_-+m_+-\Gamma)}{-e}+\frac{(m_-+m_+-\Gamma)^2}{-e}+\frac{2\Gamma(m_-+m_+-\Gamma)}{-e}+m_+-m_-+\Gamma.
\]

The first term is the relative first Chern class $c_\tau(e_-^{m_-}e_+^{m_+},e_-^\Gamma)$. The triangle has vertices $(0,0),\left(-\frac{m_-+m_+}{-e},0\right),\left(m_-+m_+,\frac{m_-+m_+}{-e}\right)$, so its height is
\[
\frac{2(m_-+m_+)}{-e}=c_\tau(e_-^{m_-}e_+^{m_+},e_-^\Gamma)+\frac{2\Gamma}{-e}\Leftrightarrow c_\tau(e_-^{m_-}e_+^{m_+},e_-^\Gamma)=\text{Height}(T(m_-,m_+))-\frac{2\Gamma}{-e}.
\]

The second two terms comprise the relative intersection pairing $Q_\tau(e_-^{m_-}e_+^{m_+},e_-^\Gamma)$. Twice the area of the triangle is
\[
2\text{Area}(T(m_-,m_+))=\left(\frac{m_-+m_+}{-e}\right)(m_-+m_+),
\]
while
\begin{align*}
Q_\tau(e_-^{m_-}e_+^{m_+},e_-^\Gamma)&=\frac{(m_-+m_+-\Gamma)^2}{-e}+\frac{2\Gamma(m_-+m_+-\Gamma)}{-e}
\\&=\frac{1}{-e}\left((m_-+m_+)^2-2\Gamma(m_-+m_+)+\Gamma^2+2\Gamma(m_-+m_+)-2\Gamma^2\right)
\\&=2\text{Area}(T(m_-,m_+))-\frac{2\Gamma^2}{-e}.
\end{align*}

Notice that we can split the sublattices into lattices along the lines of slope $\frac{2}{-e}$ through $\left(-\frac{M+\Gamma}{-e},0\right)$, where $M\in\Z_{\geq0}$. Over each such line, the Conley-Zehnder term ranges from $-M+\Gamma$ to $M+\Gamma$, where $M=m_-+m_+$, and is strictly increasing in $m_+$. Since there is exactly one generator with each value of $m_+$ between zero and $m_-+m_+$ on this line, no values of $I(e_-^{m_-}e_+^{m_+},e_-^\Gamma)$ are repeated on a given line.

 Along each line, the triangle $T(m_-,m_+)$ is constant, therefore its height and area are constant, and thus both $c_\tau(e_-^{m_-}e_+^{m_+},e_-^\Gamma)$ and $Q_\tau(e_-^{m_-}e_+^{m_+},e_-^\Gamma)$ are constant. They are also both increasing in $M$. In order to prove the theorem it therefore suffices to show that the smallest value the index takes on the line corresponding to $M+(-e)$ must be two greater than the largest value the index takes on the line corresponding to $M$.
 
 The smallest value the index takes on the line corresponding to $M+(-e)$ is $I(e_-^{M+(-e)},e_-^\Gamma)$, while while the largest value the index takes on the line corresponding to $M$ is $I(e_+^M,e_-^\Gamma)$. It is a straightforward computation to show that $I(e_-^{M+(-e)},e_-^\Gamma)=I(e_+^M,e_-^\Gamma)+2$.
\end{proof}

\addcontentsline{toc}{section}{References}

\noindent \textsc{Jo Nelson \\  Rice University}\\
{\em email: }\texttt{jo.nelson@rice.edu}\\

\noindent \textsc{Morgan Weiler \\ Cornell University \\ University of California, Riverside}\\
{\em email: }\texttt{morgan.weiler@cornell.edu}\\

\end{document}